\setlist[itemize]{leftmargin=20pt}
\DeclareFontFamily{OMX}{MnSymbolE}{}
\DeclareSymbolFont{MnLargeSymbols}{OMX}{MnSymbolE}{m}{n}
\DeclareFontShape{OMX}{MnSymbolE}{m}{n}{
    <-6>  MnSymbolE5
   <6-7>  MnSymbolE6
   <7-8>  MnSymbolE7
   <8-9>  MnSymbolE8
   <9-10> MnSymbolE9
  <10-12> MnSymbolE10
  <12->   MnSymbolE12
}{}
\DeclareFontShape{OMX}{MnSymbolE}{b}{n}{
    <-6>  MnSymbolE-Bold5
   <6-7>  MnSymbolE-Bold6
   <7-8>  MnSymbolE-Bold7
   <8-9>  MnSymbolE-Bold8
   <9-10> MnSymbolE-Bold9
  <10-12> MnSymbolE-Bold10
  <12->   MnSymbolE-Bold12
}{}
\let\llangle\@undefined
\let\rrangle\@undefined
\DeclareMathDelimiter{\llangle}{\mathopen}%
                     {MnLargeSymbols}{'164}{MnLargeSymbols}{'164}
\DeclareMathDelimiter{\rrangle}{\mathclose}%
                     {MnLargeSymbols}{'171}{MnLargeSymbols}{'171}
\newcommand{\F}{\ensuremath{\mathbf{F}}}
\newcommand{\N}{\ensuremath{\mathbf{N}}}
\newcommand{\Z}{\ensuremath{\mathbf{Z}}}
\newcommand{\Q}{\ensuremath{\mathbf{Q}}}
\newcommand{\R}{\ensuremath{\mathbf{R}}}
\newcommand{\C}{\ensuremath{\mathbf{C}}}
\newcommand{\mb}{\mathbf}
\newcommand{\mc}{\mathcal}
\DeclareMathOperator{\ind}{\mathbf{1}}
\DeclareMathOperator*{\esssup}{ess\,sup}
\newcommand{\1}{\mathbf{1}} %%% Spiro
\renewcommand{\emptyset}{\varnothing}
\def\avint_#1{\mathchoice{\mathop{\kern 0.2em\vrule width 0.6em height 0.69678ex depth -0.58065ex \kern -0.8em \intop}\nolimits_{\kern -0.4em#1}}{\mathop{\kern 0.1em\vrule width 0.5em height 0.69678ex depth -0.60387ex \kern -0.6em \intop}\nolimits_{#1}} {\mathop{\kern 0.1em\vrule width 0.5em height 0.69678ex depth -0.60387ex \kern -0.6em \intop}\nolimits_{#1}} {\mathop{\kern 0.1em\vrule width 0.5em height 0.69678ex depth -0.60387ex \kern -0.6em \intop}\nolimits_{#1}}}
\newtheorem{TheoremLetter}{Theorem}
{}
\newtheorem{theorem}{Theorem}
\newtheorem{corollary}[theorem]{Corollary}
\newtheorem{lemma}[theorem]{Lemma}
\newtheorem{proposition}[theorem]{Proposition}
\theoremstyle{remark}
\newtheorem{remark}[theorem]{Remark}
\newtheorem{example}[theorem]{Example}
\theoremstyle{definition}
\newtheorem{definition}[theorem]{Definition}
\numberwithin{theorem}{section}
\numberwithin{equation}{section}
\title{Multilinear matrix weights}
\author{Spyridon~Kakaroumpas}
\address{Spyridon~Kakaroumpas (he/him), Institut f\"{u}r Mathematik, Julius-Maximilians-Universit\"{a}t W\"{u}rzburg, Emil-Fischer-Stra{\ss}e 41, 97074 W\"{u}rzburg, Germany}
\email{spyridon.kakaroumpas@uni-wuerzburg.de}
\author{Zoe Nieraeth}
\thanks{Z. N. is supported by the Basque government through project GV IT1615-22}
\address{Zoe Nieraeth (she/her), UPV/EHU\textendash University of the Basque country, Leioa, Spain}
\email{zoe.nieraeth@gmail.com}
\begin{document}
\begin{abstract}
In this work we fully characterize the classes of matrix weights for which multilinear Calder\'on--Zygmund operators extend to bounded operators on matrix weighted Lebesgue spaces. To this end, we develop the theory of multilinear singular integrals taking values in tensor products of finite dimensional Hilbert spaces. On the one hand, we establish quantitative bounds in terms of multilinear Muckenhoupt matrix weight characteristics and scalar Fujii--Wilson conditions of a tensor product analogue of the convex body sparse operator, of a convex-set valued tensor product analogue of the Hardy--Littlewood maximal operator, and of a multilinear analogue of the Christ--Goldberg maximal operator. These bounds recover the sharpest known bounds in the linear case. Moreover, we define a notion of directional nondegeneracy for multilinear Calder\'{o}n--Zygmund operators, which is new even in the scalar case. The noncommutavity of matrix multiplication, the absence of duality, and the natural presence of quasinorms in the multilinear setting present several new difficulties in comparison to previous works in the scalar or in the linear case. To overcome them, we use techniques inspired from convex combinatorics and differential geometry.
\end{abstract}

\keywords{Multilinear, Hardy--Littlewood maximal operator, Muckenhoupt weights, Multilinear Calder\'on--Zygmund operators, Matrix weights, Convex bodies, Convex body domination, Tensor products}

\subjclass[2020]{Primary: 42B20, 46E25; Secondary: 46E30}

%42B20: Singular integrals (Calderón-Zygmund, etc.)
%42B25  Maximal functions, Littlewood-Paley theory
%46E30  Spaces of measurable functions (Lp-spaces, Orlicz spaces, Köthe function spaces, Lorentz spaces, rearrangement invariant spaces, ideal spaces, etc.)

\maketitle

\section{Introduction}

\subsection{The Hunt--Muckenhoupt--Wheeden theorem}
In the study of singular integrals, the problem of bounding Calder\'on--Zygmund operators on spaces of functions such as the weighted Lebesgue space
\[
\|f\|_{L^p_w(\R^d)}:=\Big(\int_{\R^d}\!|w(x)f(x)|^p\,\mathrm{d}x\Big)^{\frac{1}{p}}
\]
is of fundamental interest. In \cite{Mu72}, Muckenhoupt showed that the Hardy--Littlewood maximal operator
\[
Mf(x):=\sup_Q\Big(\avint_Q\!|f(x)|\,\mathrm{d}x\Big)\ind_Q(x),
\]
where the supremum is taken over all cubes $Q$ in $\R^d$,
is bounded in $L^p_w(\R^d)$ for $1<p\leq\infty$ if and only if $w$ satisfies the Muckenhoupt $A_p$ condition $w\in A_p$:
\[
[w]_p:=\sup_Q\Big(\avint_Q\!w(x)^p\,\mathrm{d}x\Big)^{\frac{1}{p}}\Big(\avint_Q\!w(x)^{-p'}\,\mathrm{d}x\Big)^{\frac{1}{p'}}<\infty.
\]
Here $\tfrac{1}{p'}:=1-\frac{1}{p}$, and the average is interpreted as an essential supremum when the exponent is infinite. This result was extended by Hunt, Muckenhoupt, and Wheeden in \cite{HMW73}, who showed that all Calder\'on--Zygmund operators $T$ are bounded in $L^p_w(\R^d)$ if and only if $1<p<\infty$ and $w\in A_p$.

In the `90s, a certain vector-valued extension of the Hunt--Muckenhoupt--Wheeden theorem was sought after by Nazarov, Treil, and Volberg, see \cite{Tr89, NT96, TV97a, TV97b, Vo97}. If $f:\R^d\to\C^n$, a matrix weighted analogue of this problem can be formulated as follows: suppose $W:\R^d\to\C^{n\times n}$ is a.e. a Hermitian positive definite matrix and set
\[
\|f\|_{L^p_W(\R^d;\C^n)}:=\Big(\int_{\R^d}\!|W(x)f(x)|^p\,\mathrm{d}x\Big)^{\frac{1}{p}}.
\]
Then one can ask when the pointwise extension $\widetilde{T}$ of a Calder\'on--Zygmund operator $T$, defined by
\begin{equation}\label{eq:introcomponentwiseextension}
\widetilde{T}f(x):=(Tf_1(x),\ldots,Tf_n(x)),
\end{equation}
where $f=(f_1,\ldots,f_n)$ with $f_k:\R^d\to\C$, is bounded in $L^p_W(\R^d;\C^n)$. Nazarov, Treil, and Volberg showed that this is the case precisely when $1<p<\infty$, and $W$ satisfies the matrix Muckenhoupt $A_p$ condition $W\in A_p$: $W^{-1}\in L^{p'}_{\text{loc}}(\R^d;\C^{n\times n})$, and there is a $C\geq 1$ such that for all cubes $Q\subseteq\R^d$ and all $u\in\C^n$ there is a $v\in\C^n$ such that
\[
\Big(\avint_Q\!|W(x)u|^p\,\mathrm{d}x\Big)^{\frac{1}{p}}\Big(\avint_Q\!|W(x)^{-1}v|^{p'}\,\mathrm{d}x\Big)^{\frac{1}{p'}}\leq C|\langle u, v\rangle_{\C^n}|,
\]
where $\langle u, v\rangle_{\C^n}=\sum_{k=1}^n u_k\overline{v_k}$. We denote the smallest possible $C$ by $[W]_p$. This is equivalent to the uniform boundedness in $L^p_W(\R^d;\C^n)$ of the family of averaging operators
\[
T_Qf:=\ind_Q\avint_Q\!f(x)\,\mathrm{d}x,
\]
indexed over all cubes $Q$ in $\R^d$, in which case we have
\[
[W]_p=\sup_Q\|T_Q\|_{L^p_W(\R^d;C^n)\to L^p_W(\R^d;C^n)}.
\]
To obtain a characterization of $A_p$ in terms of a Hardy--Littlewood type maximal operator, in \cite{CG01}, Christ and Goldberg defined the operator
\[
M_Wf(x):=\sup_Q\Big(\avint_Q\!|W(x)W(y)^{-1}f(y)|\,\mathrm{d}y\Big)\ind_Q(x),
\]
and showed that $W\in A_p$ precisely when $M_W:L^p(\R^d;\C^n)\to L^p(\R^d)$. In order to prove the Rubio de Francia extrapolation theorem for matrix weights, Bownik and Cruz-Uribe in \cite{BC23} needed an intrinsically defined maximal operator that maps a space to itself. To this end, they considered convex-set valued measurable mappings $F:\R^d\to\mc{K}(\R^n)$, where $\mc{K}(\R^n)$ denotes the collection of compact symmetric convex sets in $\R^n$. They then defined $M^{\mc{K}}F(x)$ to be the smallest set in $\mc{K}(\R^n)$ containing the set
\[
\bigcup_Q\,\langle F\rangle_Q\ind_Q(x),
\]
where the average is defined through the so-called Aumann integral
\[
\langle F\rangle_Q:=\Big\{\avint_Q\!f\,\mathrm{d}x:f\in L^0(\R^d;\R^n),\, f(x)\in F(x)\, a.e.\Big\}.
\]
This is well-defined, assuming $F$ is \emph{locally integrably bounded}, i.e., for each cube $Q$ there is a constant $C$ such that for all measurable $f:\R^d\to\R^n$ with $f(x)\in F(x)$ a.e. we have
\[
\|f\|_{L^1(Q)}\leq C.
\]
Defining $L^p_W(\R^d;\mc{K}(\R^n))$ as the measurable mappings $F:\R^d\to\mc{K}(\R^n)$ for which 
\[
h(x):=\sup_{u\in F(x)}|W(x)u|
\]
satisfies $h\in L^p(\R^d)$, they showed that $W\in A_p$ precisely when $1<p\leq \infty$ and
\[
M^{\mc{K}}:L^p_W(\R^d;\mc{K}(\R^n))\to L^p_W(\R^d;\mc{K}(\R^n)).
\]

An alternative characterization of the matrix Muckenhoupt condition was found by Roudenko \cite{Ro03}, who showed that
\begin{equation}\label{eq:roudenkointro}
[W]_p\eqsim\sup_Q\Big(\avint_Q\Big(\avint_Q\!\|W(x)W(y)^{-1}\|^{p'}\,\mathrm{d}y\Big)^{\frac{p}{p'}}\,\mathrm{d}x\Big)^{\frac{1}{p}},
\end{equation}
where $\|A\|$ denotes the operator norm of a matrix $A\in\C^{n\times n}$. An essential ingredient in establishing this equivalence is the John ellipsoid theorem, which says that if $\rho:\C^n\to[0,\infty)$ is a norm, then there is a Hermitian positive definite matrix $A\in\C^{n\times n}$ for which
\[
\rho(u)\leq|Au|\leq n^{\frac{1}{2}}\rho(u).
\]
One can then apply this result to the norms
\[
u\mapsto \Big(\avint_Q\!|W(x)u|^p\,\mathrm{d}x\Big)^{\frac{1}{p}},\quad u\mapsto \Big(\avint_Q\!|W(x)^{-1}u|^{p'}\,\mathrm{d}x\Big)^{\frac{1}{p'}}
\]
to obtain the so-called reducing matrices $A_{W,Q,p}$, $A_{W^{-1},Q,p'}$, which yield another characterization of $A_p$ through
\[
[W]_p\eqsim\sup_Q\|A_{W,Q,p} A_{W^{-1},Q,p'}\|.
\]
From this, one can deduce \eqref{eq:roudenkointro}.

A different way of extending the Hunt--Muckenhoupt--Wheeden theorem is by considered \emph{multilinear} Calder\'on--Zygmund operators $T$, which are singular integrals acting on $m$ functions $f_1,\ldots,f_m$. It was shown by Lerner, Ombrosi, P\'erez, Torres, and Trujillo-Gonz\'alez in \cite{LOPTT09} that such maps satisfy
\[
T:L^p_{w_1}(\R^d)\times\cdots L^p_{w_m}(\R^d)\to L^p_w(\R^d),
\]
where
\[
w:=\prod_{j=1}^m w_j,\quad\frac{1}{p}=\sum_{j=1}^m\frac{1}{p_j},
\]
when $p_1,\ldots,p_m\in(1,\infty]$, $p<\infty$, and $\vec{w}\in A_{\vec{p}}$:
\[
[\vec{w}]_{\vec{p}}:=\sup_Q\Big(\avint_Q\!w(x)^p\,\mathrm{d}x\Big)^{\frac{1}{p}}\prod_{j=1}^m\Big(\avint_Q\!w_j(x)^{-p_j'}\,\mathrm{d}x\Big)^{\frac{1}{p_j'}}<\infty.
\]
Moreover, they showed that the multilinear Hardy--Littlewood maximal operator
\[
M(f_1,\ldots,f_m)(x):=\sup_Q\prod_{j=1}^m\Big(\avint_Q\!|f_j(x)|\,\mathrm{d}x\Big)\ind_Q(x)
\]
satisfies
\[
M:L^p_{w_1}(\R^d)\times\cdots\times L^p_{w_m}(\R^d)\to L^p_w(\R^d)
\]
precisely when $p_1,\ldots,p_m\in(1,\infty]$ and $w\in A_{\vec{p}}$.

\subsection{Main result} 
The goal of this paper is to fully establish a multilinear matrix weighted analogue of the Hunt--Muckenhoupt--Wheeden theorem, unifying and extending the theory for both the multilinear and matrix weighted settings.

First, we need to establish what we mean by extending a multilinear operator $T$ to $m$-tuples of vector-valued functions $f_1,\ldots,f_m$ with $f_j:\R^d\to \C^{n_j}$. To exemplify our approach, we first restrict to the case $m=2$, $n_1=n_2=2$. Given two functions $f,g:\R^d\to\C^2$, each have component functions $f=(f_1,f_2)$ and $g=(g_1,g_2)$. As $T$ is bilinear, we want our extension to consider all possible pairings of the component functions:
\[
T(f_1,g_1),\quad T(f_1,g_2),\quad T(f_2,g_1),\quad T(f_2,g_2).
\]
Arranging these functions in a matrix yields a mapping $\widetilde{T}(f,g):\R^d\to\C^{2\times 2}\cong\C^{4}$.

In the general case, this extension can be conveniently expressed through the use of the tensor product 
\[
\bigotimes_{j=1}^m\C^{n_j}\cong \C^n,\quad n:=\prod_{j=1}^m n_j.
\]
Suppose $T$ is an $m$-linear operator in $L^0(\R^d)$ defined on $m$-tuples of functions in $L^\infty_c(\R^d)$. Given vector-valued functions $\vec{f}=(f_1,\ldots,f_m)$ with $f_j\in L^\infty_c(\R^d;\C^{n_j})$, we define 
\[
\widetilde{T}(\vec{f}):\R^d\to\bigotimes_{j=1}^m\C^{n_j}
\]
by applying $T$ ``component-wise'' to $\bigotimes_{j=1}^m f_j:\R^d\to \bigotimes_{j=1}^m\C^{n_j}$. More precisely, denote the canonical basis of $\C^{n_j}$ by $(e_k)_{k=1}^{n_j}$ so that each $f_j$ can be written as
\[
f_j(x)=\sum_{k=1}^{n_j} f_{j,k}(x)e_k.
\]
Then our definition is
\begin{equation}
\label{eq:vector_valued_extension}
\widetilde{T}(\vec{f})(x):=\sum_{k_1=1}^{n_1}\cdots\sum_{k_m=1}^{n_m} T(f_{1,k_1},\ldots,f_{m,k_m})(x)\bigotimes_{j=1}^m e_{k_j}.
\end{equation}
A standard check shows that this definition does not depend on the choice of orthonormal basis. As a matter of fact, in this work we will replace the $\C^{n_j}$ by general $n_j$-dimensional Hilbert spaces $\mc{H}_j$, in which we work with this same extensions, even when no orthonormal basis is specified. 

When $T$ is the product map $T(h_1,\ldots,h_m)(x)=\prod_{j=1}^m h_j(x)$, our extension coincides with the tensor product map
\[
\widetilde{T}(\vec{f})(x)=\sum_{k_1=1}^{n_1}\cdots\sum_{k_m=1}^{n_m} \prod_{j=1}^mf_{j,k_j}(x)\bigotimes_{j=1}^m e_{k_j}=\bigotimes_{j=1}^m f_j(x).
\]
Moreover, when $m=1$, our extension reduces back to the component-wise extension \eqref{eq:introcomponentwiseextension}. 

We emphasize here that the proposed extension $\widetilde{T}$ of $T$ is the natural one obtained from the universal property of the tensor product.

Next, we need to establish what the appropriate multilinear Muckenhoupt condition is for matrix weights. Let $p_1,\ldots,p_m\in(1,\infty]$ and let $W_j:\R^d\to\C^{n_j\times n_j}$ be matrix weights for which $W_j^{-1}\in L^{p_j'}_{\text{loc}}(\R^d;\C^{n_j\times n_j})$ for $j=1,\ldots,m$. We set
\[
\mb{W}:=\bigotimes_{j=1}^m W_j,\quad\frac{1}{p}=\sum_{j=1}^m\frac{1}{p_j},
\]
where for a.e. $x\in\R^d$, the map $\bigotimes_{j=1}^m W_j(x)$ is the unique map satisfying the property that
\[
\Big(\bigotimes_{j=1}^m W_j(x)\Big)\Big(\bigotimes_{j=1}^m u_j\Big)=\bigotimes_{j=1}^m W_j(x)u_j
\]
for all $u_j\in\C^{n_j}$, $j=1,\ldots,m$. Writing $\vec{W}=(W_1,\ldots,W_m)$ and $\vec{p}=(p_1,\ldots,p_m)$, we say that $\vec{W}$ satisfies the $\vec{p}$ matrix Muckenhoupt condition $\vec{W}\in A_{\vec{p}}$ if for all cubes $Q$ the mapping
\[
T_Q(f_1,\ldots,f_m):=\ind_Q\bigotimes_{j=1}^m\avint_Q\!f_j(x)\,\mathrm{d}x
\]
satisfies 
\[
T_Q:L^{p_1}_{W_1}(\R^d;\C^{n_1})\times\cdots\times L^{p_m}_{W_m}(\R^d;\C^{n_m})\to L^p_{\mb{W}}\Big(\R^d;\bigotimes_{j=1}^m \C^{n_j}\Big),
\]
with
\[
[\vec{W}]_{\vec{p}}:=\sup_Q\|T_Q\|_{L^{p_1}_{W_1}(\R^d;\C^{n_1})\times\cdots\times L^{p_m}_{W_m}(\R^d;\C^{n_m})\to L^p_{\mb{W}}\Big(\R^d;\bigotimes_{j=1}^m \C^{n_j}\Big)}<\infty.
\]
Just like in the scalar case, extra difficulties arise in the multilinear setting due to the appearance of the exponents $p$ in the quasi-Banach range $\tfrac{1}{m}<p<1$. In this work, we develop a way to apply the John ellipsoid theorem to the now quasinorms
\[
\rho_{W,Q,p}(\mb{u}):=\Big(\avint_Q\!|\mb{W}(x)\mb{u}|^p\,\mathrm{d}x\Big)^{\frac{1}{p}}
\]
on $\bigotimes_{j=1}^m\C^{n_j}$. To facilitate this, we make use of Caratheodory's theorem on convex hulls. This result states that if $S$ is a set of points in $\R^n$, then its convex hull consists precisely of points of the form
\[
\sum_{k=1}^{n+1}\theta_ku_k,
\]
where $0\leq \theta_1,\ldots,\theta_k$, $\sum_{k=1}^{n+1}\theta_k=1$, and $u_1,\ldots,u_{n+1}\in S$. In Section~\ref{subsec:quasinorms} below we show how this allows us to obtain a reducing matrix $A_{\mathbf{W},Q,p}:\bigotimes_{j=1}^m\C^{n_j}\to \bigotimes_{j=1}^m\C^{n_j}$ for which
\[
K_p^{-2n}\rho_{\mathbf{W},Q,p}(\mb{u})\leq |A_{\mathbf{W},Q,p}\mb{u}|\leq n^{\frac{1}{2}}\rho_{\mathbf{W},Q,p}(\mb{u})
\]
for all $\mb{u}\in\bigotimes_{j=1}^m\C^{n_j}$, where $K_p=2^{(\frac{1}{p}-1)_+}$ is the quasi-metric constant of the Lebesgue space of exponent $p$. Using this allows us to show that our multilinear matrix $A_{\vec{p}}$ condition can be characterized through the Roudenko-type condition
\[
[\vec{W}]_{\vec{p}}\eqsim\sup_Q\Big(\avint_Q\prod_{j=1}^m\Big(\avint_Q\!\|W_j(x)W_j(y_j)^{-1}\|^{p_j'}\,\mathrm{d}y_j\Big)^{\frac{p}{p_j'}}\,\mathrm{d}x\Big)^{\frac{1}{p}},
\]
see Proposition~\ref{prop:equivalence_averages_operator} below, as well as through the reducing matrix condition
\[
[\vec{W}]_{\vec{p}}\eqsim\sup_Q\Big\|A_{W,Q,p}\Big(\bigotimes_{j=1}^m A_{W_j^{-1},Q,p_j'}\Big)\Big\|,
\]
see Proposition~\ref{prop:reducingmatrixavop} below. The proof of Proposition~\ref{prop:reducingmatrixavop} itself is achieved through an involved induction scheme relying on some elementary tensor algebra. While such techniques are standard in the field of differential geometry, they have not appeared so far in the field of weighted estimates for singular integrals. Thus, we explain them in detail.

We also consider a characterization in terms of the tensor product maximal operator. For locally integrably bounded mappings $F_j:\R^d\to\mc{K}(\C^{n_j})$, $j=1,\ldots,m$, we define $M^{\mc{K}}(F_1,\ldots,F_m)(x)$ as the smallest set in $\mc{K}\big(\bigotimes_{j=1}^m\C^{n_j}\big)$ containing
\[
\bigcup_{Q}\,\bigotimes_{j=1}^m\langle F_j\rangle_Q\ind_Q(x).
\]
Setting
\[
L^{\vec{p}}_{\vec{W}}(\R^d;\C^{\vec{n}}):=L^{p_1}_{W_1}(\R^d;\C^{n_1})\times\cdots\times L^{p_m}_{W_m}(\R^d;\C^{n_m}),
\]
our main theorem is as follows:
\begin{TheoremLetter}\label{thm:A}
Let $p_1,\ldots,p_m\in(1,\infty]$, $\tfrac{1}{p}:=\sum_{j=1}^m\tfrac{1}{p_j}>0$ and let $W_1,\ldots,W_m$ be matrix weights $W_j:\R^d\to\C^{n_j\times n_j}$ with $W_j^{-1}\in L^{p_j'}_{\text{loc}}(\R^d;\C^{n_j\times n_j})$ for $j=1,\ldots,m$. Then the following are equivalent:
\begin{enumerate}[(i)]
    \item\label{it:thmA1}  $\widetilde{T}:L^{\vec{p}}_{\vec{W}}(\R^d;\C^{\vec{n}})\to L^p_{\mb{W}}\Big(\R^d;\bigotimes_{j=1}^m \C^{n_j}\Big)$ for all $m$-linear Calder\'{o}n--Zygmund operators $T$;
    \item\label{it:thmA2} $M^{\mc{K}}:L^{\vec{p}}_{\vec{W}}(\R^d;\C^{\vec{n}})\to L^p_{\mb{W}}\Big(\R^d;\bigotimes_{j=1}^m \C^{n_j}\Big)$;
    \item\label{it:thmA3} $\vec{W}\in A_{\vec{p}}$.
\end{enumerate}
\end{TheoremLetter}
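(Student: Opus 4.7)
The plan is to establish the cyclic chain $(\ref{it:thmA3})\Rightarrow(\ref{it:thmA2})\Rightarrow(\ref{it:thmA1})\Rightarrow(\ref{it:thmA3})$.

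First, for $(\ref{it:thmA3})\Rightarrow(\ref{it:thmA2})$, I would exploit the reducing matrix characterization of Proposition~\ref{prop:reducingmatrixavop}. Given inputs $F_j\in L^{p_j}_{W_j}(\R^d;\mc{K}(\C^{n_j}))$, the idea is to dominate $M^{\mc{K}}(F_1,\ldots,F_m)(x)$ in the convex-set sense by a tensor product of convex sets built from the reducing matrices $A_{W_j^{-1},Q,p_j'}$ acting on scalar Christ--Goldberg type averages of suitably renormalized representatives of the $F_j$. Combining the scalar Fujii--Wilson/Christ--Goldberg bounds for the multilinear Christ--Goldberg maximal operator under $\vec{W}\in A_{\vec{p}}$ with the tensor product factorization $A_{\mb{W},Q,p}\big(\bigotimes_{j=1}^m A_{W_j^{-1},Q,p_j'}\big)$ then yields the required $L^{\vec{p}}_{\vec{W}}\to L^p_{\mb{W}}$ estimate.

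Next, for $(\ref{it:thmA2})\Rightarrow(\ref{it:thmA1})$, I would use sparse domination. Every $m$-linear Calder\'{o}n--Zygmund operator should satisfy a tensor product convex-set sparse bound: for each tuple $\vec{f}$ there is a sparse collection $\mc{S}$ with
\[
\widetilde{T}(\vec{f})(x)\in C\sum_{S\in\mc{S}}\bigotimes_{j=1}^m\langle f_j\rangle_S\,\ind_S(x)
\]
in the convex body sense, using the convex body averages defined via the Aumann integral. This is the tensor product analogue of the convex body sparse operator alluded to in the abstract, and it is obtained through a multilinear adaptation of the Lerner--Nazarov grand maximal stopping-time machinery. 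The resulting sparse form is then controlled by $M^{\mc{K}}$ via a standard dyadic iteration adapted to the convex-set setting, transferring the hypothesis on $M^{\mc{K}}$ to every $\widetilde{T}$.

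For $(\ref{it:thmA1})\Rightarrow(\ref{it:thmA3})$, which I expect to be the main obstacle, the plan is to invoke the directional nondegeneracy framework introduced earlier in the paper. To estimate $\|T_Q\|$ for a fixed cube $Q$, I would select matrix-weighted inputs $f_j$ supported in $Q$ and study $T(f_1,\ldots,f_m)$ on a well-separated cube $Q'$ of comparable size. Directional nondegeneracy guarantees a finite family of multilinear CZOs whose kernels, on the region $Q'\times Q^m$, reproduce a nonvanishing tensor product constant multiple of $\bigotimes_{j=1}^m u_j$ uniformly in $u_j\in\C^{n_j}$; testing the hypothesis on this family then converts the $\widetilde{T}$ bound into the averaging operator bound that defines $[\vec{W}]_{\vec{p}}$. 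The difficulty here is that, unlike the scalar linear case, no single universal CZO (such as the Hilbert transform) encodes $A_{\vec{p}}$ on its own, and the noncommutativity of matrix multiplication together with the absence of duality in the quasi-Banach range $p<1$ forces the matching of reducing matrices with the CZO kernels to proceed via the quasi-norm Caratheodory/John ellipsoid machinery developed in Section~\ref{subsec:quasinorms}, keeping careful track of the tensor algebra in the absence of useful dual pairings.
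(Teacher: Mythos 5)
Your overall architecture is a cycle $(\ref{it:thmA3})\Rightarrow(\ref{it:thmA2})\Rightarrow(\ref{it:thmA1})\Rightarrow(\ref{it:thmA3})$, whereas the paper proves the four implications $(\ref{it:thmA3})\Rightarrow(\ref{it:thmA1})$, $(\ref{it:thmA3})\Rightarrow(\ref{it:thmA2})$, $(\ref{it:thmA2})\Rightarrow(\ref{it:thmA3})$, and $(\ref{it:thmA1})\Rightarrow(\ref{it:thmA3})$. Your steps $(\ref{it:thmA3})\Rightarrow(\ref{it:thmA2})$ (via reducing matrices and the multilinear Christ--Goldberg/Goldberg auxiliary maximal operator) and $(\ref{it:thmA1})\Rightarrow(\ref{it:thmA3})$ (via directional nondegeneracy of multilinear Riesz transforms and the two-part reproduction condition) are broadly aligned with the paper's Theorems~\ref{thm:strongboundsmaxop} and~\ref{thm:nondeglebesgue}. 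The sparse domination statement you quote for $\widetilde{T}$ is indeed the content of Theorem~\ref{thm:sparse_domination_multilinear_CZ}.

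The gap is in your step $(\ref{it:thmA2})\Rightarrow(\ref{it:thmA1})$. You assert that the convex-body sparse form $A^{\mc{K}}_{\mc{S}}$ is ``controlled by $M^{\mc{K}}$ via a standard dyadic iteration adapted to the convex-set setting.'' This is not the case, and no standard iteration fixes it: a sparse operator is a \emph{sum} over cubes while the maximal operator is a \emph{supremum}, and these are not comparable. Indeed, already in the scalar linear case, the sharp weighted exponents disagree --- $\|M\|_{L^p_w\to L^p_w}\lesssim[w]_p^{p'}$ (Buckley) versus $\|A_{\mc S}\|_{L^p_w\to L^p_w}\lesssim[w]_p^{\max(p,p')}$, with the latter strictly larger for $p>2$ and sharp for the Hilbert transform --- so any domination $\|A_{\mc S}f\|_{L^p_w}\lesssim\|Mf\|_{L^p_w}$ is impossible. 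The correct route from $(\ref{it:thmA2})$ back to $(\ref{it:thmA1})$ passes through $(\ref{it:thmA3})$: boundedness of $M^{\mc K}$ implies $\vec W\in A_{\vec p}$ by the very cheap observation that $T_Q\vec f$ is a selection of $M(\mc K(\vec f))$ (Proposition~\ref{prop:weaktypechar}), and then $\vec W\in A_{\vec p}$ yields the sparse bound through the reverse H\"older/Fujii--Wilson estimates of Theorem~\ref{thm:needed_reverse_Holder} and Theorem~\ref{thm:multilinearconvexbodyweightedbounds}. You would do well to replace your $(\ref{it:thmA2})\Rightarrow(\ref{it:thmA1})$ by the two steps $(\ref{it:thmA2})\Rightarrow(\ref{it:thmA3})\Rightarrow(\ref{it:thmA1})$; as written, your argument silently assumes precisely the quantitative sparse estimate that requires $A_{\vec p}$ as input.
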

In order to establish this equivalence, we prove a general convex body domination principle for $m$-linear Calder\'on--Zygmund operators with kernels having a modulus of continuity satisfying the Dini condition, the precise definition of which we give in Section~\ref{sec:mczo}. We defer the proof of Theorem~\ref{thm:A} to Section~\ref{sec:proofs}.

\subsection{Quantitative bounds}

Following the Hunt--Muckenhoupt--Wheeden theorem, a natural question is how exactly the bounds of Calder\'on--Zygmund operators and the Hardy--Littlewood maximal operator in $L^p_w(\R^d)$ depend on the Muckenhoupt characteristic $[w]_p$.

The sharp bound for the Hardy-Littlewood maximal operator was found by Buckley in \cite{Bu93}, who showed that
\[
\|M\|_{L^p_w(\R^d)\to L^p_w(\R^d)}\lesssim_{d,p}[w]_p^{p'}\quad\text{for $1<p\leq\infty$.}
\]
As for matrix weights $W$, it was shown in \cite{Isralowitz_Moen_2019} that the Christ--Goldberg maximal operator $M_W$ satisfies its bound with the same exponent $p'$:
\[
\|M_W\|_{L^p(\R^d;\C^n)\to L^p(\R^d)}\lesssim_{d,n,p}[W]_p^{p'}.
\]
Reducing back to this result, it was shown in \cite{BC23} that the convex-set valued maximal operator $M^{\mc{K}}$ also satisfies
\[
\|M^{\mc{K}}\|_{L_W^p(\R^d;\C^n)\to L^p_W(\R^d;\C^n)}\lesssim_{d,n,p}[W]_p^{p'}.
\]

The problem for Calder\'on--Zygmund operators $T$ was a much harder problem, and became known as the $A_2$ conjecture. It was eventually settled by Hyt\"onen in \cite{Hy12}, who proved the validity of the conjectured bound
\begin{equation}\label{eq:A2intro}
\|T\|_{L^p_w(\R^d)\to L^p_w(\R^d)}\lesssim_{d,p,T}[w]_p^{\max\{p,p'\}}.
\end{equation}
The proof was simplified by Lerner in \cite{Le13a}, who showed that for each Calder\'on-Zygmund operator $T$ there is a constant $C_T$ such that for all $f\in L^\infty_c(\R^d)$ there is a \emph{$\tfrac{1}{2\cdot 3^d}$-sparse} collection of cubes $\mc{S}$ in $\R^d$ such that
\begin{equation}\label{eq:sparsedomintro}
|Tf(x)|\leq C_T\sum_{Q\in\mc{S}}\ind_Q(x)\avint_Q\!|f(x)|\,\mathrm{d}x=:C_TA_{\mc{S}}f
\end{equation}
for a.e. $x\in\R^d$. For $0<\eta<1$, a collection of cubes $\mc{S}$ is called $\eta$-sparse if it satisfies the packing condition
\[
\Big|\bigcup_{\substack{Q\in\mc{S}\\ Q\subsetneq Q_0}}Q\Big|\leq(1-\eta)|Q_0|
\]
for all $Q_0\in\mc{S}$. This reduced the proof of \eqref{eq:A2intro} to proving a straightforward of $A_{\mc{S}}f$, uniform in the sparse collection $\mc{S}$.

This technique was not readily adaptable to the case of matrix weights. Indeed, unlike in the scalar case, a pointwise bound like \eqref{eq:sparsedomintro} does not guarantee a bound of $|W(x)\widetilde{T}f(x)|$ for matrix weights $W$. This issue was solved by Nazarov, Petermichl, Treil, and Volberg in the seminal work \cite{NPTV17}. They introduced the notion of \emph{convex body domination}, where they showed the existence of a constant $C_T>0$ such that for each $f\in L^\infty_c(\R^d;\C^n)$ there exists a sparse collection $\mc{S}$ for which
\[
\widetilde{T}f(x)\in C_T\sum_{Q\in\mc{S}}\llangle f\rrangle_Q\ind_Q(x)=:C_TA_{\mc{S}}^{\mc{K}}f(x).
\]
Here the sum is treated as a Minkowski sum, scalar multiples are considered pointwise, and the double angle bracket notation denotes the Aumann integral
\[
\llangle f\rrangle_Q:=\langle\mc{K}(f)\rangle_Q=\Big\{\avint_Q\!h(x)f(x)\,\mathrm{d}x:h\in L^\infty(\R^d),\, \|h\|_{L^\infty(\R^d)}\leq 1\Big\},
\]
where $\mc{K}(f)(x)$ denotes the smallest set in $\mc{K}(\C^n)$ containing $f(x)$. Unlike what happened for the maximal operator, this does not recover the scalar bound \eqref{eq:A2intro}. Instead, for the case $p=2$, they proved that
\[
\|\widetilde{T}\|_{L^2_W(\R^d)\to L^2_W(\R^d)}\lesssim_{d,n}C_T[W]_2^3.
\]
Rather surprisingly, this bound was shown to be sharp for the Hilbert transform $T=H$ by Domelevo, Petermichl, Treil, and Volberg in \cite{DPTV24}. For general exponents, it was shown by Cruz-Uribe, Isralowitz, and Moen in \cite{CIM18} that
\begin{equation}\label{eq:matrixa2sparse}
\|\widetilde{T}\|_{L^p_W(\R^d)\to L^p_W(\R^d)}\lesssim_{d,n,p}C_T[W]_p^{p+p'-1}\quad\text{if $1<p<\infty$.}
\end{equation}
In bounding the convex body operator $A_{\mc{S}}^{\mc{K}}$ in $L^p_W(\R^d;\mc{K}(\C^n))$, a main ingredient is the sharp reverse H\"older inequality of \cite{HP13} for scalar weights $v$ satisfying the Fujii--Wilson condition
\[
[v]_{\text{FW}}:=\sup_Q\frac{1}{v(Q)}\int_Q\!M^{\mc{D}(Q)}(v)(x)\,\mathrm{d}x<\infty,
\]
where $M^{\mc{D}(Q)}$ is the Hardy--Littlewood maximal operator taken over the dyadic subgrid of the cube $Q$. By restricting the averaging operators $T_Q$ to one-dimensional subspaces of $L^p_W(\R^d;\C^n)$, one finds that if $W\in A_p$, then also $|Wu|\in A_p$ for all $u\in\C^n\backslash\{0\}$, with
\[
\sup_{u\in\C^n\backslash\{0\}}[|Wu|]_p\leq[W]_p.
\]
Thus, as $W\in A_p$ precisely if $W^{-1}\in A_{p'}$, the Fujii--Wilson conditions
\[
[W]_{\text{FW}_p}:=\sup_{u\in\C^n\backslash\{0\}}[|Wu|^p]^{\frac{1}{p}}_{\text{FW}}\lesssim_d [W]_p,\quad [W^{-1}]_{\text{FW}_{p'}}:=\sup_{u\in\C^n\backslash\{0\}}[|Wu|^{p'}]^{\frac{1}{p'}}_{\text{FW}}\lesssim_d [W]_p
\]
hold. This yields a uniform reverse H\"older inequality which is used in the proof of \eqref{eq:matrixa2sparse} to deduce that
\[
\|\widetilde{T}\|_{L^p_W(\R^d)\to L^p_W(\R^d)}\lesssim_{d,n,p}C_T[W]^{\frac{p}{p'}}_{\text{FW}_p}[W^{-1}]^{\frac{p'}{p}}_{\text{FW}_{p'}}[W]_p\quad\text{if $1<p<\infty$.}
\]
However, in the multilinear case, these uniform reverse H\"older conditions are not readily available. Indeed, while the restriction technique to one dimensional subspaces still works, this does not yield the required scalar bounds (see Remark~\ref{rem:first_scalar_ap} below). Moreover, the notion of duality yielding $W^{-1}\in A_{p'}$ for $W\in A_p$ is now completely absent.

In the scalar case, the multilinear Muckenhoupt condition is equivalent to a set of individual matrix linear Muckenhoupt conditions on each of the weights $w_j$, $j=1,\ldots,m$ and $w$, see \cite{LOPTT09, LMO20, Ni20}. It turns out that a similar result continues being true in the matrix setting. However, the noncommutative nature of matrix multiplication imposes additional technical difficulties, which we are able to overcome through a careful use of H\"{o}lder's inequality. We refer to Theorem~\ref{thm:multilinear_muckenhoupt_through_linear_muckenhoupt} below for details.

Reverse H\"{o}ler inequalities in the scalar case follow immediately from the individual Muckenhoupt conditions. In the matrix case the individual matrix Muckenhoupt conditions one obtains are intrisically of a ``two-exponent nature'' but at the same time different from the ones that have appeared in other places in the literature such as \cite{Isralowitz_Moen_2019, KNV24}. In order to deduce scalar Muckenhoupt conditions from them we need to test a new type of ``exponent-shifted'' averaging operators on one dimensional subspaces, see Proposition~\ref{prop:muck_matrix_to_scalar} below. Using this, we find that if $\vec{W}\in A_{\vec{p}}$, then
\begin{equation*}
    [\mathbf{W}]_{\mathrm{FW}_p}:=\sup_{\mb{u}\in\Big(\bigotimes_{j=1}^{m}\C^{n_j}\Big)\setminus\{0\}}[|\mb{W}\mb{u}|^{p}]_{\text{FW}}^{\frac{1}{p}}\lesssim_{d,m,\vec{n},p}[\vec{W}]_{\vec{p}}\quad\text{ if }p<\infty,
\end{equation*}
and
\begin{equation*}
        [W_j^{-1}]_{\mathrm{FW}_{p_j'}}:=\sup_{u_j\in\C^{n_j}\setminus\{0\}}[| W_j^{-1}u_j|^{p_j'}]_{\mathrm{FW}}^{\frac{1}{p_j'}}\lesssim_{d,m,\vec{n},p_j}[\vec{W}]_{\vec{p}}\quad\text{if $p_j>1$ for }j=1,\ldots,m,
\end{equation*}
see Corollary~\ref{cor:needed_reverse_Holder} coupled with Proposition~\ref{prop:equivalence_averages_operator} below. Defining
\[
A_{\mc{S}}^{\mc{K}}(\vec{F})(x):=\sum_{Q\in\mc{S}}\mc{K}\Big(\bigotimes_{j=1}^m\langle F_j\rangle_Q\Big)\ind_Q(x),
\]
where for $S\subseteq \bigotimes_{j=1}^m\C^{n_j}$ we let $\mc{K}(S)$ denote the smallest set in $\mc{K}\big(\bigotimes_{j=1}^m\C^{n_j}\big)$ containing $S$, our main quantitative result is:
\begin{TheoremLetter}\label{thm:B}
Let $p_1,\ldots,p_m\in(1,\infty]$, $\tfrac{1}{p}:=\sum_{j=1}^m\tfrac{1}{p_j}$, let $W_1,\ldots,W_m$ be matrix weights $W_j:\R^d\to\C^{n_j\times n_j}$, and let $0<\eta<1$. Then the following assertions hold:
\begin{enumerate}[(a)]
    \item\label{it:thmB1} If $p<\infty$ and $\vec{W}\in A_{\vec{p}}$, then uniformly for all $\eta$-sparse collections $\mc{S}$, \begin{align*}
    \|A_{\mc{S}}^{\mc{K}}\|_{L^{\vec{p}}_{\vec{W}}(\R^d;\C^{\vec{n}})\to L^p_{\mb{W}}(\R^d;\bigotimes_{j=1}^m\C^{n_j})}&\lesssim_{d,m,\vec{n},\vec{p},\eta}[\vec{W}]_{\vec{p}}[\mb{W}]_{\mathrm{FW}_p}^{(p-1)^{+}}\prod_{j=1}^m[W_j^{-1}]_{\mathrm{FW}_{p_j'}}^{\frac{p_j'}{p_j}}\\
    &\lesssim_{d,m,\vec{n},p}[\vec{W}]_{\vec{p}}^{\max(1,p)+\sum_{j=1}^m\frac{p_j'}{p_j}};
    \end{align*}
    \item\label{it:thmB2} If $\vec{W}\in A_{\vec{p}}$, then
    \begin{align*}\displaystyle\|M^{\mc{K}}\|_{L^{\vec{p}}_{\vec{W}}(\R^d;\C^{\vec{n}})\to L^p_{\mb{W}}(\R^d;\bigotimes_{j=1}^m\C^{n_j})}
    &\lesssim_{d,m,\vec{n},\vec{p}}[\vec{W}]_{\vec{p}}\prod_{j=1}^m[W_j^{-1}]_{\mathrm{FW}_{p_j'}}^{\frac{p_j'}{p_j}}\\
    &\lesssim_{d,m,\vec{n}}[\vec{W}]_{\vec{p}}^{1+\sum_{j=1}^m\frac{p_j'}{p_j}}.
    \end{align*}
\end{enumerate}
\end{TheoremLetter}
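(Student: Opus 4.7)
I would prove \eqref{it:thmB1} first and deduce \eqref{it:thmB2} from a sparse domination of $M^{\mc{K}}$ by a convex-body sparse operator. The key idea for \eqref{it:thmB1} is to reduce to a scalar sparse estimate by replacing each convex body $\mc{K}\big(\bigotimes_{j=1}^m\langle F_j\rangle_Q\big)$ with the ellipsoid built from the reducing matrices $A_{W_j^{-1},Q,p_j'}$ of Proposition~\ref{prop:reducingmatrixavop}. A duality argument in each $\C^{n_j}$ followed by Hölder's inequality yields, up to dimensional constants, a set inclusion of the form
\[
\mc{K}\Big(\bigotimes_{j=1}^m \langle F_j\rangle_Q\Big) \;\subseteq\; \Big(\prod_{j=1}^{m}\Big(\avint_Q\! h_j^{p_j}\Big)^{1/p_j}\Big)\Big(\bigotimes_{j=1}^m A_{W_j^{-1},Q,p_j'}\Big) B,
\]
where $B$ is the unit ball of $\bigotimes_{j}\C^{n_j}$ and $h_j(y):=\sup_{u\in F_j(y)}|W_j(y) u|$. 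Applying $\mathbf{W}(x)$ to both sides and extracting the factor $\nrm{A_{\mathbf{W},Q,p}\bigotimes_j A_{W_j^{-1},Q,p_j'}}$ via the $L^p(Q)$-reducing matrix for $\mathbf{W}$, Proposition~\ref{prop:reducingmatrixavop} would furnish the uniform-in-$Q$ factor $[\vec{W}]_{\vec{p}}$.

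\textbf{Carleson embedding with reverse Hölder.} After this reduction I am left with a scalar multilinear sparse sum $\sum_{Q\in\mc{S}}\prod_j \big(\avint_Q h_j^{p_j}\big)^{1/p_j}\ind_Q$ to estimate in $L^p$ against the weight $|\mathbf{W}(\cdot)\mb{u}|^p$ for principal-cube-selected test vectors $\mb{u}$, a device which replaces duality in the quasi-Banach range $p<1$. Two sharp reverse Hölder inequalities of Hytönen--Pérez \cite{HP13} then come into play: for the outer $L^p$ norm against $|\mathbf{W}\mb{u}|^p$ a small power excess produces the factor $[\mathbf{W}]_{\mathrm{FW}_p}^{(p-1)^{+}}$, and for each $j$ the passage from an $L^{p_j}$-average of $h_j$ to an $L^1$-average (needed to match the multilinear sparse bound of \cite{LOPTT09}) produces $[W_j^{-1}]_{\mathrm{FW}_{p_j'}}^{p_j'/p_j}$. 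The required scalar Fujii--Wilson controls follow from Corollary~\ref{cor:needed_reverse_Holder}; the simplified second bound in \eqref{it:thmB1} is then obtained by crude estimation of the Fujii--Wilson constants by $[\vec{W}]_{\vec{p}}$.

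\textbf{Maximal operator and obstacles.} For \eqref{it:thmB2} I would first run a standard set-valued Calderón--Zygmund stopping-time argument to produce, for each $\vec{F}$, an $\tfrac{1}{2\cdot 3^d}$-sparse collection $\mc{S}$ with
\[
M^{\mc{K}}(\vec{F})(x)\;\subseteq\; C_{d,m}\,A^{\mc{K}}_{\mc{S}}(\vec{F})(x)\quad\text{a.e. }x\in\R^d.
\]
Re-running the proof of \eqref{it:thmB1}, the outer reverse Hölder step becomes unnecessary because the positivity and monotonicity of $M^{\mc{K}}$ let one keep $|\mathbf{W}(x)\cdot|$ intact on the outer integration, so the factor $[\mathbf{W}]_{\mathrm{FW}_p}^{(p-1)^{+}}$ drops out. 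The hardest step I foresee is the handling of the quasi-Banach range $\tfrac{1}{m}<p<1$: there the Caratheodory-based quasi-triangle constant $K_p$ and the noncommutativity of the tensor product $\bigotimes_j A_{W_j^{-1},Q,p_j'}$ combine to require the inductive algebraic framework of Proposition~\ref{prop:reducingmatrixavop} together with the multilinear-to-linear reduction of Theorem~\ref{thm:multilinear_muckenhoupt_through_linear_muckenhoupt}, in order to ensure that the Fujii--Wilson factors assemble into exactly the stated exponents.
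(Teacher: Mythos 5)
Your outline for part \eqref{it:thmB1} has the right skeleton (insert reducing matrices, apply H\"older, invoke sharp reverse H\"older to generate the Fujii--Wilson factors), but the ``principal-cube-selected test vectors'' device you propose for $p<1$ is not what the paper does and is unnecessary: in the quasi-Banach range the paper simply uses $\|\cdot\|_{\ell^1}\leq\|\cdot\|_{\ell^p}$ to push the $p$-th power inside the sparse sum, passes from $|Q|$ to $|E_Q|$ via sparsity, and then dominates pointwise by the auxiliary maximal operator $\widetilde{\mc{M}}_{\vec{W}}$ of Proposition~\ref{prop:auxiliary_maximal}. In the range $p\geq 1$ the paper pairs against $h\in L^{p'}_{\mathbf{W}^{-1}}$ by genuine duality of K\"othe type, not by testing on vectors. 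So for \eqref{it:thmB1} you have the right ingredients but an unnecessarily different (and not clearly workable) bookkeeping device.

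The real gap is in your route to \eqref{it:thmB2}. You propose to first establish $M^{\mc{K}}(\vec F)\subseteq C\,A^{\mc{K}}_{\mc{S}}(\vec F)$ and then ``re-run'' the proof of \eqref{it:thmB1}. If you literally apply the bound from \eqref{it:thmB1} to the dominating sparse operator, the factor $[\mathbf{W}]_{\mathrm{FW}_p}^{(p-1)^+}$ does \emph{not} drop out; the sparse operator is a Minkowski sum over overlapping cubes, so the $L^p$-norm estimate for $p\geq 1$ still requires duality, and with it the outer reverse H\"older. Your appeal to ``positivity and monotonicity of $M^{\mc{K}}$'' only helps if you treat $M^{\mc{K}}$ directly as a supremum (in which case the sparse domination becomes irrelevant and the argument changes entirely), and you do not explain how that is to be reconciled with running the \eqref{it:thmB1} proof. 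The paper, in fact, proves \eqref{it:thmB2} without sparse domination at all: after the reduction $\|\mathbf{W}(x)M^{\mc{K}}(\vec F)(x)\|_{\mc{H}}\eqsim_n\mc{M}_{\vec W}(\vec F)(x)$, it runs a level-set decomposition of Goldberg's type (Theorem~\ref{thm:strong_type_maximal_general}), introducing the auxiliary maximal operator $\widetilde{\mc{M}}_{\vec W,\vec r}$ and the operator $N_{Q,\mc{F}}$, and bounds the level sets by $\widetilde{\mc{M}}_{\vec W,\vec r}$, which absorbs only the factors $[W_j^{-1}]_{\mathrm{FW}_{p_j'}}^{p_j'/p_j}$; the $[\vec W]_{\vec p}$ factor comes from the $N_{Q,\mc{F}}$ lemma. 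The paper explicitly notes, before its Section~\ref{sec:tensor_maximal}.3, that the convex body sparse domination of $M^{\mc{K}}$ is \emph{not} used in the proof of the strong type bound. You should replace the sparse-domination route for \eqref{it:thmB2} with a direct argument of this kind, or else accept the weaker bound with the extra $[\mathbf{W}]_{\mathrm{FW}_p}^{(p-1)^+}$ factor.
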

Part \ref{it:thmB1} is proven as Theorem~\ref{thm:multilinearconvexbodyweightedbounds} below, and part \ref{it:thmB2} as Theorem~\ref{thm:strongboundsmaxop} below.

Interestingly, when $m=1$, the bound in \ref{it:thmB2} recovers Buckley's sharp bound of the scalar result. However, in the multilinear case, the sharp bound for $M$ was shown in \cite{LMS14} (see \cite{Ni19} for the case of infinite exponents) to be
\[
\|M\|_{L^{\vec{p}}_{\vec{w}}(\R^d)\to L^p_w(\R^d)}\lesssim_{d,m,\vec{p}}[\vec{w}]_{\vec{p}}^{\max\{p_1',\ldots,p_m'\}}.
\]
This exponent is smaller than the one we obtain in \ref{it:thmB2}. We leave it as an open problem whether the bound in the matrix case can be improved or not.

We apply Theorem~\ref{thm:B} to prove implication \ref{it:thmA3}$\Rightarrow$\ref{it:thmA1} of Theorem~\ref{thm:A} with the following convex body domination result. Here, we write $A_{\mc{S}}^{\mc{K}}(\vec{f}):=A_{\mc{S}}^{\mc{K}}(\mc{K}(f_1),\ldots,\mc{K}(f_m))$.
\begin{TheoremLetter}\label{thm:C}
    Let $T$ be an $m$-linear Calder\'{o}n--Zygmund operator with a kernel having a modulus of continuity that satisfies the Dini condition. Then there is a constant $C_T>0$ (depending on $d$, $m$, $\vec{n}$, and $T$) such that for any cube $Q_0$ and any $m$-tuple of functions $\vec{f}\in L^{\vec{1}}_{\mathrm{c}}(\R^d;\C^{\vec{n}})$ there exists a $\tfrac{1}{2\cdot 3^{d}}$-sparse family $\mc{S}$ of cubes in $\R^d$ contained in $3Q_0$ such that
    \begin{equation*}
        T(\vec{f})(x)\in C_TA^{\mc{K}}_{\mc{S}}(\vec{f})(x)\quad\text{for a.e. }x\in Q_0,
    \end{equation*}
    where the constant $C_T$ depends only on $d$, $m$, $\vec{n}$ and $T$.

    In particular, if $p_1,\ldots,p_m\in(1,\infty]$ satisfy $\tfrac{1}{p}:=\sum_{j=1}^m\tfrac{1}{p_j}>0$, and $W_1,\ldots,W_m$ are matrix weights $W_j:\R^d\to\C^{n_j\times n_j}$ with $\vec{W}\in A_{\vec{p}}$, then
    \begin{align*}
\|\widetilde{T}\|_{L^{\vec{p}}_{\vec{W}}(\R^d;\C^{\vec{n}})\to L^p_{\mb{W}}(\R^d;\bigotimes_{j=1}^m \C^{n_j})}
&\lesssim_{d,m,\vec{n},\vec{p}} C_T[\vec{W}]_{\vec{p}}[\mb{W}]_{\mathrm{FW}_p}^{(p-1)^{+}}\prod_{j=1}^m[W_j^{-1}]_{\mathrm{FW}_{p_j'}}^{\frac{p_j'}{p_j}}\\
&\lesssim_{d,m,\vec{n},\vec{p}}C_T[\vec{W}]_{\vec{p}}^{\max(1,p)+\sum_{j=1}^m\frac{p_j'}{p_j}}.
\end{align*}
\end{TheoremLetter}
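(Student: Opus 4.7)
The plan is to perform a Calder\'on--Zygmund stopping-time / sparse-domination argument in the spirit of Lerner, and then feed the resulting pointwise convex-body domination into Theorem~\ref{thm:B}\ref{it:thmB1}. The driver will be the grand maximal truncation operator
\[
\mc{M}_T(\vec{f})(x) := \sup_{Q \ni x}\, \esssup_{\xi \in Q}\, \bigl|T(\vec{f}\ind_{\R^d \setminus 3Q})(\xi)\bigr|,
\]
used alongside the multilinear Hardy--Littlewood operator $M$. Dini continuity of the kernel yields, via a routine telescoping argument, a weak-type bound $\prod_{j=1}^m L^1(\R^d;\C^{n_j}) \to L^{1/m,\infty}(\R^d)$ for $\mc{M}_T$, with constant controlled by the Dini modulus and the $\prod_j L^1 \to L^{1/m,\infty}$ quasinorm of $T$ itself.

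Given the reference cube $Q_0$, I build $\mc{S} \subseteq 3Q_0$ recursively. With $\mc{S}_0 := \{Q_0\}$ and a large constant $C_0$, for each previously selected $Q$ I take as stopping children the maximal dyadic subcubes $Q' \subsetneq Q$ on which either $\mc{M}_T(\vec{f}\ind_{3Q})(x)$ or $M(\vec{f}\ind_{3Q})(x)$ exceeds $C_0 \prod_{j=1}^m \avint_Q \abs{f_j}\dd y$. The weak-type bounds above, for $C_0$ large enough, force $\bigl|\bigcup Q'\bigr| \leq \tfrac{1}{2}|Q|$, yielding the desired $\tfrac{1}{2\cdot 3^d}$-sparseness once all iterates are collected into $\mc{S} := \bigcup_k \mc{S}_k$ (the factor of $3^d$ accounts for the dilation inherent in truncating by $3Q$). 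The core technical step is then the pointwise convex body containment
\[
T(\vec{f}\ind_{3Q})(x) - \sum_{Q' \text{ child of } Q} \ind_{Q'}(x)\, T(\vec{f}\ind_{3Q'})(x) \in C_T\, \mc{K}\Bigl(\bigotimes_{j=1}^m \langle \mc{K}(f_j) \rangle_Q\Bigr)
\]
for a.e.\ $x \in E_Q := Q \setminus \bigcup Q'$. By the support-function characterization of symmetric convex bodies, this reduces to the scalar inequality
\[
\bigl|\langle \mathrm{LHS},\, \boldsymbol{\xi}\rangle\bigr| \leq C_T \sup_{\|h_j\|_\infty \leq 1}\, \Bigl|\Bigl\langle \textstyle\bigotimes_{j=1}^m \langle h_j f_j\rangle_Q,\, \boldsymbol{\xi}\Bigr\rangle\Bigr|
\]
for every $\boldsymbol{\xi} \in \bigotimes_{j=1}^m \C^{n_j}$. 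Expanding $\widetilde{T}$ via \eqref{eq:vector_valued_extension} rewrites the left-hand side as a linear combination of scalar quantities $T(f_{1,k_1}, \ldots, f_{m,k_m})(x)$ directly controlled by the stopping criterion; for elementary $\boldsymbol{\xi} = \bigotimes_j \xi_j$ this yields precisely the scalar multilinear pointwise sparse-domination bound applied to the scalar functions $\langle f_j, \xi_j\rangle$, while general $\boldsymbol{\xi}$ is treated through a tensor-rank decomposition combined with the Carath\'eodory-type manipulations developed in Section~\ref{subsec:quasinorms}.

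Iterating the containment over all $Q \in \mc{S}$ and exploiting the Minkowski-sum structure of $A_{\mc{S}}^{\mc{K}}$ delivers $T(\vec{f})(x) \in C_T A_{\mc{S}}^{\mc{K}}(\vec{f})(x)$ for a.e.\ $x \in Q_0$, and Theorem~\ref{thm:B}\ref{it:thmB1} immediately converts this into the quantitative weighted bound on $\widetilde{T}$. The main obstacle I anticipate is precisely the convex body containment step for non-elementary $\boldsymbol{\xi}$: the support function of the convex hull of a tensor product of sets does not factor cleanly along higher-rank directions, so reconciling it with the tensor structure of $\widetilde{T}(\vec{f})(x)$ requires careful use of the tensor-algebraic tools from Section~\ref{subsec:quasinorms}, with a constant that must be checked to remain independent of the stopping level and to depend on the dimensions $\vec{n}$ only through an allowed polynomial factor.
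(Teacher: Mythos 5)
Your overall architecture is the right one: a grand maximal truncation operator $\mc{M}_T$, a weak $(1,\ldots,1)\to\tfrac{1}{m}$ bound from Dini regularity, an iterated stopping-time construction inside $\mc{D}(Q_0)$ landing on a $\tfrac{1}{2\cdot3^{d}}$-sparse family in $3Q_0$, and then citing Theorem~\ref{thm:B}\ref{it:thmB1} for the weighted estimate. That skeleton matches the paper. The gap is in the heart of the convex body containment step, and it is already fatal for elementary directions $\boldsymbol{\xi}=\bigotimes_j\xi_j$, not only the higher-rank ones you flag.

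Your stopping rule thresholds $\mc{M}_T(\vec{f}\ind_{3Q})$ and $M(\vec{f}\ind_{3Q})$ against $C_0\prod_{j}\avint_Q|f_j|$, i.e.\ against products of \emph{norm} averages of the vector functions. Outside the stopping children this yields a bound of the form
\begin{equation*}
\Big|\widetilde{T}(\vec{f}\ind_{3Q})(x)-\sum_{Q'}\ind_{Q'}(x)\,\widetilde{T}(\vec{f}\ind_{3Q'})(x)\Big|\leq C\prod_{j=1}^{m}\langle|f_j|\rangle_Q ,
\end{equation*}
which places the left-hand side in a Euclidean \emph{ball} of radius $C\prod_j\langle|f_j|\rangle_Q$. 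But the target is the convex body $\mc{K}\big(\bigotimes_j\llangle f_j\rrangle_Q\big)$, and that body can be arbitrarily flatter than the ball: pick $f_j=\eps e_1+e_2$ with $\eps\to 0$; then $\langle|f_j|\rangle_Q\eqsim 1$ while $\llangle f_j\rrangle_Q$ is squeezed to width $\eps$ in the $e_1$ direction, so the ball does not sit inside (a fixed multiple of) the body. Equivalently, via the support function reduction you invoke: for $\boldsymbol{\xi}=\bigotimes_j\xi_j$ the required estimate is
\begin{equation*}
|T(\langle f_1,\xi_1\rangle,\ldots,\langle f_m,\xi_m\rangle)(x)-\textstyle\sum_{Q'}\cdots|\leq C\prod_{j=1}^{m}\langle|\langle f_j,\xi_j\rangle|\rangle_Q,
\end{equation*}
and your norm-based stopping gives only the larger right-hand side $\prod_j\langle|f_j|\rangle_Q$. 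A single stopping family adapted to the norms simply does not see these directional averages.

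The paper closes this gap in two moves. First, Lemma~\ref{lem:multilinear_CZ_inductive_step} proves a \emph{flexible} scalar statement: for each $\varepsilon$ and each scalar $m$-tuple supported in $3Q_0$ there is a disjoint family $\mc{G}\subseteq\mc{D}(Q_0)$ with $\sum_{Q\in\mc{G}}|Q|\leq\varepsilon|Q_0|$ such that the pointwise bound holds for \emph{every} disjoint covering $\bar{\mc{G}}$ of $\mc{G}$, not only $\mc{G}$ itself. Second, Lemma~\ref{lem:sparse_domination_inductive_step} takes the John ellipsoid of $\llangle f_j\rrangle_{3Q_0}$ to produce a per-cube orthonormal basis $e_{j,1},\ldots,e_{j,n_j}$ of $\mc{H}_j$, runs the scalar lemma with $\varepsilon=\delta/n$ on each of the $n$ tuples $(f_{1,k_1},\ldots,f_{m,k_m})$ in those coordinates, and then merges the $n$ resulting collections into a single family $\mc{G}$; the any-covering clause is precisely what makes the merge harmless. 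The John ellipsoid geometry (plus the inequality $\langle|f_{j,k}|\rangle_{3Q_0}\leq n_j^{1/2}\alpha_{j,k}$ and the polytope-to-tensor-product comparison producing a factor $n^{3/2}$) then delivers the containment into $\mc{K}\big(\bigotimes_j\llangle f_j\rrangle_{3Q_0}\big)$. Note that this handles all $\boldsymbol{\xi}$ at once; no tensor-rank decomposition of $\boldsymbol{\xi}$ appears. Your last paragraph is therefore looking in the right neighbourhood but at the wrong object: the obstruction is that the stopping must be taken per-direction in a body-adapted basis (made possible by the any-covering flexibility), not that the support function of a tensor product fails to factor along high-rank $\boldsymbol{\xi}$.
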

This convex body domination result is proven in Theorem~\ref{thm:sparse_domination_multilinear_CZ} below, and the quantitative bounds in Corollary~\ref{cor:czoquantitativebound} below.

While this result is certainly known, we note that even in the scalar case, sparse domination for $m$-linear Calder\'on-Zygmund operators with kernels having a modulus of continuity that satisfies the Dini condition has not been explicitly written in the literature. In \cite{CR16, LN15}, the less general $\log$-Dini condition is assumed, and in \cite{Damian2018}, only the case $m=2$ is treated. For a complete exposition, we provide details of the general case $m>1$ in this work based on \cite{Damian2018, Li18}. 

\subsection{Organization} This paper is organized as follows:
\begin{itemize}
    \item In Section~\ref{sec:preliminaries} we lay the groundwork for the techniques and results of this paper. On the one hand, we introduce our main notation and collect together some notions and facts on directional quasi-Banach functions spaces, tensor products of Hilbert spaces, equivalent norms on spaces of linear operators and sparse families. Of particular importance is a novel definition of a product of directional Banach functions spaces, each of which is obtained from a Banach function space through a matrix weight. On the other hand, inspired by techniques in convex combinatorics we introduce a completely new notion of reducing operators associated to quasinorms, which forms one of the cornerstones of the paper.

    \item In Section~\ref{sec:averaging_operators} we characterize the boundedness of averaging operators from a cartesian product of directional Banach function spaces into a directional quasi-Banach function space in terms of reducing operators. The new reducing operators for quasinorms are the centerpiece of this characterization. On the technical level, inspired by techniques in differential geometry we use an involved but at the same time rather elementary induction scheme involving some tensor algebra.

    \item In Section~\ref{sec:multilinear_muckenhoupt} we develop a theory of multilinear Muckenhoupt characteristics for matrix weights. Thanks to the new notion of reducing operators for quasinorms, we are able to do so directly in the full generality of three sets of parameters and fully recover the results that were already known in the scalar multilinear case. The main result of this section is a set of reverse H\"{o}lder estimates for each weight individually as well as their tensor product. One of the main tools of the proof is the use of novel ``exponent-shifted'' averaging operators.

    \item In Section~\ref{sec:tensor_maximal} we treat lower and upper bounds for a tensor product analog of the convex body maximal operator. First, we show the equivalence of matrix weighted weak-type estimates for this operator and the multilinear Muckenhoupt condition. Then, we show that in the presence of the multilinear Muckenhoupt condition, the tensor product convex body maximal operator admits matrix weighted, strong type bounds. The proof relies on the reverse H\"{o}lder estimates of Section~\ref{sec:multilinear_muckenhoupt}. Finally, we prove a pointwise sparse domintation result for the tensor product convex body maximal operator.

    \item In Section~\ref{sec:mczo} we consider matrix weighted bounds for multilinear Calder\'{o}n--Zygmund operators. After a quick overview of basic facts about such operators we prove a pointwise sparse domination result for them in terms of a tensor product convex body sparse operator. This is achieved through bootstrapping the sparse domination algorithm of the scalar case. Using our sparse domination and the reverse H\"{o}lder estimates from Section~\ref{sec:multilinear_muckenhoupt}, we prove matrix weighted upper bounds for them in the presence of the multilinear Muckenhoupt condition. Finally, we introduce a novel notion of directionally nondegenerate multilinear Calder\'{o}n--Zygmund operators -- which is new even in the scalar case -- and show that matrix weighted bounds for them imply the multilinear Muckenhoupt condition. As an illustration of our notion of directional nondegeneracy, we study the concrete example of multilinear Riesz transforms.
    
    \item In Section~\ref{sec:proofs} we prove theorem~\ref{thm:A} by collecting together all pieces from the preceding sections.

    \item Finally, for the convenience of the reader, in the Appendix we give detailed proofs of several facts about multilinear Calder\'{o}n--Zygmund operators used in the sparse domination of Section~\ref{sec:mczo}.
\end{itemize}

\section{preliminaries}
\label{sec:preliminaries}
\subsection{Notation} Throughout this work, we let $d$ be a positive integer denoting the dimension of the domain, and let $m$ be a positive integer denoting the multilinearity. We let $\mc{H}=(\mc{H}_1,\ldots,\mc{H}_m)$ be finite dimensional Hilbert spaces that are all defined over the field $\F$, where $\F$ denotes either $\R$ or $\C$. We fix their dimensions $n_j:=\dim\mc{H}_j$, $j=1,\ldots,m$ and set
\[
n:=\prod_{j=1}^m n_j.
\]
Choosing an orthonormal basis of $\mc{H}_j$, we may identify $\mc{H}_j\cong\F^{n_j}$. 

We will use the following definitions and notational conventions:
\begin{itemize}
    \item The components of $\vec{p}$ will always be denoted by $\vec{p}=(p_1,\ldots,p_m)$, and, per convention,
    \[
    \frac{1}{p}:=\sum_{j=1}^m\frac{1}{p_j}.
    \]
    Algebraic operations and inequalities of $\vec{p}$ are considered in the Schur sense, i.e., componentwise. For example, we write $\tfrac{1}{\vec{p}}-\tfrac{1}{\vec{q}}=(\tfrac{1}{p_1}-\frac{1}{q_1},\ldots,\tfrac{1}{p_m}-\frac{1}{q_m})$, or $\vec{p}>\vec{q}$ if $p_j>q_j$ for all $j=1,\ldots,m$.
    \item As a convention, we write
    \[
    \mc{H}:=\bigotimes_{j=1}^m\mc{H}_j,
    \]
    which is a Hilbert space with respect to the unique sesquilinear form on $\mc{H}$ satisfying
    \[
    \Big\langle \bigotimes_{j=1}^m u_j,\bigotimes_{j=1}^m v_j\Big\rangle_{\mc{H}}=\prod_{j=1}^m\langle u_j,v_j\rangle_{\mc{H}_j}
    \]
    for all $\vec{u},\vec{v}\in\vec{\mc{H}}$. Note that $\dim\mc{H}=n$.
    \item The space of bounded linear operators on a Hilbert space $\mc{V}$ is denoted by $\mc{L}(\mc{V})$ and is equipped with the operator norm
    \[
    \|L\|_{\mc{V}\to\mc{V}}:=\sup_{\|u\|_{\mc{V}}\leq 1}\|Lu\|_{\mc{V}}.
    \]
    If $\dim\mc{V}=k\in\N$ and an orthonormal basis for $\mc{V}$ is chosen, we may identify $\mc{L}(\mc{V})$ with the space $\F^{k\times k}$ of $k\times k$ matrices.
    \item An $m$-tuple of mappings $\vec{W}=(W_1,\ldots,W_m)$, $W_j:\R^d\to\mc{L}(\mc{H}_j)$ will be called an $m$-tuple of matrix weights if for a.e. $x\in\R^d$ the operator $W_j(x)$ is self-adjoint and positive for all $j\in\{1,\ldots,m\}$. When identified with a matrix, this means that $W_j(x)$ is Hermitian and positive definite. Moreover, as a convention, we set
    \[
    \mb{W}:=\bigotimes_{j=1}^m W_j:\R^d\to\mc{L}(\mc{H}).
    \]
    \item For a finite dimensional Hilbert space $\mc{V}$ over $\R$, respectively over $\C$, we let $\mc{K}(\mc{V})$ denote the non-empty, compact, real, respectively complex symmetric, convex subsets of $\mc{V}$. Given $0<p\leq \infty$ and a matrix weight $W:\R^d\to\mc{L}(\mc{V})$, the space $L^p_W(\R^d;\mc{K}(\mc{V}))$ consists of the measurable mappings $F:\R^d\to\mc{K}(\mc{V})$ for which $\|W(x)F(x)\|_{\mc{V}}:=\sup_{u\in F(x)}\|W(x)u\|_{\mc{V}}$ lies in $L^p(\R^d)$, with
    \[
    \|F\|_{L^p_W(\R^d;\mc{K}(\mc{H}))}:=\big\|\|WF\|_{\mc{V}}\big\|_{L^p(\R^d)}.
    \]
    \item For an $m$-tuple of exponents $\vec{p}$ and an $m$-tuple of matrix weights $\vec{W}$ we write
    \[
    L^{\vec{p}}_{\vec{W}}(\R^d;\vec{\mc{H}}):= L^{p_1}_{W_1}(\R^d;\mc{H}_1)\times\ldots\times L^{p_m}_{W_m}(\R^d;\mc{H}_m).
    \]
    Thus, we write $\vec{f}\in L^{\vec{p}}_{\vec{W}}(\R^d;\vec{\mc{H}})$ to mean that $f=(f_1,\ldots,f_m)$ with $f_j\in L^{p_j}_{W_j}(\R^d;\mc{H}_j)$ for $j=1,\ldots,m$. We analogously define $L^{\vec{p}}_{\vec{W}}(\R^d;\mc{K}(\vec{\mc{H}}))$, $L^0(\R^d;\vec{\mc{H}})$, $L^{\vec{\infty}}_c(\R^d;\vec{\mc{H}})$, ...
    \item By a cube $Q\subseteq\R^d$, we mean a cube whose sides are parallel to the coordinate axes.
\end{itemize}
We write $A\lesssim_{a_1,a_2,\ldots}B$ to mean that there is a constant $C_{a_1,a_2,\ldots,}$ depending on the parameters $a_1,a_2,\ldots$ for which $A\leq C_{a_1,a_2,\ldots}B$. We define $A\gtrsim_{a_1,a_2,\ldots} B$ analogously, and we write $A\eqsim_{a_1,a_2,\ldots} B$ if both $A\lesssim_{a_1,a_2,\ldots}B$ and $A\gtrsim_{a_1,a_2,\ldots}B$ hold.

\subsection{Directional quasi-Banach function spaces}
In this section we describe the directional quasi-Banach function spaces as defined in \cite{Ni24b}. Let $(\Omega,\mu)$ be a $\sigma$-finite measure space and let $\mc{V}$ be a finite dimensional Hilbert space. We say that $\mb{X}$ is a $\mc{V}$-directional quasi-Banach function space over $\Omega$ if it is a complete quasi-normed subspace of $L^0(\Omega;\mc{V})$ and satisfies the following properties:
\begin{itemize}
    \item \emph{The directional ideal property:} For all $f\in\mb{X}$ and $g\in L^0(\Omega;\mc{V})$ satisfying $g(x)\in \mc{K}(f)(x)$ for a.e. $x\in\Omega$, we have $g\in\mb{X}$ with $\|g\|_{\mb{X}}\leq\|f\|_{\mb{X}}$;
    \item\emph{Non-degeneracy:} If $g\in L^0(\Omega;\mc{V})$ satisfies $\int_\Omega\!\langle f,g\rangle_{\mc{V}}\,\mathrm{d}\mu=0$ for all $f\in\mb{X}$, then $g=0$.
\end{itemize}
We let $K_{\mb{X}}\geq 1$ denote the smallest constant for which
\[
\|f+g\|_{\mb{X}}\leq K_{\mb{X}}(\|f\|_{\mb{X}}+\|g\|_{\mb{X}})
\]
for all $f,g\in\mb{X}$. When $K_{\mb{X}}=1$, we call $\mb{X}$ a $\mc{V}$-directional Banach function space over $\Omega$.

We define $\mb{X}'$ as those $g\in L^0(\Omega;\mc{V})$ for which
\[
\|g\|_{\mb{X}'}:=\sup_{\|f\|_{\mb{X}}=1}\int_{\Omega}\!|\langle f,g\rangle_{\mc{V}}|\,\mathrm{d}\mu<\infty.
\]
As in \cite[Proposition~3.2]{Ni24b}, we have $g\in\mb{X}'$ if and only if there is a $C\geq 0$ such that for all $f\in\mb{X}$ we have
\begin{equation}
\label{eq:Koethe_dual_directional}
\Big|\int_{\Omega}\!\langle f,g\rangle_{\mc{V}}\,\mathrm{d}\mu\Big|\leq C\|f\|_{\mb{X}}.
\end{equation}
In this case, the optimal $C$ satisfies $\|g\|_{\mb{X}'}=C$.

We say that $\mb{X}$ satisfies the \emph{Fatou property}, if for any sequence $(f_k)_{k\geq 1}$ in $\mb{X}$ with $f_k\to f$ a.e. in $\mc{V}$, and $\liminf_{k\to\infty}\|f_k\|_{\mb{X}}<\infty$, we have $f\in\mb{X}$ with
\[
\|f\|_{\mb{X}}\leq\liminf_{k\to\infty}\|f_k\|_{\mb{X}}.
\]

If $\mc{V}=\F$, then we simply call a $\mc{V}$-directional quasi-Banach function space over $\Omega$ a \emph{quasi-Banach function space} over $\Omega$, and a Banach function space over $\Omega$ if the triangle inequality is satisfied. We refer the reader to \cite{LN23b} for a thorough treatment of quasi-Banach function spaces.

Let $\mb{X}$ be a $\mc{V}$-directional quasi-Banach function space. For each $v\in\mc{V}$ we define the space $\mb{X}_v$ consisting of those $h\in L^0(\Omega)$ for which $hv\in\mb{X}$, with $\|h\|_{\mb{X}_v}:=\|hv\|_{\mb{X}}$. We say that $\mb{X}$ has the \emph{component-wise saturation property} if for an orthonormal basis $(e_k)_{k=1}^n$ of $\mc{V}$, the spaces $\mb{X}_{e_k}$ for $k=1,\ldots,n$ are  quasi-Banach function spaces.

A measurable mapping $W:\Omega\to\mc{L}(\mc{V})$ is called a \emph{matrix weight} if $W(x)$ is self-adjoint and positive for a.e. $x\in\Omega$. Given a quasi-Banach function space $X$ over $\Omega$ and a matrix weight $W:\Omega\to\mc{L}(\mc{V})$, we define $X_W$ as the space of functions $f\in L^0(\Omega;\mc{V})$ for which $\|Wf\|_{\mc{V}}\in X$, and set
\[
\|f\|_{X_W}:=\big\|\|Wf\|_{\mc{V}}\big\|_X.
\]
Then $X_W$ is an $\mc{V}$-directional quasi-Banach function space over $\Omega$ with the component-wise saturation property satisfying
\[
(X_W)'=(X')_{W^{-1}},
\]
see \cite[Section~3.2]{Ni24b}. Observe that if $X$ satisfies the Fatou property, then so does $X_{W}$.

\medskip

\begin{definition}
Let $\vec{X}=(X_1,\ldots,X_m)$ be $m$ Banach function spaces over $\Omega$. Their product $X:=\prod_{j=1}^m X_j$ is defined as the quasi-Banach function space over $\Omega$ consisting of those $f\in L^0(\Omega)$ for which there exist $0\leq f_j\in X_j$ such that $|f|\leq\prod_{j=1}^m f_j$, with
\begin{equation}
\label{eq:product_quasi_norm}
\|f\|_{X}:=\inf\Big\{\prod_{j=1}^m\|f_j\|_{X_j}:|f|\leq \prod_{j=1}^m f_j,\,0\leq f_j\in X_j\Big\}.
\end{equation}
We refer to \cite{Sc10} for a thorough treatment of the properties of such product spaces.
\end{definition}

We would like to define the product of a finite number of directional Banach function spaces as well. Although we have not been able to do so in full generality, we give a definition in the particular case of matrix weighted Banach function spaces, including matrix weighted Lebesgue spaces.

\begin{definition}
    Let $\vec{X}=(X_1,\ldots,X_m)$ be $m$ Banach functions spaces over $\Omega$ and let $\vec{W}$ be a $m$-tuple of matrix weights. For each $j=1,\ldots,m$, consider the $\mc{H}_{j}$-directional Banach function space $\mathbf{X}_{j}:=(X_{j})_{W_{j}}$. The product $\mathbf{X}:=\prod_{j=1}^{m}\mathbf{X}_j$ of $\mathbf{X}_1,\ldots,\mathbf{X}_{m}$  is defined to be
    \begin{equation}
        \label{eq:product_directional_weighted}
        \mathbf{X}=X_{\mathbf{W}},
    \end{equation}
    where $X:=\prod_{j=1}^{m}X_{j}$ and, we recall that $\mb{W}=\bigotimes_{j=1}^m W_j$. Observe that $\mathbf{X}$ thus defined is an $\mc{H}$-directional quasi-Banach function space over $\Omega$.
\end{definition}

In the special case that there is $\vec{p}\in[1,\infty]^{m}$ with $X_j=L^{p_j}(\Omega)$ for each $j=1,\ldots,m$, we have $X=L^{p}(\Omega)$, where we recall that $\frac{1}{p}=\sum_{j=1}^{m}\frac{1}{p_j}$. Hence, in this case,
\begin{equation*}
    \prod_{j=1}^{m}L^{p_j}_{W_j}(\Omega;\mc{H}_j)=L^{p}_{\mathbf{W}}(\Omega;\mc{H}).
\end{equation*}

\subsection{Linear operators on a tensor product of Hilbert spaces}

Let $\vec{\mc{H}}$ be a $m$-tuple of finite dimensional Hilbert spaces. We recall that $\mc{H}=\bigotimes_{j=1}^{m}\mc{H}_j$ is also a finite dimensional Hilbert space. Given now $A_j\in\mc{L}(\mc{H}_j)$ for each $j=1,\ldots,m$, there is a unique operator $A$ satisfying
\begin{equation*}
    A\Big(\bigotimes_{j=1}^{m}u_j\Big)=\bigotimes_{j=1}^{m}A_ju_j
\end{equation*}
for all $\vec{u}\in\vec{\mc{H}}$. We denote this operator by $\displaystyle\bigotimes_{j=1}^{m}A_j:=A$. In this case one can directly verify that
\begin{equation*}
    \Big(\bigotimes_{j=1}^{m} A_j\Big)\Big(\bigotimes_{j=1}^{m} B_j\Big)=\bigotimes_{j=1}^{m}(A_jB_j)
\end{equation*}
for all $B_j\in\mc{L}(\mc{H}_j)$, $j=1,\ldots,m$.

Notice that
\begin{equation*}
    A^{*}=\bigotimes_{j=1}^{m}A_j^{*}.
\end{equation*}
In particular, if $A_1,\ldots,A_m$ are Hermitian, then $A$ is also Hermitian. Moreover, using for example the spectral theorem one readily deduces that if $A_1,\ldots,A_m$ are all positive definite, then $A$ is also positive definite.

The following lemma is a standard result, but we include its proof for the reader's convenience.

\begin{lemma}
    Let $A_j\in\mc{L}(\mc{H}_j)$ for each $j=1,\ldots,m$. Set $A:=\displaystyle\bigotimes_{j=1}^{m}A_j$. Then, we have
    \begin{equation*}
        \Vert A\Vert_{\mc{H}\to\mc{H}}=\prod_{j=1}^{m}\Vert A_j\Vert_{\mc{H}_j\to\mc{H}_j}.
    \end{equation*}
\end{lemma}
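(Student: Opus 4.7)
The plan is to prove the two inequalities separately, as the lower bound follows easily from testing on simple tensors while the upper bound requires handling the fact that general elements of $\mc{H}$ are sums of simple tensors.

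For the lower bound, I would test $A$ on simple tensors. Fix $\varepsilon>0$, and for each $j=1,\ldots,m$ choose a unit vector $u_j\in\mc{H}_j$ with $\|A_ju_j\|_{\mc{H}_j}\geq \|A_j\|_{\mc{H}_j\to\mc{H}_j}-\varepsilon$. Then $\mb{u}:=\bigotimes_{j=1}^m u_j$ satisfies $\|\mb{u}\|_{\mc{H}}=\prod_{j=1}^m\|u_j\|_{\mc{H}_j}=1$ by the defining property of the induced inner product, and
\[
\|A\mb{u}\|_{\mc{H}}=\Big\|\bigotimes_{j=1}^m A_ju_j\Big\|_{\mc{H}}=\prod_{j=1}^m\|A_ju_j\|_{\mc{H}_j}\geq \prod_{j=1}^m(\|A_j\|_{\mc{H}_j\to\mc{H}_j}-\varepsilon).
\]
Letting $\varepsilon\to 0$ yields $\|A\|_{\mc{H}\to\mc{H}}\geq\prod_{j=1}^m\|A_j\|_{\mc{H}_j\to\mc{H}_j}$.

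For the upper bound, I would factor $A$ as the commuting product
\[
A=\prod_{j=1}^m B_j,\qquad B_j:=I_{\mc{H}_1}\otimes\cdots\otimes I_{\mc{H}_{j-1}}\otimes A_j\otimes I_{\mc{H}_{j+1}}\otimes\cdots\otimes I_{\mc{H}_m},
\]
whose validity is immediate on simple tensors and thus on all of $\mc{H}$ by linearity. Submultiplicativity of the operator norm then gives $\|A\|_{\mc{H}\to\mc{H}}\leq\prod_{j=1}^m\|B_j\|_{\mc{H}\to\mc{H}}$, so it suffices to show that $\|B_j\|_{\mc{H}\to\mc{H}}\leq\|A_j\|_{\mc{H}_j\to\mc{H}_j}$ for each $j$. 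Fixing orthonormal bases $(e^{(i)}_{k})_{k=1}^{n_i}$ of each $\mc{H}_i$ for $i\neq j$, the tensors $\bigotimes_{i\neq j}e^{(i)}_{k_i}\otimes(\,\cdot\,)$ induce an orthogonal decomposition of $\mc{H}$ into $\prod_{i\neq j}n_i$ copies of $\mc{H}_j$, and $B_j$ acts as $A_j$ on each copy. Expanding an arbitrary $\mb{u}\in\mc{H}$ in this basis and applying Pythagoras across the orthogonal summands yields $\|B_j\mb{u}\|_{\mc{H}}^2\leq \|A_j\|_{\mc{H}_j\to\mc{H}_j}^2\|\mb{u}\|_{\mc{H}}^2$, as required.

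The main obstacle is establishing $\|B_j\|_{\mc{H}\to\mc{H}}\leq\|A_j\|_{\mc{H}_j\to\mc{H}_j}$, since this is where one genuinely has to move beyond simple tensors. An alternative that avoids this explicit basis computation is induction on $m$: the case $m=1$ is tautological, and for the inductive step one writes $\mc{H}=\mc{H}_1\otimes\mc{H}'$ with $\mc{H}'=\bigotimes_{j=2}^m\mc{H}_j$, so that $A=A_1\otimes A'$ where $A'=\bigotimes_{j=2}^m A_j$ has norm $\prod_{j=2}^m\|A_j\|_{\mc{H}_j\to\mc{H}_j}$ by the inductive hypothesis, reducing matters to the bilinear case $m=2$, which is handled by the orthogonal decomposition argument above.
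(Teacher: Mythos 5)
Your proof is correct, but it takes a genuinely different route from the paper's. The paper's argument is a two-step reduction: it first disposes of the case where all $A_j$ are positive semidefinite by an appeal to the spectral theorem (diagonalize each $A_j$, observe that $A$ is diagonal in the induced product basis, and read off the largest eigenvalue as the product of the largest eigenvalues), and then handles the general case by the $C^{*}$-identity $\Vert A\Vert_{\mc{H}\to\mc{H}}^2=\Vert A^{*}A\Vert_{\mc{H}\to\mc{H}}$ together with $A^{*}A=\bigotimes_j(A_j^{*}A_j)$, which brings you back to the positive semidefinite case. Your argument instead proves the two inequalities directly: the lower bound by testing on simple tensors (as the paper would also implicitly do), and the upper bound by the factorization $A=\prod_j B_j$ with $B_j=I\otimes\cdots\otimes A_j\otimes\cdots\otimes I$, submultiplicativity, and the Pythagoras/orthogonal-decomposition computation showing $\Vert B_j\Vert_{\mc{H}\to\mc{H}}\leq\Vert A_j\Vert_{\mc{H}_j\to\mc{H}_j}$. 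The key step in your proof — expanding an arbitrary $\mb{u}\in\mc{H}$ against a product basis in the slots $i\neq j$ and applying Pythagoras across the resulting orthogonal $\mc{H}_j$-copies — is sound, as is the alternative induction on $m$ that reduces to the bilinear case. Your approach is more elementary (no spectral theorem) and makes transparent exactly where the Hilbert-space structure enters (orthogonality of the decomposition), at the cost of being slightly longer; the paper's $C^{*}$-identity trick is more compact but hides the mechanism inside a black-box appeal to diagonalization.
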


\begin{proof}
    If all $A_1,\ldots,A_m$ are all positive semidefinite, then the claim follows immediately through an application of the spectral theorem.

    For the general case, observe first that
    \begin{equation*}
        A^\ast A=\bigotimes_{j=1}^{m}(A_j^\ast A_j).
    \end{equation*}
    It follows from the special case that
    \begin{align*}
        \Vert A\Vert_{\mc{H}\to\mc{H}}^2=\Vert A^{*}A\Vert_{\mc{H}\to\mc{H}}=\prod_{j=1}^{m}\Vert A_j^{*}A_j\Vert_{\mc{H}_j\to\mc{H}_j}=\prod_{j=1}^{m}\Vert A_j\Vert_{\mc{H}_j\to\mc{H}_j}^2,
    \end{align*}
    concluding the proof.
\end{proof}

\subsection{Reducing operators for quasinorms}\label{subsec:quasinorms}
The construction of reducing operators in the case of norms is a standard application of the John ellipsoid theorem. However, since in the multilinear setting we are working with Lebesgue spaces with exponent $p<1$, we need to consider the setting of quasinorms as well.

\begin{proposition}\label{prop:reducingmatrixquasinorm}
Let $\mc{V}$ be an $n$-dimensional Hilbert space, and let $q:\mc{V}\to[0,\infty)$ be a lower semicontinuous quasinorm with quasi-triangle inequality constant $K$. Then there is a self-adjoint positive $A\in\mc{L}(\mc{V})$ for which
\[
    K^{-2n}q(u)\leq \|Au\|_{\mc{V}}\leq n^{\frac{1}{2}}q(u),
\]
for all $u\in\mc{V}$.
\end{proposition}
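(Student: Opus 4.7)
The plan is to reduce the quasinorm $q$ to a genuine norm $\rho$ to which the John ellipsoid theorem applies, and then quantify the loss incurred in passing from $\rho$ back to $q$ by means of Carath\'{e}odory's theorem on convex hulls. First I would set $B_{q}:=\{u\in\mc{V}:q(u)\le 1\}$; lower semicontinuity, positive definiteness, and homogeneity of $q$ imply, by a compactness argument on the Euclidean unit sphere of $\mc{V}$, that $q\gtrsim\|\cdot\|_{\mc{V}}$, so that $B_{q}$ is compact and (centrally) symmetric. Its convex hull $B_{\rho}:=\mathrm{conv}(B_{q})$ is therefore a compact, symmetric, convex body in $\mc{V}$, and its Minkowski functional $\rho$ is a norm satisfying $\rho\le q$ by construction.

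Next I would apply the John ellipsoid theorem to $\rho$. This produces a self-adjoint positive operator $A\in\mc{L}(\mc{V})$ such that
\[
\rho(u)\le\|Au\|_{\mc{V}}\le n^{\frac{1}{2}}\rho(u)
\]
for every $u\in\mc{V}$. The upper bound claimed in the proposition then follows immediately from $\rho\le q$, since $\|Au\|_{\mc{V}}\le n^{\frac{1}{2}}\rho(u)\le n^{\frac{1}{2}}q(u)$.

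The substantive work lies in the matching lower bound, namely in showing $q\le K^{2n}\rho$. By homogeneity it suffices to prove $q(u)\le K^{2n}$ whenever $u\in B_{\rho}=\mathrm{conv}(B_{q})$. Here I would invoke Carath\'{e}odory's theorem in the real vector space underlying $\mc{V}$, whose real dimension is at most $2n$: any such $u$ decomposes as $u=\sum_{k=1}^{N}\theta_{k}u_{k}$ with $u_{k}\in B_{q}$, $\theta_{k}\ge 0$, $\sum_{k}\theta_{k}=1$, and $N\le 2n+1$. Iterating the quasi-triangle inequality in the telescoped form $q(\sum_{k=1}^{N}v_{k})\le K^{N-1}\sum_{k}q(v_{k})$, which is immediate by induction on $N$, applied to $v_{k}=\theta_{k}u_{k}$ (so that $q(v_{k})\le\theta_{k}$), yields $q(u)\le K^{N-1}\le K^{2n}$, and hence $K^{-2n}q(u)\le\rho(u)\le\|Au\|_{\mc{V}}$.

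The main obstacle to anticipate is the balance between the two conversions: Carath\'{e}odory's bound is dictated by the \emph{real} dimension of $\mc{V}$, which forces $2n$ rather than $n$ when $\F=\C$, while even a crude telescoped iteration of the quasi-triangle inequality multiplies factors of $K$. The two effects combine cleanly to give the uniform exponent $K^{2n}$ regardless of whether $\F=\R$ or $\F=\C$. A sharper dyadic iteration would give $K^{\lceil\log_{2}N\rceil}$, but the cruder telescoped bound is enough for the stated result and avoids any dependence on whether $N$ is a power of two.
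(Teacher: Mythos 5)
Your proof is correct and follows essentially the same route as the paper's: form the convex hull of $\{u:q(u)\le 1\}$, apply the John ellipsoid theorem to that convex body, and use Carath\'{e}odory's theorem together with an iterated quasi-triangle inequality to control the loss when passing from the hull back to $q$. Phrasing the John ellipsoid step via the Minkowski functional $\rho$ of the hull rather than via set inclusions, as the paper does, is a purely cosmetic difference; in both arguments the exponent $2n$ comes from the real dimension underlying $\mc{V}$ in Carath\'{e}odory's theorem.
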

When the underlying field is $\R$, this power $2n$ can be replaced by $n$. The main ingredient for the proof other than the John ellipsoid theorem is Carath\'eodory's theorem on convex hulls. It states that if $S\subseteq\R^n$ is a set, then for any $u$ belonging to the convex hull of $S$ there are $\theta_1,\ldots,\theta_{n+1}\geq 0$ with $\sum_{k=1}^{n+1}\theta_k=1$, and $u_1,\ldots,u_{n+1}\in S$ for which
\[
u=\sum_{k=1}^{n+1}\theta_ku_k.
\]
The fact that the amount of elements in this convex sum has a fixed dimensional upper bound makes the result possible. When the underlying field is $\C$, we can use the canonical identification $\C\cong\R^2$ to obtain this result with $n$ replaced by $2n$.
\begin{proof}[Proof of Proposition~\ref{prop:reducingmatrixquasinorm}] We define
\begin{equation*}
    L:=\text{conv}(\{u\in\mc{V}:q(u)\leq 1\})\subseteq\mc{V}.
\end{equation*}
We claim that this is a compact (complex) symmetric convex set containing $0$ in its interior. By Carath\'eodory's theorem on convex hulls, the convex hull of a compact set is compact, so it suffices to show that the set of $u\in\mc{V}$ with $q(u)\leq 1$ has these properties. To this end, note that there are $c,C>0$ for which
\[
c\|u\|_{\mc{V}}\leq q(u)\leq C\|u\|_{\mc{V}}
\]
for all $u\in\mc{V}$. Boundedness then follows from the first inequality, whereas the containment of ball centered at $0$ follows from the second. For a topological argument proving the existence of these constants we refer the reader to \cite[Theorem 2.5]{Cabello_S_nchez_2021}. Finally, to see that the set is closed, let $(u_k)_{k\ge 1}$ be a sequence for which $q(u_k)\leq 1$ and $u_k\to u$ in $\mc{V}$. Then, by lower semicontinuity of $q$, we have
\[
q(u)\leq\liminf_{k\to\infty}q(u_k)\leq 1,
\]
as desired.
 
Now, we may apply the John ellipsoid theorem to $L$ to find a self-adjoint positive definite mapping $A\in \mc{L}(\mc{V})$ satisfying
\begin{equation}\label{eq:johnellipsoidquasiinclusion}
    A^{-1}\overline{B}_{\mc{V}}\subseteq L\subseteq n^{\frac{1}{2}}A^{-1}\overline{B}_{\mc{V}}.
\end{equation}
By Carath\'eodory's theorem on convex hulls, for each $u\in L$, there are $\theta_1,\ldots,\theta_{2n+1}\geq 0$ with $\sum_{k=1}^{2n+1}\theta_k=1$ and $u_1,\ldots,u_{2n+1}\in\mc{V}$ with $q(u_k)\leq 1$ for $k=1,\ldots,n$ such that $u=\sum_{k=1}^{2n+1}\theta_ku_k$. Hence, by inductively using the quasi-triangle inequality, we have
\[
q(u)\leq K^{2n}\sum_{k=1}^{2n+1}\theta_kq(u_k)\leq K^{2n}.
\]
This proves that
\[
L\subseteq\{u\in\mc{V}:q(u)\leq K^{2n}\}.
\]
Hence, by \eqref{eq:johnellipsoidquasiinclusion}, we find that for any $u\in\mc{V}$ we have
\[
    K^{-2n}q(u)\leq \|Au\|_{\mc{V}}\leq n^{\frac{1}{2}}q(u),
\]
as desired.
\end{proof}

The following application of this result proves the existence of reducing operators in directional quasi-Banach function spaces.
\begin{corollary}\label{cor:reducingmatrix}
Let $\mc{V}$ be an $n$-dimensional Hilbert space, let $\mb{X}$ be a $\mc{V}$-directional quasi-Banach function space over $\Omega$ with the Fatou property, and let $E\subseteq\Omega$ satisfy $0<\mu(E)<\infty$. Suppose that $\mb{X}$ has the property that $\ind_E u\in \mb{X}$ for all $u\in\mc{V}$. Then there is a self-adjoint positive $A_{\mb{X},E}\in\mc{L}(\mc{V})$ satisfying
\begin{equation}\label{eq:reducing_matrix}
    K_{\mb{X}}^{-2n}\|\ind_E u\|_{\mb{X}}\leq\|A_{\mb{X},E}u\|_{\mc{H}}\leq n^{\frac{1}{2}}\|\ind_E u\|_{\mb{X}}
\end{equation}
for all $u\in\mc{V}$.
\end{corollary}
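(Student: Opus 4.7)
The plan is to reduce the corollary directly to Proposition~\ref{prop:reducingmatrixquasinorm} by applying it to the functional
\[
q:\mc{V}\to[0,\infty),\qquad q(u):=\|\ind_E u\|_{\mb{X}}.
\]
The hypothesis that $\ind_E u\in\mb{X}$ for every $u\in\mc{V}$ ensures $q$ is finite-valued, and once $q$ is shown to be a lower semicontinuous quasinorm with quasi-triangle constant $K_{\mb{X}}$, the operator $A_{\mb{X},E}$ of Proposition~\ref{prop:reducingmatrixquasinorm} applied to $q$ is precisely what is needed.

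First I would check that $q$ is a quasinorm with quasi-triangle constant $K_{\mb{X}}$. Positive homogeneity follows from $\|\ind_E\lambda u\|_{\mb{X}}=|\lambda|\,\|\ind_E u\|_{\mb{X}}$, and the quasi-triangle inequality is inherited from $\mb{X}$: since $\ind_E(u+v)=\ind_E u+\ind_E v$,
\[
q(u+v)\leq K_{\mb{X}}\bigl(\|\ind_E u\|_{\mb{X}}+\|\ind_E v\|_{\mb{X}}\bigr)=K_{\mb{X}}\bigl(q(u)+q(v)\bigr).
\]
Definiteness uses $\mu(E)>0$: if $q(u)=0$ then $\ind_E u=0$ in $L^0(\Omega;\mc{V})$, which forces $u=0$.

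Next I would verify lower semicontinuity of $q$ by invoking the Fatou property. If $u_k\to u$ in $\mc{V}$, then $\ind_E u_k\to\ind_E u$ pointwise on $\Omega$. If $\liminf_{k\to\infty}\|\ind_E u_k\|_{\mb{X}}=\infty$, the desired inequality is trivial; otherwise the Fatou property yields
\[
q(u)=\|\ind_E u\|_{\mb{X}}\leq\liminf_{k\to\infty}\|\ind_E u_k\|_{\mb{X}}=\liminf_{k\to\infty}q(u_k).
\]
Finally, I would apply Proposition~\ref{prop:reducingmatrixquasinorm} to $q$ to produce a self-adjoint positive $A_{\mb{X},E}\in\mc{L}(\mc{V})$ with
\[
K_{\mb{X}}^{-2n}q(u)\leq\|A_{\mb{X},E}u\|_{\mc{V}}\leq n^{\frac{1}{2}}q(u)
\]
for every $u\in\mc{V}$, which is exactly \eqref{eq:reducing_matrix}. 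There is no real obstacle here: the whole content of the argument was already packaged into Proposition~\ref{prop:reducingmatrixquasinorm}, so the only subtlety to keep in mind is making sure that the quasi-triangle constant transported from $\mb{X}$ to the finite-dimensional functional $q$ does not worsen, which is ensured by the fact that $\ind_E$ is idempotent and that $q$ depends linearly on $u$ inside the $\mb{X}$-norm.
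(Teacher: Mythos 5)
Your proposal is correct and follows exactly the same route as the paper: reduce to Proposition~\ref{prop:reducingmatrixquasinorm} by applying it to $q(u)=\|\ind_E u\|_{\mb{X}}$, and verify lower semicontinuity of $q$ via the Fatou property. The paper's proof is a bit terser (it only explicitly checks lower semicontinuity and takes the quasinorm properties for granted), but the substance is identical.
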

The operator $A_{\mb{X},E}$ is called the \emph{reducing operator} of $E$ in $\mb{X}$.
\begin{proof}[Proof of Corollary~\ref{cor:reducingmatrix}]
By Proposition~\ref{prop:reducingmatrixquasinorm} we need only check that the quasinorm $u\mapsto \|\ind_E u\|_{\mb{X}}$ is lower semicontinuous. Indeed, if $u_k\to u$ in $\mc{V}$, then $\ind_E u_k\to \ind_E u$ a.e. and thus, by the Fatou property of $\mb{X}$, we have
\[
\|\ind_E u\|_{\mb{X}}\leq\liminf_{n\to\infty}\|\ind_E u_k\|_{\mb{X}},
\]
as desired.
\end{proof}

\subsection{Equivalent norms on the space of linear operators}

Let $\mc{V}$ be a $n$-dimensional Hilbert space for an integer $n\geq 1$, and let $(e_k)_{k=1}^n$ be an orthonormal basis of $\mc{V}$. Let $A\in\mc{L}(\mc{V})$. We claim that
\begin{equation}
    \label{eq:norm_of_matrix_columns}
    \Vert A\Vert_{\mc{V}\to\mc{V}}\eqsim_{n}\sum_{k=1}^n\Vert Ae_k\Vert_{\mc{V}}.
\end{equation}
Indeed, we have
\begin{equation*}
    \Vert A\Vert_{\mc{V}\to\mc{V}}\geq\sup_{k=1,\ldots,n}\Vert Ae_k\Vert_{\mc{V}}\geq\frac{1}{n}\sum_{k=1}^n\|Ae_k\|_{\mc{V}}.
\end{equation*}
For the converse inequality, an application of the Cauchy--Schwarz inequality combined with the bound $\|\cdot\|_{\ell^2}\leq\|\cdot\|_{\ell^1}$ yields
\begin{equation*}
    \Vert Av\Vert_{\mc{V}}\leq\Big(\sum_{k=1}^{n}\Vert Ae_k\Vert_{\mc{V}}^2\Big)^{\frac{1}{2}}\Vert v\Vert_{\mc{V}}\leq\Big(\sum_{k=1}^n\Vert Ae_k\Vert_{\mc{V}}\Big)\|v\|_{\mc{V}}
\end{equation*}
for all $v\in\mc{V}$, proving the the claim.

In practice, \eqref{eq:norm_of_matrix_columns} is used for the following result:
\begin{lemma}
    \label{lem:reducing_operator_on_operator}
    Let $X$ be a quasi-Banach function space over $\Omega$ with the Fatou property, let $\mc{V}$ be a $n$-dimensional Hilbert space with $n\geq 1$ an integer, and let $W:\Omega\to\mc{L}(\mc{V})$ be a matrix weight. Set $\mathbf{X}:=X_{W}$. Then, we have
    \begin{align*}
    \Vert\ind_{E}\Vert WA\Vert_{\mc{V}\to\mc{V}}\Vert_{X}&\eqsim_{n,K_{X}}\Vert A_{\mathbf{X},E}A\Vert_{\mc{V}\to\mc{V}},\quad\forall A\in\mc{L}(\mc{V}).
\end{align*}
\end{lemma}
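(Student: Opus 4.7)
The plan is to chain together two elementary tools already established just above: the equivalence \eqref{eq:norm_of_matrix_columns} between the operator norm and the sum of column norms, and the defining reducing operator estimate \eqref{eq:reducing_matrix}. The key observation that makes everything fit is that by definition of $\mathbf{X}=X_W$ we have $\|\ind_E u\|_{\mathbf{X}}=\|\ind_E \|Wu\|_{\mc{V}}\|_X$ for every fixed $u\in\mc{V}$, so the quasinorm on $\mathbf{X}$ evaluated on a constant vector is already the object we want to pass through.

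First I would fix an orthonormal basis $(e_k)_{k=1}^n$ of $\mc{V}$ and apply \eqref{eq:norm_of_matrix_columns} pointwise almost everywhere to $W(x)A\in\mc{L}(\mc{V})$, yielding
\begin{equation*}
\|W(x)A\|_{\mc{V}\to\mc{V}}\eqsim_{n}\sum_{k=1}^{n}\|W(x)Ae_k\|_{\mc{V}}\quad\text{for a.e. }x\in\Omega.
\end{equation*}
Multiplying by $\ind_E$ and taking the $X$-quasinorm, I would use the quasi-triangle inequality (with constant $K_X$, iterated $n$ times) to separate the sum on the upper side, and use the trivial bound $\max_k a_k\leq \sum_k a_k\leq n\max_k a_k$ on the lower side. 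Combining these with the identity $\|\ind_E Ae_k\|_{\mathbf{X}}=\|\ind_E\|WAe_k\|_{\mc{V}}\|_X$ produces
\begin{equation*}
\bigl\|\ind_E\|WA\|_{\mc{V}\to\mc{V}}\bigr\|_X\eqsim_{n,K_X}\sum_{k=1}^{n}\|\ind_E Ae_k\|_{\mathbf{X}}.
\end{equation*}

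Next I would feed each vector $Ae_k$ into the reducing operator estimate \eqref{eq:reducing_matrix} of Corollary~\ref{cor:reducingmatrix}, noting that since $\mathbf{X}=X_W$ one easily checks $K_{\mathbf{X}}\leq K_X$, so the constants $K_{\mathbf{X}}^{-2n}$ and $n^{1/2}$ appearing there are controlled by $n$ and $K_X$. This gives
\begin{equation*}
\sum_{k=1}^{n}\|\ind_E Ae_k\|_{\mathbf{X}}\eqsim_{n,K_X}\sum_{k=1}^{n}\|A_{\mathbf{X},E}Ae_k\|_{\mc{V}}.
\end{equation*}
Finally, I would apply \eqref{eq:norm_of_matrix_columns} once more, this time to the operator $A_{\mathbf{X},E}A\in\mc{L}(\mc{V})$, to recognize the right-hand side as $\|A_{\mathbf{X},E}A\|_{\mc{V}\to\mc{V}}$ up to a constant depending only on $n$, and chain all the equivalences together.

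There is no real obstacle here; the proof is a bookkeeping exercise of tracking constants. The only subtle point is to verify that Corollary~\ref{cor:reducingmatrix} is applicable, i.e.\ that $\mathbf{X}$ is a $\mc{V}$-directional quasi-Banach function space with the Fatou property into which $\ind_E u$ belongs for every $u\in\mc{V}$; but the Fatou property of $X$ is preserved under the matrix weight construction as remarked earlier in the preliminaries, and local integrability of $W$ together with $\mu(E)<\infty$ (implicit in $E$ being such that the reducing operator makes sense) give the required membership.
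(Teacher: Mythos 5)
Your proof is correct and follows essentially the same route as the paper: apply the operator-norm/column-norm equivalence \eqref{eq:norm_of_matrix_columns} to pass to $\sum_k\|\ind_E Ae_k\|_{\mathbf{X}}$, invoke the reducing-operator estimate \eqref{eq:reducing_matrix} term by term, and then apply \eqref{eq:norm_of_matrix_columns} once more. The only cosmetic difference is in the lower bound, where the paper passes from $\sup_k$ to $\sum_k$ at the level of $\|A_{\mathbf{X},E}Ae_k\|_{\mc{V}}$ rather than at the level of $\|\ind_E Ae_k\|_{\mathbf{X}}$; this saves no constants and changes nothing substantive.
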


\begin{proof}
    Pick an orthonormal basis $(e_k)_{k=1}^n$ for $\mc{V}$. Then, by \eqref{eq:reducing_matrix} and \eqref{eq:norm_of_matrix_columns},
\begin{align*}
    \Vert\ind_{E}\Vert WA\Vert_{\mc{V}\to\mc{V}}\Vert_{X}&\lesssim_{n,K_{X}}\sum_{k=1}^n\Vert\ind_{E}\Vert WAe\Vert_{\mc{V}}\Vert_{X}=\sum_{k=1}^n\Vert\ind_{E}Ae_k\Vert_{\mathbf{X}}\\
    &\lesssim_n\sum_{k=1}^n\Vert A_{\mathbf{X},E}Ae_k\Vert_{\mc{V}}\eqsim_{n}\Vert A_{\mathbf{X},E}A\Vert_{\mc{V}\to\mc{V}}.
\end{align*}
For the lower bound, note that, by \eqref{eq:reducing_matrix} and \eqref{eq:norm_of_matrix_columns},
\begin{align*}
\Vert\ind_{E}\Vert WA\Vert_{\mc{V}\to\mc{V}}\Vert_{X}&\geq\sup_{k=1,\ldots,n}\Vert\ind_{E}\Vert WAe_k\Vert_{\mc{V}}\Vert_{X}
=\sup_{k=1,\ldots,n}\Vert\ind_{E}Ae_k\Vert_{\mathbf{X}}\\
&\gtrsim_n\sum_{k=1}^n\Vert A_{\mathbf{X},E}Ae_k\Vert_{\mc{V}}\eqsim_{n}\Vert A_{\mathbf{X},E}A\Vert_{\mc{V}\to\mc{V}}.
\end{align*}
The result follows.
\end{proof}

\subsection{Convex-set valued mappings}
Let $\mc{V}$ be an $n$-dimensional Hilbert space over $\F$. We let $\mc{K}(\mc{V})$ denote the non-empty, closed subsets $K$ of $\mc{V}$ satisfying:
\begin{itemize}
    \item \emph{Convexity:} If $u,v\in K$, then $(1-t)u+tv\in K$ for all $0\leq t\leq 1$;
    \item\emph{Symmetry:} If $u\in K$, then $\lambda u\in K$ for all $\lambda\in\F$ with $|\lambda|=1$.
\end{itemize}
Given a set $S\subseteq\mc{V}$, we define $\mc{K}(S)$ as the smallest set in $\mc{K}(\mc{V})$ containing $S$.

Let $(\Omega,\mu)$ be a $\sigma$-finite measure space. We call a mapping $F:\Omega\to\mc{K}(\mc{V})$ \emph{measurable} if for every open set $E\subseteq\mc{V}$ the set
\[
F^{-1}(E):=\{x\in\Omega:E\cap F(x)\neq\emptyset\}
\]
is measurable. Moreover, we let $L^0(\Omega;\mc{K}(V))$ denote the space of measurable mappings $F:\Omega\to\mc{K}(\mc{V})$.

Given a $F\in L^0(\Omega;\mc{K}(\mc{V})$, we define its set of \emph{selections} by
\[
S^0(\Omega;F):=\{f\in L^0(\Omega;\mc{V}):f(x)\in F(x)\text{ a.e.}\}.
\]

Given a $\mc{V}$-directional quasi-Banach function space $\mb{X}$, we let $\mb{X}[\mc{K}]$ denote the set of those $F\in L^0(\Omega;\mc{K}(\mc{V}))$ for which $S^0(\Omega;F)$ is a bounded subset of $\mb{X}$. Moreover, in this case we set
\[
\|F\|_{\mb{X}[\mc{K}]}:=\sup_{f\in S^0(\Omega;F)}\|f\|_{\mb{X}}.
\]
If $X$ is a quasi-Banach function space with the Fatou property and $W:\Omega\to\mc{L}(\mc{V})$ is a matrix weight, then $F\in X_W[\mc{K}]$ if and only if $F(x)$ is a bounded set for a.e. $x\in\Omega$, and $h(x):=\sup_{u\in F(x)}|W(x)u|$ satisfies $h\in X$. Moreover, writing $\|WF\|_{\mc{V}}:=h$, in this case we have
\[
\|F\|_{X_W[\mc{K}]}=\big\|\|WF\|_{\mc{V}}\big\|_X,
\]
see \cite[Proposition~3.10 and 3.11]{Ni24b}. In particular, this is true for matrix weighted Lebesgue spaces, where we write
\[
L^p_W(\Omega;\mc{K}(\mc{V})):=L^p_W(\Omega;\mc{V})[\mc{K}].
\]
Furthermore, when $W$ is the identity mapping, we remove the subscript $W$.

If $F\in L^1(\Omega;\mc{K}(\mc{V}))$, then we define the \emph{Aumann integral} of $F$ by
\[
\int_\Omega\!F\,\mathrm{d}\mu:=\Big\{\int_\Omega\!f\,\mathrm{d}\mu:f\in S^0(\Omega;F)\Big\}.
\]
Note that this is a well-defined set in $\mc{V}$, since any $f\in S^0(\Omega;F)$ belongs to $L^1(\Omega;\mc{V})$. Denoting the supremum of the norms of a bounded set $S\subseteq\mc{V}$ by $\|S\|_{\mc{V}}$, we have the following result:
\begin{proposition}\label{prop:aumannintegralbound}
Let $F\in L^1(\Omega;\mc{K}(\mc{V}))$. Then the Aumann integral of $F$ is a non-empty, convex, symmetric, and bounded set, with bound
\[
\Big\|\int_\Omega\!F\,\mathrm{d}\mu\Big\|_{\mc{V}}\eqsim_n\int_\Omega\!\|F\|_{\mc{V}}\,\mathrm{d}\mu.
\]
\end{proposition}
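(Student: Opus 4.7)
My plan is to establish the four assertions of the proposition---non-emptiness, convexity, symmetry, and the two-sided norm equivalence---separately. Non-emptiness will come from a measurable selection theorem of Kuratowski--Ryll-Nardzewski type: since $F$ is a measurable multifunction with non-empty compact values, there exists a measurable selection $f\colon\Omega\to\mc{V}$ with $f(x)\in F(x)$ a.e.; the pointwise bound $\|f(x)\|_{\mc{V}}\leq\|F(x)\|_{\mc{V}}$ together with $F\in L^1(\Omega;\mc{K}(\mc{V}))$ places $f$ in $L^1(\Omega;\mc{V})$, so $\int_\Omega f\,\mathrm{d}\mu$ belongs to $\int_\Omega F\,\mathrm{d}\mu$. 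Convexity and symmetry of $\int_\Omega F\,\mathrm{d}\mu$ then follow at once from the pointwise convexity and symmetry of each $F(x)$ combined with linearity of the Bochner integral: if $f,g\in S^0(\Omega;F)$ and $0\leq t\leq 1$, $|\lambda|=1$, then $(1-t)f+tg$ and $\lambda f$ again belong to $S^0(\Omega;F)$, and integrating gives the required points inside $\int_\Omega F\,\mathrm{d}\mu$.

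The upper bound $\|\int_\Omega F\,\mathrm{d}\mu\|_{\mc{V}}\leq\int_\Omega\|F\|_{\mc{V}}\,\mathrm{d}\mu$ is immediate from the triangle inequality for the Bochner integral applied to any selection. The real work lies in the matching lower bound, which I expect to be the main obstacle. The naive idea of picking a selection $f$ with $\|f(x)\|_{\mc{V}}=\|F(x)\|_{\mc{V}}$ fails, because the direction of $f(x)$ may vary arbitrarily, causing large cancellations in $\int_\Omega f\,\mathrm{d}\mu$; the strategy is instead to harvest the mass direction by direction using a fixed orthonormal basis, and lose only a factor of $n$ in doing so.

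Concretely, I would fix an orthonormal basis $(e_k)_{k=1}^n$ of $\mc{V}$ and define $s_k(x):=\sup_{u\in F(x)}|\langle u,e_k\rangle_{\mc{V}}|$, which is measurable and dominated by $\|F(\cdot)\|_{\mc{V}}\in L^1(\Omega)$. Applying a measurable selection theorem to the level-set multifunction $x\mapsto\{u\in F(x):|\langle u,e_k\rangle_{\mc{V}}|=s_k(x)\}$ (non-empty by compactness of $F(x)$) and then, in the complex case, rotating by a measurable phase of modulus one---available via the symmetry of $F(x)$---one obtains $u_k\in S^0(\Omega;F)\cap L^1(\Omega;\mc{V})$ with $\langle u_k(x),e_k\rangle_{\mc{V}}=s_k(x)\geq 0$ a.e. Setting $v_k:=\int_\Omega u_k\,\mathrm{d}\mu\in\int_\Omega F\,\mathrm{d}\mu$, one then has $\|v_k\|_{\mc{V}}\geq\langle v_k,e_k\rangle_{\mc{V}}=\int_\Omega s_k\,\mathrm{d}\mu$. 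Combining this with the elementary pointwise inequality $\|u\|_{\mc{V}}\leq\sum_{k=1}^n|\langle u,e_k\rangle_{\mc{V}}|$ taken to the supremum over $u\in F(x)$, which gives $\|F(x)\|_{\mc{V}}\leq\sum_{k=1}^n s_k(x)$, and summing over $k$, one arrives at
\[
\int_\Omega\|F\|_{\mc{V}}\,\mathrm{d}\mu\leq\sum_{k=1}^n\int_\Omega s_k\,\mathrm{d}\mu\leq\sum_{k=1}^n\|v_k\|_{\mc{V}}\leq n\,\Big\|\int_\Omega F\,\mathrm{d}\mu\Big\|_{\mc{V}},
\]
which is the desired lower bound.

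The one technical point to double-check is the measurability of the level-set multifunction used to produce $u_k$, and of the rotating phase in the complex case; both are standard consequences of the Castaing representation of measurable multifunctions together with the compactness of each $F(x)$, so they should not present a real difficulty beyond careful bookkeeping.
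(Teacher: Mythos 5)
Your proposal is correct and follows essentially the same strategy as the paper: the upper bound from the triangle inequality, and the lower bound by harvesting each coordinate direction with a selection whose phase is aligned, using symmetry of $F$, and then summing via the $\ell^2\leq\ell^1$ bound on coordinates to lose the factor $n$. The only cosmetic difference is that the paper works from a single a.e.\ norm-maximizing selection $f$ and rotates its phase coordinate by coordinate, whereas you construct $n$ separate coordinate-maximizing selections $u_k$ — both require one measurable-selection argument per direction and yield the same constant.
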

\begin{proof}
For any $f\in S^0(\Omega;\mc{F})$ we have
\[
\Big\|\int_\Omega\!f\,\mathrm{d}\mu\Big\|_{\mc{V}}\leq\int_\Omega\!\|f\|_{\mc{V}}\,\mathrm{d}\mu\leq\|F\|_{L^1(\Omega;\mc{K}(\mc{V}))}=\int_\Omega\!\|F\|_{\mc{V}}\,\mathrm{d}\mu.
\]
Taking a supremum over all $f\in S^0(\Omega;F)$, this implies that
\[
\Big\|\int_\Omega\!F\,\mathrm{d}\mu\Big\|_{\mc{V}}\leq\int_\Omega\!\|F\|_{\mc{V}}\,\mathrm{d}\mu.
\]
For the converse inequality, pick $f\in S^0(\Omega;\mc{K}(\mc{V}))$ for which $\|f\|_{\mc{V}}=\|F\|_{\mc{V}}$. Let $(e_k)_{k=1}^n$ bean orthonormal basis of $\mc{V}$ and set
\[
f_k:=\frac{|\langle f,e_k\rangle_{\mc{V}}|}{\langle f,e_k\rangle_{\mc{V}}}f
\]
on the support of $\langle f,e_k\rangle_{\mc{V}}$. As $F$ is a symmetric set a.e., we have  $f_k\in S^0(\Omega;F)$. Thus,
\begin{align*}
\int_\Omega\!\|f\|_{\mc{V}}\,\mathrm{d}\mu
&\leq\sum_{k=1}^n\int_\Omega\!|\langle f,e_k\rangle_{\mc{V}}|\,\mathrm{d}\mu=\sum_{k=1}^n\Big\langle\int_\Omega\! f_k\,\mathrm{d}\mu,e_k\Big\rangle_{\mc{V}}\\
&\leq\sum_{k=1}^n\Big\|\int_\Omega\!f_k\,\mathrm{d}\mu\Big\|_{\mc{V}}\leq n\Big\|\int_\Omega\!F\,\mathrm{d}\mu\Big\|_{\mc{V}}.
\end{align*}
The assertion follows.
\end{proof}

We say that $F\in L^1_{\text{loc}}(\R^d;\mc{K}(\mc{V}))$ if $\ind_QF\in L^1(\R^d;\mc{K}(\mc{V}))$ for all cubes $Q\subseteq\R^d$. In this case the Aumann integral
\[
\langle F\rangle_Q=\Big\{\avint_Q\!f(x)\,\mathrm{d}x:f\in S^0(\R^d;\ind_Q F)\Big\}
\]
is a well-defined bounded set in $\mc{K}(\mc{V})$ for all cubes $Q\subseteq\R^d$.

For proofs and further properties of set-valued mappings, we refer the reader to \cite{AF09, BC23, Ni24b}.

\subsection{Sparse families}
\label{s:sparse_families}

In this subsection we briefly describe the definitions and relevant properties of sparse families that we need in this work. Further details and proofs can be found in, e.g., \cite{LN15} and \cite{NPTV17}.

\begin{definition}
    Let $0<\eta<1$. A family of cubes $\mc{S}$ in $\R^d$ is said to be $\eta$-sparse, if there exists a family $\{E_{Q}:~Q\in\mc{S}\}$ of pairwise disjoint measurable subsets of $\R^d$ with $E_{Q}\subseteq Q$ and $|E_{Q}|\geq\eta|Q|$ for all $Q\in\mc{S}$. 
\end{definition}

\begin{definition}
    Let $0<\varepsilon<1$ and let $\mc{D}$ be a dyadic grid in $\R^d$. A subfamily $\mc{S}$ of $\mc{D}$ is said to be \emph{martingale $\varepsilon$-sparse} if for all $Q\in\mc{S}$ we have
    \begin{equation*}
        \sum_{R\in\mathrm{ch}_{\mc{S}}(Q)}|R|\leq\varepsilon|Q|,
    \end{equation*}
    where $\mathrm{ch}_{\mc{S}}(Q)$ is the family of all maximal cubes in $\mc{S}$ that are strictly contained in $Q$.
\end{definition}
Setting $E_Q=Q\backslash\bigcup_{R\in\mathrm{ch}_{\mc{S}}(Q)}R$, we find that a martingale $\varepsilon$-sparse collection is $1-\varepsilon$-sparse.

We will often use the so-called ``$3^d$-lattice trick'' (see \cite[Theorem~3.1]{LN15}), stating that there are $3^d$ dyadic grids $(\mc{D}^\alpha)_{\alpha=1}^{3^d}$ in $\R^d$, such that for each cube $Q\subseteq\R^d$ there are $\alpha\in\{1,\ldots,3^d\}$ and $R\in\mc{D}^\alpha$ with $Q\subseteq R$ and $|R|\leq 6^{d}|Q|$.

In particular, if $\mc{S}$ is an $\eta$-sparse family for some $0<\eta<1$, then there are $\tfrac{\eta}{6^d}$-sparse families $\mc{S}_1,\ldots,\mc{S}_{3^{d}}$ such that $\mc{S}_{\alpha}\subseteq\mc{D}^\alpha$, $\alpha=1,\ldots,3^d$, and for each cube $Q\in\mc{S}$ there are $\alpha\in\{1,\ldots,3^d\}$ and $R\in\mc{S}_\alpha$ with $Q\subseteq R$ and $|R|\leq 6^{d}|Q|$. Moreover, if $\mc{S}$ is finite, then all families $\mc{S}_{\alpha}$, $\alpha=1,\ldots,3^d$ can be taken to be finite.

\section{Averaging operators}
\label{sec:averaging_operators}

As before, throughout this section we let $(\Omega,\mu)$ be a $\sigma$-finite measure space. Moreover, we fix measurable subsets $E,E'$ of $\Omega$ with $0<\mu(E),\mu(E')<\infty$. Then, we define the averaging operator $T_{E,E'}$ by
\begin{equation*}
    T_{E,E'}(\vec{f}):=\Big(\bigotimes_{j=1}^{m}\langle f_j\rangle_{E}\Big)\ind_{E'}
\end{equation*}
acting on $m$-tuples of functions $\vec{f}=(f_1,\ldots,f_m)$ with $f_j\ind_E\in L^1(\Omega)$ for all $j=1,\ldots,m$. If $E=E'$, then we set $T_{E}:=T_{E,E}$.

We wish to obtain a characterization of the boundedness of $T_{E,E'}$ from 
\[
\vec{\mb{X}}:=\mathbf{X}_1\times\ldots\times\mathbf{X}_m,
\]
where each $\mathbf{X}_j$ is a $\mc{H}_j$-directional Banach space, into an $\mc{H}$-directional quasi-Banach space $\mathbf{Y}$ in terms of reducing operators. 

\begin{proposition}
\label{prop:reducingmatrixavop}
Let $\mathbf{X}_j$ be a $\mc{H}_j$-directional Banach function space over $\Omega$ with the component-wise saturation property for each $j=1,\ldots,m$, and let $\mathbf{Y}$ be a $\mc{H}$-directional quasi-Banach function space over $\Omega$ with the Fatou property. Then, the following are equivalent:
\begin{enumerate}[(i)]    
    \item\label{it:reducingmatrixprop1} $f_j\in\mb{X}_j$ is integrable over $E$ for all $j=1,\ldots,m$, and $T_{E,E'}:\vec{\mb{X}}\to\mathbf{Y}$;
    \item\label{it:reducingmatrixprop2} $\ind_E v\in \mathbf{X}_j'$ for all $v\in\mc{H}_j$ for all $j=1,\ldots,m$, and $\ind_{E'}\mathbf{u}\in\mathbf{Y}$ for all $\mathbf{u}\in\mc{H}$.
\end{enumerate}
In this case, we have
\begin{equation}
    \label{eq:bilinear_matrix_Ap_reducing_operator}
    \|T_{E,E'}\|_{\mathbf{X}_1\times\ldots\times\mathbf{X}_m\to\mathbf{Y}}\eqsim_{\vec{n},K_{\mathbf{Y}}}\mu(E)^{-m}\Big\|A_{\mathbf{Y},E'}\Big(\bigotimes_{j=1}^{m} A_{\mathbf{X}_j',E}\Big)\Big\|_{\mc{H}\to\mc{H}}.
\end{equation}
\end{proposition}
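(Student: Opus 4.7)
The plan is to first observe that the conditions in (ii) are precisely what is needed to invoke Corollary~\ref{cor:reducingmatrix} and construct the reducing operators $A_{\mb{X}_j',E}\in\mc{L}(\mc{H}_j)$ and $A_{\mb{Y},E'}\in\mc{L}(\mc{H})$. Granting (ii), local integrability of any $f_j\in\mb{X}_j$ over $E$ follows from $|\int_E\langle f_j,v\rangle\,\mathrm{d}\mu|\leq\|f_j\|_{\mb{X}_j}\|\ind_E v\|_{\mb{X}_j'}$ applied along an orthonormal basis of $\mc{H}_j$, while the quantitative estimate \eqref{eq:bilinear_matrix_Ap_reducing_operator} will give the boundedness of $T_{E,E'}$. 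For the converse direction (i)$\Rightarrow$(ii), I would test $T_{E,E'}$ on $f_j=\ind_E u_j$ for $\vec{u}\in\vec{\mc{H}}$ to recover $\ind_{E'}\bigotimes_j u_j\in\mb{Y}$ on a spanning set of elementary tensors of $\mc{H}$, and deduce the K\"othe-dual condition from the resulting forced local integrability of $\mb{X}_j$-functions over $E$.

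The core computation begins by writing $T_{E,E'}(\vec{f})=\ind_{E'}\mb{u}_{\vec{f}}$ for the constant vector $\mb{u}_{\vec{f}}:=\bigotimes_{j=1}^m\langle f_j\rangle_E\in\mc{H}$, so that Corollary~\ref{cor:reducingmatrix} applied to $\mb{Y}$ on $E'$ yields
\begin{equation*}
\|T_{E,E'}(\vec{f})\|_{\mb{Y}}\eqsim_{n,K_{\mb{Y}}}\|A_{\mb{Y},E'}\mb{u}_{\vec{f}}\|_{\mc{H}}=\mu(E)^{-m}\bigg\|A_{\mb{Y},E'}\bigotimes_{j=1}^m\int_E f_j\,\mathrm{d}\mu\bigg\|_{\mc{H}}.
\end{equation*}
Taking supremum over $\|f_j\|_{\mb{X}_j}\leq 1$ for all $j$, the proof reduces to showing that this supremum is equivalent, up to constants depending on $\vec{n}$ and a factor $\mu(E)^m$, to the operator norm $\|L\|_{\mc{H}\to\mc{H}}$, where $L:=A_{\mb{Y},E'}\bigl(\bigotimes_{j=1}^m A_{\mb{X}_j',E}\bigr)$.

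For the upper direction of this equivalence, I would use the tensor identity $\bigotimes_j\int_E f_j\,\mathrm{d}\mu=L_0\bigotimes_j(A_{\mb{X}_j',E}^{-1}\int_E f_j\,\mathrm{d}\mu)$ where $L_0:=\bigotimes_j A_{\mb{X}_j',E}$, combined with the multiplicativity $\|\bigotimes_j v_j\|_{\mc{H}}=\prod_j\|v_j\|_{\mc{H}_j}$, to reduce to bounding each factor $\|A_{\mb{X}_j',E}^{-1}\int_E f_j\,\mathrm{d}\mu\|_{\mc{H}_j}$; self-adjointness of $A_{\mb{X}_j',E}^{-1}$ combined with K\"othe duality and Corollary~\ref{cor:reducingmatrix} applied to $\mb{X}_j'$ yields the clean bound $\leq\|f_j\|_{\mb{X}_j}$. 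For the lower direction, the key step is a finite-dimensional Hahn--Banach argument: the closed convex symmetric set $V_j:=\overline{\{\int_E f_j\,\mathrm{d}\mu:\|f_j\|_{\mb{X}_j}\leq 1\}}\subseteq\mc{H}_j$ has support function $w\mapsto\|\ind_E w\|_{\mb{X}_j'}$, which by Corollary~\ref{cor:reducingmatrix} is $\eqsim_{n_j}\|A_{\mb{X}_j',E}w\|_{\mc{H}_j}$, identifying $V_j$ with $A_{\mb{X}_j',E}(\overline{B}_{\mc{H}_j})$ up to dimensional constants; so every $u_j\in\overline{B}_{\mc{H}_j}$ admits an $f_j\in\mb{X}_j$ with $\int_E f_j\,\mathrm{d}\mu=A_{\mb{X}_j',E}u_j$ and $\|f_j\|_{\mb{X}_j}\lesssim_{n_j}1$, and testing $T_{E,E'}$ at such tuples yields $\|L(\bigotimes_j u_j)\|_{\mc{H}}\lesssim\mu(E)^m\|T_{E,E'}\|$ for every elementary tensor $\bigotimes_j u_j$ with $\|u_j\|_{\mc{H}_j}\leq 1$. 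Finally, to pass from the elementary-tensor supremum to the full operator norm, I would apply \eqref{eq:norm_of_matrix_columns} to $L\in\mc{L}(\mc{H})$ with respect to the orthonormal basis $\{\bigotimes_j e_{k_j}\}$ of $\mc{H}$ induced from bases of the factors, obtaining $\|L\|_{\mc{H}\to\mc{H}}\eqsim_n\sum_{\vec{k}}\|L(\bigotimes_j e_{k_j})\|_{\mc{H}}\leq n\sup_{\|u_j\|\leq 1}\|L(\bigotimes_j u_j)\|_{\mc{H}}$, while the reverse inequality is trivial. The main obstacle I foresee is organizing the multilinear duality in the lower direction cleanly across all $m$ factors simultaneously; this is presumably what the authors refer to as the involved induction, most naturally realized by peeling off one tensor factor at a time and inductively reducing to an $(m-1)$-linear averaging operator valued in $\bigotimes_{j\geq 2}\mc{H}_j$.
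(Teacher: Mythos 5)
Your treatment of the quantitative equivalence \eqref{eq:bilinear_matrix_Ap_reducing_operator} takes a genuinely different, and arguably more transparent, route than the paper. The paper's proof is an iterated tensor-contraction scheme: it introduces the partial products $\mathbf{u}\mathbf{v}$ on $\mc{H}$ and peels off one $f_j$ at a time, inductively reducing the multilinear supremum to the operator norm of $A_{\mathbf{Y},E'}\bigl(\bigotimes_j A_{\mathbf{X}_j',E}\bigr)$ using \eqref{eq:partial_tensor_inner_upper}--\eqref{eq:partial_tensor_inner_lower}. You instead use the ellipsoid interpretation of reducing operators: the support function of $V_j:=\overline{\{\int_E f_j\,\mathrm{d}\mu:\|f_j\|_{\mathbf{X}_j}\leq 1\}}$ is $w\mapsto\|\ind_E w\|_{\mathbf{X}_j'}\eqsim_{n_j}\|A_{\mathbf{X}_j',E}w\|_{\mc{H}_j}$, so $V_j$ coincides with $A_{\mathbf{X}_j',E}(\overline{B}_{\mc{H}_j})$ up to dimensional constants, and you pass from elementary tensors to the full operator norm via \eqref{eq:norm_of_matrix_columns}. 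This sidesteps the tensor-contraction bookkeeping entirely. (The closure in defining $V_j$ only yields approximate preimages of $A_{\mathbf{X}_j',E}u_j$, but finite-dimensionality of $\mc{H}_j$ and continuity of the tensor product take care of that.) Incidentally, your closing remark that the lower bound still needs the paper's induction is not borne out by your own plan; the support-function step is a one-shot argument across all $m$ factors.

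However, your (i)$\Rightarrow$(ii) argument has a genuine gap. Testing $T_{E,E'}$ on $f_j=\ind_E u_j$ requires $\ind_E u_j\in\mathbf{X}_j$, and nothing in (i) guarantees this (nor does (ii), whose condition is on $\mathbf{X}_j'$, not $\mathbf{X}_j$). This is precisely where the component-wise saturation hypothesis enters in the paper: it furnishes a weak order unit $\rho_j>0$ a.e.\ of the saturated scalar space $(\mathbf{X}_j)_{u_j}$, and the paper tests on $f_j=\rho_j u_j$, using the integrability from (i) to ensure $\langle\rho_j\rangle_E\in(0,\infty)$, thereby concluding $\ind_{E'}\mathbf{u}\in\mathbf{Y}$. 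Your statement that the K\"othe-dual condition $\ind_E v\in\mathbf{X}_j'$ ``follows from the resulting forced local integrability'' also glides over a uniform bound: pointwise integrability of each $f_j\in\mathbf{X}_j$ over $E$ is not the same as the uniform estimate $|\int_E\langle f_j,v\rangle\,\mathrm{d}\mu|\lesssim\|f_j\|_{\mathbf{X}_j}$. The paper derives the latter by combining the boundedness of $T_{E,E'}$ with the already-established finiteness of $\sup_{\mathbf{u}\neq 0}\|\mathbf{u}\|_{\mc{H}}/\|\ind_{E'}\mathbf{u}\|_{\mathbf{Y}}$; you would need an analogous argument.
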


\medskip

We note that since $A_{\mathbf{Y},E'}$ and $A_{\mathbf{X}_j',E}$, $j=1,\ldots,m$ are positive definite and thus Hermitian operators, we have
\begin{align*}
    &\Big\|A_{\mathbf{Y},E'}\Big(\bigotimes_{j=1}^{m} A_{\mathbf{X}_j',E}\Big)\Big\|_{\mc{H}\to\mc{H}}
    =\Big\|\Big(\bigotimes_{j=1}^{m} A_{\mathbf{X}_j',E}\Big)A_{\mathbf{Y},E'}\Big\|_{\mc{H}\to\mc{H}}.
\end{align*}

\medskip

Before proceeding to the proof of Proposition~\ref{prop:reducingmatrixavop}, we need to review some elementary tensor algebra.

Fix $i\in\{1,\ldots,m\}$ and define
\begin{equation*}
    \vec{\mc{H}}_{(i)}:=(\mc{H}_i,\ldots,\mc{H}_m),\quad\vec{\mc{H}}^{(i)}:=(\mc{H}_1,\ldots,\mc{H}_i)
\end{equation*}
as well as
\begin{equation*}
    \mc{H}_{(i)}:=\bigotimes_{j=i}^{m}\mc{H}_j,\quad\mc{H}^{(i)}:=\bigotimes_{j=1}^{i}\mc{H}_j.
\end{equation*}
We also set $\mc{H}^{(0)}=\mc{H}_{(m+1)}:=\mathbf{F}$. Given now $\vec{u}\in\vec{\mc{H}}$, we define
\begin{equation*}
    \Big(\bigotimes_{j=1}^{m}u_j\Big)\Big(\bigotimes_{j=i}^{m}v_j\Big):=\Big(\prod_{j=i}^{m}\langle u_j,v_j\rangle_{\mc{H}_j}\Big)\bigotimes_{j=1}^{i-1}u_j\in\mc{H}^{(i-1)},\quad\forall\vec{v}\in\vec{\mc{H}}_{(i)}
\end{equation*}
as well as
\begin{equation*}
    \Big(\bigotimes_{j=1}^{m}u_j\Big)\Big(\bigotimes_{j=1}^{i}v_j\Big):=\Big(\prod_{j=1}^{i}\langle u_j,v_j\rangle_{\mc{H}_j}\Big)\bigotimes_{j=i+1}^{m}u_j\in\mc{H}_{(i+1)},\quad\forall\vec{v}\in\vec{\mc{H}}^{(i)}.
\end{equation*}
These definitions arise from the canonical isomorphisms
\begin{equation*}
    \mc{H}_j\ni u_j\mapsto\langle u_j,\cdot\rangle\in\mc{H}_j^{*},\quad j=1,\ldots,m,
\end{equation*}
where $\mc{H}_j^{*}$ denotes the space of sesquilinear forms on $\mc{H}_j$, and noticing then that these yield canonical isomorphisms
\begin{equation*}
    \mc{H}\cong\Big(\bigotimes_{j=1}^{i-1}\mc{H}_j\Big)\otimes\Big(\bigotimes_{j=i}^{m}\mc{H}_j^{*}\Big)
\end{equation*}
and
\begin{equation*}
    \mc{H}\cong\Big(\bigotimes_{j=1}^{i}\mc{H}_j^{*}\Big)\otimes\Big(\bigotimes_{j=i+1}^{m}\mc{H}_j\Big).
\end{equation*}
By linearity, we can then consider the products $\mathbf{u}\mathbf{v}$ and $\mathbf{u}\mathbf{w}$, for all $\mathbf{u}\in\mc{H}$, $\mathbf{v}\in\mc{H}_{(i)}$ and $\mathbf{w}\in\mc{H}^{(i)}$. One can verify through a direct computation that
\begin{equation}
    \label{eq:partial_tensor_inner_upper}
    \Big\langle \mathbf{u}\Big(\bigotimes_{j=i}^{m}v_j\Big),\bigotimes_{j=1}^{i-1}v_j\Big\rangle_{\mc{H}^{(i-1)}}=\Big\langle \mathbf{u},\bigotimes_{j=1}^{m}v_j\Big\rangle_{\mc{H}}
\end{equation}
and
\begin{equation}
    \label{eq:partial_tensor_inner_lower}
    \Big\langle \mathbf{u}\Big(\bigotimes_{j=1}^{i}v_j\Big),\bigotimes_{j=i+1}^{m}v_j\Big\rangle_{\mc{H}_{(i+1)}}=\Big\langle \mathbf{u},\bigotimes_{j=1}^{m}v_j\Big\rangle_{\mc{H}},
\end{equation}
for all $\mathbf{u}\in\mc{H}$ and $\vec{v}\in\vec{\mc{H}}$.

Finally, we note the identity
\begin{equation}
    \label{eq:iterate_identity}
    (\ldots(((\mathbf{u}v_1)v_2)v_3)\ldots)v_{m}=\Big\langle\mathbf{u},\bigotimes_{j=1}^{m}v_j\Big\rangle_{\mc{H}},
\end{equation}
for all $\mathbf{u}\in\mc{H}$ and $\vec{v}\in\vec{\mc{H}}$.

\medskip

\begin{proof}[Proof of Proposition~\ref{prop:reducingmatrixavop}]
Assume first that all $f_j\in\mb{X}_j$ are integrable over $E$ for $j=1,\ldots,m$ and $T_E:\vec{\mb{X}}\to\mathbf{Y}$. To see that $\ind_E u\in\mb{X}$ for all $u\in\mc{H}$, note that, by linearity, it suffices to show this for $\mathbf{u}=\displaystyle\bigotimes_{j=1}^m u_j$ with $u_j\in\mc{H}_j$. As $\mathbf{X}_j$ possesses the component-wise saturation property, by \cite[Proposition 3.4]{Ni24} we have that the $\F$-Banach function space $(\mathbf{X}_j)_{u_j}$ is saturated, therefore by \cite[Proposition~2.5]{LN23b} there exists a weak order unit $\rho_j\in(\mathbf{X}_j)_{u_j}$ with $\rho_j>0$ $\mu$-a.e.~on $\Omega$, for all $j\in\{1,\ldots,m\}$. Observe that $\rho_{j}u_{j}\in \mathbf{X}_j$, for all $j=1,\ldots,m$, and thus
\begin{equation*}
    \prod_{j=1}^m\langle\rho_j\rangle_E\ind_{E'} u=T_{E,E'}(\rho_1u_1,\ldots,\rho_mu_m)\in\mb{Y}.
\end{equation*}
Since $\displaystyle\prod_{j=1}^m\langle\rho_j\rangle_E>0$, we conclude that $\ind_{E'}\mathbf{u}\in\mb{Y}$, as asserted.

Next, let $\vec{v}\in\vec{\mc{H}}$ be arbitrary. Then, for all $f_j\in\mathbf{X}_j$ with $\Vert f_j\Vert_{\mathbf{X}_j}=1$, $j=1,\ldots,m$, letting $u_j:=\langle f_j\rangle_{E}$, $j=1,\ldots,m$ we estimate
\begin{align*}
    &\mu(E)^{-m}\prod_{j=1}^{m}\Big|\int_{E}\langle f_j(x),v_j\rangle_{\mc{H}_j}\mathrm{d}\mu(x)\Big|
    =\prod_{j=1}^{m}|\langle\langle f_j\rangle_{E},v_j\rangle_{\mc{H}_j}|
    =\Big|\Big\langle\bigotimes_{j=1}^{m}\langle f_j\rangle_E,\bigotimes_{j=1}^{m}v_j\Big\rangle_{\mc{H}}\Big|\\
    &=\frac{\Big|\Big\langle\displaystyle\bigotimes_{j=1}^{m}u_j,\displaystyle\bigotimes_{j=1}^{m}v_j\Big\rangle_{\mc{H}}\Big|}{\Big\|\ind_{E'}\Big(\displaystyle\bigotimes_{j=1}^{m}u_j\Big)\Big\|_{\mathbf{Y}}}\|T_{E,E'}(f_1,\ldots,f_m)\|_{\mathbf{Y}}\leq\|T_{E,E'}\|_{\mathbf{X}_1\times\ldots\times\mathbf{X}_m\to\mathbf{Y}}\frac{\Big|\Big\langle\displaystyle\bigotimes_{j=1}^{m}u_j,\displaystyle\bigotimes_{j=1}^{m}v_j\Big\rangle_{\mc{H}}\Big|}{\Big\|\ind_{E'}\Big(\displaystyle\bigotimes_{j=1}^{m}u_j\Big)\Big\|_{\mathbf{Y}}}\\
    &\leq\|T_{E,E'}\|_{\mathbf{X}_1\times\ldots\times\mathbf{X}_m\to\mathbf{Y}}\sup_{\mathbf{u}\in\mc{H}\backslash\{0\}}\frac{\Big|\Big\langle \mathbf{u},\displaystyle\bigotimes_{j=1}^{m}v_j\Big\rangle_{\mc{H}}\Big|}{\|\ind_{E'}\mathbf{u}\|_{\mathbf{Y}}}\\
    &\leq\|T_{E,E'}\|_{\mathbf{X}_1\times\ldots\times\mathbf{X}_m\to\mathbf{Y}}\Big\Vert\bigotimes_{j=1}^{m}v_j\Big\Vert_{\mc{H}}\sup_{\mathbf{u}\in\mc{H}\backslash\{0\}}\frac{\Vert\mathbf{u}\Vert_{\mc{H}}}{\Vert\ind_{E'}\mathbf{u}\Vert_{\mathbf{Y}}},
\end{align*}
where we observe that
\begin{equation*}
    \sup_{\mathbf{u}\in\mc{H}\backslash\{0\}}\frac{\Vert\mathbf{u}\Vert_{\mc{H}}}{\|\ind_{E'}\mathbf{u}\|_{\mathbf{Y}}}<\infty
\end{equation*}
exactly as in the proof of part (a) in \cite[Proposition 4.1]{Ni24}, or alternatively, as a consequence of \eqref{eq:reducing_matrix}. Therefore, for each $j=1,\ldots,m$, by fixing $f_i$ for $i=1,\ldots,m$ with $i\neq j$ and letting $f_j$ vary, from \eqref{eq:Koethe_dual_directional} we deduce $\ind_E v_j\in\mathbf{X}_j'$.

Next, we estimate
\begin{align*}
    &R:=\mu(E)^{m}\|T_{E,E'}\|_{\vec{\mb{X}}\to\mathbf{Y}}
    =\sup_{\substack{f_j\in\overline{B}_{\mathbf{X}_j}\\j=1,\ldots,m}}\mu(E)^{m}\Vert T_{E,E'}(\vec{f})\Vert_{\mathbf{Y}}\\
    &=\sup_{\substack{f_j\in\overline{B}_{\mathbf{X}_j}\\j=1,\ldots,m}}\mu(E)^{m}\Big\Vert\ind_{E'}\Big(\bigotimes_{j=1}^{m}\langle f\rangle_{E}\Big)\Big\Vert_{\mathbf{Y}}
    \overset{\eqref{eq:reducing_matrix}}{\eqsim_{K_{\mathbf{Y}},\vec{n}}}\sup_{\substack{f_j\in\overline{B}_{\mathbf{X}_j}\\j=1,\ldots,m}}\mu(E)^{m}\Big\Vert A_{\mathbf{Y},E'}\Big(\bigotimes_{j=1}^{m}\langle f_j\rangle_{E}\Big)\Big\Vert_{\mc{H}}\\
    &=\sup_{\substack{f_j\in\overline{B}_{\mathbf{X}_j}\\j=1,\ldots,m}}\sup_{\mathbf{u}\in\overline{B}_\mc{H}}\mu(E)^{m}\Big|\Big\langle A_{\mathbf{Y},E'}\Big(\bigotimes_{j=1}^{m}\langle f_j\rangle_{E}\Big),\mathbf{u}\Big\rangle_{\mc{H}}\Big|\\
    &=\sup_{\mathbf{u}\in\overline{B}_\mc{H}}\sup_{\substack{f_j\in\overline{B}_{\mathbf{X}_j}\\j=1,\ldots,m}}\mu(E)^{m}\Big|\Big\langle\bigotimes_{j=1}^{m}\langle f_j\rangle_{E},A_{\mathbf{Y},E'}\mathbf{u}\Big\rangle_{\mc{H}}\Big|.
\end{align*}
Let us fix for the moment $\mathbf{u}\in\mc{H}$ as well as $f_j\in\overline{B}_{\mathbf{X}_j}$ for $j=2,\ldots,m$ and estimate
\begin{equation*}
    R_1:=\mu(E)\sup_{\substack{f_1\in\overline{B}_{\mathbf{X}_1}}}\Big|\Big\langle\bigotimes_{j=1}^{m}\langle f_j\rangle_{E},A_{\mathbf{Y},E'}\mathbf{u}\Big\rangle_{\mc{H}}\Big|.
\end{equation*}
Thanks to \eqref{eq:partial_tensor_inner_upper} we can write
\begin{align*}
    R_1
    =\mu(E)\sup_{\substack{f_1\in\overline{B}_{\mathbf{X}_1}}}\Big|\Big\langle\langle f_1\rangle_{E},(A_{\mathbf{Y},E'}\mathbf{u})\Big(\bigotimes_{j=2}^{m}\langle f_j\rangle_{E}\Big)\Big\rangle_{\mc{H}_1}\Big|.
\end{align*}
Setting
\begin{equation*}
    u_1:=(A_{\mathbf{Y},E'}\mathbf{u})\Big(\bigotimes_{j=2}^{m}\langle f_j\rangle_{E}\Big)\in\mc{H}_1,
\end{equation*}
we can continue the computation using \eqref{eq:Koethe_dual_directional} to find that
\begin{align*}
    R_1&=\mu(E)\sup_{\substack{f_1\in\overline{B}_{\mathbf{X}_1}}}\Big|\Big\langle\langle f_1\rangle_{E},u_1\Big\rangle_{\mc{H}_1}\Big|
    =\sup_{\substack{f_1\in\overline{B}_{\mathbf{X}_1}}}\Big|\int_{E}\langle f_1(x),u_1\rangle_{\mc{H}_1}\,\mathrm{d}\mu(x)\Big|\\
    &=\Vert \ind_{E}u_1\Vert_{\mathbf{X}_1'}\eqsim_{n_1}\Vert A_{\mathbf{X}_1',E}u_1\Vert_{\mc{H}_1}.
\end{align*}
Thus, we obtain
\begin{align*}
    R&\eqsim_{\vec{n}}\mu(E)^{m-1}\sup_{\mathbf{u}\in\overline{B}_{\mc{H}}}\sup_{\substack{f_j\in\overline{B}_{\mathbf{X}_j}\\j=2,\ldots,m}}\Big\Vert A_{\mathbf{X}_1',E}\Big[(A_{\mathbf{Y},E'}\mathbf{u})\Big(\bigotimes_{j=2}^{m}\langle f_j\rangle_{E}\Big)\Big]\Big\Vert_{\mc{H}_1}\\
    &=\mu(E)^{m-1}\sup_{\mathbf{u}\in\overline{B}_{\mc{H}}}\sup_{\substack{f_j\in\overline{B}_{\mathbf{X}_j}\\j=2,\ldots,m}}\sup_{v_1\in\overline{B}_{\mc{H}_1}}\Big|\Big\langle A_{\mathbf{X}_1',E}\Big[(A_{\mathbf{Y},E'}\mathbf{u})\Big(\bigotimes_{j=2}^{m}\langle f_j\rangle_{E}\Big)\Big],v_1\Big\rangle_{\mc{H}_1}\Big|\\
    &=\mu(E)^{m-1}\sup_{v_1\in\overline{B}_{\mc{H}_1}}\sup_{\mathbf{u}\in\overline{B}_{\mc{H}}}\sup_{\substack{f_j\in\overline{B}_{\mathbf{X}_j}\\j=2,\ldots,m}}\Big|\Big\langle (A_{\mathbf{Y},E'}\mathbf{u})\Big(\bigotimes_{j=2}^{m}\langle f_j\rangle_{E}\Big),A_{\mathbf{X}_1',E}v_1\Big\rangle_{\mc{H}_1}\Big|\\
    &\overset{\eqref{eq:partial_tensor_inner_upper},\eqref{eq:partial_tensor_inner_lower}}{=}\mu(E)^{m-1}\sup_{v_1\in\overline{B}_{\mc{H}_1}}\sup_{\mathbf{u}\in\overline{B}_{\mc{H}}}\sup_{\substack{f_j\in\overline{B}_{\mathbf{X}_j}\\j=2,\ldots,m}}\Big|\Big\langle \bigotimes_{j=2}^{m}\langle f_j\rangle_{E},(A_{\mathbf{Y},E'}\mathbf{u})(A_{\mathbf{X}_1',E}v_1)\Big\rangle_{\mc{H}_{(2)}}\Big|.
\end{align*}
Then, similarly to previously we deduce
\begin{align*}
    R\eqsim_{\vec{n}}\mu(E)^{m-2}\sup_{v_1\in\overline{B}_{\mc{H}_1}}\sup_{\mathbf{u}\in\overline{B}_{\mc{H}}}\sup_{\substack{f_j\in\overline{B}_{\mathbf{X}_j}\\j=3,\ldots,m}}\Big\Vert A_{\mathbf{X}_2',E}\Big[\Big[(A_{\mathbf{Y},E'}\mathbf{u})(A_{\mathbf{X}_1',E}v_1)\Big]\Big(\bigotimes_{j=3}^{m}\langle f_j\rangle_{E}\Big)\Big]\Big\Vert_{\mc{H}_2}.
\end{align*}
Continuing inductively, we eventually arrive at
\begin{align*}
   R&\eqsim_{\vec{n}}\sup_{\substack{v_j\in\overline{B}_{\mc{H}_j}\\j=1,\ldots,m}}\sup_{\mathbf{u}\in\overline{B}_{\mc{H}}}\Big|[\ldots[[(A_{\mathbf{Y},E'}\mathbf{u})A_{\mathbf{X}_1',E}v_1](A_{\mathbf{X}_2',E}v_2)]\ldots](A_{\mathbf{X}_m',E}v_m)]\Big|\\
   &=\sup_{\substack{v_j\in\overline{B}_{\mc{H}_j}\\j=1,\ldots,m}}\sup_{\mathbf{u}\in\overline{B}_{\mc{H}}}\Big|\Big\langle A_{\mathbf{Y},E'}\mathbf{u},\bigotimes_{j=1}^{m}(A_{\mathbf{X}_j',E}v_j)\Big\rangle_{\mc{H}}\Big|\\
   &=\sup_{\substack{v_j\in\overline{B}_{\mc{H}_j}\\j=1,\ldots,m}}\sup_{\mathbf{u}\in\overline{B}_{\mc{H}}}\Big|\Big\langle A_{\mathbf{Y},E'}\mathbf{u},\Big(\bigotimes_{i=1}^{m}A_{\mathbf{X}_j',E}\Big)\Big(\bigotimes_{j=1}^{m}v_j\Big)\Big\rangle_{\mc{H}}\Big|\\
   &=\sup_{\substack{v_j\in\overline{B}_{\mc{H}_j}\\j=1,\ldots,m}}\sup_{\mathbf{u}\in\overline{B}_{\mc{H}}}\Big|\Big\langle \mathbf{u},A_{\mathbf{Y},E'}\Big(\bigotimes_{j=1}^{m}A_{\mathbf{X}_j',E}\Big)\Big(\bigotimes_{j=1}^{m}v_j\Big)\Big\rangle_{\mc{H}}\Big|\\
   &=\sup_{\substack{v_j\in\overline{B}_{\mc{H}_j}\\j=1,\ldots,m}}\Big\Vert A_{\mathbf{Y},E'}\Big(\bigotimes_{j=1}^{m}A_{\mathbf{X}_j',E}\Big)\Big(\bigotimes_{j=1}^{m}v_j\Big)\Big\Vert_{\mc{H}}\overset{\eqref{eq:norm_of_matrix_columns}}{\eqsim_{\vec{n}}}\Big\Vert A_{\mathbf{Y},E'}\Big(\bigotimes_{j=1}^{m}A_{\mathbf{X}_j',E}\Big)\Big\Vert_{\mc{H}\to\mc{H}}.
\end{align*}

\smallskip

Conversely, assume that $\ind_E v\in\mathbf{X}_j'$, for all $v\in\mc{H}_j$, for all $j=1,\ldots,m$ and $\ind_{E'}\mathbf{u}\in\mathbf{Y}$, for all $\mathbf{u}\in\mc{H}$. First of all, observe that for all $j=1,\ldots,m$, since $\ind_E v\in\mathbf{X}_j'$, for all $v\in\mc{H}_j$, exactly as in the proof of part (b) of \cite[Proposition 4.1]{Ni24} we have that every $f\in\mathbf{X}_j$ is integrable over $E$.

Let now $f_j\in\mathbf{X}_j$, $j=1,\ldots,m$ be arbitrary. Set
\begin{equation*}
    C:=\Big\Vert A_{\mathbf{Y},E'}\Big(\bigotimes_{j=1}^{m}A_{\mathbf{X}_j',E}\Big)\Big\Vert_{\mc{H}\to\mc{H}}.
\end{equation*}
We estimate
\begin{align*}
    &\mu(E)^{m}\Vert T_{E,E'}(f_1,\ldots,f_m)\Vert_{\mathbf{Y}}=\mu(E)^{m}\Big\Vert\ind_{E'}\Big(\bigotimes_{j=1}^{m}\langle f_j\rangle_{E}\Big)\Big\Vert_{\mathbf{Y}}\\
    &\overset{\eqref{eq:reducing_matrix}}{\lesssim_{K_{\mathbf{Y}},\vec{n}}}\mu(E)^{m}C\Big\Vert\bigotimes_{j=1}^{m}(A_{\mathbf{X}_j',E}^{-1}\langle f_j\rangle_{E})\Big\Vert_{\mc{H}}
    =C\prod_{j=1}^{m}\Big\Vert\int_{E}A_{\mathbf{X}_j',E}^{-1}f_j(x)\mathrm{d}\mu(x)\Big\Vert_{\mc{H}_j}.
\end{align*}
For each $j=1,\ldots,m$, choosing an orthonormal basis $e_1,\ldots,e_{n_j}$ of $\mc{H}_j$ we get
\begin{align*}
    &\Big\Vert\int_{E}A_{\mathbf{X}_j,E}'f_j(x)\mathrm{d}\mu(x)\Big\Vert_{\mc{H}}^2=
    \sum_{k=1}^{n_j}\Big|\Big\langle \int_{E}A_{\mathbf{X}_j',E}^{-1}f_j(x)\mathrm{d}\mu(x),e_{k}\Big\rangle_{\mc{H}_j}\Big|^2\\
    &=\sum_{k=1}^{n_j}\Big|\int_{E}\langle f_j(x),A_{\mathbf{X}_j',E}^{-1}e_k\rangle_{\mc{H}_j}\mathrm{d}\mu(x)\Big|^2
    \leq\Vert f\Vert_{\mathbf{X}_j}^2\sum_{k=1}^{n_j}\Vert \ind_{E}A_{\mathbf{X}_j',E}^{-1}e_k\Vert_{\mathbf{X}_j'}^2\\
    &\eqsim_{n_j}
    \Vert f\Vert_{\mathbf{X}_j}^2\sum_{k=1}^{n_j}\Vert A_{\mathbf{X}_j',E}A_{\mathbf{X}_j',E}^{-1}e_k\Vert_{\mc{H}_j}^2=n_j\Vert f\Vert_{\mathbf{X}_j}^2.
\end{align*}
Therefore, we deduce
\begin{equation*}
    \mu(E)^{m}\Vert T_{E,E'}(\vec{f})\Vert_{\mathbf{Y}}\lesssim_{\vec{n}}C\prod_{j=1}^{m}\Vert f_j\Vert_{\mathbf{X}_j}.
\end{equation*}
Thus, $T_{E}:\mathbf{X}_1\times\ldots\times\mathbf{X}_m\to\mathbf{Y}$.
\end{proof}

Using Lemma~\ref{lem:reducing_operator_on_operator} in the special case of weighted Lebesgue spaces, we regain the familiar expression featuring integral averages. To state it, we denote by $A_{W,E,p}$ the reducing operator in $L^{p}_{W}\Big(\Omega,\frac{\mu}{\mu(E)}\Big)$ of $E$ for $p\in(0,\infty]$ (where we note that the quasi-Banach function space $L^{p}(\Omega,\frac{\mu}{\mu(E)})$ has the Fatou property). Observe that
\begin{equation*}
    K_{L^{p}\Big(\Omega,\frac{\mu}{\mu(E)}\Big)}=2^{(\frac{1}{p}-1)_+}=:K_p,\quad\forall p\in(0,\infty].
\end{equation*}

\begin{proposition}
    \label{prop:roudenko_type_characterization}
    Let $\vec{t}\in(0,\infty]^{m}$, $q\in(0,\infty]$ and let $\vec{W}$ be a $m$-tuple of matrix weights. Then, the following are equivalent:
    \begin{enumerate}[(i)]
        \item\label{it:roudenko_type_characterization1} $W_j^{-1}$ is $t_j$-integrable over $E$, for all $j=1,\ldots,m$ and $\mathbf{W}$ is $q$-integrable over $E'$

        \item\label{it:roudenko_type_characterization2}  $C:=\Big(\displaystyle\avint_{E'}\displaystyle\prod_{j=1}^{m}\Big(\avint_{E}\|W_j(y)W_j(x)^{-1}\|_{\mc{H}_j\to\mc{H}_j}^{t_j}\mathrm{d}\mu(x)\Big)^{\frac{q}{t_j}}\,\mathrm{d}\mu(y)\Big)^{\frac{1}{q}}<\infty$
    \end{enumerate}
    (where the usual interpretations are in force if $t_j=\infty$ for some $j$ or $q=\infty$).
    Moreover, in this case we have
    \begin{equation}
    \label{eq:Roudenko}
        C\eqsim_{\vec{n},K_{t_1},\ldots,K_{t_m},K_q}\Big\|A_{\mathbf{W},E',q}\Big(\bigotimes_{j=1}^{m} A_{W_j^{-1},E,t_j}\Big)\Big\|_{\mc{H}\to\mc{H}}.
    \end{equation}
\end{proposition}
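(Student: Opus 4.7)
The plan is to establish \eqref{eq:Roudenko} by applying Lemma~\ref{lem:reducing_operator_on_operator} twice---once to peel off the inner $t_j$-averages and once to peel off the outer $q$-average---and then to read the equivalence (i)~$\Leftrightarrow$~(ii) off from the resulting quantitative identity together with Corollary~\ref{cor:reducingmatrix}.

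For the inner step, I fix $y\in E'$ and $j\in\{1,\ldots,m\}$, and use the Hermitian nature of $W_j(x)$ and $W_j(y)$ to rewrite
\[
\|W_j(y)W_j(x)^{-1}\|_{\mc{H}_j\to\mc{H}_j}=\|W_j(x)^{-1}W_j(y)\|_{\mc{H}_j\to\mc{H}_j}.
\]
Applying Lemma~\ref{lem:reducing_operator_on_operator} with quasi-Banach function space $X=L^{t_j}\big(\Omega,\tfrac{\mu}{\mu(E)}\big)$, matrix weight $W_j^{-1}$ on $\mc{H}_j$, set $E$, and operator $A=W_j(y)$ will then produce
\[
\Big(\avint_E\|W_j(y)W_j(x)^{-1}\|^{t_j}\,\mathrm{d}\mu(x)\Big)^{1/t_j}\eqsim_{n_j,K_{t_j}}\|A_{W_j^{-1},E,t_j}W_j(y)\|_{\mc{H}_j\to\mc{H}_j}.
\]

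Next, I will combine these factors across $j$ using the multiplicativity $\|\bigotimes_j B_j\|_{\mc{H}\to\mc{H}}=\prod_j\|B_j\|_{\mc{H}_j\to\mc{H}_j}$ together with the tensor identity $\bigotimes_j(A_jW_j(y))=\big(\bigotimes_j A_j\big)\mathbf{W}(y)$, obtaining
\[
\prod_{j=1}^m\Big(\avint_E\|W_j(y)W_j(x)^{-1}\|^{t_j}\,\mathrm{d}\mu(x)\Big)^{q/t_j}\eqsim\Big\|\mathbf{W}(y)\bigotimes_{j=1}^m A_{W_j^{-1},E,t_j}\Big\|_{\mc{H}\to\mc{H}}^q,
\]
where the reversal of the product inside the operator norm is justified because $\bigotimes_j A_{W_j^{-1},E,t_j}$ (a tensor product of self-adjoint positive operators, hence itself Hermitian) and $\mathbf{W}(y)$ are both Hermitian. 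A second application of Lemma~\ref{lem:reducing_operator_on_operator}, this time with $X=L^{q}\big(\Omega,\tfrac{\mu}{\mu(E')}\big)$, matrix weight $\mathbf{W}$ on $\mc{H}$, set $E'$, and operator $A=\bigotimes_j A_{W_j^{-1},E,t_j}$, will then yield \eqref{eq:Roudenko}. The equivalence of (i) and (ii) is built into this identity: by Corollary~\ref{cor:reducingmatrix}, each individual reducing operator on the right-hand side is a well-defined, bounded, Hermitian positive operator precisely when the corresponding integrability condition in (i) holds, and any failure in (i) forces the relevant operator norm, and hence both sides of \eqref{eq:Roudenko}, to be infinite.

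The main point to keep track of is the full generality of the exponents, which allows $t_j<1$ or $q<1$; in that regime the corresponding Lebesgue spaces are merely quasi-Banach. This is handled by the framework of Section~\ref{subsec:quasinorms}: Proposition~\ref{prop:reducingmatrixquasinorm} and Corollary~\ref{cor:reducingmatrix} supply reducing operators for quasinorms (at the cost of the constants $K_{t_j}$ and $K_q$), and Lemma~\ref{lem:reducing_operator_on_operator} is already formulated for quasi-Banach function spaces with the Fatou property. The only additional care therefore is to track the quasi-metric constants $K_{t_1},\ldots,K_{t_m},K_q$ through the two applications of the lemma, as recorded in the constant appearing in \eqref{eq:Roudenko}.
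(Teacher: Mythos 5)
Your route to the quantitative estimate \eqref{eq:Roudenko} under hypothesis \ref{it:roudenko_type_characterization1} is exactly the paper's: two applications of Lemma~\ref{lem:reducing_operator_on_operator}---first with $X=L^{t_j}\big(\Omega,\tfrac{\mu}{\mu(E)}\big)$, $W=W_j^{-1}$, $A=W_j(y)$ to convert the inner $t_j$-averages, then with $X=L^q\big(\Omega,\tfrac{\mu}{\mu(E')}\big)$, $W=\mathbf{W}$, $A=\bigotimes_j A_{W_j^{-1},E,t_j}$ for the outer $q$-average---glued by multiplicativity of operator norms of tensor products and the Hermitian swaps. This part is correct and matches the paper.

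However, the equivalence of \ref{it:roudenko_type_characterization1} and \ref{it:roudenko_type_characterization2} is treated too loosely. When some $W_j^{-1}$ is not $t_j$-integrable over $E$, the reducing operator $A_{W_j^{-1},E,t_j}$ is not defined (the hypothesis $\ind_E u\in\mathbf{X}$ of Corollary~\ref{cor:reducingmatrix} fails), so it is not meaningful to assert that ``both sides of \eqref{eq:Roudenko} are infinite.'' The implication \ref{it:roudenko_type_characterization2}$\Rightarrow$\ref{it:roudenko_type_characterization1} cannot be read off \eqref{eq:Roudenko}, because \eqref{eq:Roudenko} presupposes \ref{it:roudenko_type_characterization1}. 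The paper supplies a short direct argument: since $C<\infty$, pick a point $y_0$ in the domain of the outer average at which all $W_j(y_0)$ are invertible and $\prod_j\big(\avint_E\|W_j(y_0)W_j(x)^{-1}\|^{t_j}\,\mathrm{d}\mu(x)\big)^{q/t_j}<\infty$; since $W_j(y_0)$ is a fixed invertible operator, this forces $t_j$-integrability of $W_j^{-1}$ over $E$ for each $j$; with the reducing operators now well-defined, the intermediate estimate $C\eqsim\big(\avint_{E'}\|(\bigotimes_j A_{W_j^{-1},E,t_j})\mathbf{W}(y)\|^q\,\mathrm{d}\mu(y)\big)^{1/q}$ together with the invertibility of $\bigotimes_j A_{W_j^{-1},E,t_j}$ yields $q$-integrability of $\mathbf{W}$ over $E'$. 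Your intended contrapositive (failure of \ref{it:roudenko_type_characterization1} $\Rightarrow$ $C=\infty$) is essentially this argument reversed and can be made rigorous, but it needs to be spelled out rather than deduced from \eqref{eq:Roudenko}.
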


\medskip

We note that if $t_j<\infty$, then the condition that $W_j^{-1}$ is $t_j$-integrable over $E$ means
\begin{equation*}
    \int_{E}\Vert W_j(x)^{-1}\Vert^{t_j}_{\mc{H}_j\to\mc{H}_j}\mathrm{d}\mu(x)<\infty,
\end{equation*}
or equivalently (by the spectral theorem for positive definite operators on a Hilbert space)
\begin{equation*}
    \int_{E}\Vert W_j(x)^{-t_j}\Vert_{\mc{H}_j\to\mc{H}_j}\mathrm{d}\mu(x)<\infty,
\end{equation*}
that is, $W_j^{-t_j}$ is integrable over $E$. Similarly, if $q<\infty$, then the condition that $\mathbf{W}$ is $q$-integrable over $E'$ is equivalent to the condition that $\mathbf{W}^q$ is integrable over $E'$.

\medskip

\begin{proof}[Proof of Proposition~\ref{prop:roudenko_type_characterization}]
    First of all, if \ref{it:roudenko_type_characterization1} is true, then the reducing matrices in \eqref{eq:Roudenko} are well-defined. Thus, using Lemma \ref{lem:reducing_operator_on_operator}, we have
    \begin{align*}
        C&=\Big(\avint_{E'}\prod_{j=1}^{m}\Big(\avint_{E}\|W_j(y)W_j(x)^{-1}\|_{\mc{H}_j\to\mc{H}_j}^{t_j}\,\mathrm{d}x\Big)^{\frac{q}{t_j}}\,\mathrm{d}y\Big)^{\frac{1}{q}}\\
        &=\Big(\avint_{E'}\prod_{j=1}^{m}\Big(\avint_{E}\|W_j(x)^{-1}W_j(y)\|_{\mc{H}_j\to\mc{H}_j}^{t_j}\,\mathrm{d}x\Big)^{\frac{q}{t_j}}\,\mathrm{d}y\Big)^{\frac{1}{q}}\\
        &\eqsim_{\vec{n},K_{t_1},\ldots,K_{t_m}}\Big(\avint_{E'}\prod_{j=1}^{m}\Vert A_{W_j^{-1},E,t_j}W_j(y)\Vert_{\mc{H}_j\to\mc{H}_j}^{q}\,\mathrm{d}y\Big)^{\frac{1}{q}}\\
        &=\Big(\avint_{E'}\Big\Vert\Big(\bigotimes_{j=1}^{m}A_{W_j^{-1},E,t_j}\Big)\mathbf{W}(y)\Big\Vert_{\mc{H}\to\mc{H}}^{q}\mathrm{d}\mu(y)\Big)^{\frac{1}{q}}\\
        &=
        \Big(\avint_{E'}\Big\Vert\mathbf{W}(y)\Big(\bigotimes_{j=1}^{m}A_{W_j^{-1},E,t_j}\Big)\Big\Vert_{\mc{H}\to\mc{H}}^{q}\,\mathrm{d}y\Big)^{\frac{1}{q}}\\
        &\eqsim_{n,K_q}\Big\|A_{\mathbf{W},E',q}\Big(\bigotimes_{j=1}^{m} A_{W_j^{-1},E,t_j}\Big)\Big\|_{\mc{H}\to\mc{H}}.
    \end{align*}

    Conversely, assume that $C<\infty$. Then, there is $y_0\in E$ such that $\mathbf{W}_j(y_0)$ is invertible, for all $j=1,\ldots,m$ and
    \begin{equation*}
        \prod_{j=1}^{m}\Big(\avint_{E}\|W_j(y_0)W_j(x)^{-1}\|_{\mc{H}_i\to\mc{H}_j}^{t_j}\mathrm{d}\mu(x)\Big)^{\frac{q}{t_j}}<\infty.
    \end{equation*}
    Then
    \begin{equation*}
       \avint_{E}\|W_j(y_0)W_j(x)^{-1}\|_{\mc{H}_j\to\mc{H}_j}^{t_j}\mathrm{d}\mu(x)<\infty,\quad\forall j=1,\ldots,m.
    \end{equation*}
    Since $W_{j}(y_0)$ is a fixed invertible linear operator on $\mc{H}_j$, we deduce that $W_j^{-1}$ is $t_j$-integrable over $E$ for all $j=1,\ldots,m$. Then, similarly to previously, we have
    \begin{align*}
       \infty > C \eqsim_{\vec{n},K_{t_1},\ldots,K_{t_m}} \Big(\avint_{E'}\Big\Vert\Big(\bigotimes_{j=1}^{m}A_{W_j^{-1},E,t_j}\Big)\mathbf{W}(y)\Big\Vert_{\mc{H}\to\mc{H}}^{q}\,\mathrm{d}y\Big)^{\frac{1}{q}}.
    \end{align*}
    Since $\displaystyle\bigotimes_{j=1}^{m}A_{W_j^{-1},E,t_j}\in\mathcal{L}(\mc{H})$ is a fixed invertible linear operator on $\mc{H}$, we deduce that $\mathbf{W}$ is $q$-integrable over $E'$, concluding the proof.
\end{proof}

Finally, we prove a result that shows that the linear matrix Muckenhoupt classes embed into our bilinear matrix Muckenhoupt weight classes.
\begin{proposition}
\label{prop:from_bilinear_to_linear}
Let $X$ be a Banach function space over $\Omega$ with the Fatou property and let $W:\Omega\to\mc{L}(\mc{H})$ be a matrix weight. Then the following are equivalent:
\begin{enumerate}[(i)]
    \item\label{it:bilinearweightembedding1} $T_E: X_W\to X_W$;
    \item\label{it:bilinearweightembedding2} $T_E:(X')_{W^{-1}}\to (X')_{W^{-1}}$;
    \item\label{it:bilinearweightembedding3} $T_E:X_W\times (X')_{W^{-1}}\to L^1_{W\otimes W^{-1}}(\Omega;\mc{H}\otimes\mc{H})$.
\end{enumerate}
Moreover, in this case we have
\[
\|T_E\|_{X_W\to X_W}=\|T_E\|_{(X')_{W^{-1}}\to (X')_{W^{-1}}}\leq \|T_E\|_{X_W\times (X')_{W^{-1}}\to L^1_{W\otimes W^{-1}}(\Omega;\mc{H}\otimes\mc{H})},
\]
\[
\|T_E\|_{X_W\times (X')_{W^{-1}}\to L^1_{W\otimes W^{-1}}(\Omega;\mc{H}\otimes\mc{H})}\leq\|T_E\|_{X_W\to X_W}\|T_E\|_{(X')_{W^{-1}}\to (X')_{W^{-1}}}.
\]
\end{proposition}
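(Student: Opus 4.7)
The plan is to establish the equivalence in three steps: prove (i)$\Leftrightarrow$(ii) with equal norms by duality, deduce (iii) from (i) and (ii) via H\"older's inequality, and recover (i) from (iii) via Cauchy--Schwarz in $\mc{H}$ together with K\"othe duality. The guiding observation is that $T_E$ is self-adjoint with respect to the K\"othe pairing: for admissible $f,g\in L^0(\Omega;\mc{H})$,
\[
\int_\Omega\langle T_Ef,g\rangle_{\mc{H}}\,\mathrm{d}\mu=\mu(E)\langle\langle f\rangle_E,\langle g\rangle_E\rangle_{\mc{H}}=\int_\Omega\langle f,T_Eg\rangle_{\mc{H}}\,\mathrm{d}\mu.
\]
Combined with the duality $(X_W)'=(X')_{W^{-1}}$ recalled in Section~\ref{sec:preliminaries} and the Fatou property of $X$ (which gives $X''=X$, hence $((X')_{W^{-1}})'=X_W$), a standard K\"othe-pairing argument deduces that $\|T_E\|_{X_W\to X_W}=\|T_E\|_{(X')_{W^{-1}}\to (X')_{W^{-1}}}$, so that (i) and (ii) are equivalent with equal norms.

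For the implication (i)$\wedge$(ii)$\Rightarrow$(iii), given admissible $f\in X_W$ and $g\in (X')_{W^{-1}}$, the bilinear output $T_E(f,g)(x)=\ind_E(x)\bigl(\langle f\rangle_E\otimes\langle g\rangle_E\bigr)$ and the multiplicativity $\|u\otimes v\|_{\mc{H}\otimes\mc{H}}=\|u\|_{\mc{H}}\|v\|_{\mc{H}}$ give
\[
\|T_E(f,g)\|_{L^1_{W\otimes W^{-1}}(\Omega;\mc{H}\otimes\mc{H})}=\int_\Omega\|W(x)(T_Ef)(x)\|_{\mc{H}}\|W(x)^{-1}(T_Eg)(x)\|_{\mc{H}}\,\mathrm{d}\mu,
\]
which H\"older's inequality for the K\"othe pair $(X,X')$ bounds by $\|T_Ef\|_{X_W}\|T_Eg\|_{(X')_{W^{-1}}}$. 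Invoking (i) and (ii) then yields the product bound in the statement.

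For the converse (iii)$\Rightarrow$(i), K\"othe duality gives
\[
\|T_Ef\|_{X_W}=\sup_{\|g\|_{(X')_{W^{-1}}}\leq 1}\Big|\int_\Omega\langle T_Ef,g\rangle_{\mc{H}}\,\mathrm{d}\mu\Big|,
\]
and the integral equals $\mu(E)\langle\langle f\rangle_E,\langle g\rangle_E\rangle_{\mc{H}}$. The key step, and the only conceptual subtlety, is to use the self-adjointness of $W(x)$ to write the pointwise identity $\langle u,v\rangle_{\mc{H}}=\langle W(x)u,W(x)^{-1}v\rangle_{\mc{H}}$; integrating it over $E$ with $u=\langle f\rangle_E$ and $v=\langle g\rangle_E$, and applying Cauchy--Schwarz in $\mc{H}$ pointwise, yields
\[
\Big|\int_\Omega\langle T_Ef,g\rangle_{\mc{H}}\,\mathrm{d}\mu\Big|\leq\int_E\|W(x)\langle f\rangle_E\|_{\mc{H}}\|W(x)^{-1}\langle g\rangle_E\|_{\mc{H}}\,\mathrm{d}\mu=\|T_E(f,g)\|_{L^1_{W\otimes W^{-1}}}.
\]
Hypothesis (iii) then bounds the right-hand side by $\|T_E\|_{X_W\times (X')_{W^{-1}}\to L^1_{W\otimes W^{-1}}}\|f\|_{X_W}\|g\|_{(X')_{W^{-1}}}$, and taking the supremum over admissible $g$ gives $\|T_E\|_{X_W\to X_W}\leq\|T_E\|_{X_W\times (X')_{W^{-1}}\to L^1_{W\otimes W^{-1}}}$. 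Combining this with the equality from the first step establishes all the norm identities and inequalities in the statement; no obstacles beyond the pointwise reinjection of $W$ and $W^{-1}$ are expected.
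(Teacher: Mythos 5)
Your proposal is correct and follows essentially the same route as the paper's proof: the equivalence \ref{it:bilinearweightembedding1}$\Leftrightarrow$\ref{it:bilinearweightembedding2} via the self-adjointness of $T_E$ under the K\"othe pairing together with \eqref{eq:Koethe_dual_directional} and the Fatou property; \ref{it:bilinearweightembedding1}$\wedge$\ref{it:bilinearweightembedding2}$\Rightarrow$\ref{it:bilinearweightembedding3} by factoring the $L^1_{W\otimes W^{-1}}$-norm through $\|W T_Ef\|_{\mc{H}}\|W^{-1}T_Eg\|_{\mc{H}}$ and applying H\"older for the pair $(X,X')$; and \ref{it:bilinearweightembedding3}$\Rightarrow$\ref{it:bilinearweightembedding1} by inserting $W(x)$ and $W(x)^{-1}$ pointwise, applying Cauchy--Schwarz in $\mc{H}$, and then K\"othe duality. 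The only matter worth flagging is a point of presentation: you are careful to spell out that the reverse direction of (i)$\Leftrightarrow$(ii) requires $((X')_{W^{-1}})'=X_W$, which is where the Fatou property enters via $X''=X$; the paper leaves this implicit.
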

\begin{proof}
The equivalence \ref{it:bilinearweightembedding1}$\Leftrightarrow$\ref{it:bilinearweightembedding2} with equal norm follows from \eqref{eq:Koethe_dual_directional} and the fact that
\[
\int_{\Omega}\langle T_Ef,g\rangle_{\mc{H}}\,\mathrm{d}\mu=\int_{\Omega}\langle f, T_Eg\rangle_{\mc{H}}\,\mathrm{d}\mu.
\]
For \ref{it:bilinearweightembedding3}$\Rightarrow$\ref{it:bilinearweightembedding1}, note that for any $g\in (X')_{W^{-1}}$ we have
\begin{align*}
\Big|\int_{\Omega}\!\langle T_Ef , g\rangle_{\mc{H}}\,\mathrm{d}\mu\Big|
&=\Big|\int_E\!\langle \langle f\rangle_E, \langle g\rangle_E\rangle_{\mc{H}}\,\mathrm{d}\mu\Big|
=\Big|\int_E\!\langle W(x)\langle f\rangle_E ,  W(x)^{-1}\langle g\rangle_E\rangle_{\mc{H}}\,\mathrm{d}\mu\Big|\\
&\leq\int_E\!\|W(x)\langle f\rangle_E\|_{\mc{H}}\|W(x)^{-1}\langle g\rangle_E\|_{\mc{H}}\,\mathrm{d}\mu=\|T_E(f,g)\|_{L^1_{W\otimes W^{-1}}(\Omega;\F^n\otimes\F^n)}\\
&\leq \|T_E\|_{X_W\times (X')_{W^{-1}}\to L^1_{W\otimes W^{-1}}(\Omega;\mc{H}\otimes\mc{H})}\|f\|_{X_W}\|g\|_{(X')_{W^{-1}}}
\end{align*}
Taking a supremum over all $g\in (X')_{W^{-1}}=(X_W)'$ of norm $1$ proves that $T_E:X_W\to X_W$, with
\[
\|T_E\|_{X_W\to X_W}\leq \|T_E\|_{X_W\times (X')_{W^{-1}}\to L^1_{W\otimes W^{-1}}(\Omega;\mc{H}\otimes\mc{H})}.
\]
For \ref{it:bilinearweightembedding1}\&\ref{it:bilinearweightembedding2}$\Rightarrow$\ref{it:bilinearweightembedding3}, note that
\begin{align*}
\|T_E(f,g)\|_{L^1_{W\otimes W^{-1}}(\Omega;\mc{H}\otimes\mc{H})}
&=\int_{\Omega}\|W(x)T_Ef(x)\|_{\mc{H}}\|W(x)^{-1}T_Eg(x)\|_{\mc{H}}\,\mathrm{d}\mu\\
&\leq\|T_Ef\|_{X_W}\|T_Eg\|_{(X')_{W^{-1}}},
\end{align*}
which shows that
\[
\|T_E\|_{X_W\times (X')_{W^{-1}}\to L^1_{W\otimes W^{-1}}(\Omega;\mc{H}\otimes\mc{H})}\leq\|T_E\|_{X_W\to X_W}\|T_E\|_{(X')_{W^{-1}}\to (X')_{W^{-1}}},
\]
as desired.
\end{proof}

\begin{remark}
When $\mc{H}=\F$, we have $L^1_{W\otimes W^{-1}}(\Omega;\F\otimes\F)\cong L^1(\Omega)$ under the canonical identification $\F\otimes\F=\F$. Now, in the implication \ref{it:bilinearweightembedding1}\&\ref{it:bilinearweightembedding2}$\Rightarrow$\ref{it:bilinearweightembedding3} we can use the fact that 
\[
\int_{\Omega}T_E(f,g)(x)\,\mathrm{d}\mu=\int_{\Omega}T_Ef(x)g(x)\,\mathrm{d}\mu
\]
to produce a bound using only $T_E:X_W\to X_W$. Thus, in this case we have
\[
\|T_E\|_{X_W\times (X')_{W^{-1}}\to L^1(\Omega)}=\|T_E\|_{X_W\to X_W}=\|T_E\|_{(X')_{W^{-1}}\to (X')_{W^{-1}}}.
\]
It is unclear whether this same equality holds for general $\mc{H}$ or not.
\end{remark}

\section{The multilinear matrix Muckenhoupt condition}
\label{sec:multilinear_muckenhoupt}
Let  $\vec{W}$ be an $m$-tuple of matrix weights and let $\vec{p}=(p_1,\ldots,p_m)\in[1,\infty]^m$. We set
\[
L^{\vec{p}}_{\vec{W}}(\R^d;\vec{\mc{H}}):=L^{p_1}_{W_1}(\R^d;\mc{H}_1)\times\cdots\times L^{p_m}_{W_m}(\R^d;\mc{H}_m),
\]
and, recalling our conventions, we have defined
\[
\mc{H}:=\bigotimes_{j=1}^m\mc{H}_j,\quad \mb{W}:=\bigotimes_{j=1}^m W_j:\R^d\to \mc{L}(\mc{H}),\quad\frac{1}{p}:=\sum_{j=1}^m\frac{1}{p_j}.
\]
We say that $\vec{W}$ satisfies the Muckenhoupt $A_{\vec{p}}$ condition and write $\vec{W}\in A_{\vec{p}}$, if $W_j^{-1}$ is locally $p_j'$-integrable for all $j=1,\ldots,m$ and
\[
[\vec{W}]_{\vec{p}}:=\sup_Q\|T_Q\|_{L^{\vec{p}}_{\vec{W}}(\R^d;\vec{\mc{H}})\to L^p_{\mb{W}}(\R^d;\mc{H})}<\infty,
\]
where the supremum is taken over all cubes $Q\subseteq\R^d$.

\begin{remark}
    The condition that $W_j^{-1}$ is locally $p_j'$-integrable ensures that any function $f\in L^{p_j}_{W_j}(\R^d;\mc{H}_j)$ is locally integrable.
\end{remark}

\begin{definition}
    Let $\vec{W}$ be a $m$-tuple of matrix weights. Let also $\vec{p}\in(0,\infty]^{m}$, $\vec{r}\in(0,\infty)^{m}$ and $s\in(\R\backslash\{0\})\cup\{\infty\}$ with $\vec{p}\geq\vec{r}$ (i.e.~$p_j\geq r_j$ for all $j=1,\ldots,m$) and $\frac{1}{p}\geq\frac{1}{s}$ (where by convention $\frac{1}{\infty}:=0$). Define
    \begin{equation*}
        \frac{1}{q}:=\frac{1}{p}-\frac{1}{s}\in[0,\infty),\quad \frac{1}{t_j}:=\frac{1}{r_j}-\frac{1}{p_j}\in[0,\infty)\quad \text{ for $j=1,\ldots,m$}.
    \end{equation*}
    We define
    \begin{equation*}
        [\vec{W}]_{\vec{p},(\vec{r},s),\mathrm{op}}:=\sup_Q\Big(\avint_Q\prod_{j=1}^m\Big(\avint_Q\!\|W_j(x)W_j(x_j)^{-1}\|_{\mc{H}_j\to\mc{H}_j}^{t_j}\,\mathrm{d}x_j\Big)^{\frac{q}{t_j}}\,\mathrm{d}x\Big)^{\frac{1}{q}},
    \end{equation*}
    where the supremum is taken over all cubes $Q\subseteq\R^d$, and an integral average is interpreted as an essential supremum whenever the associated exponent is infinite. In particular, if $\vec{p}\in[1,\infty]^m$, $\vec{r}=(1,\ldots,1)$ and $s=\infty$ (so that $q=p$ and $t_j=p_j'$ for $j=1,\ldots,m$), then we define
    \begin{equation*}
        [\vec{W}]_{\vec{p},\mathrm{op}}:=[\vec{W}]_{\vec{p},(\vec{r},s),\mathrm{op}}.
    \end{equation*}
\end{definition}

\begin{proposition}
\label{prop:equivalence_averages_operator}
Let $\vec{p}\in[1,\infty]^m$ and let $\vec{W}$ be an $m$-tuple of matrix weights. Then, we have $\vec{W}\in A_{\vec{p}}$ if and only if $[\vec{W}]_{\vec{p},\mathrm{op}}<\infty$. Moreover, in this case we have
\begin{equation*}
    [\vec{W}]_{\vec{p},\mathrm{op}}\lesssim_{\vec{n},m}[\vec{W}]_{\vec{p}}\leq[\vec{W}]_{\vec{p},\mathrm{op}}.
\end{equation*}
\end{proposition}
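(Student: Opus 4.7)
The plan is to prove the proposition by combining the two characterizations already established: Proposition~\ref{prop:reducingmatrixavop} (reducing operator formula for the averaging operator norm) and Proposition~\ref{prop:roudenko_type_characterization} (Roudenko-type formula for the reducing operator expression). The easy direction, namely the upper bound $[\vec{W}]_{\vec{p}}\leq[\vec{W}]_{\vec{p},\mathrm{op}}$, should however be handled by a direct H\"older argument, since this avoids producing an extra dimensional constant.

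For the easy direction, I fix a cube $Q\subseteq\R^d$ and $\vec{f}\in L^{\vec{p}}_{\vec{W}}(\R^d;\vec{\mc{H}})$. Writing $\langle f_j\rangle_Q=\avint_Q W_j(y_j)^{-1}[W_j(y_j)f_j(y_j)]\,\mathrm{d}y_j$ and using submultiplicativity of the operator norm, I bound
\begin{equation*}
\|W_j(x)\langle f_j\rangle_Q\|_{\mc{H}_j}\leq\avint_Q\|W_j(x)W_j(y_j)^{-1}\|_{\mc{H}_j\to\mc{H}_j}\|W_j(y_j)f_j(y_j)\|_{\mc{H}_j}\,\mathrm{d}y_j,
\end{equation*}
and then apply H\"older's inequality with exponents $(p_j',p_j)$ in the variable $y_j$. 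Multiplying over $j=1,\ldots,m$, raising to the $p$-th power, integrating over $x\in Q$ and using that $\sum_{j=1}^m \tfrac{p}{p_j}=1$ (so the $|Q|$ factors balance), I arrive at $\|T_Q\vec{f}\|_{L^p_{\mb{W}}}\leq [\vec{W}]_{\vec{p},\mathrm{op}}\prod_{j=1}^m\|f_j\|_{L^{p_j}_{W_j}}$. Taking the sup over $Q$ yields the claimed upper bound. In particular, this also shows that if $[\vec{W}]_{\vec{p},\mathrm{op}}<\infty$ then $W_j^{-1}$ is locally $p_j'$-integrable (this latter integrability is inherent in the finiteness of the Roudenko-type expression, since it forces $\|W_j(y_0)W_j(\cdot)^{-1}\|^{p_j'}$ to be locally integrable at some $y_0$ for which $W_j(y_0)$ is invertible, and $W_j(y_0)$ is a fixed invertible operator).

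For the nontrivial direction $[\vec{W}]_{\vec{p},\mathrm{op}}\lesssim_{\vec{n},m}[\vec{W}]_{\vec{p}}$, I apply Proposition~\ref{prop:reducingmatrixavop} with $\mb{X}_j=L^{p_j}_{W_j}(\R^d;\mc{H}_j)$ (whose K\"othe dual is $(L^{p_j'})_{W_j^{-1}}(\R^d;\mc{H}_j)$, see \cite[Section~3.2]{Ni24b}) and $\mb{Y}=L^p_{\mathbf{W}}(\R^d;\mc{H})$, and with $E=E'=Q$. This yields
\begin{equation*}
\|T_Q\|_{L^{\vec{p}}_{\vec{W}}\to L^p_{\mathbf{W}}}\eqsim_{\vec{n},K_p}|Q|^{-m}\Big\|A_{\mb{Y},Q}\bigotimes_{j=1}^m A_{\mb{X}_j',Q}\Big\|_{\mc{H}\to\mc{H}}.
\end{equation*}
Passing from the unnormalized reducing operators to their normalized counterparts absorbs a factor $|Q|^{1/p}\prod_{j=1}^m|Q|^{1/p_j'}$; since $\sum_{j=1}^m\tfrac{1}{p_j'}=m-\tfrac{1}{p}$, this cancels exactly with $|Q|^{-m}$. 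Thus
\begin{equation*}
\|T_Q\|_{L^{\vec{p}}_{\vec{W}}\to L^p_{\mathbf{W}}}\eqsim_{\vec{n},K_p}\Big\|A_{\mathbf{W},Q,p}\bigotimes_{j=1}^m A_{W_j^{-1},Q,p_j'}\Big\|_{\mc{H}\to\mc{H}},
\end{equation*}
and then Proposition~\ref{prop:roudenko_type_characterization} applied with $\vec{r}=\vec{1}$, $s=\infty$, $q=p$, $t_j=p_j'$ identifies the right-hand side (up to constants depending on $\vec{n}$ and on $K_p,K_{p_j'}$) with the Roudenko-type expression defining $[\vec{W}]_{\vec{p},\mathrm{op}}$. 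Since $p_j\geq 1$ implies $K_{p_j'}=1$ and $p\geq 1/m$ implies $K_p\leq 2^{m-1}$, all constants collapse into a dependence on $\vec{n}$ and $m$ alone. Taking the supremum over $Q$ completes the proof.

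The main bookkeeping obstacle is the reconciliation between the two reducing-operator conventions (unnormalized $A_{\mb{X},E}$ from Corollary~\ref{cor:reducingmatrix} versus normalized $A_{W,E,p}$ from Proposition~\ref{prop:roudenko_type_characterization}) and the verification that the exponent identity $-m+\tfrac{1}{p}+\sum_{j=1}^m\tfrac{1}{p_j'}=0$ holds; once this is done, the proof is a clean concatenation of the two previously established propositions.
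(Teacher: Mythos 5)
Your proposal is correct and takes essentially the same approach as the paper: the easy direction is the same direct H\"older computation, and the hard direction is the same concatenation of Proposition~\ref{prop:reducingmatrixavop} (with $E=E'=Q$) and Proposition~\ref{prop:roudenko_type_characterization} (with $\vec{r}=\vec{1}$, $s=\infty$), together with the observation that $K_p\leq 2^{m-1}$ and $K_{p_j'}=1$. The one place you go further than the paper is that you explicitly carry out the normalization reconciliation between the unnormalized reducing operators $A_{\mathbf{X}'_j,Q}$, $A_{\mathbf{Y},Q}$ of Corollary~\ref{cor:reducingmatrix} and the averaged reducing operators $A_{W_j^{-1},Q,p_j'}$, $A_{\mathbf{W},Q,p}$ of Proposition~\ref{prop:roudenko_type_characterization}, verifying the exponent identity $-m+\tfrac{1}{p}+\sum_{j=1}^m\tfrac{1}{p_j'}=0$; the paper compresses this into ``follows immediately from ... coupled with ...''. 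This is a worthwhile detail to make explicit, but it is not a different route.
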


\begin{proof}
First suppose that $[\vec{W}]_{\vec{p},\text{op}}<\infty$. Then, by Proposition~\ref{prop:roudenko_type_characterization}, we have that $W_j^{-1}$ is locally $p_j'$-integrable for all $j=1,\ldots,m$. Moreover, we estimate
\begin{align*}
\|T_Q(\vec{f})\|_{L^p_{\mb{W}}(\R^d;\mc{H})}&=\Big\|\prod_{j=1}^m |W_j\langle W_j^{-1}W_jf_j\rangle_Q|\ind_Q\Big\|_{L^p(\R^d)}\\
&\leq \Big(\int_Q\prod_{j=1}^m\Big(\avint_Q\!\|W_j(x)W_j(x_j)^{-1}\|_{\mc{H}_j\to\mc{H}_j}\Vert W_j(x_j)f_j(x_j)\Vert_{\mc{H}_j}\,\mathrm{d}x_j\Big)^p\,\mathrm{d}x\Big)^{\frac{1}{p}}\\
&\leq \Big(\int_Q\prod_{j=1}^m\Big(\avint_Q\!\|W_j(x)W_j(x_j)^{-1}\|_{\mc{H}_j\to\mc{H}_j}^{p_j'}\,\mathrm{d}x_j\Big)^{\frac{p}{p_j'}}\langle\Vert W_jf_j\Vert_{\mc{H}_j}\rangle^p_{p_j,Q}\,\mathrm{d}x\Big)^{\frac{1}{p}}\\
&\leq [\vec{W}]_{\vec{p},\text{op}}|Q|^{\frac{1}{p}}\prod_{j=1}^m\langle \Vert W_jf_j\Vert_{\mc{H}_j}\rangle_{p_j,Q}
\leq[\vec{W}]_{\vec{p},\text{op}}\prod_{j=1}^m\|f_j\|_{L^{p_j}_{W_j}(\R^d;\mc{H}_j)}.
\end{align*}
Taking a supremum over all cubes $Q\subseteq\R^d$ proves that $\vec{W}\in A_{\vec{p}}$ with $[\vec{W}]_{\vec{p}}\leq[\vec{W}]_{\vec{p},\text{op}}$.

The converse follows immediately from Proposition~\ref{prop:reducingmatrixavop} coupled with Proposition~\ref{prop:roudenko_type_characterization}, \eqref{eq:reducing_matrix} and the observation that for any measure space $(\Omega,\mu)$ we have $K_{L^{p}(\Omega,\mu)}\leq 2^{m-1}$.
\end{proof}

The main result of this section is the following:
\begin{theorem}
    \label{thm:needed_reverse_Holder}
    Let $\vec{p}\in[1,\infty]^{m}$, and let $\vec{W}\in A_{\vec{p}}$. Then we have
    \[
        \sup_{v\in\mc{H}\setminus\{0\}}[\Vert\mathbf{W}v\Vert_{\mc{H}}]_{p}\leq[\vec{W}]_{\vec{p},\mathrm{op}},
    \]
    and
    \[
        \sup_{v\in\mc{H}_j\setminus\{0\}}[\Vert W_j^{-1}v\Vert_{\mc{H}_j}]_{p_j'}\lesssim_{m,\vec{n}}[\vec{W}]_{\vec{p},\mathrm{op}},\quad\forall j=1,\ldots,m.
    \]
    In particular, if $p<\infty$, we have
    \[
        [\mathbf{W}]_{\mathrm{FW}_{p}}:=\sup_{v\in\mc{H}\setminus\{0\}}[\Vert\mathbf{W}v\Vert_{\mc{H}}^{p}]_{\mathrm{FW}}^{\frac{1}{p}}\lesssim_{m,p}[\vec{W}]_{\vec{p},\mathrm{op}}
    \]
    and, if $p_j>1$,
    \[
        [W_{j}^{-1}]_{\mathrm{FW}_{p_j'}}:=\sup_{v\in\mc{H}_j\setminus\{0\}}[\Vert W_j^{-1}v\Vert_{\mc{H}_j}^{p'_j}]_{\mathrm{FW}}^{\frac{1}{p_j'}}\lesssim_{d,m,\vec{n},p_j}[\vec{W}]_{\vec{p},\mathrm{op}}
    \]
    for all $j=1,\ldots,m$.
\end{theorem}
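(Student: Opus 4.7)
The theorem splits into two scalar Muckenhoupt-type bounds — on $\|\mathbf{W}v\|_{\mc{H}}$ and on $\|W_j^{-1}u\|_{\mc{H}_j}$ — and the corresponding Fujii--Wilson consequences. The latter follow from the former by standard scalar machinery, so the crux lies in the two Muckenhoupt estimates.

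For the first bound, fix $v\in\mc{H}\setminus\{0\}$ and set $w(x):=\|\mathbf{W}(x)v\|_{\mc{H}}$. Specialising Proposition~\ref{prop:roudenko_type_characterization} to a single scalar weight rewrites $[w]_p^p$ as the supremum over cubes $Q$ of $\avint_Q(\avint_Q(w(x)/w(y))^{p'}\,\mathrm{d}y)^{p/p'}\,\mathrm{d}x$, and the tensor-product operator-norm identity furnishes the pointwise bound
\[
\frac{w(x)}{w(y)}\leq\|\mathbf{W}(x)\mathbf{W}(y)^{-1}\|_{\mc{H}\to\mc{H}}=\prod_{j=1}^m\|W_j(x)W_j(y)^{-1}\|_{\mc{H}_j\to\mc{H}_j}.
\]
The passage from the resulting integral to $[\vec{W}]_{\vec{p},\mathrm{op}}^p$ — which requires decoupling the single $y$ into independent variables $y_1,\ldots,y_m$ — is accomplished via Proposition~\ref{prop:muck_matrix_to_scalar}, the matrix-to-scalar transfer by exponent-shifted averaging operators, tested on the one-dimensional subspace $\F v\subseteq\mc{H}$. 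Because this subspace is one-dimensional, no dimensional loss is incurred and one obtains the sharp bound $[w]_p\leq[\vec{W}]_{\vec{p},\mathrm{op}}$.

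For the second bound, fix $j\in\{1,\ldots,m\}$ and $u\in\mc{H}_j\setminus\{0\}$. In contrast with the linear theory no duality of the form $W_j\in A_{p_j}\Leftrightarrow W_j^{-1}\in A_{p_j'}$ is available here, so I again apply Proposition~\ref{prop:muck_matrix_to_scalar}, this time with shift parameters $(\vec{r},s)$ engineered to yield a scalar $A_{p_j'}$ condition on $\|W_j^{-1}u\|_{\mc{H}_j}$ rather than an $A_p$ condition on $\|\mathbf{W}v\|_{\mc{H}}$. The test is along the one-dimensional subspace $\F u\subseteq\mc{H}_j$; the dimensional factor $\lesssim_{m,\vec{n}}$ arises from the reducing-operator comparisons of Corollary~\ref{cor:reducingmatrix} and Lemma~\ref{lem:reducing_operator_on_operator} needed to isolate the $j$-th factor from the remaining weights in the $m$-tuple. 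Given these two Muckenhoupt estimates, the Fujii--Wilson bounds are a consequence of the standard scalar estimate $[\rho]_{\mathrm{FW}}\lesssim_d[\rho]_s$ (applied to $\rho=\|\mathbf{W}v\|^p$ with $s=p$ when $p\geq 1$, respectively to $\rho=\|W_j^{-1}u\|^{p_j'}$ with $s=p_j'$; the case $p<1$ is handled via a reverse-H\"older reformulation of $[w]_p$), followed by supremising in $v$, respectively $u$.

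The main obstacle is the exponent-shifted reduction of Proposition~\ref{prop:muck_matrix_to_scalar} itself. Attempting a naive pointwise H\"older bound of $\int_Q\prod_j\|W_j(x)W_j(y)^{-1}\|^{p'}\,\mathrm{d}y$ by $\prod_j(\int_Q\|W_j(x)W_j(y_j)^{-1}\|^{p_j'}\,\mathrm{d}y_j)^{p'/p_j'}$ would require $p'\sum_j(1/p_j')=p'(m-1/p)=1$, which holds only when $m=1$. For $m\geq 2$ this rigidity is a genuine multilinear obstruction, and its resolution is precisely to test exponent-shifted averaging operators whose very definition decouples the $y_j$-integration variables at the cost of a carefully matched arithmetic of the shift parameters.
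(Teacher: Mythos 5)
Your proposal has the right raw ingredients — the pointwise bound $w(x)/w(y)\leq\|\mathbf{W}(x)\mathbf{W}(y)^{-1}\|_{\mc{H}\to\mc{H}}$, the H\"older obstruction, reducing operators, and Proposition~\ref{prop:muck_matrix_to_scalar} — but the architecture is off in a way that leaves a real gap. You correctly observe that the naive H\"older step decoupling $y$ into $y_1,\ldots,y_m$ requires $p'\sum_j(1/p_j')=1$, which fails for $m\geq 2$, and you then assert that this obstruction is "resolved" by testing exponent-shifted averaging operators via Proposition~\ref{prop:muck_matrix_to_scalar}. But Proposition~\ref{prop:muck_matrix_to_scalar} does no decoupling at all: it takes as input a \emph{linear} two-exponent condition $[W]_{p,(r,s),\mathrm{op}}<\infty$ for a single matrix weight and produces a scalar $A_\kappa$ bound for $\|Wv\|^\rho$; it never touches the multilinear integral you start with. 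The decoupling in the paper is done \emph{before} that step: Proposition~\ref{prop:linear_muck_tensor_product} deduces $[\mathbf{W}]_{p,(r,s),\mathrm{op}}\leq[\vec{W}]_{\vec{p},(\vec{r},s),\mathrm{op}}$ by a single application of H\"older with exponents $t_1,\ldots,t_m$ summing harmonically to $t$ — and this works precisely because one \emph{chooses} the two-exponent target quantity (with $1/t=\sum_j 1/t_j$) rather than starting from $[w]_p$ (which imposes the unattainable exponent $p'$). In other words, the resolution is not to repair your starting integral but to abandon it: begin with the two-exponent characteristic $[\mathbf{W}]_{p,(1/m,\infty),\mathrm{op}}$ whose inner exponent $t$ satisfies $1/t=m-1/p=\sum_j1/p_j'$, apply H\"older (Proposition~\ref{prop:linear_muck_tensor_product}), and only then invoke Proposition~\ref{prop:muck_matrix_to_scalar} to extract a scalar $A_{mp}$ condition on $\|\mathbf{W}v\|^{1/m}$.

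The second bound has the analogous gap: Proposition~\ref{prop:muck_matrix_to_scalar} cannot on its own "isolate the $j$-th factor," because it acts on a single matrix weight. The isolation is the content of Proposition~\ref{prop:linear_muck_each_weight}, which interposes reducing operators $A_{W_i^{-1},Q,t_i}$ to peel off the $i\neq j$ factors and then applies H\"older with exponents $t_2,\ldots,t_m,q$ — this is where the $\lesssim_{m,\vec{n}}$ loss you mention actually occurs. Only after obtaining $[W_j^{-1}]_{\hat p_j,(r_j,\sigma_j),\mathrm{op}}\lesssim[\vec{W}]_{\vec{p},(\vec{r},s),\mathrm{op}}$ does one pass to the scalar condition via Proposition~\ref{prop:muck_matrix_to_scalar}. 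So the correct dependency chain is Propositions~\ref{prop:linear_muck_tensor_product} and~\ref{prop:linear_muck_each_weight} (the multilinear-to-linear reductions, which do the decoupling and isolation) feeding into Proposition~\ref{prop:muck_matrix_to_scalar} (the matrix-to-scalar step), packaged as Corollary~\ref{cor:needed_reverse_Holder} and specialized to $\vec{r}=(1,\ldots,1)$, $s=\infty$. Your write-up compresses these into a single misattributed step and, as written, does not actually close the H\"older gap you diagnosed.
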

We defer the proof to the end of this section.

The following result is a multilinear analogue of the result that the norms of an $A_p$ matrix weight applied to vectors, uniformly belong to the scalar $A_p$ class.
\begin{proposition}\label{prop:scalarweightsfrommatrix}
Let $\vec{p}\in[1,\infty]^m$ and let $\vec{W}$ be an $m$-tuple of matrix weights. If $\vec{W}\in A_{\vec{p}}$, then
\[
\vec{W}\vec{u}:=(\Vert W_1u_1\Vert_{\mc{H}_1},\ldots,\Vert W_mu_m\Vert_{\mc{H}_m})\in A_{\vec{p}}
\]
for all $u_1,\ldots,u_m$ with $u_j\in\mc{H}_j\backslash\{0\}$. Moreover, we have
\[
\sup_{\substack{u_1,\ldots,u_m\\ u_j\in\mc{H}_j\backslash\{0\}}}[\vec{W}\vec{u}]_{\vec{p}}\leq[\vec{W}]_{\vec{p}}.
\]
\end{proposition}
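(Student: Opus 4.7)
The plan is to test the matrix-valued averaging operator $T_Q$ on rank-one test functions, namely on tuples of the form $\vec{f} = (h_1 u_1, \ldots, h_m u_m)$ where $h_j \in L^0(\R^d)$ is scalar-valued and $u_j \in \mc{H}_j \setminus \{0\}$ is a fixed vector. Since $[\vec{W}]_{\vec{p}}$ is a supremum over cubes $Q$ of operator norms of $T_Q$, it suffices to prove, uniformly in $Q$, that the scalar multilinear averaging operator
\[
T_Q^{\mathrm{sc}}(h_1, \ldots, h_m) := \ind_Q \prod_{j=1}^m \langle h_j \rangle_Q
\]
is bounded from $L^{p_1}_{v_1}(\R^d) \times \cdots \times L^{p_m}_{v_m}(\R^d)$ into $L^p_v(\R^d)$ with norm at most $\|T_Q\|_{L^{\vec{p}}_{\vec{W}}(\R^d;\vec{\mc{H}}) \to L^p_{\mb{W}}(\R^d;\mc{H})}$, where $v_j := \|W_j u_j\|_{\mc{H}_j}$ and $v := \prod_{j=1}^m v_j$.

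The key computation is as follows. For the input side, $\|h_j u_j\|_{L^{p_j}_{W_j}(\R^d;\mc{H}_j)} = \|\,|h_j|\,v_j\|_{L^{p_j}(\R^d)} = \|h_j\|_{L^{p_j}_{v_j}(\R^d)}$. For the output side, I would use the identity $\langle h_j u_j \rangle_Q = \langle h_j \rangle_Q u_j$, together with the key compatibility
\[
\Big\|\mb{W}(x) \bigotimes_{j=1}^m u_j\Big\|_{\mc{H}} = \Big\|\bigotimes_{j=1}^m W_j(x) u_j\Big\|_{\mc{H}} = \prod_{j=1}^m \|W_j(x) u_j\|_{\mc{H}_j} = v(x),
\]
which follows from the definition of the inner product on the tensor product space. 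Combining these, a direct computation yields
\[
\|T_Q(\vec{f})\|_{L^p_{\mb{W}}(\R^d;\mc{H})} = \Big(\prod_{j=1}^m |\langle h_j \rangle_Q|\Big) \|\ind_Q v\|_{L^p(\R^d)} = \|T_Q^{\mathrm{sc}}(h_1, \ldots, h_m)\|_{L^p_v(\R^d)}.
\]

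Combining the two identities, the ratio $\|T_Q(\vec{f})\|_{L^p_{\mb{W}}} / \prod_j \|h_j u_j\|_{L^{p_j}_{W_j}}$ equals exactly $\|T_Q^{\mathrm{sc}}(\vec{h})\|_{L^p_v} / \prod_j \|h_j\|_{L^{p_j}_{v_j}}$. Taking the supremum over admissible scalar $h_j$ then gives $\|T_Q^{\mathrm{sc}}\|_{L^{p_1}_{v_1} \times \cdots \times L^{p_m}_{v_m} \to L^p_v} \leq \|T_Q\|_{L^{\vec{p}}_{\vec{W}} \to L^p_{\mb{W}}}$, and taking the supremum over cubes $Q$ yields $[\vec{W}\vec{u}]_{\vec{p}} \leq [\vec{W}]_{\vec{p}}$, completing the proof.

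I do not foresee any real obstacles: the statement is essentially a ``restriction to one-dimensional subspaces'' argument, and the whole point is that rank-one tensors $\bigotimes_j u_j$ make the operator norm on $\mc{H}$ factor multiplicatively, turning the matrix-weighted multilinear averaging operator into its scalar-weighted analogue. The only subtlety worth verifying carefully is that $h_j u_j$ really is an admissible input in $L^{p_j}_{W_j}(\R^d;\mc{H}_j)$ whenever $h_j \in L^{p_j}_{v_j}(\R^d)$, which is immediate from $v_j = \|W_j u_j\|_{\mc{H}_j}$.
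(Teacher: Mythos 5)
Your argument is exactly the paper's: test $T_Q$ on rank-one inputs $(h_1 u_1,\ldots,h_m u_m)$, use that $\|\mb{W}(x)\bigotimes_j u_j\|_{\mc{H}}=\prod_j\|W_j(x)u_j\|_{\mc{H}_j}$ and that $h_j\mapsto h_j u_j$ is an isometric embedding of $L^{p_j}_{v_j}(\R^d)$ into $L^{p_j}_{W_j}(\R^d;\mc{H}_j)$, and conclude $[\vec{W}\vec{u}]_{\vec{p}}\leq[\vec{W}]_{\vec{p}}$. The one item you leave unaddressed, which the paper handles explicitly, is the local $p_j'$-integrability of $v_j^{-1}=\|W_j u_j\|_{\mc{H}_j}^{-1}$ — this is part of the definition of the (scalar) $A_{\vec{p}}$ class and is not an automatic consequence of the boundedness of $T_Q^{\mathrm{sc}}$. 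It follows from the Cauchy--Schwarz estimate $\|u_j\|_{\mc{H}_j}^2\,\|W_j u_j\|_{\mc{H}_j}^{-1}\leq\|W_j^{-1}u_j\|_{\mc{H}_j}$ together with the assumed local $p_j'$-integrability of $W_j^{-1}$; you should include this line.
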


\begin{proof}
Observe that
\begin{equation*}
    \Vert u_j\Vert_{\mc{H}_j}^2\Vert W_ju_j\Vert_{\mc{H}_j}^{-1}\leq\Vert W_j^{-1}u_j\Vert_{\mc{H}_j},
\end{equation*}
so $\Vert W_ju_j\Vert_{\mc{H}_j}^{-1}$ is locally $p_j'$-integrable. Let $h_j\in L^{p_j}_{\Vert W_ju_j\Vert_{\mc{H}_j}}(\R^d)$. Then $f_j:=h_ju_j\in L^{p_j}_{W_j}(\R^d;\mc{H}_j)$ with 
\[
\|f_j\|_{L^{p_j}_{W_j}(\R^d;\mc{H}_j)}=\|h_j\|_{L^{p_j}_{\Vert W_ju_j\Vert_{\mc{H}_j}}(\R^d)},
\]
and
\[
T_Q(\vec{f})=\Big(\prod_{j=1}^m\langle h_j\rangle_Q\ind_Q\Big)\bigotimes_{j=1}^m u_j=T_Q(\vec{h})\bigotimes_{j=1}^m u_j.
\]
Hence, since $\Big\Vert\mb{W}\Big(\bigotimes_{j=1}^{m}u_j\Big)\Big\Vert_{\mc{H}}=\prod_{j=1}^m\Vert W_ju_j\Vert_{\mc{H}_j}=:W_{\vec{u}}$,
\begin{align*}
\|T_Q(\vec{h})\|_{L^p_{W_{\vec{u}}}(\R^d)}&=\|T_Q(\vec{f})\|_{L^p_{\mb{W}}(\R^d;\mc{H})}\leq[\vec{W}]_{\vec{p}}\prod_{j=1}^m\|h_j\|_{L^{p_j}_{\Vert W_ju_j\Vert_{\mc{H}_j}}(\R^d)},
\end{align*}
as desired.
\end{proof}

\begin{remark}
    \label{rem:first_scalar_ap}
    In view of the properties of the scalar multilinear Muckenhoupt condition \cite{LOPTT09}, Proposition~\ref{prop:scalarweightsfrommatrix} already implies that
    \begin{equation*}
        \Big[\Big\Vert\mathbf{W}\bigotimes_{j=1}^{m}u_j\Big\Vert_{\mc{H}}^{\frac{1}{m}}\Big]_{mp}^{m}\leq[\vec{W}]_{\vec{p}}
    \end{equation*}
    as well as that
    \begin{equation*}
        [\Vert W^{j}u_j\Vert_{\mc{H}_j}^{-\frac{1}{m}}]_{mp_j'}^{m}\leq[\vec{W}]_{\vec{p}},\quad j=1,\ldots,m
    \end{equation*}
    for all nonzero $u_j\in\mc{H}_j$, $j=1,\ldots,m$. This already implies that these weights satisfy some reverse H\"{o}lder inequalities. However, these do not appear to be sufficient to estimate the mutlilinear convex body sparse operators in Section~\ref{sec:mczo} below, and we need the full result of Theorem~\ref{thm:needed_reverse_Holder}.
\end{remark}

\begin{proposition}
\label{prop:linear_muck_tensor_product}
    Let $\vec{p}\in(0,\infty]^{m}$, $\vec{r}\in(0,\infty)^{m}$ and $s\in(\R\backslash\{0\})\cup\{\infty\}$ with $\vec{p}\geq\vec{r}$ and $\frac{1}{p}\geq\frac{1}{s}$. Let $\vec{W}$ be a $m$-tuple of matrix weights with $[\vec{W}]_{\vec{p},(\vec{r},s),\mathrm{op}}<\infty$. Then
    \begin{equation*}
        [\mathbf{W}]_{p,(r,s),\mathrm{op}}\leq[\vec{W}]_{\vec{p},(\vec{r},s),\mathrm{op}}.
    \end{equation*}
\end{proposition}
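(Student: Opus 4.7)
The plan rests on two structural identities that reduce the proposition to a clean application of generalized H\"older's inequality. First, adopting the natural convention $\tfrac{1}{r}:=\sum_{j=1}^m\tfrac{1}{r_j}$ (in analogy with $\tfrac{1}{p}$), the inner exponent $\tilde{t}$ associated with the one-fold Muckenhoupt quantity $[\mathbf{W}]_{p,(r,s),\mathrm{op}}$, defined by $\tfrac{1}{\tilde{t}}:=\tfrac{1}{r}-\tfrac{1}{p}$, satisfies
\[
\frac{1}{\tilde{t}}=\sum_{j=1}^m\Big(\frac{1}{r_j}-\frac{1}{p_j}\Big)=\sum_{j=1}^m\frac{1}{t_j},
\]
so the outer exponent $q$ defined by $\tfrac{1}{q}=\tfrac{1}{p}-\tfrac{1}{s}$ is unchanged between the two quantities. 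Second, the operator-norm multiplicativity of tensor products, established in the lemma preceding Proposition~\ref{prop:reducingmatrixavop}, yields
\[
\|\mathbf{W}(x)\mathbf{W}(y)^{-1}\|_{\mc{H}\to\mc{H}}=\prod_{j=1}^m\|W_j(x)W_j(y)^{-1}\|_{\mc{H}_j\to\mc{H}_j},
\]
since $\mathbf{W}(x)\mathbf{W}(y)^{-1}=\bigotimes_{j=1}^m W_j(x)W_j(y)^{-1}$.

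With these in hand, I would fix a cube $Q$ and a point $x\in Q$, set $f_j(y):=\|W_j(x)W_j(y)^{-1}\|_{\mc{H}_j\to\mc{H}_j}$, and rewrite the inner average in $[\mathbf{W}]_{p,(r,s),\mathrm{op}}$ as $\avint_Q\prod_{j=1}^m f_j(y)^{\tilde{t}}\,\mathrm{d}y$. Since the exponents $t_j/\tilde{t}\in[1,\infty]$ satisfy $\sum_{j=1}^m \tilde{t}/t_j=1$, the generalized H\"older inequality (in the form of the product estimate for averaged norms) gives
\[
\Big(\avint_Q\prod_{j=1}^m f_j(y)^{\tilde{t}}\,\mathrm{d}y\Big)^{\frac{1}{\tilde{t}}}\leq\prod_{j=1}^m\Big(\avint_Q f_j(y)^{t_j}\,\mathrm{d}y\Big)^{\frac{1}{t_j}}.
\]
Raising both sides to the $q$-th power, integrating in $x$ over $Q$ against the normalized measure, and then taking the $q$-th root yields
\[
\Big(\avint_Q\Big(\avint_Q\|\mathbf{W}(x)\mathbf{W}(y)^{-1}\|_{\mc{H}\to\mc{H}}^{\tilde{t}}\,\mathrm{d}y\Big)^{\frac{q}{\tilde{t}}}\,\mathrm{d}x\Big)^{\frac{1}{q}}\leq\Big(\avint_Q\prod_{j=1}^m\Big(\avint_Q f_j(y)^{t_j}\,\mathrm{d}y\Big)^{\frac{q}{t_j}}\,\mathrm{d}x\Big)^{\frac{1}{q}}.
\]
Taking the supremum over cubes $Q$ finishes the proof.

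The manipulations above are entirely routine for finite exponents, so the only real obstacle is careful bookkeeping of edge cases: any of $t_1,\ldots,t_m,\tilde{t},q$ may equal $\infty$ (when $p_j=r_j$, when $p=r$, or when $p=s$, respectively), in which case the relevant averages are read as essential suprema per the convention introduced with the definition of $[\vec{W}]_{\vec{p},(\vec{r},s),\mathrm{op}}$. Generalized H\"older and the operations of raising to powers and integrating still apply in these degenerate cases with the standard interpretations, so the argument goes through without change, and I would simply note this at the relevant places rather than splitting into a large number of cases.
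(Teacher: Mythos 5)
Your proof is correct and follows essentially the same route as the paper's: fix a cube, observe that $\tfrac{1}{t}=\sum_j\tfrac{1}{t_j}$ (your $\tilde t$), apply generalized H\"older's inequality to the inner average, and invoke the multiplicativity of the operator norm under tensor products to identify the resulting integrand with $\|\mathbf{W}(x)\mathbf{W}(y)^{-1}\|_{\mc{H}\to\mc{H}}$. The only differences are cosmetic (notation, and your explicit remark about the edge cases $t_j=\infty$, $t=\infty$, $q=\infty$, which the paper leaves to the standing conventions).
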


\begin{proof}
Like above, set $\tfrac{1}{q}:=\tfrac{1}{p}-\tfrac{1}{s}$, $\tfrac{1}{t_j}:=\tfrac{1}{r_j}-\tfrac{1}{p_j}$ for $j=1,\ldots,m$, and fix a cube $Q\subseteq\R^d$. Observe that
\begin{equation*}
    \frac{1}{t}=\sum_{j=1}^{m}\frac{1}{t_j}=\frac{1}{r}-\frac{1}{p}.
\end{equation*}
Thus, by H\"{o}lder's inequality for the exponents $t_j$, $j=1,\ldots,m$, we get
\begin{align*}
    &\Big(\avint_{Q}\prod_{j=1}^{m}\Big(\avint_{Q}\Vert W_j(y)W_j(x)^{-1}\Vert_{\mc{H}_j\to\mc{H}_j}^{t_j}\mathrm{d}x\Big)^{\frac{q}{t_j}}\mathrm{d}y\Big)^{\frac{1}{q}}\\
    &\geq\Big(\avint_{Q}\Big(\avint_{Q}\prod_{j=1}^{m}\Vert W_j(y)W_j(x)^{-1}\Vert_{\mc{H}_j\to\mc{H}_j}^{t}\mathrm{d}x\Big)^{\frac{q}{t}}\mathrm{d}y\Big)^{\frac{1}{q}}\\
    &=\Big(\avint_{Q}\Big(\avint_{Q}\Vert\mathbf{W}(y)\mathbf{W}(x)^{-1}\Vert_{\mc{H}\to\mc{H}}^{t}\mathrm{d}x\Big)^{\frac{q}{t}}\mathrm{d}y\Big)^{\frac{1}{q}},
\end{align*}
concluding the proof.
\end{proof}

We now prove a matrix weighted version of \cite[Lemma~3.2]{LMO18}. We use the notation similar to the one of \cite[Proposition~3.1.6]{Ni20}.

\begin{proposition}
\label{prop:linear_muck_each_weight}
    Let $\vec{p}\in(0,\infty]^{m}$, $\vec{r}\in(0,\infty)^{m}$ and $s\in(\R\backslash\{0\})\cup\{\infty\}$ with $\vec{p}\geq\vec{r}$ and $\frac{1}{p}\geq\frac{1}{s}$. Let $\vec{W}$ be a $m$-tuple of matrix weights with $[\vec{W}]_{\vec{p},(\vec{r},s),\mathrm{op}}<\infty$. Fix $j\in\{1,\ldots,m\}$. Set
    \begin{equation*}
        \frac{1}{\sigma_j}:=\frac{1}{r_j}-\Big(\frac{1}{r}-\frac{1}{s}\Big)\in\R.
    \end{equation*}
    Then
    \begin{equation*}
        [W_j]_{p_j,(r_j,\sigma_j),\mathrm{op}}\lesssim_{\vec{r},s,\vec{n}}[\vec{W}]_{\vec{p},(\vec{r},s),\mathrm{op}}
    \end{equation*}
    and
    \begin{equation*}
        [W_j^{-1}]_{\hat{p}_j,(r_j,\sigma_j),\mathrm{op}}\lesssim_{\vec{r},s,\vec{n}}[\vec{W}]_{\vec{p},(\vec{r},s),\mathrm{op}},
    \end{equation*}
    where $\frac{1}{\hat{p}_j}:=\frac{1}{r_j}+\frac{1}{\sigma_j}-\frac{1}{p_j}$.
\end{proposition}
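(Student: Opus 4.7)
The two claimed bounds are equivalent up to dimensional constants. Indeed, by Proposition~\ref{prop:roudenko_type_characterization} applied with $m=1$, one has
\[
[W_j]_{p_j,(r_j,\sigma_j),\mathrm{op}}\eqsim\|A_{W_j,Q,q_{\mathrm{new}}}A_{W_j^{-1},Q,t_j}\|_{\mc{H}_j\to\mc{H}_j}
\]
and
\[
[W_j^{-1}]_{\hat{p}_j,(r_j,\sigma_j),\mathrm{op}}\eqsim\|A_{W_j^{-1},Q,t_j}A_{W_j,Q,q_{\mathrm{new}}}\|_{\mc{H}_j\to\mc{H}_j},
\]
where $\tfrac{1}{q_{\mathrm{new}}}:=\tfrac{1}{p_j}-\tfrac{1}{\sigma_j}$, and a short computation gives $\tfrac{1}{q_{\mathrm{new}}}=\tfrac{1}{q}+\sum_{i\ne j}\tfrac{1}{t_i}$. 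Since both reducing operators are self-adjoint, the identity $(AB)^{*}=BA$ yields $\|AB\|=\|BA\|$, so the two quantities are comparable. It thus suffices to bound the first. Following the scalar case of \cite[Lemma~3.2]{LMO18}, the plan is to apply Hölder's inequality so that the multilinear constant on $Q$ controls the target single-weight constant up to a residual product, and then to show that each factor of the residual is bounded by a dimensional constant.

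\textbf{Hölder step.} For each $i\in\{1,\ldots,m\}$ and each cube $Q$, set
\[
g_i(y):=\Big(\avint_{Q}\|W_i(y)W_i(x)^{-1}\|_{\mc{H}_i\to\mc{H}_i}^{t_i}\,\mathrm{d}x\Big)^{\frac{1}{t_i}}.
\]
Then $[\vec W]_{\vec p,(\vec r,s),\mathrm{op}}$ restricted to $Q$ equals $\|g_1\cdots g_m\|_{L^{q}(Q,\mathrm{d}y/|Q|)}$, while $[W_j]_{p_j,(r_j,\sigma_j),\mathrm{op}}$ restricted to $Q$ equals $\|g_j\|_{L^{q_{\mathrm{new}}}(Q,\mathrm{d}y/|Q|)}$. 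Writing $g_j=(g_1\cdots g_m)\prod_{i\ne j}g_i^{-1}$ and applying Hölder's inequality with exponents $q/q_{\mathrm{new}}$ and $t_i/q_{\mathrm{new}}$ for $i\ne j$ -- whose reciprocals sum to $q_{\mathrm{new}}\cdot\tfrac{1}{q_{\mathrm{new}}}=1$ by the expression for $q_{\mathrm{new}}$, and all of which are $\ge 1$ since $q_{\mathrm{new}}\le q$ and $q_{\mathrm{new}}\le t_i$ -- yields
\[
\|g_j\|_{L^{q_{\mathrm{new}}}}\le\|g_1\cdots g_m\|_{L^{q}}\prod_{i\ne j}\|g_i^{-1}\|_{L^{t_i}}.
\]

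\textbf{Key dimensional estimate.} The main obstacle is showing $\|g_i^{-1}\|_{L^{t_i}(Q,\mathrm{d}y/|Q|)}\lesssim_{\vec{r},s,\vec{n}}1$ uniformly in $Q$. In the scalar case this is immediate since $g_i(y)=w_i(y)\langle w_i^{-t_i}\rangle_Q^{1/t_i}$ factorizes multiplicatively and $\|g_i^{-1}\|_{L^{t_i}}=1$ by direct computation. In the matrix setting the factorization fails, and the intended device is to reproduce this cancellation through reducing operators. Using $\|W_i(y)W_i(x)^{-1}\|=\|W_i(x)^{-1}W_i(y)\|$ (Hermitian adjoint) and applying Lemma~\ref{lem:reducing_operator_on_operator} with the weight $W_i^{-1}$, space $X=L^{t_i}$, set $E=Q$, and the operator $A=W_i(y)$, we obtain
\[
g_i(y)\eqsim_{n_i,t_i}\|A_{W_i^{-1},Q,t_i}W_i(y)\|_{\mc{H}_i\to\mc{H}_i}.
\]
The critical trick is to bound this operator norm from below by testing against $u=W_i(y)^{-1}w$ for an arbitrary nonzero $w\in\mc{H}_i$, which yields
\[
\|A_{W_i^{-1},Q,t_i}W_i(y)\|\ge\frac{\|A_{W_i^{-1},Q,t_i}w\|}{\|W_i(y)^{-1}w\|}.
\]
Raising to $-t_i$, averaging in $y$ over $Q$, and invoking the defining property $(\avint_Q\|W_i(y)^{-1}w\|^{t_i}\,\mathrm{d}y)^{1/t_i}\eqsim\|A_{W_i^{-1},Q,t_i}w\|$, the factors of $\|A_{W_i^{-1},Q,t_i}w\|$ cancel and the desired dimensional bound falls out. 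Combining with the Hölder step proves the first inequality, and the second follows by the self-adjointness observation of the first paragraph.
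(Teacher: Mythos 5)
Your proof is correct, and it follows a genuinely different route from the paper even though both rest on H\"older's inequality with the same exponents. In the paper's argument, one inserts $A_i^{-1}A_i$ factors into the tensor product $(W_1(x)^{-1}W_1(y))\otimes\mathrm{id}\otimes\cdots\otimes\mathrm{id}$, takes operator norms to obtain the pointwise bound
\[
\Vert W_1(x)^{-1}W_1(y)\Vert\leq\Vert W_1(x)^{-1}A_1^{-1}\Vert\Big(\prod_{i\ge 2}\Vert W_i(y)^{-1}A_i^{-1}\Vert\Big)\Vert A^{\otimes}\mathbf{W}(y)\Vert,
\]
and then controls the residuals $\langle\Vert W_i(\cdot)^{-1}A_i^{-1}\Vert^{t_i}\rangle_Q^{1/t_i}$ by Lemma~\ref{lem:reducing_operator_on_operator} applied with $A=A_i^{-1}$, which reduces them immediately to $\Vert\mathrm{id}\Vert=1$. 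You instead stay entirely at the scalar level: you write $g_j$ \emph{exactly} as $(g_1\cdots g_m)\prod_{i\neq j}g_i^{-1}$, apply H\"older, and then produce the dimensional bound on $\Vert g_i^{-1}\Vert_{L^{t_i}(Q)}$ via the vector-testing argument $\Vert A_{W_i^{-1},Q,t_i}W_i(y)\Vert\geq\Vert A_{W_i^{-1},Q,t_i}w\Vert/\Vert W_i(y)^{-1}w\Vert$ followed by averaging in $y$ and the defining property of the reducing operator. Note that $\Vert W_i(y)^{-1}A_i^{-1}\Vert$ is only an \emph{upper} bound (not an equivalent) for $g_i(y)^{-1}$, so the paper's factorization is slightly lossier, while yours is an exact algebraic identity — each method compensates: the paper's residual is bounded automatically, whereas yours needs the testing step. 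Your reduction of the second inequality to the first via the Roudenko characterization and self-adjointness of reducing operators is exactly what the paper does in Lemma~\ref{lem:symmetry}. What your route buys is a proof that more directly parallels the scalar case of \cite[Lemma~3.2]{LMO18} and avoids all tensor-algebra manipulations; the paper's route keeps the tensor product structure more visibly on display. One small point worth making explicit when writing this up: for the H\"older step you need $g_i>0$ a.e.\ on $Q$ (which holds, since $W_i(y)W_i(x)^{-1}$ is a.e.\ invertible) and one should spell out the boundary case $t_i=\infty$ (which your testing argument still handles, reading the $t_i$-average as an essential supremum).
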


For the proof we use the following lemma:
\begin{lemma}
    \label{lem:symmetry}
    Let $p\in(0,\infty]$, $r\in(0,\infty)$ and $s\in(\R\backslash\{0\})\cup\{\infty\}$ with $p\geq r$ and $\frac{1}{p}\geq\frac{1}{s}$. Assume that $[W]_{p,(r,s),\mathrm{op}}<\infty$. Set
    \begin{equation*}
        \frac{1}{\hat{p}}:=\frac{1}{r}+\frac{1}{s}-\frac{1}{p}.
    \end{equation*}
    Then, we have $[W^{-1}]_{\hat{p},(r,s),\mathrm{op}}<\infty$ and
    \begin{equation*}
        [W^{-1}]_{\hat{p},(r,s),\mathrm{op}}\eqsim_{r,s,n}\sup_{Q}\Vert A_{W,Q,\frac{1}{\frac{1}{p}-\frac{1}{s}}}A_{W^{-1},Q,\frac{1}{\frac{1}{r}-\frac{1}{p}}}\Vert_{\mc{H}\to\mc{H}}\eqsim_{r,s,n}[W]_{p,(r,s),\mathrm{op}}.
    \end{equation*}
\end{lemma}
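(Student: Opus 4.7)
The plan is to translate both sides of the asserted equivalence into expressions involving operator norms of products of reducing operators via Proposition~\ref{prop:roudenko_type_characterization} applied with $m=1$, and then exploit the self-adjointness of the reducing operators together with the identity $\|AB\|=\|(AB)^{*}\|=\|B^{*}A^{*}\|$.

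First, I would apply Proposition~\ref{prop:roudenko_type_characterization} with $m=1$ to $[W]_{p,(r,s),\mathrm{op}}$. In the notation of that proposition, the exponents reduce to $q=\tfrac{1}{\frac{1}{p}-\frac{1}{s}}$ and $t=\tfrac{1}{\frac{1}{r}-\frac{1}{p}}$, yielding
\begin{equation*}
[W]_{p,(r,s),\mathrm{op}}\eqsim_{r,s,n}\sup_{Q}\bigl\|A_{W,Q,q}\,A_{W^{-1},Q,t}\bigr\|_{\mc{H}\to\mc{H}}.
\end{equation*}
Note that the integrability hypotheses of Proposition~\ref{prop:roudenko_type_characterization} are precisely encoded in the finiteness assumption $[W]_{p,(r,s),\mathrm{op}}<\infty$, so this step is legitimate.

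Next I would perform the analogous computation for $[W^{-1}]_{\hat p,(r,s),\mathrm{op}}$. One first checks routinely that the compatibility conditions $\hat{p}\geq r$ and $\tfrac{1}{\hat p}\geq\tfrac{1}{s}$ hold, using the assumptions $p\geq r$ and $\tfrac{1}{p}\geq\tfrac{1}{s}$. A direct calculation then shows that the corresponding Roudenko-type exponents $\hat q$ and $\hat t$ satisfy
\begin{equation*}
\frac{1}{\hat q}=\frac{1}{\hat p}-\frac{1}{s}=\frac{1}{r}-\frac{1}{p}=\frac{1}{t},\qquad\frac{1}{\hat t}=\frac{1}{r}-\frac{1}{\hat p}=\frac{1}{p}-\frac{1}{s}=\frac{1}{q},
\end{equation*}
i.e., the roles of $q$ and $t$ get swapped. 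Applying Proposition~\ref{prop:roudenko_type_characterization} once more (to the weight $W^{-1}$, with $(W^{-1})^{-1}=W$), I obtain
\begin{equation*}
[W^{-1}]_{\hat p,(r,s),\mathrm{op}}\eqsim_{r,s,n}\sup_{Q}\bigl\|A_{W^{-1},Q,t}\,A_{W,Q,q}\bigr\|_{\mc{H}\to\mc{H}}.
\end{equation*}

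To close the loop, I would use that $A_{W,Q,q}$ and $A_{W^{-1},Q,t}$ are positive definite and hence self-adjoint, so
\begin{equation*}
\bigl\|A_{W,Q,q}\,A_{W^{-1},Q,t}\bigr\|_{\mc{H}\to\mc{H}}=\bigl\|(A_{W,Q,q}\,A_{W^{-1},Q,t})^{*}\bigr\|_{\mc{H}\to\mc{H}}=\bigl\|A_{W^{-1},Q,t}\,A_{W,Q,q}\bigr\|_{\mc{H}\to\mc{H}},
\end{equation*}
giving the full chain of equivalences stated in the lemma and, as a by-product, the finiteness of $[W^{-1}]_{\hat p,(r,s),\mathrm{op}}$. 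There is no genuine obstacle here: the content of the lemma is essentially bookkeeping on top of Proposition~\ref{prop:roudenko_type_characterization}, with the symmetry $q\leftrightarrow t$ under $(W,p)\leftrightarrow(W^{-1},\hat p)$ being the key algebraic observation. The only mild nuisance is to verify the parameter compatibility and integrability conditions, but these are immediate from the standing hypotheses.
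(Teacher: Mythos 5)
Your proof is correct and is essentially the same argument as the paper's, which is a one-paragraph compression of exactly this reasoning: check that $\tfrac{1}{s}\leq\tfrac{1}{\hat p}\leq\tfrac{1}{r}$, observe the exponent symmetry $\tfrac{1}{\hat p}-\tfrac{1}{s}=\tfrac{1}{r}-\tfrac{1}{p}$ and $\tfrac{1}{r}-\tfrac{1}{\hat p}=\tfrac{1}{p}-\tfrac{1}{s}$, apply Proposition~\ref{prop:roudenko_type_characterization} to both $W$ and $W^{-1}$, and conclude via self-adjointness of the reducing operators. The only detail you leave implicit, which the paper's proof makes a point of flagging, is that the implied constants from Proposition~\ref{prop:roudenko_type_characterization} depend on $K_t$ and $K_q$, and one should note $K_t\leq 2^{(\frac{1}{r}-1)_+}$ and $K_q\leq 2^{(\frac{1}{r}-\frac{1}{s}-1)_+}$ (since $\tfrac{1}{t}\leq\tfrac{1}{r}$ and $\tfrac{1}{q}\leq\tfrac{1}{r}-\tfrac{1}{s}$), so that the equivalence constants indeed depend only on $r,s,n$ as claimed.
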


\begin{proof}
     Observing that $\frac{1}{s}\leq\frac{1}{\hat{p}}\leq\frac{1}{r}$ and
    \begin{equation*}
        \frac{1}{\hat{p}}-\frac{1}{s}=\frac{1}{r}-\frac{1}{p}\quad\text{and}\quad\frac{1}{r}-\frac{1}{\hat{p}}=\frac{1}{p}-\frac{1}{s},
    \end{equation*}
    this follows from Proposition~\ref{prop:roudenko_type_characterization} (where we note that $K_{t}\leq 2^{(\frac{1}{r}-1)_+}\lesssim_r 1$ and, similarly, $K_q\lesssim_{r,s} 1$), and the self-adjointness of reducing operators.
\end{proof}

\begin{proof}[Proof of Proposition~\ref{prop:linear_muck_each_weight}]
Set $\tfrac{1}{q}:=\tfrac{1}{p}-\tfrac{1}{s}$, $\tfrac{1}{t_j}:=\tfrac{1}{r_j}-\tfrac{1}{p_j}$, and $\tfrac{1}{\kappa_j}:=\frac{1}{p_j}-\tfrac{1}{\sigma_j}$ for $j=1,\ldots,m$. Then $\kappa_j\in(0,\infty]$. Indeed, we have
\begin{equation*}
    \frac{1}{r}-\frac{1}{p}=\sum_{i=1}^{m}\Big(\frac{1}{r_i}-\frac{1}{p_i}\Big)\geq\frac{1}{r_j}-\frac{1}{p_j},
\end{equation*}
which implies that
\begin{equation*}
    \frac{1}{p_j}-\frac{1}{\sigma_j}=\frac{1}{r}-\frac{1}{s}-\Big(\frac{1}{r_j}-\frac{1}{p_j}\Big)\geq\frac{1}{r}-\frac{1}{s}-\Big(\frac{1}{r}-\frac{1}{p}\Big)=\frac{1}{p}-\frac{1}{s}\geq0.
\end{equation*}
Now fix a cube $Q\subseteq\R^d$ and $j\in\{1,\ldots,m\}$. By permuting the indices, we may assume without loss of generality that $j=1$. Then we first note that
    \[
        \Vert W_1(x)^{-1}W_1(y)\Vert_{\mc{H}_1\to\mc{H}_1}=\Vert (W_1(x)^{-1}W_1(y))\otimes\mathrm{id}_{\mc{H}_{2}}\ldots\otimes\mathrm{id}_{\mc{H}_m}\Vert_{\mc{H}\to\mc{H}}.
    \]
    Since $[\vec{W}]_{\vec{p},(\vec{r},s)}<\infty$, we have that $W_i^{-1}$ is $t_i$-integrable over $Q$. Denoting $A_i:=A_{W_i^{-1},Q,t_i}$ for $i=1,\ldots,m$ as well as $A^{\otimes}:=\bigotimes_{i=1}^{m}A_i$, we write
    \begin{align*}
        (W_1&(x)^{-1}W_1(y))\otimes\mathrm{id}_{\mc{H}_{2}}\ldots\otimes\mathrm{id}_{\mc{H}_m}\\
        &=(W_1(x)^{-1}A_1^{-1}A_1W_1(y))\otimes\Big(\bigotimes_{i=2}^{m}(W_i(y)^{-1}A_i^{-1}A_iW_i(y))\Big)\\
        &=(W_1(x)^{-1}A_1^{-1})\otimes\Big(\bigotimes_{i=2}^{m}(W_i(y)^{-1}A_i^{-1})\Big)\Big](A^{\otimes}\mathbf{W}(y)).
    \end{align*}
    Using first the submultiplicativity of the operator norm and then the fact that the operator norm of the tensor product of operators equals the product of the operator norms of the individual operators, we finally get
    \begin{align*}
        &\Vert W_1(x)^{-1}W_1(y)\Vert_{\mc{H}_1\to\mc{H}_1}\\
        &\leq\Vert W_1(x)^{-1}A_1^{-1}\Vert_{\mc{H}_1\to\mc{H}_1}\Big(\prod_{i=2}^{m}\Vert W_i(y)^{-1}A_i^{-1}\Vert_{\mc{H}_i\to\mc{H}_i}\Big)\Vert A^{\otimes}\mathbf{W}(y)\Vert_{\mc{H}\to\mc{H}}.
    \end{align*}
    Thus, we deduce
    \begin{align*}
        &\Big(\avint_{Q}\Big(\avint_{Q}\Vert W_1(x)^{-1}W_1(y)\Vert_{\mc{H}_1\to\mc{H}_1}^{t_1}\mathrm{d}x\Big)^{\frac{\kappa_1}{t_1}}\mathrm{d}y\Big)^{\frac{1}{\kappa_1}}\\
        &\leq\Big(\avint_{Q}\Vert W_1(x)^{-1}A_1^{-1}\Vert_{\mc{H}_1\to\mc{H}_1}^{t_1}\mathrm{d}x\Big)^{\frac{1}{t_1}}\\
        &\times \Big(\avint_{Q}\Big(\prod_{i=2}^{m}\Vert W_i(y)^{-1}A_i^{-1}\Vert_{\mc{H}_i\to\mc{H}_i}^{\kappa_1}\Big)\Vert A^{\otimes}\mathbf{W}(y)\Vert_{\mc{H}\to\mc{H}}^{\kappa_1}\mathrm{d}y\Big)^{\frac{1}{\kappa_1}}\\
        &\lesssim_{r_1,n_1}\Big(\avint_{Q}\Big(\prod_{i=2}^{m}\Vert W_i(y)^{-1}A_i^{-1}\Vert_{\mc{H}_i\to\mc{H}_i}^{\kappa_1}\Big)\Vert A^{\otimes}\mathbf{W}(y)\Vert_{\mc{H}\to\mc{H}}^{\kappa_1}\mathrm{d}y\Big)^{\frac{1}{\kappa_1}}.
    \end{align*}
    Now, we observe that
    \begin{align*}
        \sum_{i=2}^{m}\frac{1}{t_i}+\frac{1}{q}=\frac{1}{t}-\frac{1}{t_1}+\frac{1}{q}=\frac{1}{r}-\frac{1}{s}-\frac{1}{r_1}+\frac{1}{p_1}=\frac{1}{\kappa_1}.
    \end{align*}
    Thus, we apply H\"{o}lder's inequality for the exponents $t_2,\ldots,t_m,q$ to conclude that
    \begin{align*}
        &\Big(\avint_{Q}\Big(\avint_{Q}\Vert W_1(x)^{-1}W_1(y)\Vert_{\mc{H}_1\to\mc{H}_1}^{t_1}\mathrm{d}x\Big)^{\frac{\kappa_1}{t_1}}\mathrm{d}y\Big)^{\frac{1}{\kappa_1}}\\
        &\lesssim_{r_1,n_1}\Big(\prod_{i=2}^{m}\Big(\avint_{Q}\Vert W_i(y)^{-1}A_i^{-1}\Vert_{\mc{H}_i\to\mc{H}_i}^{t_i}\mathrm{d}y\Big)^{\frac{1}{t_i}}\Big)\Big(\avint_{Q}\Vert A^{\otimes}\mathbf{W}(y)\Vert_{\mc{H}\to\mc{H}}^{q}\mathrm{d}y\Big)^{\frac{1}{q}}\\
        &\lesssim_{\vec{r},s,\vec{n}}[\vec{W}]_{\vec{p},(\vec{r},s),\mathrm{op}},
    \end{align*}
    concluding the proof of the first estimate. The second estimate follows immediately from the first one coupled with Lemma~\ref{lem:symmetry}, proving the result.
    \end{proof}

We now want to show that the conclusions of Proposition~\ref{prop:linear_muck_tensor_product} and Proposition~\ref{prop:linear_muck_each_weight} together are actually equivalent to the condition of the weight $m$-tuple belonging to the multilinear matrix Muckenhoupt class.

\begin{theorem}
    \label{thm:multilinear_muckenhoupt_through_linear_muckenhoupt}
    Let $\vec{p}\in(0,\infty]^{m}$, $\vec{r}\in(0,\infty)^{m}$ and $s\in(\R\backslash\{0\})\cup\{\infty\}$ with $\vec{p}\geq\vec{r}$ and $\frac{1}{p}\geq\frac{1}{s}$. Let $\vec{W}$ be a $m$-tuple of matrix weights with $[\vec{W}]_{\vec{p},(\vec{r},s),\mathrm{op}}<\infty$. Set
    \begin{equation*}
        \frac{1}{\sigma_j}:=\frac{1}{r_j}-\Big(\frac{1}{r}-\frac{1}{s}\Big),\quad j=1,\ldots,m.
    \end{equation*}
   Then, the following are equivalent.
    \begin{enumerate}[(i)]
        \item\label{it:multilinear_muckenhoupt_through_linear_muckenhoupt1} $[\vec{W}]_{\vec{p},(\vec{r},s),\mathrm{op}}<\infty$;

        \item\label{it:multilinear_muckenhoupt_through_linear_muckenhoupt2} $[W_j]_{p_j,(r_j,\sigma_j),\mathrm{op}}<\infty$ for each $j=1,\ldots,m$ and $[\mathbf{W}]_{p,(r,s)}<\infty$.
    \end{enumerate}
    Moreover, in this case we have
    \begin{align*}
        \max([W_1]_{p_1,(r_1,\sigma_1),\mathrm{op}},\ldots,[W_m]_{p_m,(r_m,\sigma_m),\mathrm{op}},[\mathbf{W}]_{p,(r,s),\mathrm{op}})\lesssim_{\vec{r},s,\vec{n}}[\vec{W}]_{\vec{p},(\vec{r},s),\mathrm{op}}
    \end{align*}
    and
    \begin{align*}
        [\vec{W}]_{\vec{p},(\vec{r},s),\mathrm{op}}\lesssim_{\vec{n},\vec{r},s,m}[\mathbf{W}]_{p,(r,s),\mathrm{op}}\prod_{j=1}^{m}[W]_{p_j,(r_j,\sigma_j),\mathrm{op}}.
    \end{align*}
\end{theorem}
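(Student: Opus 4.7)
The forward implication (i)$\Rightarrow$(ii), together with the bound
\[
\max\{[W_1]_{p_1,(r_1,\sigma_1),\mathrm{op}},\ldots,[W_m]_{p_m,(r_m,\sigma_m),\mathrm{op}},[\mathbf{W}]_{p,(r,s),\mathrm{op}}\}\lesssim_{\vec{r},s,\vec{n}}[\vec{W}]_{\vec{p},(\vec{r},s),\mathrm{op}},
\]
follows immediately upon combining Propositions~\ref{prop:linear_muck_tensor_product} and \ref{prop:linear_muck_each_weight}. The content of the theorem therefore lies in the converse, together with the quantitative estimate
\[
[\vec{W}]_{\vec{p},(\vec{r},s),\mathrm{op}}\lesssim_{\vec{n},\vec{r},s,m}[\mathbf{W}]_{p,(r,s),\mathrm{op}}\prod_{j=1}^{m}[W_j]_{p_j,(r_j,\sigma_j),\mathrm{op}}.
\]

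Fix a cube $Q$ and recall the exponent relations $\tfrac{1}{q}=\tfrac{1}{p}-\tfrac{1}{s}$, $\tfrac{1}{t_j}=\tfrac{1}{r_j}-\tfrac{1}{p_j}$, and $\tfrac{1}{q_j}=\tfrac{1}{p_j}-\tfrac{1}{\sigma_j}$. Introduce for each $j$ the reducing operator $A_j^{+}:=A_{W_j,Q,q_j}$. My plan is to exploit the factorization $W_j(y)W_j(x_j)^{-1}=W_j(y)(A_j^{+})^{-1}\cdot A_j^{+}W_j(x_j)^{-1}$ and submultiplicativity of the operator norm. Integrating in $x_j$ with exponent $t_j$, using self-adjointness of $W_j(x_j)^{-1}$ and $A_j^{+}$ to write $\|A_j^{+}W_j(x_j)^{-1}\|=\|W_j(x_j)^{-1}A_j^{+}\|$, and then invoking Lemma~\ref{lem:reducing_operator_on_operator} together with Proposition~\ref{prop:roudenko_type_characterization}, I expect to obtain
\[
\Big(\avint_Q\|W_j(y)W_j(x_j)^{-1}\|^{t_j}\,\mathrm{d}x_j\Big)^{\frac{1}{t_j}}\lesssim_{r_j,n_j}[W_j]_{p_j,(r_j,\sigma_j),\mathrm{op}}\,\|W_j(y)(A_j^{+})^{-1}\|_{\mc{H}_j\to\mc{H}_j}.
\]
Raising to the $q$-th power, taking the product over $j$, integrating in $y$, and using the tensor product multiplicativity $\prod_{j}\|W_j(y)(A_j^{+})^{-1}\|=\big\|\mathbf{W}(y)\bigotimes_{j}(A_j^{+})^{-1}\big\|_{\mc{H}\to\mc{H}}$, this reduces the desired bound to
\[
\Big(\avint_Q\Big\|\mathbf{W}(y)\bigotimes_{j}(A_j^{+})^{-1}\Big\|^{q}\,\mathrm{d}y\Big)^{\frac{1}{q}}\lesssim_{\vec{n},\vec{r},s,m}[\mathbf{W}]_{p,(r,s),\mathrm{op}}.
\]

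The hard part will be this last estimate. By Lemma~\ref{lem:reducing_operator_on_operator} its left-hand side equals, up to dimensional constants, $\|A_{\mathbf{W},Q,q}\bigotimes_{j}(A_j^{+})^{-1}\|_{\mc{H}\to\mc{H}}$, while Proposition~\ref{prop:roudenko_type_characterization} identifies the right-hand side with $\|A_{\mathbf{W},Q,q}A_{\mathbf{W}^{-1},Q,t}\|_{\mc{H}\to\mc{H}}$. The main obstruction is that $\bigotimes_{j}(A_j^{+})^{-1}$ does not cleanly factor through $A_{\mathbf{W}^{-1},Q,t}$, because operator inverses interact poorly with tensor products. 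To overcome this, I would reduce to simple-tensor inputs via the column-sum characterization \eqref{eq:norm_of_matrix_columns} of the operator norm, collapsing the estimate to a sum over the basis vectors $\bigotimes_{j}e_{k_j}$. On each such simple tensor the integrand splits multiplicatively as $\prod_{j}\|W_j(y)(A_j^{+})^{-1}e_{k_j}\|$, and a further pointwise factorization $\|W_j(y)(A_j^{+})^{-1}e_{k_j}\|\leq\|W_j(y)W_j(x)^{-1}\|\|W_j(x)(A_j^{+})^{-1}e_{k_j}\|$ followed by H\"older's inequality in $x$ with the exponents $t_j/t$ (which sum to $1$ by $\tfrac{1}{t}=\sum_j\tfrac{1}{t_j}$) should recombine the individual factors into a single multilinear average matching the defining integral of $[\mathbf{W}]_{p,(r,s),\mathrm{op}}$, with the residual terms absorbed by the linear characteristics $[W_j]_{p_j,(r_j,\sigma_j),\mathrm{op}}\eqsim\|A_j^{+}A_{W_j^{-1},Q,t_j}\|$. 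Careful bookkeeping of the constants from the various reducing operator equivalences then completes the proof.
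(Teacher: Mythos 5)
Your opening reduction---factoring $W_j(y)W_j(x_j)^{-1}$ through the reducing operator $A_j^{+}:=A_{W_j,Q,\kappa_j}$ (where $\tfrac{1}{\kappa_j}=\tfrac{1}{p_j}-\tfrac{1}{\sigma_j}$) and applying Lemma~\ref{lem:reducing_operator_on_operator} together with Proposition~\ref{prop:roudenko_type_characterization}---is correct, and it does reduce the problem to
\begin{equation*}
\Big(\avint_Q\Big\|\mathbf{W}(y)\bigotimes_{j}(A_j^{+})^{-1}\Big\|^{q}\,\mathrm{d}y\Big)^{\frac{1}{q}}\lesssim[\mathbf{W}]_{p,(r,s),\mathrm{op}}.
\end{equation*}
This is a genuinely different set-up from the paper's: the paper never introduces reducing operators at this point, but instead fixes an auxiliary variable $z$ and factors $\|W_j(y)W_j(x)^{-1}\|\leq\|W_j(y)W_j(z)^{-1}\|\,\|W_j(z)W_j(x)^{-1}\|$, then averages in $z$ with exponent $\ell$ defined by $\tfrac{1}{\ell}=m(\tfrac{1}{r}-\tfrac{1}{s})$.

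However, the description of the final step is where the proposal has a real gap. First, H\"older's inequality ``with the exponents $t_j/t$'' does not produce the desired structure: after the pointwise factorization in $x$ there are $m+1$ factors to separate---one factor $\|\mathbf{W}(y)\mathbf{W}(x)^{-1}\|=\prod_j\|W_j(y)W_j(x)^{-1}\|$ carrying the tensor weight, and $m$ residual factors $\|W_j(x)(A_j^{+})^{-1}e_{k_j}\|$---yet there are only $m$ exponents $t_j/t$. The correct choice (after taking an $L^{\ell}$ average in $x$) is H\"older with exponents $\tfrac{t}{\ell},\tfrac{\kappa_1}{\ell},\ldots,\tfrac{\kappa_m}{\ell}$, which sum to $1$ precisely because $\tfrac{1}{\ell}=\tfrac{1}{t}+\sum_j\tfrac{1}{\kappa_j}$; these are the same exponents the paper uses in its $z$-average. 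Second, the residual terms are \emph{not} absorbed by the linear characteristics $[W_j]_{p_j,(r_j,\sigma_j),\mathrm{op}}$: after averaging $\|W_j(x)(A_j^{+})^{-1}e_{k_j}\|$ over $x$ with exponent $\kappa_j$, Lemma~\ref{lem:reducing_operator_on_operator} gives $\eqsim\|A_j^{+}(A_j^{+})^{-1}e_{k_j}\|=1$. If the residual terms really needed another factor of $[W_j]$ as you say, your final estimate would be $[\vec{W}]_{\vec{p},(\vec{r},s),\mathrm{op}}\lesssim[\mathbf{W}]\prod_j[W_j]^2$, which is strictly weaker than the theorem's stated bound $[\mathbf{W}]\prod_j[W_j]$. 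Third, after the H\"older step the first factor is the defining integral of $[\mathbf{W}^{-1}]_{\hat p,(r,s),\mathrm{op}}$ (with the inner and outer averages in $y$ and $x$ swapped), not of $[\mathbf{W}]_{p,(r,s),\mathrm{op}}$ directly, so you still need to invoke the symmetry Lemma~\ref{lem:symmetry} to convert between the two---a step absent from your sketch. With these corrections the reducing-operator route does go through, but it ends up reproducing the paper's H\"older computation with an extra layer of John-ellipsoid machinery on top; the paper's auxiliary $z$-variable achieves the same factorization without ever leaving the world of pointwise operator norms.
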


\begin{proof}
     
    The implication \ref{it:multilinear_muckenhoupt_through_linear_muckenhoupt1}$\Rightarrow$\ref{it:multilinear_muckenhoupt_through_linear_muckenhoupt2} together with the first inequality follow from Proposition~\ref{prop:linear_muck_tensor_product} and Proposition~\ref{prop:linear_muck_each_weight}. Thus, it remains to show that \ref{it:multilinear_muckenhoupt_through_linear_muckenhoupt2}$\Rightarrow$\ref{it:multilinear_muckenhoupt_through_linear_muckenhoupt1} together with the second inequality.

    Assume that $[W_j]_{p_j,(r_j,\sigma_j),\mathrm{op}}<\infty$ for each $j=1,\ldots,m$ and $[\mathbf{W}]_{p,(r,s)}<\infty$. We set $\tfrac{1}{\ell}:=m\big(\tfrac{1}{r}-\tfrac{1}{s}\big)$, $\tfrac{1}{q}:=\tfrac{1}{p}-\tfrac{1}{s}$, $\tfrac{1}{t_j}:=\tfrac{1}{r_j}-\tfrac{1}{p_j}$, and $\tfrac{1}{\kappa_j}:=\frac{1}{p_j}-\tfrac{1}{\sigma_j}$ for $j=1,\ldots,m$.
Fix a cube $Q\subseteq\R^d$. Then, we have
   \begin{align*}
   		&\Big(\avint_{Q}\prod_{j=1}^{m}\Big(\avint_{Q}\Vert W_j(y)W_j(x)^{-1}\Vert_{\mc{H}_j\to\mc{H}_j}^{t_j}\mathrm{d}x\Big)^{\frac{q}{t_j}}\mathrm{d}y\Big)^{\frac{\ell}{q}}\\
   		&=\avint_{Q}\Big(\avint_{Q}\prod_{j=1}^{m}\Big(\avint_{Q}\Vert W_j(y)W_j(x)^{-1}\Vert_{\mc{H}_j\to\mc{H}_j}^{t_j}\mathrm{d}x\Big)^{\frac{q}{t_j}}\mathrm{d}y\Big)^{\frac{\ell}{q}}\mathrm{d}z\\
   		&\leq\avint_{Q}\Big(\avint_{Q}\prod_{j=1}^{m}\Big(\avint_{Q}\Vert W_j(y)W_j(z)^{-1}\Vert_{\mc{H}_j\to\mc{H}_j}^{t_j}\Vert W_j(z)W_j(x)^{-1}\Vert_{\mc{H}_j\to\mc{H}_j}^{t_j}\mathrm{d}x\Big)^{\frac{q}{t_j}}\mathrm{d}y\Big)^{\frac{\ell}{q}}\mathrm{d}z\\
   		&=\avint_{Q}\Big(\avint_{Q}\Vert\mathbf{W}(y)\mathbf{W}(z)^{-1}\Vert_{\mc{H}\to\mc{H}}^{q}\mathrm{d}y\Big)^{\frac{\ell}{q}}\prod_{j=1}^{m}\Big(\avint_{Q}\Vert W_j(z)W_j(x)^{-1}\Vert_{\mc{H}_j\to\mc{H}_j}^{t_j}\mathrm{d}x\Big)^{\frac{\ell}{t_j}}\mathrm{d}z.
   \end{align*}
   Notice that
   \begin{equation*}
   		\sum_{j=1}^{m}\frac{1}{\kappa_j}+\frac{1}{t}=\frac{1}{p}+\frac{1}{t}-\frac{1}{r}+m\Big(\frac{1}{r}-\frac{1}{s}\Big)=\frac{1}{\ell}.
   \end{equation*}
   Thus, by applying H\"older's inequality with the exponents $\tfrac{t}{\ell},\tfrac{\kappa_1}{\ell},\ldots,\tfrac{\kappa_m}{\ell}$, we deduce
   \begin{align*}
   		&\Big(\avint_{Q}\prod_{j=1}^{m}\Big(\avint_{Q}\Vert W_j(y)W_j(x)^{-1}\Vert_{\mc{H}_j\to\mc{H}_j}^{t_j}\mathrm{d}x\Big)^{\frac{q}{t_j}}\mathrm{d}y\Big)^{\frac{\ell}{q}}\\
   		&\leq\Big(\avint_{Q}\Big(\avint_{Q}\Vert\mathbf{W}(y)\mathbf{W}(z)^{-1}\Vert_{\mc{H}\to\mc{H}}^{q}\mathrm{d}y\Big)^{\frac{t}{q }}\mathrm{d}z\Big)^{\frac{\ell}{t}}\\
   		&\times\prod_{j=1}^{m}\Big(\avint_{Q}\Big(\avint_{Q}\Vert W_j(z)W_j(x)^{-1}\Vert_{\mc{H}_j\to\mc{H}_j}^{t_j}\mathrm{d}x\Big)^{\frac{\kappa_j}{t_j}}\mathrm{d}z\Big)^{\frac{\ell}{\kappa_j}}\\
   		&\leq[\mathbf{W}^{-1}]_{\hat{p},(r,s),\mathrm{op}}^{\ell}\prod_{j=1}^{m}[W_j]_{p_j,(r_j,\sigma_j),\mathrm{op}}^{\ell},
   \end{align*}
   where
   \begin{equation*}
   		\frac{1}{\hat{p}}:=\frac{1}{r}+\frac{1}{s}-\frac{1}{p}.
   \end{equation*}
   Thus, we have shown that
   \begin{equation*}
   		[\vec{W}]_{\vec{p},(\vec{r},s),\mathrm{op}}\leq[\mathbf{W}^{-1}]_{\hat{p},(r,s),\mathrm{op}}\prod_{j=1}^{m}[W_j]_{p_j,(r_j,\sigma_j),\mathrm{op}}.
   \end{equation*}
   An appeal to Lemma~\ref{lem:symmetry} yields $[\mathbf{W}^{-1}]_{\hat{p},(r,s),\mathrm{op}}\eqsim_{n,r,s}[\mathbf{W}]_{p,(r,s),\mathrm{op}}$, concluding the proof.
   \end{proof}

\begin{proposition}
    \label{prop:muck_matrix_to_scalar}
    Let $p\in(0,\infty]$, $r\in(0,\infty)$ and $s\in(\R\backslash\{0\})\cup\{\infty\}$ with $p\geq r$, $\frac{1}{p}\geq\frac{1}{s}$ and $r\neq s$. Assume that $[W]_{p,(r,s),\mathrm{op}}<\infty$. Then, for all $v\in\mc{H}\setminus\{0\}$ we have
    \begin{equation*}
        [\Vert W v\Vert_{\mc{H}}^{\rho}]^{\frac{1}{\rho}}_{\kappa}\leq [W]_{p,(r,s),\mathrm{op}},
    \end{equation*}
    where
    \begin{equation*}
        \frac{1}{\kappa}:=\frac{\frac{1}{p}-\frac{1}{s}}{\frac{1}{r}-\frac{1}{s}}\in[0,1]\quad\text{and}\quad\frac{1}{\rho}:=\frac{1}{r}-\frac{1}{s}.
    \end{equation*}
\end{proposition}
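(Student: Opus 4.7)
The plan is to reduce the claim to a pointwise estimate coming from the submultiplicativity of the operator norm. First I would verify by direct algebra that the exponents match up nicely: setting $\frac{1}{t}:=\frac{1}{r}-\frac{1}{p}$ and $\frac{1}{q}:=\frac{1}{p}-\frac{1}{s}$, one has $\frac{1}{\rho}=\frac{1}{t}+\frac{1}{q}$, from which a short calculation gives $\rho\kappa=q$ and $\rho\kappa'=t$. Consequently,
\[
[\Vert Wv\Vert_{\mc{H}}^{\rho}]_{\kappa}^{\frac{1}{\rho}}=\sup_{Q}\Big(\avint_{Q}\Vert W(x)v\Vert_{\mc{H}}^{q}\,\mathrm{d}x\Big)^{\frac{1}{q}}\Big(\avint_{Q}\Vert W(y)v\Vert_{\mc{H}}^{-t}\,\mathrm{d}y\Big)^{\frac{1}{t}},
\]
so the task reduces to bounding the right-hand side by $[W]_{p,(r,s),\mathrm{op}}$ uniformly in $Q$.

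Next I would use the pointwise submultiplicativity bound $\Vert W(x)v\Vert_{\mc{H}}\leq\Vert W(x)W(y)^{-1}\Vert_{\mc{H}\to\mc{H}}\,\Vert W(y)v\Vert_{\mc{H}}$, which holds for a.e. $y$ since $W(y)$ is positive definite a.e. and $v\neq 0$. Absorbing the $y$-average into the $x$-average via $\frac{q}{t}\cdot t=q$, one rewrites
\[
\Big(\avint_{Q}\!\Vert W(x)v\Vert_{\mc{H}}^{q}\,\mathrm{d}x\Big)\Big(\avint_{Q}\!\Vert W(y)v\Vert_{\mc{H}}^{-t}\,\mathrm{d}y\Big)^{\frac{q}{t}}=\avint_{Q}\Big(\avint_{Q}\Big(\frac{\Vert W(x)v\Vert_{\mc{H}}}{\Vert W(y)v\Vert_{\mc{H}}}\Big)^{t}\,\mathrm{d}y\Big)^{\frac{q}{t}}\,\mathrm{d}x.
\]
Applying the pointwise bound inside, this is dominated by
\[
\avint_{Q}\Big(\avint_{Q}\Vert W(x)W(y)^{-1}\Vert_{\mc{H}\to\mc{H}}^{t}\,\mathrm{d}y\Big)^{\frac{q}{t}}\,\mathrm{d}x\leq[W]_{p,(r,s),\mathrm{op}}^{q}.
\]
Taking the $\frac{1}{q}$-th power and then a supremum over $Q$ yields the desired inequality, once it is combined with the identity of the first paragraph.

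The only real obstacle is handling the edge cases $q=\infty$ (that is, $p=s$) and $t=\infty$ (that is, $p=r$), in which the exponents $\kappa$ or $\kappa'$ degenerate and the corresponding averages must be reinterpreted as essential suprema (consistently in the definition of $[W]_{p,(r,s),\mathrm{op}}$, in the scalar characteristic $[\,\cdot\,]_{\kappa}$, and in the Hölder-style rearrangement above). In each of these boundary situations the submultiplicativity step is applied first with the essential-supremum acting on the appropriate variable, after which the remaining integral bound goes through verbatim. Since $r\neq s$ is assumed, the case $q=t=\infty$ does not occur, so no further pathology arises.
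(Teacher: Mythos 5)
Your argument is correct and, in fact, shorter than the paper's. Both proofs hinge on the same pointwise submultiplicativity bound $\Vert W(x)v\Vert_{\mc{H}}\leq\Vert W(x)W(y)^{-1}\Vert_{\mc{H}\to\mc{H}}\Vert W(y)v\Vert_{\mc{H}}$ and the exponent arithmetic $\rho\kappa=q$, $\rho\kappa'=t$, but you handle them differently. The paper first proves the intermediate vector-valued estimate \eqref{eq:vector_valued} via H\"older, then tests it on rank-one data $f^{1/\rho}v$ to show $\Vert T_Q\Vert_{L^{\kappa}_w\to L^{\kappa}_w}\leq[W]_{p,(r,s),\mathrm{op}}^{\rho}$ with $w=\Vert Wv\Vert_{\mc{H}}^{\rho}$, and only then invokes the classical fact that the two-weight averaging-operator norm dominates the $A_{\kappa}$ characteristic. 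You instead unwind the scalar characteristic $[\Vert Wv\Vert_{\mc{H}}^{\rho}]_{\kappa}^{1/\rho}$ directly into the double-average quantity, absorb the $y$-integral into the $x$-integral, and apply the pointwise bound to land immediately on the $[W]_{p,(r,s),\mathrm{op}}$ expression, with no detour through operator testing or the lower bound for $T_Q$. This is a genuine simplification for the purpose of this proposition; what the paper's route buys in exchange is the self-standing vector-valued inequality \eqref{eq:vector_valued}, which is of independent interest, though it is not used elsewhere in the paper's proof. Your remarks on the degenerate cases $q=\infty$ (i.e. $p=s$, $\kappa=\infty$) and $t=\infty$ (i.e. $p=r$, $\kappa=1$) are correct: in each the relevant inner quantity becomes an essential supremum, the pointwise bound is applied for a.e.~$y$ before taking the essential supremum, and since $r\neq s$ forces $\rho<\infty$ the two degenerations cannot occur simultaneously, so no circularity arises.
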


\begin{proof}
    Set
    \begin{equation*}
        \frac{1}{t}:=\frac{1}{r}-\frac{1}{p},\quad\frac{1}{q}:=\frac{1}{p}-\frac{1}{s}
    \end{equation*}
    and observe that
    \begin{equation*}
        \rho\kappa'=t,\quad\rho\kappa=q.
    \end{equation*}
    To begin with, we prove the following estimate for measurable functions $f:\R^d\to\mc{H}$:
    \begin{equation}
        \label{eq:vector_valued}
        \Big(\avint_{Q}\langle\Vert W(x)f\Vert_{\mc{H}}^{\rho}\rangle_{Q}^{\kappa}\mathrm{d}x\Big)^{\frac{1}{\kappa}}\leq [W]_{p,(r,s),\mathrm{op}}\Big(\avint_{Q}\Vert W(x)f(x)\Vert_{\mc{H}}^{\rho\kappa}\mathrm{d}x\Big)^{\frac{1}{\kappa}}.
    \end{equation}
    We estimate
    \begin{align*}
        &\Big(\avint_{Q}\langle\Vert W(x)f\Vert_{\mc{H}}^{\rho}\rangle_{Q}^{\kappa}\mathrm{d}x\Big)^{\frac{1}{\kappa}}
        =\Big(\avint_{Q}\Big(\avint_{Q}\Vert W(x)f(y)\Vert_{\mc{H}}^{\rho}\mathrm{d}y\Big)^{q}\mathrm{d}x\Big)^{\frac{1}{\kappa}}\\
        &\leq\Big(\avint_{Q}\Big(\avint_{Q}\Vert W(x)W(y)^{-1}\Vert_{\mc{H}\to\mc{H}}^{\rho}\cdot\Vert W(y)f(y)\Vert_{\mc{H}}^{\rho}\mathrm{d}y\Big)^{\kappa}\mathrm{d}x\Big)^{\frac{1}{\kappa}}\\
        &\leq\Big(\avint_{Q}\Big(\avint_{Q}\Vert W(x)W(y)^{-1}\Vert_{\mc{H}\to\mc{H}}^{\rho\kappa'}\mathrm{d}y\Big)^{\frac{\kappa}{\kappa'}}\Big(\avint_{Q}\Vert W(y)f(y)\Vert_{\mc{H}}^{\rho\kappa}\mathrm{d}y\Big)\mathrm{d}x\Big)^{\frac{1}{\kappa}}\\
        &\leq [W]_{p,(r,s),\mathrm{op}}^{\rho}\Big(\avint_{Q}\Vert W(x)f(x)\Vert_{\mc{H}}{\rho\kappa}\mathrm{d}x\Big)^{\frac{1}{\kappa}}.
    \end{align*}
    Next, considering the scalar weight $w:=\Vert W v\Vert_{\mc{H}}^{\rho}$, which by Proposition~\ref{prop:roudenko_type_characterization} is locally $\kappa$-integrable, for any measurable function $f:\R^d\to[0,\infty)$ we test \eqref{eq:vector_valued} on the vector valued function $f^{\frac{1}{\rho}}v$, and we get
    \begin{align*}
        \Big(\avint_{Q}\langle\Vert W(x)f^{\frac{1}{\rho}}v\Vert_{\mc{H}}^{\rho}\rangle_{Q}^{\kappa}\mathrm{d}x\Big)^{\frac{1}{\kappa}}\leq [W]_{p,(r,s),\mathrm{op}}^{\rho}\Big(\avint_{Q}\Vert W(x)f(x)^{\frac{1}{\rho}}v\Vert_{\mc{H}}^{\rho\kappa}\mathrm{d}x\Big)^{\frac{1}{\kappa}},
    \end{align*}
    that is
    \begin{equation*}
        \Big(\avint_{Q}(\langle f\rangle_{Q}w(x))^{\kappa}\mathrm{d}x\Big)^{\frac{1}{\kappa}}\leq [W]_{p,(r,s),\mathrm{op}}^{\rho}\Big(\avint_{Q}(f(x)w(x))^{\kappa}\mathrm{d}x\Big)^{\frac{1}{\kappa}}.
    \end{equation*}
    In other words, we have shown that
    \begin{equation}
        \label{eq:scalar_valued}
        \Vert T_{Q}\Vert_{L^{\kappa}_{w}\Big(Q,\frac{\mathrm{d}x}{|Q|}\Big)\to L^{\kappa}_{w}\Big(Q,\frac{\mathrm{d}x}{|Q|}\Big)}\leq [W]_{p,(r,s),\mathrm{op}}^{\rho}<\infty.
    \end{equation}
    Since $\kappa\in[1,\infty]$ and the scalar weight $w$ is $q$-integrable over $Q$, it is a classical fact that
    \begin{equation*}
        \langle w^{\kappa}\rangle_{Q}^{\frac{1}{\kappa}}\langle w^{-\kappa'}\rangle_{Q}^{\frac{1}{\kappa'}}\leq\Vert T_{Q}\Vert_{L^{\kappa}_{w}\Big(Q,\frac{\mathrm{d}x}{|Q|}\Big)\to L^{\kappa}_{w}\Big(Q,\frac{\mathrm{d}x}{|Q|}\Big)},
    \end{equation*}
    yielding the required result.
\end{proof}

\begin{remark}
    Referring to the proof of Proposition~\ref{prop:muck_matrix_to_scalar}, we note that
    \begin{equation*}
        w^{-1}\leq\frac{\Vert W^{-1}v\Vert_{\mc{H}}^{\rho}}{\Vert u\Vert_{\mc{H}}^{2\rho}}\quad\text{a.e.~on }\R^d.
    \end{equation*}
    Since $W^{-1}$ is locally $t$-integrable by Proposition~\ref{prop:roudenko_type_characterization}, we have that $w^{-1}$ is locally $\kappa'$-integrable.
\end{remark}

\begin{corollary}
    \label{cor:needed_reverse_Holder}
    Let $\vec{p}\in(0,\infty]^{m}$, $\vec{r}\in(0,\infty)^{m}$ and $s\in(\R\backslash\{0\})\cup\{\infty\}$ with $\vec{p}\geq\vec{r}$, $\frac{1}{p}\geq\frac{1}{s}$. and $r\neq s$. Let $\vec{W}$ be a $m$-tuple of matrix weights with $[\vec{W}]_{\vec{p},(\vec{r},s),\mathrm{op}}<\infty$. Set
    \begin{equation*}
        \frac{1}{q}:=\frac{1}{p}-\frac{1}{s},\quad\frac{1}{\kappa}:=\frac{\frac{1}{p}-\frac{1}{s}}{\frac{1}{r}-\frac{1}{s}},\quad\frac{1}{\rho}:=\frac{1}{r}-\frac{1}{s},
    \end{equation*}
    and
    \begin{equation*}
        \frac{1}{t_j}:=\frac{1}{r_j}-\frac{1}{p_j},\quad \lambda_j:=t_j\Big(\frac{1}{r}-\frac{1}{s}\Big),\quad j=1,\ldots,m.
    \end{equation*}
    Then we have
    \begin{equation}
        \label{eq:ap_tensor}
        \sup_{v\in\mc{H}\setminus\{0\}}[\Vert\mathbf{W}v\Vert_{\mc{H}}^{\rho}]^{\frac{1}{\rho}}_{\kappa}\leq[\vec{W}]_{\vec{p},(\vec{r},s),\mathrm{op}},
    \end{equation}
    and
    \begin{equation}
        \label{eq:ap_each}
        \sup_{v\in\mc{H}_j\setminus\{0\}}[\Vert W_j^{-1}v\Vert_{\mc{H}_j}^{\rho}]^{\frac{1}{\rho}}_{\lambda_j}\lesssim_{\vec{r},s,\vec{n}}[\vec{W}]_{\vec{p},(\vec{r},s),\mathrm{op}},\quad\forall j=1,\ldots,m.
    \end{equation}
In particular, we have
    \begin{equation}
        \label{eq:rh_tensor}
        [\mathbf{W}]_{\mathrm{FW}_q}:=\sup_{v\in\mc{H}\setminus\{0\}}[\Vert\mathbf{W}v\Vert_{\mc{H}}^{q}]_{\mathrm{FW}}^{\frac{1}{q}}\lesssim_{r,s,p}[\vec{W}]_{\vec{p},(\vec{r},s),\mathrm{op}}\quad\text{if }p\neq s
    \end{equation}
    and
    \begin{equation}
        \label{eq:rh_each}
        [W_{j}^{-1}]_{\mathrm{FW}_{t_j}}:=\sup_{v\in\mc{H}_j\setminus\{0\}}[\Vert W_j^{-1}v\Vert_{\mc{H}_j}^{t_j}]_{\mathrm{FW}}^{\frac{1}{t_j}}\lesssim_{\vec{r},s,\vec{n},p_j}[\vec{W}]_{\vec{p},(\vec{r},s),\mathrm{op}}\quad\text{ if $p_j\neq r_j$}
    \end{equation}
    for all $j=1,\ldots,m$.
\end{corollary}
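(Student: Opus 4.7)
The strategy is to reduce the multilinear statement to the single-weight Proposition~\ref{prop:muck_matrix_to_scalar} by first passing from the multilinear characteristic to the individual linear ones (using Propositions~\ref{prop:linear_muck_tensor_product} and \ref{prop:linear_muck_each_weight}), and then to derive the Fujii--Wilson estimates from the resulting scalar $A_\kappa$ conditions via the standard implication $A_\kappa\Rightarrow\mathrm{FW}$. All the analytic content has already been established in this section, so the task is essentially to verify that the exponents line up.

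For \eqref{eq:ap_tensor}, I apply Proposition~\ref{prop:linear_muck_tensor_product} to the single matrix weight $\mathbf{W}$ to obtain $[\mathbf{W}]_{p,(r,s),\mathrm{op}}\leq[\vec{W}]_{\vec{p},(\vec{r},s),\mathrm{op}}$, and then Proposition~\ref{prop:muck_matrix_to_scalar} (whose hypotheses are immediate since $r\neq s$ by assumption) yields the claim upon taking the supremum over $v\in\mc{H}\setminus\{0\}$. For \eqref{eq:ap_each} I apply Proposition~\ref{prop:linear_muck_each_weight} to get $[W_j^{-1}]_{\hat{p}_j,(r_j,\sigma_j),\mathrm{op}}\lesssim_{\vec{r},s,\vec{n}}[\vec{W}]_{\vec{p},(\vec{r},s),\mathrm{op}}$, and then Proposition~\ref{prop:muck_matrix_to_scalar} applied to the matrix weight $W_j^{-1}$ with $(p,r,s)$ replaced by $(\hat{p}_j,r_j,\sigma_j)$. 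Using the identities $1/r_j-1/\sigma_j=1/r-1/s=1/\rho$ and $1/\hat{p}_j-1/\sigma_j=1/r_j-1/p_j=1/t_j$, a direct computation gives that the $\rho$ and $\kappa$ produced by the proposition in this application coincide with $\rho$ and $\lambda_j=t_j/\rho$, respectively. The hypotheses of the proposition are straightforward to verify: $\hat{p}_j\geq r_j$ reduces to $1/t_j\leq 1/\rho$, which follows from $1/t_j=1/r_j-1/p_j\leq\sum_{i=1}^m(1/r_i-1/p_i)\leq 1/r-1/s=1/\rho$; the condition $1/\hat{p}_j\geq 1/\sigma_j$ is just $p_j\geq r_j$; and $r_j\neq\sigma_j$ is equivalent to $r\neq s$.

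For the Fujii--Wilson estimates, observe that $q=\rho\kappa<\infty$ precisely when $p\neq s$, and $t_j=\rho\lambda_j<\infty$ precisely when $p_j\neq r_j$. Raising \eqref{eq:ap_tensor} to the $\rho$-th power and rewriting in classical Muckenhoupt notation shows that the scalar weight $\Vert\mathbf{W}v\Vert_{\mc{H}}^{q}=(\Vert\mathbf{W}v\Vert_{\mc{H}}^{\rho})^{\kappa}$ is a classical $A_\kappa$ weight with characteristic bounded by $[\vec{W}]_{\vec{p},(\vec{r},s),\mathrm{op}}^{q}$; analogously, \eqref{eq:ap_each} implies that $\Vert W_j^{-1}v\Vert_{\mc{H}_j}^{t_j}$ is a classical $A_{\lambda_j}$ weight with characteristic $\lesssim [\vec{W}]_{\vec{p},(\vec{r},s),\mathrm{op}}^{t_j}$. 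Invoking the standard scalar bound $[u]_{\mathrm{FW}}\lesssim_\kappa [u]^{\mathrm{cl}}_\kappa$ for classical $A_\kappa$ weights (used already in the linear case recalled in the introduction) and raising to the power $1/q$, respectively $1/t_j$, gives \eqref{eq:rh_tensor} and \eqref{eq:rh_each} after taking the supremum over $v$.

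There is no serious analytic obstacle: the whole argument is essentially a bookkeeping exercise on the exponents. The only place that requires a moment of care is verifying that Proposition~\ref{prop:muck_matrix_to_scalar} applied to $W_j^{-1}$ with the shifted parameters $(\hat{p}_j,r_j,\sigma_j)$ reproduces exactly the exponent $\lambda_j$ appearing in the target inequality, and that raising the resulting scalar Muckenhoupt estimate to the right power translates into the classical $A$-characteristic needed to invoke the known reverse H\"older bound.
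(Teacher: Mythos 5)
Your proof is correct and takes essentially the same route as the paper: apply Proposition~\ref{prop:linear_muck_tensor_product} (resp.\ Proposition~\ref{prop:linear_muck_each_weight}) followed by Proposition~\ref{prop:muck_matrix_to_scalar} to obtain \eqref{eq:ap_tensor} and \eqref{eq:ap_each}, then invoke the standard $A_\kappa\Rightarrow\mathrm{FW}$ implication together with the identities $\rho\kappa=q$ and $\rho\lambda_j=t_j$ to deduce \eqref{eq:rh_tensor} and \eqref{eq:rh_each}. You carry out the exponent bookkeeping more explicitly than the paper, but the structure of the argument is identical.
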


\begin{proof}
    Estimate \eqref{eq:ap_tensor} follows immediately from Proposition~\ref{prop:linear_muck_tensor_product} combined with Proposition~\ref{prop:muck_matrix_to_scalar}. Estimate \eqref{eq:ap_each} follows immediately from Proposition~\ref{prop:linear_muck_each_weight} combined with Proposition~\ref{prop:muck_matrix_to_scalar} (since in the notation of Proposition~\ref{prop:linear_muck_each_weight} we have $\sigma_j\neq r_j$ for each $j=1,\ldots,m$).
    
    Finally, since $\rho\kappa=q$ and $\lambda_j\rho=t_j$ for all $j=1,\ldots,m$, \eqref{eq:rh_tensor} and \eqref{eq:rh_each} follow from \eqref{eq:ap_tensor}, respectively \eqref{eq:ap_each}, through the  fact that if $\alpha\in[1,\infty)$ and $w$ is a scalar weight, then
    \begin{equation*}
        [w^\alpha]^{\frac{1}{\alpha}}_{\text{FW}}\lesssim_\alpha[w]_\alpha,
    \end{equation*}
    see, e.g., \cite{HP13}. The result follows.
\end{proof}

\begin{remark}\label{rem:sharppdendence}
If one cares about dependence on $\vec{p}$, one can use the fact that if $\alpha\leq\beta$ and $w$ is a scalar weight, then
    \begin{equation*}
        \Big[w^{-\frac{1}{\frac{1}{\alpha}-\frac{1}{\beta}}}\Big]^\frac{1}{\beta}_{\mathrm{FW}}\leq e^{\frac{1}{\alpha}}[w^\alpha]_{\frac{\beta}{\alpha}}^{\frac{\frac{1}{\alpha}\frac{1}{\beta}}{\frac{1}{\beta}-\frac{1}{\alpha}}},
    \end{equation*}
    see \cite[Proposition~3.3.3]{Ni20}. If one applies this in the final estimate above respectively with $\alpha=\rho$, $\tfrac{1}{\beta}=\tfrac{1}{r}-\tfrac{1}{p}$, $w=\|Wv\|_{\mc{H}}^{-\rho}$, and $\alpha=\rho$, $\beta=t_j$, $w=\|W_j^{-1}v\|_{\mc{H}_j}$, we obtain  
\[
        \sup_{v\in\mc{H}\setminus\{0\}}[\Vert\mathbf{W}v\Vert_{\mc{H}}^{q}]_{\mathrm{FW}}^{\frac{1}{r}-\frac{1}{p}}\lesssim_{r,s}[\vec{W}]_{\vec{p},(\vec{r},s),\mathrm{op}}^{\frac{\frac{1}{r}-\frac{1}{p}}{\frac{1}{p}-\frac{1}{s}}}\quad\text{if }p\neq s
    \]
    and
    \[
        [W_j^{-1}]_{\mathrm{FW}_{\frac{1}{\frac{1}{r_j}-\frac{1}{p_j}}}}^{\frac{\frac{1}{p_j}}{\frac{1}{r_j}-\frac{1}{p_j}}}=\sup_{v\in\mc{H}_j\setminus\{0\}}[\Vert W_j^{-1}v\Vert_{\mc{H}_j}^{t_j}]_{\mathrm{FW}}^{\frac{1}{p_j}}\lesssim_{\vec{r},s,\vec{n}}[\vec{W}]_{\vec{p},(\vec{r},s),\mathrm{op}}^{\frac{t_j}{p_j}}\quad\text{ if $p_j\neq r_j$}
    \]
    for all $j=1,\ldots,m$. Setting $\vec{r}=(1,\ldots,1)$, $s=\infty$, this yields
    \[
        \sup_{v\in\mc{H}\setminus\{0\}}[\Vert\mathbf{W}v\Vert_{\mc{H}}^{p}]_{\mathrm{FW}}^{m-\frac{1}{p}}\lesssim_m[\vec{W}]_{\vec{p},\mathrm{op}}^{pm-1}\quad\text{if }p<\infty
    \]
    and
    \[
        \sup_{v\in\mc{H}_j\setminus\{0\}}[\Vert W_j^{-1}v\Vert_{\mc{H}_j}^{p'_j}]_{\mathrm{FW}}^{\frac{1}{p_j}}\lesssim_{\vec{r},s,\vec{n}}[\vec{W}]_{\vec{p},\mathrm{op}}^{\frac{p'_j}{p_j}}\quad\text{ if $p_j>1$}
    \]
    for all $j=1,\ldots,m$. As these last terms are exactly the ones that appears in Theorem~\ref{thm:B} (and Theorem~\ref{thm:strong_type_maximal_general} below), the dependence on $\vec{p}$ can be determined more precisely this way.
\end{remark}

\begin{proof}[Proof of Theorem~\ref{thm:needed_reverse_Holder}]
The result follows from Corollary~\ref{cor:needed_reverse_Holder} applied with $\vec{r}=(1,\ldots,1)$ and $s=\infty$.
\end{proof}

\section{The tensor product maximal operator}
\label{sec:tensor_maximal}

Given $\vec{F}=(F_1,\ldots,F_m)$ with $F_j\in L^1_{\text{loc}}(\R^d;\mc{K}(\mc{H}_j))$ and a collection of cubes $\mc{P}$ we define
\[
M^{\mc{K}}_{\mc{P}}\vec{F}(x):=\mc{K}\Big(\bigcup_{Q\in\mc{P}}\bigotimes_{j=1}^m\langle F_j\rangle_Q\ind_Q(x)\Big)
\]
and
\[
M_{\mc{P}}\vec{F}(x):=\overline{\bigcup_{Q\in\mc{P}}\mc{K}\Big(\bigotimes_{j=1}^m\langle F_j\rangle_Q\Big)\ind_Q(x)}
\]
Moreover, we drop the index $\mc{P}$ when the collection consists of all cubes.

\subsection{Weak type bounds}

We define $L^p_{\mb{W}}(\R^d;\mc{H})_{\text{weak}}$ by
\[
\|F\|_{L^p_{\mb{W}}(\R^d;\mc{H})_{\text{weak}}}:=\sup_{u\in\mc{H}}\|\ind_{\{x\in\R^d:u\in F(x)\}}u\|_{L^p_{\mb{W}}(\R^d;\mc{H})}.
\]
As $\ind_{\{x\in\R^d:u\in F(x)\}}u\in S^0(\R^d;F)$, any $F\in L^p_{\mb{W}}(\R^d;\mc{H})$ belongs to $L^p_{\mb{W}}(\R^d;\mc{H})_{\text{weak}}$ with
\[
\|F\|_{L^p_{\mb{W}}(\R^d;\mc{H})_{\text{weak}}}\leq\|F\|_{L^p_{\mb{W}}(\R^d;\mc{H})}.
\]
The following result shows that the weak-type boundedness of $M$ characterizes the multilinear matrix Muckenhoupt condition.
\begin{proposition}\label{prop:weaktypechar}
Let $\vec{p}\in[1,\infty]^m$ and let $\vec{W}$ be matrix weights. Then the following are equivalent:
\begin{enumerate}[(i)]
    \item\label{it:weaktypechar1} $\vec{W}\in A_{\vec{p}}$;
    \item\label{it:weaktypechar2} $M:L^{\vec{p}}_{\vec{W}}(\R^d;\mc{K}(\vec{\mc{H}}))\to L^p_{\mb{W}}(\R^d;\mc{K}(\mc{H}))_{\emph{weak}}$.
\end{enumerate}
Moreover, in this case we have
\[
[\vec{W}]_{\vec{p}}\eqsim_{d,n,m}\|M\|_{L^{\vec{p}}_{\vec{W}}(\R^d;\mc{K}(\vec{\mc{H}}))\to L^p_{\mb{W}}(\R^d;\mc{K}(\mc{H}))_{\emph{weak}}}.
\]
\end{proposition}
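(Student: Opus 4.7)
The plan is to prove the two implications separately, obtaining the quantitative equivalence $[\vec{W}]_{\vec{p}}\eqsim_{d,n,m}\|M\|$ along the way. I begin with the easier direction (ii)$\Rightarrow$(i), which I will obtain by testing the weak-type bound on convex-set valued functions of a special form. Fix a cube $Q$ and $\vec{f}\in L^{\vec{p}}_{\vec{W}}(\R^d;\vec{\mc{H}})$, and set $F_j(x):=\mc{K}(\{f_j(x)\})$, the smallest symmetric convex set containing $f_j(x)$. Since scaling by unit moduli preserves norm, $\|F_j\|_{L^{p_j}_{W_j}[\mc{K}]}=\|f_j\|_{L^{p_j}_{W_j}}$, and because $\langle f_j\rangle_Q\in\langle F_j\rangle_Q$, the constant vector $\mb{u}:=\bigotimes_j\langle f_j\rangle_Q$ lies in $\mc{K}(\bigotimes_j\langle F_j\rangle_Q)\subseteq M\vec{F}(x)$ for every $x\in Q$. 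Hence $Q\subseteq E_{\mb{u}}:=\{x:\mb{u}\in M\vec{F}(x)\}$, $T_Q\vec{f}=\ind_Q\mb{u}$ is pointwise dominated by $\ind_{E_{\mb{u}}}\mb{u}$, and the weak-type bound for $M$ directly yields $\|T_Q\vec{f}\|_{L^p_{\mb{W}}}\leq\|M\|\prod_j\|f_j\|_{L^{p_j}_{W_j}}$; taking the supremum over $Q$ gives $[\vec{W}]_{\vec{p}}\leq\|M\|$.

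For the converse (i)$\Rightarrow$(ii), fix $\mb{u}\in\mc{H}\setminus\{0\}$ and $\vec{F}\in L^{\vec{p}}_{\vec{W}}(\R^d;\mc{K}(\vec{\mc{H}}))$. The strategy is a Calder\'{o}n--Zygmund-style decomposition. Working first with $M_{\mc{P}}$ for a finite collection $\mc{P}$ of dyadic cubes (appealing to the $3^d$-lattice trick of Subsection~\ref{s:sparse_families} to reduce to a single grid), the set $E_{\mb{u}}^{\mc{P}}:=\{x:\mb{u}\in M_{\mc{P}}\vec{F}(x)\}$ is a finite union of cubes $Q\in\mc{P}$ satisfying $\mb{u}\in\mc{K}(\bigotimes_j\langle F_j\rangle_{Q})$, and taking maximal such cubes produces a disjoint family $\{Q_k\}$ exhausting $E_{\mb{u}}^{\mc{P}}$. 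Since any element of a symmetric convex hull is norm-dominated by the supremum of norms of the generating set, for $x\in Q_k$ we have
\[
\|\mb{W}(x)\mb{u}\|_{\mc{H}}\leq\sup_{\mb{v}\in\bigotimes_j\langle F_j\rangle_{Q_k}}\|\mb{W}(x)\mb{v}\|_{\mc{H}}=\prod_j\|W_j(x)\langle F_j\rangle_{Q_k}\|_{\mc{H}_j},
\]
and Proposition~\ref{prop:aumannintegralbound} majorizes each factor, up to a constant depending on $n_j$, by $\avint_{Q_k}\|W_j(x)W_j(y)^{-1}\|_{\mc{H}_j\to\mc{H}_j}\|W_j(y)F_j(y)\|_{\mc{H}_j}\,\mathrm{d}y$. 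Applying H\"older's inequality in each factor with the conjugate pair $(p_j,p_j')$, integrating over $x\in Q_k$, and recognizing the Roudenko-type characterization of Proposition~\ref{prop:equivalence_averages_operator} yields
\[
\int_{Q_k}\|\mb{W}(x)\mb{u}\|_{\mc{H}}^p\,\mathrm{d}x\lesssim_{n,m}[\vec{W}]_{\vec{p}}^p\prod_j\|\ind_{Q_k}F_j\|_{L^{p_j}_{W_j}[\mc{K}]}^p.
\]
Summing over $k$ via the discrete multilinear H\"older inequality with exponents $p_j/p$ (noting $\sum_j p/p_j=1$) and using disjointness of $\{Q_k\}$ gives $\|\ind_{E_{\mb{u}}^{\mc{P}}}\mb{u}\|_{L^p_{\mb{W}}}\lesssim_{d,n,m}[\vec{W}]_{\vec{p}}\prod_j\|F_j\|_{L^{p_j}_{W_j}[\mc{K}]}$.

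Taking the supremum over $\mb{u}$ and passing to the limit over an exhausting sequence of finite collections $\mc{P}$ then completes the bound $\|M\|\lesssim_{d,n,m}[\vec{W}]_{\vec{p}}$. The main technical subtlety is the closure in the definition of $M\vec{F}(x)$: an element $\mb{u}\in M\vec{F}(x)$ need only lie in $\overline{\bigcup_{Q\ni x}\mc{K}(\bigotimes_j\langle F_j\rangle_Q)}$, so a priori $E_{\mb{u}}$ may strictly exceed $\bigcup_{\mc{P}}E_{\mb{u}}^{\mc{P}}$. I expect to handle this via a standard limiting argument exploiting the finite dimensionality of $\mc{H}$ and the Hausdorff-continuity of $Q\mapsto\mc{K}(\bigotimes_j\langle F_j\rangle_Q)$, which allow any $\mb{u}$ in the closure to be approximated by vectors in some single $\mc{K}(\bigotimes_j\langle F_j\rangle_{Q'})$ with $Q'\ni x$ and the estimate transferred without loss.
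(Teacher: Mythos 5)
Your proof of (ii)$\Rightarrow$(i) is the same as the paper's: $T_Q\vec{f}$ is a selection of $M(\mc{K}(\vec{f}))$, and the weak norm of a constant vector on a cube coincides with the strong norm.

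For (i)$\Rightarrow$(ii) you take a genuinely different route. Both you and the paper reduce to finite dyadic collections, take the maximal stopping cubes $\{Q_k\}$ on which $\mb{u}\in\mc{K}\bigl(\bigotimes_j\langle F_j\rangle_{Q_k}\bigr)$, and then need a strong-type estimate on this pairwise disjoint family. The paper proves this via Lemma~\ref{lem:strongmuckenhouptconvex}, whose proof applies the John ellipsoid theorem and Carath\'eodory's theorem to write any selection of $\mc{K}\bigl(\bigotimes_j\langle F_j\rangle_Q\bigr)$ as a bounded combination of elementary tensors $\bigotimes_j\langle f^Q_{i,j}\rangle_Q$, and then invokes boundedness of $T_Q$, i.e.\ $[\vec{W}]_{\vec{p}}$ directly. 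You instead use the pointwise bound $\|\mb{W}(x)\mb{u}\|_{\mc H}\le\prod_j\|W_j(x)\langle F_j\rangle_{Q_k}\|_{\mc{H}_j}$ (convexity of the norm and factorization of the supremum over elementary tensors), Proposition~\ref{prop:aumannintegralbound}, and H\"older in each factor to arrive at the Roudenko-type quantity $[\vec W]_{\vec p,\mathrm{op}}$ of Proposition~\ref{prop:equivalence_averages_operator}, closing via multilinear H\"older in $k$. Your argument is correct; the constants work out to the claimed $\eqsim_{d,n,m}$. It is arguably cleaner here because it avoids the John/Carath\'eodory step entirely at this stage (though the equivalence $[\vec W]_{\vec p,\mathrm{op}}\eqsim[\vec W]_{\vec p}$ that you quote ultimately rests on the John-type reducing operators of Section~\ref{sec:averaging_operators}, so the dependence is relocated rather than removed). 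The trade-off is that the paper's intermediate Lemma~\ref{lem:strongmuckenhouptconvex} is a self-contained strong-type estimate on disjoint dyadic families that is reused elsewhere in spirit, whereas your computation is tailored to the weak bound. Finally, on the closure subtlety you flag at the end: the paper faces the same issue and also dispatches it with an unproved appeal to ``monotone convergence''; your handling is equally informal, so I would not call it a gap specific to your approach, but it is worth noting that the sketched Hausdorff-continuity argument is not quite the right shape (you would be transferring an estimate for $\ind_{Q_k}\mb{v}_k$ rather than $\ind_{E_{\mb u}}\mb u$), and the correct device is the scalar-multiple/approximation argument underlying \cite[Proposition~3.7]{Ni24b} referenced later in the paper.
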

For the proof, we require the following lemma:
\begin{lemma}\label{lem:strongmuckenhouptconvex}
Let $\vec{p}\in[1,\infty]^m$, $\vec{W}\in A_{\vec{p}}$, $\vec{F}\in L^{\vec{p}}_{\vec{W}}(\R^d;\mc{K}(\vec{\mc{H}}))$, and let $\mc{P}$ be a pairwise disjoint collection of cubes in a dyadic grid $\mc{D}$. Then
\[
\Big\|\sum_{Q\in\mc{P}}\mc{K}\Big(\bigotimes_{j=1}^m\langle F_j\rangle_Q\Big)\ind_Q\Big\|_{L^p_{\mb{W}}(\R^d;\mc{K}(\mc{H}))}\lesssim_{n,m}[\vec{W}]_{\vec{p}}\prod_{j=1}^m\|F_j\|_{L^{p_j}_{W_j}(\R^d;\mc{K}(\mc{H}_j))}.
\]
\end{lemma}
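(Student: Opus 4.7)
The plan is to reduce the convex-body valued estimate to a pointwise scalar one and then iterate H\"older's inequality twice — once \emph{within} each cube $Q$ and once \emph{across} the disjoint cubes in $\mc{P}$. First, I would exploit the pairwise disjointness of $\mc{P}$ to rewrite the left-hand side raised to the $p$-th power as
\[
\sum_{Q\in\mc{P}}\int_Q\Big\|\mb{W}(x)\mc{K}\Big(\bigotimes_{j=1}^m\langle F_j\rangle_Q\Big)\Big\|_{\mc{H}}^p\,dx.
\]
The key observation for passing to scalars is that $u\mapsto\|\mb{W}(x)u\|_{\mc{H}}$ is a norm on $\mc{H}$, so its sublevel sets are closed, convex and symmetric; by minimality of $\mc{K}(S)$, the supremum of this norm over $\mc{K}(S)$ coincides with its supremum over $S$. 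Applied to $S=\bigotimes_{j=1}^m\langle F_j\rangle_Q$, this yields
\[
\Big\|\mb{W}(x)\mc{K}\Big(\bigotimes_{j=1}^m\langle F_j\rangle_Q\Big)\Big\|_{\mc{H}}=\sup_{v_j\in\langle F_j\rangle_Q}\prod_{j=1}^m\|W_j(x)v_j\|_{\mc{H}_j}.
\]

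Next, setting $g_j(y):=\|W_j(y)F_j(y)\|_{\mc{H}_j}$ (so that $g_j\in L^{p_j}(\R^d)$ with $\|g_j\|_{L^{p_j}}=\|F_j\|_{L^{p_j}_{W_j}(\R^d;\mc{K}(\mc{H}_j))}$), I would use the selector description of the Aumann integral together with the factorization $W_j(x)=[W_j(x)W_j(y)^{-1}]W_j(y)$ to derive the pointwise bound
\[
\sup_{v_j\in\langle F_j\rangle_Q}\|W_j(x)v_j\|_{\mc{H}_j}\leq\avint_Q\|W_j(x)W_j(y)^{-1}\|_{\mc{H}_j\to\mc{H}_j}\,g_j(y)\,dy.
\]
H\"older's inequality inside each inner average (with exponents $p_j'$ and $p_j$) separates $g_j$ from the operator norm kernel, and integration in $x\in Q$ produces the Roudenko-type average of Proposition~\ref{prop:equivalence_averages_operator}, controlled by $[\vec{W}]_{\vec{p},\mathrm{op}}^p\lesssim_{\vec{n},m}[\vec{W}]_{\vec{p}}^p$. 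The leftover cube-local factor is $|Q|\prod_{j=1}^m\langle g_j\rangle_{p_j,Q}^p$, which collapses to $\prod_{j=1}^m\|g_j\ind_Q\|_{L^{p_j}}^p$ thanks to $\sum_{j=1}^m p/p_j=1$.

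Finally, I would apply H\"older's inequality for sums with exponents $(p_j/p)_{j=1}^m$ (whose reciprocals sum to $1$), together with a second use of the pairwise disjointness of $\mc{P}$, to obtain
\[
\sum_{Q\in\mc{P}}\prod_{j=1}^m\|g_j\ind_Q\|_{L^{p_j}}^p\leq\prod_{j=1}^m\Big(\sum_{Q\in\mc{P}}\|g_j\ind_Q\|_{L^{p_j}}^{p_j}\Big)^{p/p_j}\leq\prod_{j=1}^m\|g_j\|_{L^{p_j}}^p.
\]
Taking $p$-th roots yields the claim. The only genuinely delicate point is the seminorm-reduction step, which turns the convex-body norm of a tensor product of Aumann integrals into a supremum of ordinary products of norms of averages; once that is in hand, the rest is two clean applications of H\"older. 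The endpoint cases $p_j=\infty$ or $p=\infty$ cause no new issues under the usual essential-supremum convention.
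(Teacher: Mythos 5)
Your proof is correct, and it takes a genuinely different route from the paper's. The paper works directly with a selector $g\in S^0(\R^d;\sum_Q\mc{K}(\bigotimes_j\langle F_j\rangle_Q)\ind_Q)$ and decomposes it cube-by-cube via the John ellipsoid theorem (yielding $A_Q\overline{B}\subseteq\mc{K}(\bigotimes_j\langle F_j\rangle_Q)\subseteq n^{1/2}A_Q\overline{B}$) and Carath\'eodory's theorem (writing each $A_Qe_{\vec{k}}$ as a convex combination of $2n+1$ terms of the form $\bigotimes_j\langle f^Q_{\vec{k},i,j}\rangle_Q$ with $f^Q_{\vec{k},i,j}\in S^0(\R^d;F_j\ind_Q)$), so that the operator-norm definition of $[\vec{W}]_{\vec{p}}$ (boundedness of $T_Q$) can be applied term by term. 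You instead sidestep the selector decomposition entirely by observing that $u\mapsto\|\mb{W}(x)u\|_{\mc{H}}$ is a norm, hence its supremum over $\mc{K}(S)$ equals its supremum over $S$, which factorizes as $\prod_j\sup_{v_j\in\langle F_j\rangle_Q}\|W_j(x)v_j\|_{\mc{H}_j}$. After that, the estimate is pointwise, two H\"olders, and one invocation of the Roudenko-type characterization $[\vec{W}]_{\vec{p},\mathrm{op}}\lesssim_{\vec{n},m}[\vec{W}]_{\vec{p}}$ (Proposition~\ref{prop:equivalence_averages_operator}). This is cleaner and more modular, but note that the hard direction of Proposition~\ref{prop:equivalence_averages_operator} already hides the John-ellipsoid/Carath\'eodory machinery inside Proposition~\ref{prop:reducingmatrixavop}, so the two proofs rely on the same underlying tools; yours simply packages them through the Roudenko characteristic rather than re-running the convex decomposition by hand. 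A minor gain of the paper's argument is that it is entirely self-contained modulo the definition of $[\vec{W}]_{\vec{p}}$; a gain of yours is that the pointwise seminorm-reduction step makes the disjoint-support structure transparent and dispenses with all quasi-triangle-inequality bookkeeping.
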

\begin{proof}
We set $T_{\mc{P}}\vec{F}:=\sum_{Q\in\mc{P}_k}\mc{K}\big(\bigotimes_{j=1}^m\langle F_j\rangle_Q\big)\ind_Q$ and let $g\in S^0(\R^d;T_{\mc{P}}\vec{F})$. Then $g=\sum_{Q\in\mc{P}}g_Q$, where $g_Q\in S^0(\R^d;\mc{K}(\bigotimes_{j=1}^m\langle F_j\rangle_Q\ind_Q))$. By the John ellipsoid theorem, there is an $A_Q\in\mc{L}(\mc{H})$ such that
\begin{equation}\label{eq:johnellipsoidpairwisedisjoint}
A_Q\overline{B}\subseteq \mc{K}\Big(\bigotimes_{j=1}^m\langle F_j\rangle_Q\ind_Q\Big)\subseteq n^{\frac{1}{2}}A_Q\overline{B},
\end{equation}
where $\overline{B}$ is the closed unit ball in $\mc{H}=\bigotimes_{j=1}^m\mc{H}_j$. Choosing an orthonormal basis $(e_k)_{k=1}^{n_j}$ for each $\mc{H}_j$, for $\vec{k}=(k_1,\ldots,k_m)$, $k_j\in\{1,\ldots,n_j\}$, we write $e_{\vec{k}}:=\bigotimes_{j=1}^m e_{k_j}$, and set $v_{\vec{k}}^Q:=A_Q e_{\vec{k}}$. By the first inclusion in \eqref{eq:johnellipsoidpairwisedisjoint} and Carath\'eodory's theorem on convex hulls, we have
\[
v_{\vec{k}}^Q=\sum_{i=1}^{2n+1}\theta_{i,\vec{k}}^Q\bigotimes_{j=1}^m\langle f_{\vec{k},i,j}^Q\rangle_Q,
\]
with $\theta_{i,\vec{k}}^Q\geq 0$, $\sum_{i=1}^{2n+1}\theta_{i,\vec{k}}^Q=1$, and $f_{\vec{k},i,j}^Q\in S^0(\R^d;F_j\ind_Q)$. Now, since $g_Q(x)\in n^{\frac{1}{2}} A_Q\overline{B}$ by the second inclusion in \eqref{eq:johnellipsoidpairwisedisjoint}, it is of the form
\[
g_Q(x)=\sum_{\vec{k}}h_{\vec{k}}^Q(x)v_{\vec{k}}^Q=\sum_{\vec{k}}h_{\vec{k}}^Q(x)\sum_{i=1}^{2n+1}\theta_{i,\vec{k}}^Q\bigotimes_{j=1}^m\langle f_{\vec{k},i,j}^Q\rangle_Q\ind_Q(x),
\]
for $h^Q_{\vec{k}}$ satisfying $\Big(\sum_{\vec{k}}|h^Q_{\vec{k}}(x)|^2\Big)^{\frac{1}{2}}\leq n^{\frac{1}{2}}$. Now, setting $h_{\vec{k}}:=\sum_{Q\in\mc{P}}h^Q_{\vec{k}}\ind_Q$, it follows from the quasi-triangle inequality that
\begin{align*}
\|g\|_{L^p_{\mb{W}}(\R^d;\vec{\mc{H}})}&=\Big(\int_{\R^d}\Big|\sum_{\vec{k}}h_{\vec{k}}(x)\sum_{Q\in\mc{P}}\sum_{i=1}^{2n+1}\theta_{i,\vec{k}}^Q\bigotimes_{j=1}^m W_j(x)\langle f_{\vec{k},i,j}^Q\rangle_Q\ind_Q\Big|^p\,\mathrm{d}x\Big)^{\frac{1}{p}}\\
&\leq 2^{(n-1)(\frac{1}{p}-1)_+} n^{\frac{1}{2}}\sum_{\vec{k}}\Big(\int_{\R^d}\Big|\sum_{Q\in\mc{P}}\sum_{i=1}^{2n+1}\theta_{i,\vec{k}}^Q\bigotimes_{j=1}^m W_j(x)\langle f_{\vec{k},i,j}^Q\rangle_Q\ind_Q\Big|^p\,\mathrm{d}x\Big)^{\frac{1}{p}}\\
&\lesssim_{n,m}\sum_{\vec{k}}\Big(\sum_{Q\in\mc{P}}\int_Q\Big|\sum_{i=1}^{2n+1}\theta_{i,\vec{k}}^Q\bigotimes_{j=1}^m W_j(x)\langle f_{\vec{k},i,j}^Q\rangle_Q\Big|^p\,\mathrm{d}x\Big)^{\frac{1}{p}}\\
&\lesssim_{n,m}\sum_{\vec{k}}\Big(\sum_{Q\in\mc{P}}\sum_{i=1}^{2n+1}(\theta_{i,\vec{k}}^Q)^p\|T_Q\vec{f}_{\vec{k},i}^Q\|_{L^p_{\mb{W}}(\R^d)}^p\Big)^{\frac{1}{p}}\\
&\leq[\vec{W}]_{\vec{p}}\sum_{\vec{k}}\Big(\sum_{Q\in\mc{P}}\sum_{i=1}^{2n+1}(\theta_{i,\vec{k}}^Q)^p\prod_{j=1}^m\Big(\int_Q\!|W_jF_j|^{p_j}\,\mathrm{d}x\Big)^{\frac{p}{p_j}}\Big)^{\frac{1}{p}}\\
&\lesssim_n (2n+1)^{\frac{1}{p}}[\vec{W}]_{\vec{p}}\Big(\sum_{Q\in\mc{P}}\prod_{j=1}^m\Big(\int_Q\!|W_jF_j|^{p_j}\,\mathrm{d}x\Big)^{\frac{p}{p_j}}\Big)^{\frac{1}{p}}\\
&\lesssim_{n,m}[\vec{W}]_{\vec{p}}\prod_{j=1}^m\Big(\sum_{Q\in\mc{P}}\int_Q\!|W_jF_j|^{p_j}\,\mathrm{d}x\Big)^{\frac{1}{p_j}}\leq[\vec{W}]_{\vec{p}}\prod_{j=1}^m\|F_j\|_{L^{p_j}_{W_j}(\R^d;\mc{K}(\mc{H}_j))}.
\end{align*}
Taking a supremum over all $g\in S^0(\R^d;T_{\mc{P}}\vec{F})$, the assertion follows.
\end{proof}

\begin{proof}[Proof of Proposition~\ref{prop:weaktypechar}]
For \ref{it:weaktypechar2}$\Rightarrow$\ref{it:weaktypechar1}, note that $T_Q\vec{f}$ is a selection of $M(\mc{K}(\vec{f}))$. Hence,
\begin{align*}
\|T_Q\vec{f}\|_{L^p_{\mb{W}}(\R^d;\mc{H})}&=\|T_Q\vec{f}\|_{L^p_{\mb{W}}(\R^d;\mc{H})_{\text{weak}}}\leq\|M(\mc{K}(\vec{f}))\|_{L^p_{\mb{W}}(\R^d;\mc{K}(\mc{H}))_{\text{weak}}}\\
&\leq \|M\|_{L^{\vec{p}}_{\vec{W}}(\R^d;\mc{K}(\vec{\mc{H}}))\to L^p_{\mb{W}}(\R^d;\mc{K}(\mc{H}))_{\emph{weak}}}\prod_{j=1}^m\|\mc{K}(f_j)\|_{L^{p_j}_{W_j}(\R^d;\mc{K}(\mc{H}_j))}\\
&=\|M\|_{L^{\vec{p}}_{\vec{W}}(\R^d;\mc{K}(\vec{\mc{H}}))\to L^p_{\mb{W}}(\R^d;\mc{K}(\mc{H}))_{\emph{weak}}}\prod_{j=1}^m\|f_j\|_{L^{p_j}_{W_j}(\R^d;\mc{H}_j)}.
\end{align*}
This proves that $\vec{W}\in A_{\vec{p}}$ with
\[
[\vec{W}]_{\vec{p}}\leq \|M\|_{L^{\vec{p}}_{\vec{W}}(\R^d;\vec{\mc{H}})\to L^p_{\mb{W}}(\R^d;\mc{H})_{\text{weak}}}.
\]

For \ref{it:weaktypechar1}$\Rightarrow$\ref{it:weaktypechar2}, by a $3^d$-lattice reduction and the monotone convergence property it suffices to bound $M_{\mc{F}}$ uniformly for finite collections of cubes $\mc{F}$ contained in a dyadic grid $\mc{D}$.

Let $u\in\mc{H}$. If $u\in M_{\mc{F}}\vec{F}(x)$, there is a $Q\in\mc{F}$ with $x\in Q$ for which $u\in \mc{K}\Big(\bigotimes_{j=1}^m\langle F_j\rangle_Q\Big)$. Denote the maximal cubes $Q\in \mc{F}$ for which $u\in \mc{K}\Big(\bigotimes_{j=1}^m\langle F_j\rangle_Q\Big)$ by $\mc{P}$. Then
\[
\ind_{\{x\in\R^d:u\in M_{\mc{F}}\vec{F}(x)\}}u\in \sum_{Q\in\mc{P}}\bigotimes_{j=1}^m\langle F_j\rangle_Q\ind_Q
\]
a.e., so that by Lemma~\ref{lem:strongmuckenhouptconvex} we have
\begin{align*}
\|\ind_{\{x\in\R^d:u\in M_{\mc{F}}\vec{F}(x)\}}u\|_{L^p_{\mb{W}}(\R^d;\mc{H})}&\leq\Big\|\sum_{Q\in\mc{P}}\mc{K}\Big(\bigotimes_{j=1}^m\langle F_j\rangle_Q\Big)\ind_Q\Big\|_{L^p_{\mb{W}}(\R^d;\mc{K}(\mc{H}))}\\
&\lesssim_{n,m}[\vec{W}]_{\vec{p}}\prod_{j=1}^m\|F_j\|_{L^{p_j}_{W_j}(\R^d;\mc{K}(\mc{H}_j))}.
\end{align*}
The result follows.
\end{proof}

\subsection{Strong type upper bounds}

Next, we prove a strong-type bound for $M^{\mc{K}}$.

\begin{theorem}\label{thm:strongboundsmaxop}
Let $\vec{p}\in(1,\infty]^m$ and $\vec{W}\in A_{\vec{p}}$. Then
\[
M^{\mc{K}}:L^{\vec{p}}_{\vec{W}}(\R^d;\mc{K}(\vec{\mc{H}}))\to L^p_{\mb{W}}(\R^d;\mc{K}(\mc{H}))
\]
with
\begin{align*}
\|M^{\mc{K}}\|_{L^{\vec{p}}_{\vec{W}}(\R^d;\mc{K}(\vec{\mc{H}}))\to L^p_{\mb{W}}(\R^d;\mc{K}(\mc{H}))}.
&\lesssim_{d,m,\vec{n},\vec{p}}[\vec{W}]_{\vec{p}}\prod_{j=1}^m[W_j^{-1}]_{\mathrm{FW}_{p_j'}}^{\frac{p_j'}{p_j}}\\
    &\lesssim_{d,m,\vec{n},\vec{p}}[\vec{W}]_{\vec{p}}^{1+\sum_{j=1}^m\frac{p_j'}{p_j}}.
\end{align*}
\end{theorem}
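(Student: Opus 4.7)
My plan is to reduce the matrix-weighted estimate for $M^{\mc{K}}$ to an $L^p$-bound on a scalar multilinear Christ--Goldberg-type maximal operator, then control the latter via reducing operators, the Roudenko-type characterization of $A_{\vec{p}}$, and sharp reverse H\"{o}lder inequalities applied to each $W_j^{-1}$ individually (furnished by Theorem~\ref{thm:needed_reverse_Holder}).

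First I would observe that any element of $M^{\mc{K}}\vec{F}(x)$ is a convex combination of tensor products $\bigotimes_{j=1}^m \langle h_jF_j\rangle_Q$ over cubes $Q\ni x$ and bounded scalar multipliers $|h_j|\leq 1$, which yields the pointwise domination
\[
\|\mb{W}(x) M^{\mc{K}}\vec{F}(x)\|_{\mc{H}} \leq \sup_{Q \ni x} \prod_{j=1}^m \avint_Q \|W_j(x) W_j(y)^{-1}\|_{\mc{H}_j\to\mc{H}_j}\, f_j(y) \,\mathrm{d}y,
\]
where $f_j := \|W_j F_j\|_{\mc{H}_j}$ and $\|f_j\|_{L^{p_j}(\R^d)} = \|F_j\|_{L^{p_j}_{W_j}(\R^d;\mc{K}(\mc{H}_j))}$. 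By Theorem~\ref{thm:needed_reverse_Holder}, the scalar weight $y\mapsto \|W_j(y)^{-1}u\|_{\mc{H}_j}^{p_j'}$ satisfies a sharp reverse H\"{o}lder inequality uniformly in $u\in\mc{H}_j\setminus\{0\}$, with self-improvement exponent $\epsilon_j \sim 1/[W_j^{-1}]_{\mathrm{FW}_{p_j'}}^{p_j'}$. Transferring this estimate to the operator norm by finite-dimensionality, then applying H\"{o}lder's inequality with exponents $p_j'(1+\epsilon_j)$ and its conjugate $s_j := (p_j'(1+\epsilon_j))' < p_j$, and using Lemma~\ref{lem:reducing_operator_on_operator}, I would obtain
\[
\|\mb{W}(x) M^{\mc{K}}\vec{F}(x)\|_{\mc{H}} \lesssim \sup_{Q \ni x} \|\mb{W}(x) A_Q^{\otimes}\|_{\mc{H}\to\mc{H}} \prod_{j=1}^m \langle f_j^{s_j}\rangle_Q^{1/s_j},
\]
where $A_Q^{\otimes} := \bigotimes_{j=1}^m A_{W_j^{-1}, Q, p_j'}$.

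Next I would raise to the $p$-th power and integrate in $x$. A standard multilinear Calder\'{o}n--Zygmund / principal-cubes construction based on $\vec{f}$ yields a sparse family $\mc{S}$ with respect to which the supremum is majorized by a sum, giving
\[
\|\mb{W} M^{\mc{K}}\vec{F}\|_{L^p(\R^d)}^p \lesssim \sum_{Q\in\mc{S}} \prod_{j=1}^m \langle f_j^{s_j}\rangle_Q^{p/s_j} \int_Q \|\mb{W}(x) A_Q^{\otimes}\|_{\mc{H}\to\mc{H}}^p \,\mathrm{d}x.
\]
By Proposition~\ref{prop:roudenko_type_characterization} combined with Proposition~\ref{prop:equivalence_averages_operator}, each inner integral is dominated by $[\vec{W}]_{\vec{p}}^p |Q|$. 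Sparseness then converts the remaining sum into a bound by $\int \prod_j M(f_j^{s_j})^{p/s_j}\,\mathrm{d}x$, and an application of H\"{o}lder's inequality with exponents $p_j/s_j$ (reciprocally summing to one) together with the Hardy--Littlewood $L^{p_j/s_j}$-bound---whose norm scales like $(p_j/s_j)' \sim [W_j^{-1}]_{\mathrm{FW}_{p_j'}}^{p_j'}$---closes the estimate. Taking $p$-th roots produces the claimed exponent $p_j'/p_j$ on each $[W_j^{-1}]_{\mathrm{FW}_{p_j'}}$, while the second displayed inequality in the theorem follows from the bound $[W_j^{-1}]_{\mathrm{FW}_{p_j'}}\lesssim[\vec{W}]_{\vec{p}}$ of Theorem~\ref{thm:needed_reverse_Holder}.

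The main obstacle I anticipate is establishing the sparse domination of the $x$-dependent supremum $\sup_{Q\ni x}\|\mb{W}(x) A_Q^{\otimes}\|^p\prod_j\langle f_j^{s_j}\rangle_Q^{p/s_j}\ind_Q(x)$: since the operator-norm factor varies with $Q$, the principal cubes associated to $\vec{f}$ alone do not suffice, and one must combine them with operator-monotonicity comparisons between $A_{W_j^{-1},Q,p_j'}$ and $A_{W_j^{-1},P,p_j'}$ for $Q\subseteq P$, together with a cube-by-cube application of the Roudenko bound on each principal cube.
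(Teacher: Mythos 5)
Your overall architecture (reduce to the scalar Christ--Goldberg-type operator $\mc{M}_{\vec{W}}$, use reverse H\"{o}lder on each $W_j^{-1}$ together with the Fujii--Wilson characteristics, invoke the Roudenko-type characterization via reducing operators, finish with H\"{o}lder and Hardy--Littlewood) matches the paper's, and the first pointwise reduction, the reverse-H\"{o}lder-to-$\langle f_j^{s_j}\rangle_Q$ step, and the final bookkeeping of exponents are all essentially correct (up to a minor slip in the H\"{o}lder exponents at the very end, which should be $p_j/p$ rather than $p_j/s_j$). What is genuinely different is the middle: you propose to pass from the supremum $\sup_{Q\ni x}\|\mb{W}(x)A_Q^{\otimes}\|\prod_j\langle f_j^{s_j}\rangle_Q^{1/s_j}$ to a sparse sum via a principal-cubes construction based on $\vec{f}$, whereas the paper runs a Goldberg-style good-$\lambda$/level-set decomposition based on the auxiliary operator $\widetilde{\mc{M}}_{\vec{W}}$ (which averages against $(A_{W_j^{-1},Q,p_j'})^{-1}$ in place of $W_j(x)$), combined with a separate lemma bounding $\int_Q N_{Q,\mc{F}}^p\lesssim[\vec{W}]_{\vec{p}}^p|Q|$ for the auxiliary maximal function $N_{Q,\mc{F}}(x)=\sup_{R\in\mc{F}(Q)}\prod_j\|W_j(x)A_{W_j^{-1},R,p_j'}\|\ind_R(x)$.

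The gap you flag at the end is real and is not fixable by "operator-monotonicity comparisons": there is no monotonicity principle relating $A_{W_j^{-1},Q,p_j'}$ to $A_{W_j^{-1},P,p_j'}$ for $Q\subseteq P$, any more than there is for averages $\langle w\rangle_Q$ of a scalar weight. If you stop only on the $f_j^{s_j}$-averages, then between a sparse cube $Q_0$ and an arbitrary intermediate $Q\subseteq Q_0$ the factors $\prod_j\langle f_j^{s_j}\rangle_Q$ are controlled but $\|\mb{W}(x)A_Q^{\otimes}\|$ is not, so the domination $\int\sup_{Q\ni x}(\cdots)^p\lesssim\sum_{Q\in\mc{S}}\prod_j\langle f_j^{s_j}\rangle_Q^{p/s_j}\int_Q\|\mb{W}(x)A_Q^{\otimes}\|^p\,\mathrm{d}x$ does not follow. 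The missing ingredient, which the paper supplies through the $N_{Q,\mc{F}}$ lemma, is a second stopping time on the ratios $\prod_j\|(A_{W_j^{-1},Q_0,p_j'})^{-1}A_{W_j^{-1},Q,p_j'}\|$, together with a measure estimate (derived from the Roudenko characterization) showing the exceptional set has half the mass of $Q_0$; iterating this gives $\int_{Q_0}\sup_{Q\subseteq Q_0}\|\mb{W}(x)A_Q^{\otimes}\|^p\ind_Q(x)\,\mathrm{d}x\lesssim[\vec{W}]_{\vec{p}}^p|Q_0|$. That self-improving argument is the crucial idea your sketch is missing, and without it the sparse reduction as written does not close.
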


We begin with a couple of standard reductions. Using Proposition~\ref{prop:aumannintegralbound}, for any $\vec{F}\in L^1_{\mathrm{loc}}(\R^d;\mc{K}(\vec{\mc{H}}))$ we have
\begin{align*}
    &\Vert\mathbf{W}(x)\mc{M}^{\mc{K}}(\vec{F})(x)\Vert_{\mc{H}}=\Big\Vert \mathbf{W}(x)\mc{K}\Big(\bigcup_{Q}\bigotimes_{j=1}^m\langle F_j\rangle_Q\ind_Q(x)\Big)\Big\Vert_{\mc{H}}
    =\Big\Vert \bigcup_{Q}\bigotimes_{j=1}^m\langle W_j(x)F_j\rangle_Q\ind_Q(x)\Big\Vert_{\mc{H}}\\
    &=\sup_{Q}\Big\Vert\bigotimes_{j=1}^{m}\langle W_j(x)F_j\rangle_Q\Big\Vert_{\mc{H}}\ind_Q(x)
    =\sup_{Q}\prod_{j=1}^{m}\Big\Vert\langle W_j(x)F_j\rangle_Q\Big\Vert_{\mc{H}_j}\ind_Q(x)\\
    &\eqsim_n\sup_{Q}\prod_{j=1}^{m}\langle\Vert W_j(x)F_j\Vert_{\mc{H}_j}\rangle_Q\ind_Q(x):=\mc{M}_{\vec{W}}(\vec{F})(x).
\end{align*}
Thus, we only have to find $L^{\vec{p}}_{\vec{W}}(\R^d;\mc{K}(\mc{H}))\to L^{p}(\R^d)$ bounds for $\mc{M}_{\vec{W}}$.

We will actually prove a more general result. Let $\vec{p}\in(0,\infty]^{m}$ and $\vec{r}\in(0,\infty)^{m}$ with $\vec{p}\geq\vec{r}$, $p\neq\infty$ and $p_j\neq r_j$ for all $j=1,\ldots,m$. Let $\vec{W}$ be a $m$-tuple of matrix weights with $[\vec{W}]_{\vec{p},(\vec{r},\infty),\mathrm{op}}<\infty$. For $\vec{F}\in L^{\vec{r}}_{\mathrm{loc}}(\R^d;\mc{K}(\vec{\mc{H}}))$ we define
\begin{equation*}
    \mc{M}_{\vec{W},\vec{r}}(\vec{F})(x):=\sup_{Q}\Big(\prod_{j=1}^{m}\langle\Vert W_j(x)F_j\Vert_{\mc{H}_j}^{r_{j}}\rangle_{Q}^{\frac{1}{r_j}}\Big)\ind_{Q}(x),
\end{equation*}
where the supremum is taken over all cubes $Q\subseteq\R^d$. We will deduce $L^{\vec{p}}_{\vec{W}}(\R^d;\mc{K}(\vec{\mc{H}}))\to L^{p}(\R^d)$ bounds for $\mc{M}_{\vec{W},\vec{r}}$ in terms of $[\vec{W}]_{\vec{p},(\vec{r},\infty),\mathrm{op}}$.

\begin{theorem}
    \label{thm:strong_type_maximal_general} 
    Let $\vec{p}\in(0,\infty]^{m}$ and $\vec{r}\in(0,\infty)^{m}$ with $\vec{p}>\vec{r}$. Let $\vec{W}$ be a $m$-tuple of matrix weights with $[\vec{W}]_{\vec{p},(\vec{r},\infty),\mathrm{op}}<\infty$. We have
    \begin{align*}
        \Vert\mc{M}_{\vec{W},\vec{r}}\Vert_{L^{\vec{p}}_{\vec{W}}(\R^d;\mc{K}(\vec{\mc{H}}))\to L^{p}(\R^d)}&
        \lesssim_{d,m,\vec{n},\vec{p},\vec{r}}[\vec{W}]_{\vec{p},(\vec{r},\infty),\mathrm{op}}\prod_{j=1}^{m}[W_j^{-1}]_{\mathrm{FW}_{\frac{1}{\frac{1}{r_j}-\frac{1}{p_j}}}}^{\frac{\frac{1}{p_j}}{\frac{1}{r_j}-\frac{1}{p_j}}}\\
        &\lesssim_{d,m,\vec{n},\vec{r}}[\vec{W}]_{\vec{p},(\vec{r},\infty),\mathrm{op}}^{1+\sum_{j=1}^{m}\frac{\frac{1}{p_j}}{\frac{1}{r_j}-\frac{1}{p_j}}}.
    \end{align*}
\end{theorem}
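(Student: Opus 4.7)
The plan is to combine a pointwise Hölder reduction, a principal-cubes decomposition adapted to the scalar structure, a local matrix-weighted $L^p$-estimate, and a multilinear Carleson embedding. To begin, I would apply Hölder's inequality in each cube $Q$ with exponents $p_j/r_j$ and $t_j/r_j$, where $\tfrac{1}{t_j}:=\tfrac{1}{r_j}-\tfrac{1}{p_j}$, to obtain
\[
    \langle\|W_j(x)F_j\|^{r_j}\rangle_Q^{1/r_j}\leq\Big(\avint_Q\|W_j(x)W_j(y)^{-1}\|^{t_j}\,dy\Big)^{1/t_j}\langle h_j^{p_j}\rangle_Q^{1/p_j},
\]
where $h_j:=\|W_jF_j\|_{\mc{H}_j}\in L^{p_j}(\R^d)$. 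Lemma~\ref{lem:reducing_operator_on_operator} then identifies the matrix average with $\|W_j(x)A_{W_j^{-1},Q,t_j}\|$ up to $\vec{n}$-dependent constants, and a $3^d$-lattice argument reduces matters to a fixed dyadic grid $\mc{D}$.

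Next, I would run a principal-cubes construction in $\mc{D}$ adapted to the scalar multilinear averages $\Psi(Q):=\prod_j\langle h_j^{p_j}\rangle_Q^{1/p_j}$, producing a sparse family $\mc{P}\subseteq\mc{D}$ and a partition $\mc{D}=\bigsqcup_{Q\in\mc{P}}\mc{D}(Q)$ with $\Psi(R)\leq 2\Psi(Q)$ whenever $R\in\mc{D}(Q)$. Combined with the previous step, this would give the pointwise bound
\[
    \mc{M}^{\mc{D}}_{\vec{W},\vec{r}}(\vec{F})(x)\lesssim_{\vec{n}}\sup_{Q\in\mc{P},\,Q\ni x}\Psi(Q)\,\mc{N}_Q(x),\quad\mc{N}_Q(x):=\sup_{R\in\mc{D}(Q),\,R\ni x}\prod_{j=1}^m\|W_j(x)A_{W_j^{-1},R,t_j}\|.
\]

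The main obstacle will be to establish the local matrix-weighted $L^p$-bound
\[
    \Big(\int_Q\mc{N}_Q(x)^p\,dx\Big)^{1/p}\lesssim[\vec{W}]_{\vec{p},(\vec{r},\infty),\mathrm{op}}\prod_{j=1}^m[W_j^{-1}]_{\mathrm{FW}_{t_j}}^{t_j/p_j}|Q|^{1/p}
\]
for each $Q\in\mc{P}$. This rests on two ingredients: on the one hand, the Roudenko-type characterization of $[\vec{W}]_{\vec{p},(\vec{r},\infty),\mathrm{op}}$ (Proposition~\ref{prop:roudenko_type_characterization}) provides, with $B_Q:=\bigotimes_jA_{W_j^{-1},Q,t_j}$, the single-scale bound $\big(\int_Q\|\mathbf{W}(x)B_Q\|^p\,dx\big)^{1/p}\lesssim[\vec{W}]_{\vec{p},(\vec{r},\infty),\mathrm{op}}|Q|^{1/p}$; on the other hand, another application of Lemma~\ref{lem:reducing_operator_on_operator} yields $\|W_j(x)A_{W_j^{-1},R,t_j}\|^{t_j}\eqsim\avint_R\|W_j(x)W_j(y)^{-1}\|^{t_j}\,dy$, so the sup over $R\in\mc{D}(Q)$ exposes a dyadic maximal function whose control via the sharp reverse Hölder self-improvement for the scalar weights $y\mapsto\|W_j(y)^{-1}v\|^{t_j}$ --- whose Fujii--Wilson constants are bounded by Corollary~\ref{cor:needed_reverse_Holder} --- delivers the factor $\prod_j[W_j^{-1}]_{\mathrm{FW}_{t_j}}^{t_j/p_j}$.

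Finally, combining the above with a multilinear sparse Carleson embedding adapted to $\mc{P}$ would yield the first (sharp) inequality. The cruder form $[\vec{W}]^{1+\sum_j t_j/p_j}$ then follows by substituting $[W_j^{-1}]_{\mathrm{FW}_{t_j}}^{t_j/p_j}\lesssim[\vec{W}]^{t_j/p_j}$ from Corollary~\ref{cor:needed_reverse_Holder} (cf.~Remark~\ref{rem:sharppdendence}).
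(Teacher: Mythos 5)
The opening Hölder reduction and the pointwise bound $\mc{M}^{\mc{D}}_{\vec{W},\vec{r}}(\vec{F})(x)\lesssim\sup_{Q\in\mc{P},Q\ni x}\Psi(Q)\mc{N}_Q(x)$ are fine, and the claimed local estimate $\int_Q\mc{N}_Q^p\,dx\lesssim C|Q|$ is plausible (the paper proves a stronger version, with constant $[\vec{W}]_{\vec{p},(\vec{r},\infty),\mathrm{op}}^p$ and no Fujii--Wilson factors, via a bootstrapping argument). The decisive gap is in the final ``multilinear sparse Carleson embedding.'' After inserting the local $\mc{N}_Q$-bound you are left needing
\begin{equation*}
\sum_{Q\in\mc{P}}\Psi(Q)^p|Q|\lesssim\prod_{j=1}^{m}\|h_j\|_{L^{p_j}}^p,
\qquad\Psi(Q)=\prod_{j=1}^m\langle h_j^{p_j}\rangle_Q^{1/p_j},
\end{equation*}
for the principal-cubes family $\mc{P}$. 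This is false. Already for $m=1$, $r=1$ it reads $\sum_{Q\in\mc{P}}\langle h^{p}\rangle_Q|Q|\lesssim\|h\|_{L^p}^p=\|h^p\|_{L^1}$, i.e.\ the $L^1$-boundedness of a sparse operator applied to $h^p\in L^1$, which fails (take $h=\ind_{[0,1]}$ and the principal cubes stemming from a large interval; the sum diverges logarithmically). No choice of principal cubes can repair this, because the defect is that the $F_j$-part of your Hölder split sits at the \emph{critical} exponent $p_j$.

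The mechanism the argument is missing is that the self-improvement must act on the $F_j$-averages, not on the weight averages. You invoke the sharp reverse Hölder inequality only inside $\mc{N}_Q$, where it produces the harmless $[W_j^{-1}]_{\mathrm{FW}_{t_j}}$ constants but does not push the $h_j$-exponent below $p_j$; the Carleson step then cannot close. The paper's route keeps the function part at the low exponent $r_j<p_j$ by working with the auxiliary operator $\widetilde{\mc{M}}_{\vec{W},\vec{r}}$, whose definition contains $\langle\|(A_{W_j^{-1},Q,t_j})^{-1}F_j\|^{r_j}\rangle_Q^{1/r_j}$ with no $W_j(x)$; the Hölder split is then performed only in Proposition~\ref{prop:auxiliary_maximal}, at the slightly larger pair $(\tau_jt_j/r_j,(\tau_jt_j/r_j)')$ with $\tau_j>1$ supplied by the sharp reverse Hölder inequality \cite[Theorem~2.3]{HPR12}. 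That places $\|W_jF_j\|$ at the exponent $r_j(\tau_jt_j/r_j)'<p_j$, so the $L^{p_j}$-boundedness of the Hardy--Littlewood $\cdot$-maximal function applies and yields the $[W_j^{-1}]_{\mathrm{FW}_{t_j}}^{t_j/p_j}$ factors. The proof then combines this with the local $N_{Q,\mc{F}}$-bound through Goldberg's level-set decomposition (slicing by $\{\widetilde{\mc{M}}\approx 2^j\}$), which also sidesteps a secondary difficulty in your plan: for $p>1$ one cannot simply replace the supremum over $Q\ni x$ by a sum when passing to $L^p$. In short, the Fujii--Wilson gain must come from the global (auxiliary maximal) estimate, not the local $\mc{N}_Q$-estimate, and the initial Hölder at exponent $p_j$ gives away exactly the integrability slack the argument needs.
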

The second inequality follows from the first through Corollary~\ref{cor:needed_reverse_Holder} (and Remark~\ref{rem:sharppdendence}), so we need only prove the first one. Moreover, we note that by setting $\vec{r}=(1,\ldots,1)$, Theorem~\ref{thm:strongboundsmaxop} follows from this one.

The proof of Theorem~\ref{thm:strong_type_maximal_general} follows the same strategy as in \cite[Theorem 3.2]{Go03}. We first need to bound two auxiliary operators. The first one is the multilinear counterpart of ``Goldberg's auxiliary maximal operator'' from \cite{Go03} and is of interest in its own. Namely, for $\vec{F}\in L^{\vec{r}}_{\mathrm{loc}}(\R^d;\mc{K}(\vec{\mc{H}}))$ we set
    \begin{align*}
        \widetilde{\mc{M}}_{\vec{W},\vec{r}}(\vec{F})(x):=\sup_{Q}\Big(\prod_{j=1}^{m}\langle\Vert(A_{W_j^{-1},Q,t_j})^{-1}F_j\Vert_{\mc{H}_j}^{r_j}\rangle_{Q}^{\frac{1}{r_j}}\Big)\ind_{Q}(x),
    \end{align*}
    where the supremum is taken over all cubes $Q\subseteq\R^d$. If $\vec{r}=(1,\ldots,1)$ then we write $\widetilde{\mc{M}}_{\vec{W}}:=\widetilde{\mc{M}}_{\vec{W},\vec{r}}$.

\begin{proposition}
    \label{prop:auxiliary_maximal}
     Let $\vec{p}\in(0,\infty]^{m}$ and $\vec{r}\in(0,\infty)^{m}$ with $\vec{p}>\vec{r}$. Let $\vec{W}$ be a $m$-tuple of matrix weights with $[W_j^{-1}]_{\mathrm{FW}_{\frac{1}{\frac{1}{r_j}-\frac{1}{p_j}}}}<\infty$ for all $j=1,\ldots,m$. We have
    \begin{align*}
        \Vert\widetilde{\mc{M}}_{\vec{W},\vec{r}}\Vert_{L^{\vec{p}}_{\vec{W}}(\R^d;\mc{K}(\vec{\mc{H}}))\to L^{p}(\R^d)}&\lesssim_{d,\vec{n},\vec{p},\vec{r}}\prod_{j=1}^{m}[W_j^{-1}]_{\mathrm{FW}_{\frac{1}{\frac{1}{r_j}-\frac{1}{p_j}}}}^{\frac{\frac{1}{p_j}}{\frac{1}{r_j}-\frac{1}{p_j}}}.
    \end{align*}
\end{proposition}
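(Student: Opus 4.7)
The plan is to reduce the multilinear $L^p$ estimate to $m$ separate linear estimates via a pointwise product decomposition, and then to prove each linear estimate through the sharp reverse Hölder inequality coming from the Fujii--Wilson hypothesis. Introducing the linear auxiliary operators
\begin{equation*}
N_jF_j(x):=\sup_{Q\ni x}\Big(\avint_Q\|B_{j,Q}^{-1}F_j(y)\|_{\mc{H}_j}^{r_j}\,\mathrm{d}y\Big)^{1/r_j},
\end{equation*}
with $B_{j,Q}:=A_{W_j^{-1},Q,t_j}$ and $\tfrac{1}{t_j}:=\tfrac{1}{r_j}-\tfrac{1}{p_j}$, I have $\widetilde{\mc{M}}_{\vec{W},\vec{r}}(\vec{F})(x)\leq\prod_{j=1}^mN_jF_j(x)$ because the supremum of a product is bounded by the product of the suprema taken independently over $Q\ni x$. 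Multilinear Hölder with exponents $(p/p_j)_{j=1}^m$ then reduces the proposition to the single-factor bounds
\begin{equation*}
\|N_jF_j\|_{L^{p_j}(\R^d)}\lesssim_{d,n_j,p_j,r_j}[W_j^{-1}]_{\mathrm{FW}_{t_j}}^{\frac{1/p_j}{1/r_j-1/p_j}}\|F_j\|_{L^{p_j}_{W_j}(\R^d;\mc{K}(\mc{H}_j))},\quad j=1,\ldots,m.
\end{equation*}

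Fix $j$. For a cube $Q$, apply Hölder's inequality in $y$ with the exponent pair $(t_j(1+\delta)/r_j,\beta/r_j)$ to the elementary pointwise bound $\|B_{j,Q}^{-1}F_j\|_{\mc{H}_j}^{r_j}\leq\|B_{j,Q}^{-1}W_j^{-1}\|_{\mc{H}_j\to\mc{H}_j}^{r_j}\|W_jF_j\|_{\mc{H}_j}^{r_j}$, where $\delta>0$ is to be chosen and $\beta<p_j$ is determined by $\tfrac{1}{t_j(1+\delta)}+\tfrac{1}{\beta}=\tfrac{1}{r_j}$. The sharp reverse Hölder inequality for scalar Fujii--Wilson weights, applied uniformly in $u\in\mc{H}_j$ to the weights $y\mapsto\|W_j(y)^{-1}u\|_{\mc{H}_j}^{t_j}$, holds with constant $\lesssim_d 1$ for $\delta\eqsim_d 1/[W_j^{-1}]_{\mathrm{FW}_{t_j}}^{t_j}$. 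Through the equivalence of directional norms with the norms induced by reducing operators this gives $\|A_{W_j^{-1},Q,t_j(1+\delta)}u\|_{\mc{H}_j}\lesssim\|B_{j,Q}u\|_{\mc{H}_j}$ uniformly in $u$, and Lemma~\ref{lem:reducing_operator_on_operator} applied with $A=B_{j,Q}^{-1}$ upgrades this to the operator-norm estimate
\begin{equation*}
\Big(\avint_Q\|B_{j,Q}^{-1}W_j(y)^{-1}\|_{\mc{H}_j\to\mc{H}_j}^{t_j(1+\delta)}\,\mathrm{d}y\Big)^{\frac{1}{t_j(1+\delta)}}\lesssim_{n_j}1.
\end{equation*}
Combined with the Hölder step this yields the pointwise domination $N_jF_j(x)\lesssim_{n_j}(M(\|W_jF_j\|_{\mc{H}_j}^{\beta})(x))^{1/\beta}$, where $M$ denotes the scalar Hardy--Littlewood maximal operator.

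The single-factor bound follows from the standard estimate $\|M\|_{L^{p_j/\beta}(\R^d)\to L^{p_j/\beta}(\R^d)}\lesssim_d (p_j/\beta)'$, producing the constant $\big(\tfrac{p_j}{p_j-\beta}\big)^{1/\beta}$. A direct computation using $\tfrac{1}{\beta}-\tfrac{1}{p_j}=\tfrac{\delta}{t_j(1+\delta)}$ shows $p_j-\beta\eqsim_{p_j,r_j}\delta$ for small $\delta$, so the choice $\delta\eqsim 1/[W_j^{-1}]_{\mathrm{FW}_{t_j}}^{t_j}$ yields the claimed constant $[W_j^{-1}]_{\mathrm{FW}_{t_j}}^{\frac{1/p_j}{1/r_j-1/p_j}}$, with the small discrepancy between the exponents $t_j/\beta$ and $t_j/p_j$ absorbed using that $[W_j^{-1}]_{\mathrm{FW}_{t_j}}^{\delta}=O(1)$. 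The main obstacle I anticipate is the operator-norm lifting of the reverse Hölder step: the Fujii--Wilson hypothesis delivers the reverse Hölder inequality on each one-dimensional subspace $\spn(u)\subseteq\mc{H}_j$ separately, and Lemma~\ref{lem:reducing_operator_on_operator} is essential for converting this directionwise information into the uniform operator-norm bound on $\|B_{j,Q}^{-1}W_j(y)^{-1}\|_{\mc{H}_j\to\mc{H}_j}$ that feeds into the Hölder splitting.
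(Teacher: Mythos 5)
Your proof is essentially the same as the paper's and is correct. Both arguments reduce $\widetilde{\mc{M}}_{\vec{W},\vec{r}}$ to a product of scalar Hardy--Littlewood maximal functions applied to $\|W_jF_j\|_{\mc{H}_j}$ raised to a power slightly below $p_j$, using (i) the pointwise splitting $\|A_{j,Q}^{-1}F_j\|\leq\|A_{j,Q}^{-1}W_j^{-1}\|\,\|W_jF_j\|$, (ii) Hölder with a gain $\tau_j\eqsim1+\delta$ on the first factor, (iii) the sharp reverse Hölder inequality with $\delta\eqsim_d 1/[W_j^{-1}]_{\mathrm{FW}_{t_j}}^{t_j}$, and (iv) the $\eqsim_n 1$ normalization coming from Lemma~\ref{lem:reducing_operator_on_operator}. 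The only organizational difference is that you factor the multilinear operator into $m$ linear auxiliary operators $N_j$ at the outset via ``sup of product $\leq$ product of sups'' and then Hölder in $x$, whereas the paper estimates the supremum of the product directly and applies Hölder in $x$ only at the end; these are the same estimate done in a different order. Two small remarks: your Hölder exponents in $x$ should be $(p_j/p)_{j=1}^m$ (these sum to $1$), not $(p/p_j)_{j=1}^m$; and your explicit treatment of the discrepancy between the exponents $t_j/\beta$ and $t_j/p_j$ via $[W_j^{-1}]_{\mathrm{FW}_{t_j}}^{\delta}=O(1)$ is in fact a step the paper also implicitly needs (its final line lists $(w_j')^{1/p_j}$ where the maximal-operator bound actually produces $(w_j')^{1/\gamma_j}$ with $\gamma_j=r_j(\tau_jt_j/r_j)'$, and the two differ by exactly this absorbable factor), so your more careful bookkeeping is a minor improvement in exposition rather than a deviation.
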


\begin{proof}
    We adapt the argument of \cite[p.~252]{Isralowitz_Kwon_Pott2017}. Set
    \begin{equation*}
        \frac{1}{\vec{t}}:=\frac{1}{\vec{r}}-\frac{1}{\vec{p}},\quad\vec{\kappa}:=\frac{\vec{p}}{\vec{r}}.
    \end{equation*}
    We note that $\vec{r}\leq\vec{t}$. For each $j=1,\ldots,m$ let $\tau_j\in(1,\infty)$ with
    \begin{equation*}
        \tau_j'=2^{d+1}\sup_{\substack{A\in\mc{L}(\mc{H}_j)\\A\text{ invertible}}}[\Vert W_j A\Vert_{\mc{H}_j\to\mc{H}_j}^{t_j}]_{\mathrm{FW}},
    \end{equation*}
    where as a consequence of Lemma~\ref{lem:reducing_operator_on_operator} we observe that
    \begin{equation*}
        \sup_{\substack{A\in\mc{L}(\mc{H}_j)\\A\text{ invertible}}}[\Vert W_j A\Vert_{\mc{H}_j\to\mc{H}_j}^{t_j}]^{\frac{1}{t_j}}_{\mathrm{FW}}\lesssim_{n}[W_j^{-1}]_{\mathrm{FW}_{t_j}}<\infty.
    \end{equation*}
    Then, using in each factor H\"{o}lder's inequality for the exponents $\frac{\tau_j t_j}{r_j}$, $\Big(\frac{\tau_j t_j}{r_j}\Big)'$ (where we note that $t_j\geq r_j$), the sharp Reverse H\"{o}lder inequality \cite[Theorem 2.3]{HPR12}, and Lemma~\ref{lem:reducing_operator_on_operator}, we estimate
    \begin{align*}
        &\widetilde{\mc{M}}_{\vec{W},\vec{r}}(\vec{F})(x)\\
        &\leq\sup_{Q}\Big(\prod_{j=1}^{m}\langle\Vert(A_{W_j^{-1},Q,t_j})^{-1}W_j^{-1}\Vert_{\mc{H}_j\to\mc{H}_j}^{\tau_jt_j}\rangle_{Q}^{\frac{1}{\tau_jt_j}}\langle\Vert W_jF_j\Vert_{\mc{H}_j}^{r_j\Big(\frac{\tau_jt_j}{r_j}\Big)'}\rangle_{Q}^{\frac{1}{r_j\Big(\frac{\tau_jt_j}{r_j}\Big)'}}\Big)\ind_{Q}(x)\\
        &\lesssim_{\vec{r}}\sup_{Q}\Big(\prod_{j=1}^{m}\langle\Vert W_j^{-1}(A_{W_j^{-1},Q,t_j})^{-1}\Vert_{\mc{H}_j\to\mc{H}_j}^{t_j}\rangle_{Q}^{\frac{1}{t_j}}\langle\Vert W_jF_j\Vert_{\mc{H}_j}^{r_j\Big(\frac{\tau_jt_j}{r_j}\Big)'}\rangle_{Q}^{\frac{1}{r_j\Big(\frac{\tau_jt_j}{r_j}\Big)'}}\Big)\ind_{Q}(x)\\
        &\eqsim_{\vec{n},\vec{r}}\sup_{Q}\Big(\prod_{j=1}^{m}\langle\Vert W_jF_j\Vert^{r_j\Big(\frac{\tau_jt_j}{r_j}\Big)'}_{\mc{H}_j}\rangle_{Q}^{\frac{1}{r_j\Big(\frac{\tau_jt_j}{r_j}\Big)'}}\Big)\ind_{Q}(x).
    \end{align*}
    Set
    \begin{equation*}
        w_j:=\frac{p_j}{r_j\Big(\frac{\tau_jt_j}{r_j}\Big)'},\quad j=1,\ldots,m.
    \end{equation*}
    We observe that
    \begin{align*}
        \frac{\tau_jt_j}{r_j}>\frac{t_j}{r_j}=\kappa_{j}',
    \end{align*}
    therefore $w_j=\frac{\kappa_j}{\Big(\frac{\tau_jt_j}{r_j}\Big)'}>1$. Thus, applying again H\"{o}lder's inequality and then the classical strong type estimate for the Hardy--Littlewood maximal function $M$ on $\R^d$ we obtain
    \begin{align*}
        &\Big(\int_{\R^d}\widetilde{M}_{\vec{W},\vec{r}}(\vec{F})(x)^{p}\mathrm{d}x\Big)^{\frac{1}{p}}
        \lesssim_{\vec{n},\vec{r}}\prod_{j=1}^{m}\Big(\int_{\R^d}M(\Vert W_jF_j\Vert_{\mc{H}_j}^{r_j\Big(\frac{\tau_jt_j}{r_j}\Big)'})(x)^{\frac{p_j}{r_j\Big(\frac{\tau_jr_j}{r_j}\Big)'}}\mathrm{d}x\Big)^{\frac{1}{p_j}}\\
        &\lesssim_{d,\vec{p},\vec{r}}\prod_{j=1}^{m}(w_j')^{\frac{1}{p_j}}\Big(\int_{\R^d}\Vert W_j(x)F_j(x)\Vert_{\mc{H}_j}^{p_j}\mathrm{d}x\Big)^{\frac{1}{p_j}}.
    \end{align*}
    Now notice that
    \begin{align*}
        w_j'&=\frac{p_j}{p_j-r_j\Big(\frac{\tau_jt_j}{r_j}\Big)'}=\frac{p_j}{p_j-\frac{r_j\tau_jt_j}{\tau_jt_j-r_j}}=\frac{p_j(\tau_jt_j-r_j)}{p_j(\tau_jt_j-r_j)-r_j\tau_jt_j}\\
        &=\frac{p_j(\tau_jt_j-r_j)}{\tau_jt_j(p_j-r_j)-p_jr_j}
        =\frac{\frac{\tau_jt_j}{r_j}-1}{\tau_jt_j\Big(\frac{1}{r_j}-\frac{1}{p_j}\Big)-1}
        =\frac{\frac{\tau_jt_j}{r_j}-1}{\tau_j-1}\lesssim_{p_j,r_j}\tau_j',
    \end{align*}
    therefore
    \begin{align*}
        (w_j')^{\frac{1}{p_j}}\lesssim_{p_j,r_j,s}(\tau_{j}')^{\frac{1}{p_j}}\lesssim_{d,n,p_j,r_j}[W_j^{-1}]_{\mathrm{FW}_{t_j}}^{\frac{t_j}{p_j}}.
    \end{align*}
    It now remains to observe that $\frac{t_j}{p_j}=\frac{\frac{1}{p_j}}{\frac{1}{r_j}-\frac{1}{p_j}}$.
\end{proof}

\begin{lemma}
    Suppose $p<\infty$. Let $\mc{F}$ be a finite family of cubes contained in a dyadic grid $\mc{D}$. For all $Q\in\mc{F}$, we consider the auxiliary operator
    \begin{equation*}
        N_{Q,\mc{F}}(x):=\sup_{R\in\mc{F}(Q)}\Big(\prod_{j=1}^{m}\Vert W_j(x)A_{W_j^{-1},R,t_j}\Vert_{\mc{H}_j\to\mc{H}_j}\Big)\ind_{R}(x).
    \end{equation*}
    Then
    \begin{equation*}
        \int_{Q}N_{Q,\mc{F}}(x)^{p}\mathrm{d}x\lesssim_{\vec{n},\vec{r},\vec{p}}[\vec{W}]_{\vec{p},(\vec{r},\infty),\mathrm{op}}^p|Q|
    \end{equation*}
    for all $Q\in\mc{F}$.
\end{lemma}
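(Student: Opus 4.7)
Plan. Write $\Lambda:=[\vec{W}]_{\vec{p},(\vec{r},\infty),\mathrm{op}}$, $\tfrac{1}{t_j}:=\tfrac{1}{r_j}-\tfrac{1}{p_j}$, and
\[
b_R(x):=\prod_{j=1}^m\Vert W_j(x)A_{W_j^{-1},R,t_j}\Vert_{\mc{H}_j\to\mc{H}_j}^p=\Big\Vert\mathbf{W}(x)\bigotimes_{j=1}^m A_{W_j^{-1},R,t_j}\Big\Vert_{\mc{H}\to\mc{H}}^p,
\]
so that $N_{Q,\mc{F}}(x)^p=\sup_{R\in\mc{F}(Q),\,x\in R}b_R(x)$. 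Applying Lemma~\ref{lem:reducing_operator_on_operator} with $X=L^p(R,\mathrm{d}x/|R|)$, $\mc{V}=\mc{H}$, $W=\mathbf{W}$, $A=\bigotimes_j A_{W_j^{-1},R,t_j}$, combined with Proposition~\ref{prop:roudenko_type_characterization} with $E=E'=R$, $s=\infty$, $q=p$, one obtains the per-cube estimate
\[
\int_R b_R(x)\,\mathrm{d}x\lesssim_{\vec{n},\vec{r}}\Lambda^p|R|
\]
uniformly in $R$. The remaining task is to pass from this single-cube bound to a bound on the supremum.

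To that end I will run a principal-cube/stopping-time construction inside $Q$. Declare $Q$ to be principal; given a principal $P$, let $\mathrm{ch}_{\mc{P}}(P)$ consist of the maximal $R\in\mc{F}$ with $R\subsetneq P$ such that
\[
\prod_{j=1}^m\Vert A_{W_j^{-1},R,t_j}A_{W_j^{-1},P,t_j}^{-1}\Vert_{\mc{H}_j\to\mc{H}_j}^p>C,
\]
where $C=C(m,\vec{n},\vec{p},\vec{r})>1$ will be chosen below, and set $E_P:=P\setminus\bigcup_{R'\in\mathrm{ch}_{\mc{P}}(P)}R'$. A dyadic case analysis shows that for $x\in E_P$ and any $R\in\mc{F}(Q)$ containing $x$, there exists a unique principal ancestor $P'$ of $P$ (possibly $P'=P$) with $R\subseteq P'$ and $R$ not contained in any stopping child of $P'$. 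The failure of the stopping condition combined with the submultiplicativity
\[
\Vert W_j(x)A_{W_j^{-1},R,t_j}\Vert\leq\Vert W_j(x)A_{W_j^{-1},P',t_j}\Vert\,\Vert A_{W_j^{-1},P',t_j}^{-1}A_{W_j^{-1},R,t_j}\Vert
\]
then yields $b_R(x)\leq C\,b_{P'}(x)$, and consequently
\[
N_{Q,\mc{F}}(x)^p\leq C\sum_{P'\in\mc{P},\,x\in P'}b_{P'}(x).
\]

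It remains to show that $\mc{P}$ is sparse. Fix $P\in\mc{P}$ and set $\varphi_j(x):=\Vert W_j(x)^{-1}A_{W_j^{-1},P,t_j}^{-1}\Vert_{\mc{H}_j\to\mc{H}_j}^{t_j}$. Lemma~\ref{lem:reducing_operator_on_operator} yields $\langle\varphi_j\rangle_P\eqsim_{\vec{n},\vec{r}}1$ and $\langle\varphi_j\rangle_R\eqsim_{\vec{n},\vec{r}}\Vert A_{W_j^{-1},R,t_j}A_{W_j^{-1},P,t_j}^{-1}\Vert^{t_j}$ for every cube $R\subseteq P$. By pigeonhole, each $R\in\mathrm{ch}_{\mc{P}}(P)$ must satisfy $\langle\varphi_j\rangle_R\gtrsim_{\vec{n},\vec{r}} C^{t_j/(pm)}$ for some $j$, and the weak $(1,1)$ bound for the dyadic maximal operator relative to $P$ produces
\[
\sum_{R\in\mathrm{ch}_{\mc{P}}(P)}|R|\lesssim_{\vec{n},\vec{r}}|P|\sum_{j=1}^m C^{-t_j/(pm)}.
\]
Choosing $C$ large enough makes $\mc{P}$ an $\eta$-sparse family for some $\eta=\eta(m,\vec{n},\vec{p},\vec{r})<1$, so $\sum_{P\in\mc{P}}|P|\leq(1-\eta)^{-1}|Q|$. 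Combining this with the pointwise domination and the per-cube estimate gives
\[
\int_Q N_{Q,\mc{F}}(x)^p\,\mathrm{d}x\leq C\sum_{P\in\mc{P}}\int_P b_P\,\mathrm{d}x\lesssim_{\vec{n},\vec{r},\vec{p}}\Lambda^p|Q|,
\]
as required. The main technical obstacle is the pointwise domination step: correctly identifying the governing principal ancestor for an arbitrary $R\in\mc{F}(Q)$ and closing the submultiplicativity estimate require a careful use of the tensor structure and the nested dyadic geometry; once that is in place, sparsity and the Roudenko-type equivalence quickly finish the argument.
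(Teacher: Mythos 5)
Your proof is correct, and it takes a genuinely different organizational route from the paper's. The paper performs a \emph{single} stopping-time step on the fixed cube $Q$ and then closes the argument by bootstrapping: since $\mc{F}$ is finite, a best constant $B$ with $\int_Q N_{Q,\mc{F}}^p\leq B|Q|$ for all $Q\in\mc{F}$ exists a priori, and a one-step decomposition into good and bad parts yields $B\leq cA^p\Lambda^p + \tfrac{B}{2}$, from which $B\lesssim\Lambda^p$ follows. You instead \emph{unfold} that bootstrap into an explicit iterated stopping-time, building a tree of principal cubes $\mc{P}$, establishing the per-cube estimate $\int_R b_R\lesssim\Lambda^p|R|$ via Lemma~\ref{lem:reducing_operator_on_operator} and Proposition~\ref{prop:roudenko_type_characterization}, dominating $N_{Q,\mc{F}}(x)^p$ pointwise by $C\sum_{P\in\mc{P},\,x\in P}b_P(x)$, and then concluding from the Carleson condition for $\mc{P}$. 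The two arguments are closely related (the paper's best-constant trick implicitly performs your iteration) but each has its merits: the paper's is shorter and does not require verifying the tree structure of the stopping times, while yours avoids the a priori finiteness device and makes the sparse structure fully explicit. A second, smaller difference: for the packing bound on the stopping children, the paper applies H\"older's inequality for sums with exponents $t/t_j$ directly to the product of averages, whereas you use a pigeonhole to peel off one index $j$ and then a weak-$(1,1)$ estimate for each $\varphi_j$. Both close the argument; the paper's version is slicker, yours is slightly more elementary. One detail worth spelling out when you write this up: the identity $\Vert A_{W_j^{-1},R,t_j}A_{W_j^{-1},P,t_j}^{-1}\Vert=\Vert A_{W_j^{-1},P,t_j}^{-1}A_{W_j^{-1},R,t_j}\Vert$, needed to pass between the stopping condition and the submultiplicativity estimate, holds because both factors are Hermitian and $\Vert AB\Vert=\Vert(AB)^{*}\Vert=\Vert B^{*}A^{*}\Vert$.
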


\begin{proof}
    We adapt the proof of \cite[Lemma 3.3]{Go03}. Since $[\vec{W}]_{\vec{p},(\vec{r},\infty),\mathrm{op}}<\infty$, we have that $\mathbf{W}$ is $p$-integrable over $Q$. Since $\mc{F}$ is finite, a crude estimate shows that there is a best constant $0<B<\infty$ such that
    \begin{align*}
        \int_{Q}N_{Q,\mc{F}}(x)^{p}\mathrm{d}x\leq B|Q|,
    \end{align*}
    for all $Q\in\mc{F}$. We show that $B\lesssim_{\vec{n},\vec{r},\vec{p}}[\vec{W}]_{\vec{p},(\vec{r},\infty),\mathrm{op}}^p$.

    Let $0<A<\infty$ be a large constant to be specified later. Fix $Q\in\mc{F}$. Denote by $\mc{S}$ the set of maximal cubes $R$ in $\mc{F}(Q)$ satisfying
    \begin{equation*}
        \prod_{j=1}^{m}\Vert(A_{W_j^{-1},Q,t_j})^{-1}A_{W_j^{-1},R,t_j}\Vert_{\mc{H}_j\to\mc{H}_j}>A.
    \end{equation*}
    It is clear that
    \begin{equation*}
        N_{Q,\mc{F}}(x)\leq A\prod_{j=1}^{m}\Vert W_j(x)A_{W_j^{-1},Q,t_j}\Vert_{\mc{H}_j\to\mc{H}_j},\quad\forall x\in Q\setminus(\bigcup\mc{S}),
    \end{equation*}
    therefore, using Lemma~\ref{lem:reducing_operator_on_operator},
    \begin{align*}
        \int_{Q\setminus(\bigcup\mc{S})} N_{Q,\mc{F}}(x)^{p}\mathrm{d}x\leq A^{p}\int_{Q}\prod_{j=1}^{m}\Vert W_j(x)A_{W_j^{-1},Q,t_j}\Vert_{\mc{H}_j\to\mc{H}_j}^{p}\mathrm{d}x\leq c^p A^{p}[\vec{W}]_{\vec{p},(\vec{r},\infty),\mathrm{op}}^{p}|Q|,
    \end{align*}
    where the constant $c>0$ depends only on $\vec{n},\vec{r}$.

    We now claim that
    \begin{equation*}
        \sum_{R\in\mc{S}}|R|<\frac{1}{2}|Q|
    \end{equation*}
    if $A$ is sufficiently large. Indeed, for each $R\in\mc{S}$ we estimate, using Lemma~\ref{lem:reducing_operator_on_operator},
    \begin{align*}
        &A^{t}\leq\prod_{j=1}^{m}\Vert(A_{W_j^{-1},Q,t_j})^{-1}A_{W_j^{-1},R,t_j}\Vert_{\mc{H}_j\to\mc{H}_j}^{t}\\
        &\eqsim_{\vec{n},\vec{p},\vec{r},m}\frac{1}{|R|}\prod_{j=1}^{m}\Big(\int_{R}\Vert W_j(x)^{-1}(A_{W_j^{-1},Q,t_j})^{-1}\Vert_{\mc{H}_j\to\mc{H}_j}^{t_j}\mathrm{d}x\Big)^{\frac{t}{t_j}},
    \end{align*}
    so by applying H\"older's inequality (for sums) we obtain
    \begin{align*}
        A^{t}\sum_{R\in\mc{S}}|R|&\lesssim_{\vec{n},\vec{p},\vec{r},m}\sum_{R\in\mc{S}}\prod_{j=1}^{m}\Big(\int_{R}\Vert W_j(x)^{-1}(A_{W_j^{-1},Q,t_j})^{-1}\Vert_{\mc{H}_j\to\mc{H}_j}^{t_j}\mathrm{d}x\Big)^{\frac{t}{t_j}}\\
        &\leq\prod_{j=1}^{m}\Big(\sum_{R\in\mc{S}}\int_{R}\Vert W_j(x)^{-1}(A_{W_j^{-1},Q,t_j})^{-1}\Vert_{\mc{H}_j\to\mc{H}_j}^{t_j}\mathrm{d}x\Big)^{\frac{t}{t_j}}\\
        &\leq\prod_{j=1}^{m}\Big(\int_{Q}\Vert W_j(x)^{-1}(A_{W_j^{-1},Q,t_j})^{-1}\Vert_{\mc{H}_j\to\mc{H}_j}^{t_j}\mathrm{d}x\Big)^{\frac{t}{t_j}}\eqsim_{\vec{n},\vec{p},\vec{r},m}1.
    \end{align*}
    This shows that one has only to choose large enough $A$ depending only on $\vec{n},\vec{p},\vec{r}$.

    Observe that for each $R\in\mc{S}$ and each $x\in R$ we have either $N_{Q,\mc{F}}(x)=N_{R,\mc{F}}(x)$ or $N_{Q,\mc{F}}(x)\leq A\prod_{j=1}^{m}\Vert W_j(x)A_{W_j^{-1},Q,t_j}\Vert_{\mc{H}_j\to\mc{H}_j}$. Thus, since the cubes in $\mc{S}$ are pairwise disjoint we conclude
    \begin{align*}
        \int_{\bigcup\mc{S}}N_{Q,\mc{F}}(x)^{p}\mathrm{d}x&=\sum_{R\in\mc{S}}\int_{R}N_{Q,\mc{F}}(x)^{p}\mathrm{d}x\leq cA^{p}[\vec{W}]_{\vec{p},(\vec{r},\infty),\mathrm{op}}^{p}|Q|+B\sum_{R\in\mc{S}}|R|\\
        &<\Big(cA^{p}[\vec{W}]_{\vec{p},(\vec{r},\infty),\mathrm{op}}^{p}+\frac{B}{2}\Big)|Q|.
    \end{align*}
    Thus, we deduce that $B\leq 2cA^{p}[\vec{W}]_{\vec{p},(\vec{r},\infty),\mathrm{op}}^{p}+\dfrac{B}{2}$, showing that $B\leq 4cA^{p}[\vec{W}]_{\vec{p},(\vec{r},\infty),\mathrm{op}}^{p}$.
\end{proof}

We can now prove Theorem~\ref{thm:strong_type_maximal_general}.

\begin{proof}[Proof of Theorem~\ref{thm:strong_type_maximal_general}]
If $p=\infty$, note that for every cube $Q$ and a.e. $x\in Q$ we have
\begin{align*}
\prod_{j=1}^{m}\langle\Vert W_j(x)F_j\Vert_{\mc{H}_j}^{r_{j}}\rangle_{Q}^{\frac{1}{r_j}}
&\leq\prod_{j=1}^{m}\langle\Vert W_j(x)W_j^{-1}\Vert_{\mc{H}_j}^{r_{j}}\rangle_{Q}^{\frac{1}{r_j}}\prod_{j=1}^m\|F_j\|_{L^\infty_{W_j}(\R^d;\mc{K}(\mc{H}_j)}\\
&\leq[\vec{W}]_{\vec{\infty},(\vec{r},\infty),\mathrm{op}}\prod_{j=1}^m\|F_j\|_{L^\infty_{W_j}(\R^d;\mc{K}(\mc{H}_j)},
\end{align*}
proving the result.

Now suppose $p<\infty$. Through the $3^d$-lattice trick and monotone convergence it suffices to prove such a bound uniformly for the operators $\mc{M}_{\mc{F},\vec{W},\vec{r}}$ obtained by restricting the supremum in the definition of $\mc{M}_{\vec{W},\vec{r}}$ to cubes in finite collections $\mc{F}$ contained in a dyadic grid $\mc{D}$. In particular, $\mc{M}_{\mc{F},\vec{W},\vec{r}}(\vec{F})(x)<\infty$ for every $x\in\R^d$.

    For each $j\in\Z$, let $\mc{S}_j$ be the family of all maximal cubes $R\in\mc{F}$ with the property
    \begin{equation*}
        2^{j}<\prod_{j=1}^{m}\langle\Vert (A_{W_j^{-1},R,t_j})^{-1}F_j\Vert_{\mc{H}_j}^{r_j}\rangle_{R}^{\frac{1}{r_j}}\leq 2^{j+1},
    \end{equation*}
    and set
    \begin{equation*}
        E_j:=\bigcup_{R\in\mc{S}_j}\{x\in R:~M_{\mc{F},\vec{W},\vec{r}}(\vec{F})(x)\leq2^{j+2}N_{R,\mc{F}}(x)\}.
    \end{equation*}
    Let $x\in\R^d$ with $\mc{M}_{\mc{F},\vec{W},\vec{r}}(\vec{F})(x)\neq0$. We claim that there exists $j\in\Z$ with $x\in E_j$. Indeed, pick $R_x\in\mc{F}$ with $x\in R_x$ and
    \begin{equation*}
        \mc{M}_{\mc{F},\vec{W},\vec{r}}(\vec{F})(x)\leq 2\prod_{j=1}^{m}\langle\Vert W_j(x)F_j\Vert_{\mc{H}_j}^{r_j}\rangle_{R_x}^{\frac{1}{r_j}}.
    \end{equation*}
    Observe that
    \begin{align*}
        \prod_{j=1}^{m}\langle\Vert W_j(x)F_j\Vert_{\mc{H}_j}^{r_j}\rangle_{R_x}^{\frac{1}{r_j}}
        &\leq\prod_{j=1}^{m}\Vert W_j(x)A_{W_j^{-1},R_x,t_j}\Vert_{\mc{H}_j\to\mc{H}_j}\cdot\langle\Vert (A_{W_j^{-1},R_x,t_j})^{-1}F_j\Vert_{\mc{H}_j}^{r_j}\rangle_{R_x}^{\frac{1}{r_j}},
    \end{align*}
    thus $\prod_{j=1}^{m}\langle\Vert (A_{W_j^{-1},R_x,t_j})^{-1}F_j\Vert_{\mc{H}_j}^{r_j}\rangle_{R_x}^{\frac{1}{r_j}}>0$. Therefore, there exists $j\in\Z$ with
    \begin{equation*}
        2^{j}<\prod_{j=1}^{m}\langle\Vert (A_{W_j^{-1},R_x,t_j})^{-1}F_j\Vert_{\mc{H}_j}^{r_j}\rangle_{R_x}^{\frac{1}{r_j}}\leq 2^{j+1}.
    \end{equation*}
    Then, there is $R\in\mc{S}_j$ such that $R_x\subseteq R$. Then $x\in R$ and
    \begin{align*}
        \prod_{j=1}^{m}\langle\Vert W_j(x)F_j\Vert_{\mc{H}_j}^{r_j}\rangle_{R_x}^{\frac{1}{r_j}}
        &\leq\prod_{j=1}^{m}\Vert W_j(x)A_{W_j^{-1},R_x,t_j}\Vert_{\mc{H}_j\to\mc{H}_j}\cdot\langle\Vert (A_{W_j^{-1},R_x,t_j})^{-1}F_j\Vert_{\mc{H}_j}^{r_j}\rangle_{R_x}^{\frac{1}{r_j}}\\
        &\leq 2^{j+1}N_{R,\mc{F}}(x),
    \end{align*}
    thus $x\in E_j$.
     
    Observe also that $\bigcup\mc{S}_j\subseteq\{\widetilde{\mc{M}}_{\mc{F},\vec{W},\vec{r}}(\vec{F})>2^{j}\}$ for each $j\in\Z$. Thus, we deduce
    \begin{align*}
        &\int_{\R^d}M_{\mc{F},\vec{W},\vec{r}}(\vec{F})(x)^{p}\mathrm{d}x\leq\sum_{j\in\Z}\int_{E_j}M_{\mc{F},\vec{W},\vec{r}}(\vec{F})(x)^{p}\mathrm{d}x\\
        &\leq\sum_{j\in\Z}2^{(j+2)p}\sum_{R\in\mc{S}_j}\int_{R}N_{R,\mc{F}}(x)^{p}\mathrm{d}x\\
        &\lesssim_{m,\vec{n},\vec{p},\vec{r}}[\vec{W}]_{\vec{p},(\vec{r},\infty),\mathrm{op}}^{p}\sum_{j\in\Z}2^{jp}\sum_{R\in\mc{S}_j}|R|=
        [\vec{W}]_{\vec{p},(\vec{r},\infty),\mathrm{op}}^{p}\sum_{j\in\Z}2^{jp}\Big|\bigcup\mc{S}_j\Big|\\
        &\leq[\vec{W}]_{\vec{p},(\vec{r},\infty),\mathrm{op}}^{p}\sum_{j\in\Z}2^{jp}|\{\widetilde{\mc{M}}_{\mc{F},\vec{W}}(\vec{F})>2^{j}\}|
        \eqsim_{p}[\vec{W}]_{\vec{p},(\vec{r},\infty),\mathrm{op}}^{p}\Vert\widetilde{\mc{M}}_{\mc{F},\vec{W},\vec{r}}(\vec{F})\Vert_{L^{p}(\R^d)}^{p}\\
        &\lesssim_{d,\vec{n},\vec{p},\vec{r}}[\vec{W}]_{\vec{p},(\vec{r},\infty),\mathrm{op}}\Big(\prod_{j=1}^{m}[W_j^{-1}]_{\mathrm{FW}_{\frac{1}{\frac{1}{r_j}-\frac{1}{p_j}}}}^{\frac{\frac{1}{p_j}}{\frac{1}{r_j}-\frac{1}{p_j}}}\Big)^p\prod_{j=1}^{m}\Vert F_j\Vert_{L^{p_j}_{W_j}(\R^d;\mc{K}(\mc{H}_j))}^{p},
    \end{align*}
    concluding the proof.
\end{proof}

\subsection{Sparse domination}
While we did not use this in the proof of Theorem~\ref{thm:strongboundsmaxop}, we nonetheless prove convex body sparse domination of $M^{\mc{K}}$, generalizing \cite[Theorem~C]{Ni24b}.
\begin{lemma}
Let $\mc{D}$ be a dyadic grid in $\R^d$ and let $\mc{F}\subseteq\mc{D}$ be finite. For all $\vec{F}\in L^{\vec{1}}_{\mathrm{loc}}(\R^d;\vec{\mc{H}})$ there exists a martingale $\frac{1}{2}$-sparse collection $\mc{S}\subseteq\mc{F}$ for which
\[
M^{\mc{K}}_{\mc{F}}\vec{F}(x)\subseteq n^{\frac{3}{2}}(2n)^mM^{\mc{K}}_{\mc{S}}\vec{F}(x)
\]
for a.e. $x\in\R^d$.
\end{lemma}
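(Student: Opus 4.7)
My plan is to run a stopping-time argument in the spirit of the scalar sparse domination of the Hardy--Littlewood maximal function, adapted to the multilinear convex body setting via reducing operators for the individual Aumann integrals $\langle F_j\rangle_Q\in\mc{K}(\mc{H}_j)$. For each $Q\in\mc{F}$ and each $j=1,\ldots,m$, the John ellipsoid theorem provides a positive-definite $A_{j,Q}\in\mc{L}(\mc{H}_j)$ with $A_{j,Q}\overline{B}_{\mc{H}_j}\subseteq\langle F_j\rangle_Q\subseteq n_j^{1/2}A_{j,Q}\overline{B}_{\mc{H}_j}$. The outer inclusion combined with Proposition~\ref{prop:aumannintegralbound} yields the uniform a priori bound
\[
\langle\|A_{j,Q}^{-1}F_j\|_{\mc{H}_j}\rangle_Q\leq n_j\|A_{j,Q}^{-1}\langle F_j\rangle_Q\|_{\mc{H}_j}\leq n_j^{3/2}.
\]
Starting from $\mc{S}_0$ consisting of the maximal cubes of $\mc{F}$, I would define the stopping children $\ch_{\mc{S}}(Q)$ of $Q\in\mc{S}_k$ as the maximal $R\subsetneq Q$ in $\mc{F}$ such that $\langle\|A_{j,Q}^{-1}F_j\|_{\mc{H}_j}\rangle_R>2n$ for some $j\in\{1,\ldots,m\}$, and set $\mc{S}:=\bigcup_k\mc{S}_k$. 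The finiteness of $\mc{F}$ guarantees termination.

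Martingale $\tfrac{1}{2}$-sparsity would follow by bounding $\big|\bigcup_{R\in\ch_{\mc{S}}(Q)}R\big|\leq\sum_{j=1}^m\big|\{x\in Q:M^{\mc{D}(Q)}(\|A_{j,Q}^{-1}F_j\|_{\mc{H}_j}\ind_Q)(x)>2n\}\big|$ and applying the weak-$(1,1)$ bound of the dyadic maximal function to each term together with the a priori bound above. For the convex body inclusion, fix $x\in\R^d$, and let $Q_x\in\mc{S}$ be the minimal $\mc{S}$-cube containing $x$ (which exists because $\mc{S}$ contains all maximal cubes of $\mc{F}$). For any $Q'\in\mc{F}$ with $x\in Q'$, I select $Q:=Q_x$ if $Q'\subseteq Q_x$, and $Q:=\pi(Q')$, the minimal $\mc{S}$-cube containing $Q'$, otherwise. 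By dyadic nesting and the maximality in the stopping rule, $Q'$ is not a stopping child of $Q$, and thus $\langle\|A_{j,Q}^{-1}F_j\|_{\mc{H}_j}\rangle_{Q'}\leq 2n$ for every $j$. Proposition~\ref{prop:aumannintegralbound} then yields $\|A_{j,Q}^{-1}\langle F_j\rangle_{Q'}\|_{\mc{H}_j}\leq 2n$, and the inner John inclusion $A_{j,Q}\overline{B}_{\mc{H}_j}\subseteq\langle F_j\rangle_Q$ gives $\langle F_j\rangle_{Q'}\subseteq 2n\,\langle F_j\rangle_Q$. Tensoring and taking the symmetric convex hull yields
\[
\bigotimes_{j=1}^m\langle F_j\rangle_{Q'}\subseteq(2n)^m\bigotimes_{j=1}^m\langle F_j\rangle_Q,
\]
whence $M^{\mc{K}}_{\mc{F}}\vec{F}(x)\subseteq(2n)^m M^{\mc{K}}_{\mc{S}}\vec{F}(x)$. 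The additional $n^{3/2}$ factor in the stated constant would emerge from applying one further outer John / Proposition~\ref{prop:aumannintegralbound} estimate to transition between $\mc{K}(\bigotimes_j\langle F_j\rangle_Q)$ and a single enclosing ellipsoid, which is needed if the proof routes through a global reducing operator for $K_Q$.

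The main obstacle I anticipate is the delicate calibration of the stopping threshold: the choice must be large enough that the $m$ weak-$(1,1)$ contributions in the sparsity estimate combine to no more than $|Q|/2$ after absorbing the dimensional constants from Proposition~\ref{prop:aumannintegralbound}, yet small enough so that the set-inclusion constant remains $n^{3/2}(2n)^m$. A secondary difficulty is the case $Q'\supsetneq Q_x$, which requires the tree-theoretic observation that $Q'$ sits between $\pi(Q')$ and its stopping children; this follows routinely from dyadic nesting of $\mc{F}$ but should be verified explicitly.
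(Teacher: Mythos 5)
Your stopping rule is genuinely different from the paper's: you stop coordinatewise on the $m$ scalar quantities $\langle\|A_{j,Q}^{-1}F_j\|_{\mc{H}_j}\rangle_R$ (one condition per $j$), whereas the paper fixes a John basis of each $\langle F_j\rangle_{Q_0}$ and stops on the $n$ tensor directions $e=\bigotimes_{j=1}^m e_j$ via the product $\prod_j|\langle\langle F_j,e_j\rangle_{\mc{H}_j}\rangle_R|$, exploiting the multiplicative structure with H\"older's inequality for sums. The difference matters for the sparsity calibration, and your version has a concrete gap.

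With your threshold $2n$ and the a priori bound $\langle\|A_{j,Q}^{-1}F_j\|_{\mc{H}_j}\rangle_Q\leq n_j\,\|A_{j,Q}^{-1}\langle F_j\rangle_Q\|_{\mc{H}_j}\leq n_j^{3/2}$, the weak-$(1,1)$ estimate gives
\begin{equation*}
\Big|\bigcup_{R\in\mathrm{ch}_{\mc{S}}(Q)}R\Big|\leq\sum_{j=1}^m\frac{1}{2n}\int_Q\|A_{j,Q}^{-1}F_j\|_{\mc{H}_j}\,\mathrm{d}x\leq\sum_{j=1}^m\frac{n_j^{3/2}}{2n}\,|Q|,
\end{equation*}
and the right-hand side is not $\leq\tfrac{1}{2}|Q|$ in general: already for $m=1$, $n\geq 2$ it equals $\tfrac{n^{1/2}}{2}|Q|>\tfrac12|Q|$, so the collection $\mc{S}$ you build is not martingale $\tfrac12$-sparse. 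To salvage the argument the threshold must be raised to at least $2\sum_{j=1}^m n_j^{3/2}$ (for instance $\tau_j:=2mn_j^{3/2}$), but then the inclusion constant you obtain is $\prod_{j=1}^m\tau_j=(2m)^m n^{3/2}$, which is not the stated constant $n^{3/2}(2n)^m$ and exceeds it whenever $m>n$ (e.g.\ all $n_j=1$). So the corrected coordinatewise scheme proves a version of the lemma with a different constant, not the one claimed. Your account of where $n^{3/2}$ comes from is also not right: it does not arise from a reducing operator for the tensor body. In the paper's argument $(2n)^m$ is the stopping threshold, while the extra $n^{3/2}$ comes from (i) the outer John inclusion $\langle F_j\rangle_{Q_0}\subseteq n_j^{1/2}\mc{E}_j(Q_0)$, contributing $\prod_j n_j^{1/2}=n^{1/2}$, and (ii) summing the Minkowski contributions over the $n$ tensor directions $e\in\mc{B}(Q_0)$, contributing another factor $n$. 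Because that stopping condition is a geometric mean across $j$, each of the $n$ directions costs only $|Q_0|/(2n)$ in the sparsity budget after H\"older for sums, so the dimensional blow-up lands entirely on the inclusion constant rather than on the sparsity \textendash\ exactly the trade-off your coordinatewise rule cannot reproduce.
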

\begin{proof}
Let $Q_0\in\mc{F}$. Using the John ellipsoid theorem, for each $j\in\{1,\ldots,m\}$ we can find an orthonormal set $\mc{B}_j(Q_0)$ of $\mc{H}_j$ and $(\lambda_{e_j})_{e_j\in\mc{B}_j(Q_0)}$, $\lambda_{e_j}\geq 0$, for which
\[
\mc{E}_j(Q_0):=\Big\{\sum_{e_j\in\mc{B}_j(Q_0)} \langle u,e_j\rangle_{\mc{H}_j}\lambda_{e_j} e_j:\|u\|_{\mc{H}_j}\leq 1\Big\}
\]
satisfies
\[
\mc{E}_j(Q_0)\subseteq\langle F_j\rangle_{Q_0}\subseteq n_j^{\frac{1}{2}}\mc{E}_j(Q_0).
\]
We set
\[
\mc{B}(Q_0)=\Big\{\bigotimes_{j=1}^m e_j:e_j\in\mc{B}_j(Q_0),\,j=1,\ldots,m\Big\},
\]
and for each $e\in\mc{B}(Q_0)$ we let $\text{ch}_e(Q_0)$ denote the collection of maximal cubes $Q\in\mc{F}$ contained in $Q_0$ satisfying
\[
\prod_{j=1}^m|\langle \langle F_j, e_j\rangle_{\mc{H}_j}\rangle_Q|>(2n)^m \prod_{j=1}^m\langle|\langle F_j, e_j\rangle_{\mc{H}_j}|\rangle_{Q_0},
\]
where $e=\bigotimes_{j=1}^m e_j$. 

Letting $\text{ch}(Q_0)$ denote the maximal cubes in $\bigcup_{e\in\mc{B}(Q_0)}\text{ch}_e(Q_0)$, we let $\mc{S}_0$ denote the maximal cubes in $\mc{F}$, and iteratively define 
\[
\mc{S}_{i+1}=\bigcup_{Q\in\mc{S}_i}\text{ch}(Q),\quad \mc{S}:=\bigcup_{i=0}^\infty\mc{S}_i.
\]
We will show that $\mc{S}$ is sparse. Indeed, for each $Q_0\in\mc{S}$ we have
\begin{align*}
\sum_{Q\in\text{ch}(Q_0)}|Q|&\leq\sum_{e\in\mc{B}(Q_0)}\sum_{Q\in\text{ch}_e(Q_0)}|Q|\\
&\leq\sum_{e\in\mc{B}(Q_0)}\frac{1}{2 n\prod_{j=1}^m\langle|\langle F_j, e_j\rangle_{\mc{H}_j}|\rangle_{Q_0}^{\frac{1}{m}}}\sum_{Q\in\text{ch}_e(Q_0)}\prod_{j=1}^m\Big(\int_Q|\langle F_j(x),e_j\rangle_{\mc{H}_j}|\,\mathrm{d}x\Big)^{\frac{1}{m}}\\
&\leq \sum_{e\in\mc{B}(Q_0)}\frac{1}{2 n\prod_{j=1}^m\langle|\langle F_j, e_j\rangle_{\mc{H}_j}|\rangle_{Q_0}^{\frac{1}{m}}}\prod_{j=1}^m\Big(\sum_{Q\in\text{ch}_e(Q_0)}\int_Q|\langle F_j(x),e_j\rangle_{\mc{H}_j}|\,\mathrm{d}x\Big)^{\frac{1}{m}}\\
&\leq\sum_{e\in\mc{B}(Q_0)}\frac{|Q_0|}{2n}=\frac{|Q_0|}{2},
\end{align*}
as desired.

For each $Q\in\mc{F}$, we let $\pi_{\mc{S}}(Q)$ denote the smallest cube $Q_0\in\mc{S}$ containing $Q$. Suppose $Q\in\mc{F}$ satisfies $\pi_{\mc{S}}(Q)=Q_0$. If $Q\neq Q_0$, then $Q$ fails the stopping condition for all $e\in\mc{B}(Q_0)$, i.e., if $e=\bigotimes_{j=1}^m e_j$, then
\[
\prod_{j=1}^m|\langle \langle F_j, e_j\rangle_{\mc{H}_j}\rangle_Q|\leq (2n)^m \prod_{j=1}^m\langle|\langle F_j, e_j\rangle_{\mc{H}_j}|\rangle_{Q_0}.
\]
As this estimate remains true when $Q=Q_0$, we find that
\begin{equation}\label{eq:sparseinclusion1}
\bigotimes_{j=1}^m\langle F_j\rangle_Q=\sum_{e\in\mc{B}(Q_0)}\bigotimes_{j=1}^m\langle \langle F_j, e_j\rangle_{\mc{H}_j}\rangle_Q e_j\subseteq (2n)^m\sum_{e\in\mc{B}(Q_0)}\Big(\prod_{j=1}^m\langle |\langle F_j, e_j\rangle_{\mc{H}_j}|\rangle_{Q_0}\Big)\mc{K}(e).
\end{equation}
Now, fix $e\in\mc{B}(Q_0)$ where $e=\bigotimes_{j=1}^m e_j$, and pick $f_j\in S^0(\R^d;F_j)$ for which 
\[
\langle|\langle F_j,e_j\rangle_{\mc{H}_j}|\rangle_{Q_0}=\langle \langle f_j,e_j\rangle_{\mc{H}_j}\rangle_{Q_0},\quad j=1,\ldots,m.
\]
Fixing $j\in\{1,\ldots,m\}$, since $\langle f_j\rangle_{Q_0}\in\langle F_j\rangle_{Q_0}\subseteq n_j^{\frac{1}{2}}\mc{E}_j(Q_0)$, we can write
\[
\langle f_j\rangle_{Q_0}=n_j^{\frac{1}{2}}\sum_{e'_j\in\mc{B}_j(Q_0)}\langle u,e'_j\rangle_{\mc{H}_j}\lambda_{e'_j}e'_j
\]
with $\|u\|_{\mc{H}_j}\leq 1$. Thus,
\[
\langle |\langle F_j, e_j\rangle_{\mc{H}_j}|\rangle_{Q_0}e_j=\langle\langle\langle f_j\rangle_Q,e_j\rangle_{\mc{H}_j}\rangle_{Q_0}e_j=n_j^{\frac{1}{2}}\langle u,e_j\rangle_{\mc{H}_j}\lambda_{e_j}e_j\in n_j^{\frac{1}{2}}\mc{E}_j(Q_0)\subseteq n_j^{\frac{1}{2}}\langle F_j\rangle_{Q_0}.
\]
Combining this with \eqref{eq:sparseinclusion1} yields
\[
\bigotimes_{j=1}^m\langle F_j\rangle_Q\subseteq n^{\frac{3}{2}}(2n)^m\mc{K}\Big(\bigotimes_{j=1}^m\langle F_j\rangle_{Q_0}\Big).
\]
In conclusion, we find that
\begin{align*}
M_{\mc{F}}\vec{F}(x)
&=\bigcup_{Q_0\in\mc{S}}\bigcup_{\substack{Q\in\mc{F}\\\pi_{\mc{S}}(Q)=Q_0}}\bigotimes_{j=1}^m\langle F_j\rangle_Q\ind_Q(x)
\subseteq n^{\frac{3}{2}}(2n)^m\bigcup_{Q_0\in\mc{S}}\mc{K}\Big(\bigotimes_{j=1}^m\langle F_j\rangle_{Q_0}\Big)\\
&\subseteq n^{\frac{3}{2}}(2n)^m M^{\mc{K}}_{\mc{S}}\vec{F}(x).
\end{align*}
As $M^{\mc{K}}_{\mc{S}}\vec{F}(x)\in\mc{K}$, we conclude that also 
\[
M^{\mc{K}}_{\mc{F}}\vec{F}(x)=\mc{K}(M_{\mc{F}}\vec{F}(x))\subseteq n^{\frac{3}{2}}(2n)^m M^{\mc{K}}_{\mc{S}}\vec{F}(x),
\]
as desired.
\end{proof}

\section{Matrix weighted bounds for multilinear Calder\'{o}n--Zygmund operators}\label{sec:mczo}

In this section we prove matrix weighted upper bounds for multilinear Calder\'{o}n--Zygmund operators as well as matrix weighted lower bounds for non-degenerate multilinear Calder\'{o}n--Zygmund operators.

\subsection{Multilinear Calder\'{o}n--Zygmund operators}

Here we briefly recall the definition of multilinear Calder\'{o}n--Zygmund operators and properties of them that will be important for the convex body sparse domination.

\smallskip

Let $\omega:[0,\infty)\to[0,\infty)$ be a \emph{modulus of continuity}, meaning in this paper an increasing, continuous function with $\omega(0)=0$ such that there is a constant $D\in[1,\infty)$ with
\begin{equation*}
    \omega(2t)\leq D\,\omega(t),\quad\forall t\in[0,\infty).
\end{equation*}
The best such $D$ is called \emph{doubling constant of $\omega$}. In particular, setting $r:=\log_{2}(D)\in[0,\infty)$, we have
\begin{equation*}
    \omega(ct)\leq c^{r}D\,\omega(t),\quad\forall c\in[1,\infty),\quad\forall t\in[0,\infty).
\end{equation*}
We will also assume that $\omega$ satisfies the so-called \emph{Dini condition}, that is
\begin{equation*}
    \Vert\omega\Vert_{\mathrm{Dini}}:=\int^{1}_{0}\!\omega(t)\,\frac{\mathrm{d}t}{t}<\infty.
\end{equation*}

\smallskip

Let $\Delta:=\{(y_0,y_1,\ldots,y_m)\in(\R^{d})^{m+1}:~y_0=y_1=\ldots=y_m\}$. A \emph{$m$-linear Calder\'{o}n--Zygmund kernel with modulus of continuity $\omega$} is a function $K:(\R^d)^{m+1}\setminus\Delta\to\C$ for which there exists a constant $C_{K}\in(0,\infty)$ such that the following two conditions are fullfilled:
\begin{enumerate}
    \item \emph{Size condition:} For all $(y_0,y_1,\ldots,y_m)\in(\R^d)^{m+1}\setminus\Delta$ we have
    \begin{equation*}
        |K(y_0,y_1,\ldots,y_m)|\leq\frac{C_{K}}{\Big(\sum_{k,\ell=0}^{m}|y_k-y_{\ell}|\Big)^{md}}.
    \end{equation*}

    \item \emph{Smoothness condition:} For all $j=0,1,\ldots,m$, for all
    \begin{equation*}
        (y_0,\ldots,y_{j},\ldots,y_m),(y_0,\ldots,y_{j}',\ldots,y_{m})\in(\R^d)^{m+1}\setminus\Delta
    \end{equation*}
    with
    \begin{equation*}
        |y_j-y_j'|\leq\frac{1}{2}\max_{k=0,\ldots,m}|y_j-y_{k}|
    \end{equation*}
    we have
    \begin{align*}
        &|K(y_0,\ldots,y_j,\ldots,y_m)-K(y_0,\ldots,y_j',\ldots,y_m)|\\
        &\leq C_{K}\omega\Big(\frac{|y_j-y_j'|}{\sum_{k,\ell=0}^{m}|y_k-y_{\ell}|}\Big)\cdot\frac{1}{\Big(\sum_{k,\ell=0}^{m}|y_k-y_{\ell}|\Big)^{md}}.
    \end{align*}
\end{enumerate}

\smallskip

A \emph{multilinear Calder\'{o}n--Zygmund operator with kernel $K$} is an $m$-linear operator
\begin{equation*}
    T:\mc{S}(\R^d)^{m}\to\mc{S}'(\R^d),
\end{equation*}
where $\mc{S}(\R^d)$ denotes the space of Schwarz functions on $\R^d$ and $\mc{S}'(\R^d)$ the space of tempered distributions, such that the following hold:
\begin{enumerate}
    \item The operator $T$ admits a bounded, $m$-linear extension $L^{\vec{q}}(\R^d)\to L^{q}(\R^d)$ for some $m$-tuple $q\in[1,\infty)^{m}$. By abuse of notation we denote this extention also by $T$.

    \item For all $m$-tuples of (suitable) complex valued functions $\vec{f}=(f_1,\ldots,f_m)$ there holds
    \begin{equation*}
        T(\vec{f})(x)=\int_{(\R^d)^{m}}K(x,\vec{y})
    \prod_{j=1}^mf_j(y_j)\,\mathrm{d}\vec{y},
    \end{equation*}
    for all $x\in\R^{d}\setminus\bigcap_{j=1}^{m}\mathrm{supp}(f_j)$.
\end{enumerate}

We will need some properties of multilinear Calder\'{o}n--Zygmund operators that have appeared earlier in the literature. We recall that for $p\in(0,\infty)$ and a measurable function $f:\R^d\to\C$ we define
\begin{equation*}
    \Vert f\Vert_{L^{p,\infty}(\R^d)}:=\sup_{\lambda>0}\lambda|\{|f|>\lambda\}|^{\frac{1}{p}}.
\end{equation*}

\begin{proposition}
    \label{prop:weak_bound_multilinear_CZ}
    The operator $T$ admits a bounded extension $L^{\vec{1}}(\R^d)\to L^{\frac{1}{m},\infty}(\R^d)$ with
    \begin{equation*}
        \Vert T\Vert_{L^{\vec{1}}(\R^d)\to L^{\frac{1}{m},\infty}(\R^d)}\leq C(\Vert T\Vert_{L^{\vec{q}}(\R^d)\to L^{q}(\R^d)}+C_{K}\Vert\omega\Vert_{\mathrm{Dini}}), 
    \end{equation*}
    where the constant $C>0$ depends only on $d,m,\vec{q}$ and the doubling constant of $\omega$.
\end{proposition}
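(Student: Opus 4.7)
The plan is to establish the weak-type $(1,\ldots,1)$ bound via a multilinear Calder\'on--Zygmund decomposition, reducing the estimate to either the assumed strong-type bound or to kernel estimates exploiting the Dini condition. By homogeneity I first normalize so that $\|f_j\|_{L^1(\R^d)}=1$ for $j=1,\ldots,m$, and it then suffices to show $|\{|T(\vec{f})|>\lambda\}|\lesssim \lambda^{-1/m}$ uniformly in $\lambda>0$, with constant of the asserted form. Fix $\lambda>0$ and set the stopping height $\alpha:=\lambda^{1/m}$. For each $j$, perform the Calder\'on--Zygmund decomposition of $f_j$ at height $\alpha$, writing $f_j=g_j+b_j$ with $b_j=\sum_{Q\in\mc{B}_j}b_{j,Q}$, where each $b_{j,Q}$ is supported on a cube $Q\in\mc{B}_j$, has mean zero, and $\|b_{j,Q}\|_{L^1}\lesssim \alpha|Q|$; moreover $|g_j|\lesssim\alpha$ a.e., $\|g_j\|_{L^1}\leq\|f_j\|_{L^1}=1$, and $\sum_{Q\in\mc{B}_j}|Q|\lesssim \alpha^{-1}=\lambda^{-1/m}$.

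Let $\Omega:=\bigcup_{j=1}^m\bigcup_{Q\in\mc{B}_j}3Q$, so that $|\Omega|\lesssim m\,\lambda^{-1/m}$, and split
\begin{equation*}
|\{|T(\vec{f})|>\lambda\}|\leq |\Omega|+\big|\{x\in\R^d\setminus\Omega:|T(\vec{f})(x)|>\lambda\}\big|.
\end{equation*}
By multilinearity, $T(\vec{f})=\sum_{S\subseteq\{1,\ldots,m\}}T(h_1^S,\ldots,h_m^S)$ with $h_j^S=g_j$ if $j\notin S$ and $h_j^S=b_j$ if $j\in S$, giving $2^m$ terms. For the purely good term $T(\vec{g})$, I would interpolate using $|g_j|\lesssim\alpha$ together with $\|g_j\|_{L^1}\leq 1$ to obtain $\|g_j\|_{L^{q_j}}\lesssim \alpha^{1/q_j'}$, and then by the assumed bound
\begin{equation*}
\|T(\vec{g})\|_{L^q}\lesssim\|T\|_{L^{\vec{q}}\to L^q}\prod_{j=1}^m\alpha^{1/q_j'}=\|T\|_{L^{\vec{q}}\to L^q}\,\alpha^{m-1/q},
\end{equation*}
whence by Chebyshev $|\{|T(\vec{g})|>\lambda/2^m\}|\lesssim \lambda^{-q}\alpha^{mq-1}\|T\|^q=\|T\|^q\lambda^{-1/m}$ after using $\alpha^m=\lambda$.

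The principal work lies in the $2^m-1$ mixed terms, each of which involves at least one bad function. For such a term with $S\neq\emptyset$, fix an index $j_0\in S$ and, for $x\notin\Omega$, use the kernel representation together with the mean-zero property of $b_{j_0,Q}$ (so that $Q$ is the cube in position $j_0$ farthest from $x$, up to permutation) to write
\begin{equation*}
T(h_1^S,\ldots,h_m^S)(x)=\sum_{Q\in\mc{B}_{j_0}}\int_{(\R^d)^m}\big(K(x,\vec{y})-K(x,\ldots,c_Q,\ldots,\vec{y})\big)b_{j_0,Q}(y_{j_0})\prod_{j\neq j_0}h_j^S(y_j)\,\mathrm{d}\vec{y},
\end{equation*}
where $c_Q$ is the center of $Q$. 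Here I would partition by the cube $Q\in\mc{B}_{j_0}$ that is ``dominant'' among the bad positions and use the smoothness estimate to pick up a factor $\omega(\ell(Q)/\mathrm{dist}(x,Q))$ divided by the appropriate power of the distance. Integrating in $x\notin\Omega$ and summing the dyadic annuli around $Q$ converts $\omega$ into the Dini norm $\|\omega\|_{\mathrm{Dini}}$, while the remaining integrals against $|b_{j,Q'}|$ (for $j\in S\setminus\{j_0\}$) or $|g_j|$ (for $j\notin S$) are bounded brutally by $L^1$--norms of the pieces; each $b_j$ contributes $\|b_j\|_{L^1}\lesssim 1$, and each $g_j$ contributes $\|g_j\|_{L^1}\lesssim 1$. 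The final sum in $Q\in\mc{B}_{j_0}$ uses $\sum_Q|Q|\lesssim \lambda^{-1/m}$, producing
\begin{equation*}
\int_{\R^d\setminus\Omega}|T(h_1^S,\ldots,h_m^S)(x)|\,\mathrm{d}x\lesssim C_K\|\omega\|_{\mathrm{Dini}}\lambda^{-1/m}\cdot\alpha^{m-1}=C_K\|\omega\|_{\mathrm{Dini}}\lambda^{1-1/m},
\end{equation*}
and hence by Chebyshev the corresponding superlevel set has measure $\lesssim C_K\|\omega\|_{\mathrm{Dini}}\lambda^{-1/m}$.

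The main obstacle is the careful bookkeeping in the mixed terms: one has to choose the ``dominant'' bad cube correctly, verify that for $x\notin\Omega$ the kernel smoothness condition applies (i.e.\ that $|y_{j_0}-c_Q|\leq\tfrac12\max_{k}|c_Q-y_k|$ once $x$ lies outside $3Q$ and we integrate against bad functions supported far from $x$), and correctly exploit cancellation in one position at a time so that the Dini sum closes. Summing the bounds over all $2^m$ pieces and combining with $|\Omega|\lesssim\lambda^{-1/m}$ then yields the claimed weak-type estimate with constant of the required form.
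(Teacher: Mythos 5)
Your proposal follows the same overall strategy as the paper: perform an $m$-fold Calder\'on--Zygmund decomposition (the paper uses heights $a_j\lambda$ for each $f_j$ and optimizes the $a_j$ at the very end, which after normalizing $\|f_j\|_{L^1}=1$ amounts to your single height $\alpha=\lambda^{1/m}$), split $T(\vec{f})$ into $2^m$ pieces, use the assumed strong $L^{\vec q}\to L^q$ bound for the all-good term, exhibit cancellation in one bad position for each mixed term, and close the estimate via the Dini condition and the measure of $\Omega$.

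There is, however, a genuine gap in the handling of the mixed terms. You assert that, after the cancellation and Dini integration in $x$, the remaining good factors $g_j$ may be ``bounded brutally by $L^1$--norms of the pieces,'' with each contributing $\|g_j\|_{L^1}\lesssim 1$. This cannot work: if you do not use the $L^\infty$ bound $|g_j|\lesssim\alpha$ together with the decay of the kernel in the corresponding variable $y_j$, you neither generate the factor $\alpha^{m-1}$ (which you nonetheless invoke in your display) nor reduce the power of the kernel before the final integral in $x$. Concretely, applying the Dini integral while the kernel still carries the full power $md$ produces, after the substitution $R=|x-y_{j_0}|$, an uncontrolled factor of order $\ell(Q_{j_0})^{(1-m)d}$, and the subsequent sum over cubes diverges for $m\geq 2$. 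The correct accounting, as in the paper, is: each good position $j\in A$ yields a factor $a_j\lambda\sim\alpha$ from the $L^\infty$ bound, each bad position $j\in B$ yields $a_j\lambda|Q_j|\sim\alpha|Q_j|$ from the $L^1$ bound on $b_{j,Q_j}$, the kernel decay is used to integrate the remaining $y_j$'s and shrink the power, and only then does the Dini integral in $(x,y_{r_k})$ and the sum $\sum_{Q_k}|Q_k|\lesssim(a_{r_k}\lambda)^{-1}$ close the estimate, producing $\int_E|T(\vec h)|\,\mathrm{d}x\lesssim C_K\|\omega\|_{\mathrm{Dini}}\,\alpha^{m-1}$. (Incidentally, your displayed identity $\lambda^{-1/m}\cdot\alpha^{m-1}=\lambda^{1-1/m}$ is off by a power of $\lambda^{1/m}$; the correct bound $\alpha^{m-1}=\lambda^{1-1/m}$ arises from the $L^\infty$ accounting just described, not from the product you wrote.) Finally, the role of the ``dominant'' cube is somewhat garbled in your sketch: in the paper one first performs cancellation in a fixed bad position $r_1$, and only afterwards partitions the bad $\ell$-tuples $(Q_1,\ldots,Q_\ell)$ according to which $Q_k$ has maximal side length, using that $\omega$ is increasing to pass from $\omega(\ell(Q_1)/\cdot)$ to $\omega(\ell(Q_k)/\cdot)$; the cube used for cancellation and the dominant cube need not be the same.
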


Following \cite{Li18}, we consider the ``grand maximal operator'' $\mc{M}_{T}$ given by
\begin{equation*}
    \mc{M}_{T}(\vec{f})(x):=\sup_{Q}\esssup_{\xi\in Q}|T(\vec{f})(\xi)-T(\vec{f}\ind_{3Q})(\xi)|\ind_{Q}(x),\quad x\in\R^d,
\end{equation*}
where the first supremum ranges over all cubes $Q\subseteq\R^d$. Moreover, following \cite{Li18}, for any cube $Q_0\subseteq\R^d$ we consider the localized grand maximal operator $\mc{M}_{T,Q_0}$ given by
\begin{equation*}
    \mc{M}_{T,Q_0}(\vec{f})(x):=\sup_{Q\subseteq Q_0}\esssup_{\xi\in Q}|T(\vec{f}\ind_{3Q_0})(\xi)-T(\vec{f}\ind_{3Q})(\xi)|\ind_{Q}(x),\quad x\in\R^d,
\end{equation*}
where the first supremum ranges over all cubes $Q\subseteq\R^d$ contained in $Q_0$. Observe that $\mc{M}_{T,Q_0}(\vec{f})\leq\mc{M}_{T}(\vec{f}\ind_{3Q_0})$.

\begin{proposition}
    \label{prop:pointwise_bound_CZ_grand_maximal}
    For any cube $Q_0\subseteq\R^d$ one has
    \begin{equation*}
        |T(\vec{f}\ind_{3Q_0})(x)|\leq c\Vert T\Vert_{L^{\vec{1}(\R^d)\to L^{\frac{1}{m},\infty}(\R^d)}}\prod_{j=1}^{m}|f_j(x)|+\mc{M}_{T,Q_0}(\vec{f})(x)
    \end{equation*}
    for a.e.~$x\in Q_0$, where the constant $c>0$ depends only on $d$.
\end{proposition}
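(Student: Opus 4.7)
The plan is to establish the inequality pointwise at a.e.~$x \in Q_0$ by extracting values of $T(\vec{f}\ind_{3Q_0})$ at nearby points in small subcubes of $Q_0$, and controlling these via the weak-type $L^{\vec{1}} \to L^{\frac{1}{m},\infty}$ estimate of Proposition~\ref{prop:weak_bound_multilinear_CZ} together with the defining property of $\mc{M}_{T,Q_0}$. This is the standard route taken in, e.g., Lerner--Nazarov \cite{LN15} and Li \cite{Li18}, adapted to the multilinear setting.

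First I would restrict attention to the a.e.~set of good points: namely, those $x \in Q_0$ which are simultaneously a Lebesgue point of each $|f_j|$ (so that $\langle|f_j|\rangle_{3Q}\to|f_j(x)|$ as $\mathrm{diam}(Q)\to 0$) and a point of approximate continuity of $T(\vec{f}\ind_{3Q_0})$ (the latter valid a.e.~by Lusin's theorem, noting that $T(\vec{f}\ind_{3Q_0})$ is a measurable function on $\R^d$ by Proposition~\ref{prop:weak_bound_multilinear_CZ}). Then for every cube $Q$ with $x\in Q\subseteq Q_0$ and every $\epsilon>0$, I would consider three exceptional subsets of $Q$:
\begin{enumerate}[(i)]
    \item $F_Q:=\{\xi\in Q:|T(\vec{f}\ind_{3Q_0})(\xi)-T(\vec{f}\ind_{3Q})(\xi)|>\mc{M}_{T,Q_0}(\vec{f})(x)\}$, which has Lebesgue measure zero by the definition of $\mc{M}_{T,Q_0}(\vec{f})(x)$;
    \item $E_Q:=\{\xi\in Q:|T(\vec{f}\ind_{3Q})(\xi)|>\lambda_Q\}$ with $\lambda_Q:=c\,\Vert T\Vert_{L^{\vec{1}}\to L^{\frac{1}{m},\infty}}\prod_{j=1}^{m}\langle|f_j|\rangle_{3Q}$, which by Proposition~\ref{prop:weak_bound_multilinear_CZ} and $\prod_j\|f_j\ind_{3Q}\|_{L^1}=|3Q|^m\prod_j\langle|f_j|\rangle_{3Q}$ satisfies $|E_Q|\leq c^{-1/m}3^{d}|Q|$;
    \item $G_{Q,\epsilon}:=\{\xi\in Q:|T(\vec{f}\ind_{3Q_0})(\xi)-T(\vec{f}\ind_{3Q_0})(x)|>\epsilon\}$, whose density at $x$ tends to $0$ as $\mathrm{diam}(Q)\to 0$ by approximate continuity.
\end{enumerate}
Choosing $c$ large enough that $c^{-1/m}3^{d}\leq 1/4$ and then $Q$ small enough that $|G_{Q,\epsilon}|\leq |Q|/4$, the union $F_Q\cup E_Q\cup G_{Q,\epsilon}$ is a proper subset of $Q$, so there exists $\xi\in Q$ outside all three. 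Chaining the three inequalities at this $\xi$ yields
\[
|T(\vec{f}\ind_{3Q_0})(x)|\leq \lambda_Q+\mc{M}_{T,Q_0}(\vec{f})(x)+\epsilon,
\]
and letting first $\mathrm{diam}(Q)\to 0$ (so that $\langle|f_j|\rangle_{3Q}\to|f_j(x)|$) and then $\epsilon\to 0$ produces the desired bound.

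The main technical subtlety will be the passage from essentially-everywhere inequalities on $Q$ to a genuine pointwise bound at $x$. Because $T(\vec{f}\ind_{3Q_0})$ belongs only to $L^{1/m,\infty}$, which is not contained in $L^{1}_{\mathrm{loc}}$ when $m\geq 2$, the Lebesgue differentiation theorem cannot be applied directly to this function; instead I must invoke approximate continuity via Lusin's theorem in order to justify replacing the value at $\xi\in Q$ with the value at $x$ in the limit. Once this is in place the remaining estimates are routine measure-theoretic bookkeeping, and the resulting constant $c$ depends only on $d$ (absorbing the $m$-dependence into the weak-type norm factor).
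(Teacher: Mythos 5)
Your proof is correct and follows the same density/good-point argument as the proof of \cite[Lemma~2.1]{Li18}, to which the paper defers: in each small cube $Q\ni x$ one locates a point $\xi$ avoiding the exceptional sets coming from the weak-type bound, from the definition of $\mc{M}_{T,Q_0}(\vec{f})(x)$, and from the deviation of $T(\vec{f}\ind_{3Q_0})$ from its value at $x$, and then shrinks $Q$ to $x$. The genuine contribution of your write-up is that you identify, and handle correctly via approximate continuity (Denjoy--Lusin), the technical point that $T(\vec{f}\ind_{3Q_0})\in L^{1/m,\infty}\not\subseteq L^1_{\loc}$ for $m\ge 2$, so the Lebesgue differentiation theorem cannot be applied directly to this function; the paper's ``mutatis mutandis'' instruction mentions only Lebesgue points of the $|f_j|$ and leaves this subtlety implicit. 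One inaccuracy: the constant produced by your argument is $c=(4\cdot 3^d)^m$, which depends on $m$ in addition to $d$, and your parenthetical claim that the $m$-dependence can be ``absorbed into the weak-type norm factor'' is not correct, since $c$ multiplies $\|T\|_{L^{\vec{1}}(\R^d)\to L^{1/m,\infty}(\R^d)}$ rather than being a part of it. The paper's statement shares this imprecision (it should also read ``depends only on $m$ and $d$,'' consistent with Proposition~\ref{prop:Cotlat_CZ_multilinear} and Corollary~\ref{cor:weak_bound_grand_maximal}, which do track the $m$-dependence), so this does not affect the validity of your argument.
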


For any $0<\eta<\infty$, we denote by $M_{\eta}$ the Hardy--Littlewood $\eta$-maximal function on $\R^d$, that is, for $f\in L^{\eta}_{\mathrm{loc}}(\R^d)$ we define
\begin{equation*}
    M_{\eta}f(x):=\sup_{Q}\langle|f|^{\eta}\rangle_{Q}^{\frac{1}{\eta}}\ind_{Q}(x),\quad x\in\R^d,
\end{equation*}
where the supremum ranges over all cubes $Q\subseteq\R^d$. Moreover, we denote by $\mc{M}$ the $m$-(sub)linear maximal function on $\R^d$, that is, for $\vec{f}\in L^{\vec{1}}_{\mathrm{loc}}(\R^d)$ we define
\begin{equation*}
    \mc{M}(\vec{f})(x):=\sup_{Q}\prod_{j=1}^{m}\langle|f_j|\rangle_{Q}\ind_{Q}(x),\quad x\in\R^d,
\end{equation*}
where the supremum ranges over all cubes $Q\subseteq\R^d$.

\begin{proposition}
    \label{prop:Cotlat_CZ_multilinear}
    For all $\eta\in\Big(0,\frac{1}{m}\Big)$ we have
    \begin{equation*}
        \mc{M}_{T}(\vec{f})\leq C(C_{K}\Vert\omega\Vert_{\mathrm{Dini}}+\Vert T\Vert_{L^{\vec{1}}(\R^d)\to L^{\frac{1}{m},\infty}(\R^d)})\mc{M}(\vec{f})+M_{\eta}(|T(\vec{f})|)\quad\text{a.e.~on }\R^d,
    \end{equation*}
    where the constant $C>0$ depends only on $m,d,\eta$ and the doubling constant of $\omega$.
\end{proposition}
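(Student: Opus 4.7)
The plan is to adapt the classical Cotlar-type argument for Calderón--Zygmund operators to the $m$-linear setting with Dini modulus, following the template of \cite{Damian2018,Li18}. Fix $x\in\R^d$ and a cube $Q\ni x$; it suffices to show that for a.e.~$\xi\in Q$,
\[
|T(\vec{f})(\xi)-T(\vec{f}\ind_{3Q})(\xi)|\lesssim_{d,m,\eta,D}(C_K\Vert\omega\Vert_{\mathrm{Dini}}+C_T)\mc{M}(\vec{f})(x)+M_{\eta}(|T(\vec{f})|)(x),
\]
where $C_T:=\Vert T\Vert_{L^{\vec{1}}(\R^d)\to L^{\frac{1}{m},\infty}(\R^d)}$ and $D$ is the doubling constant of $\omega$. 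Taking the essential supremum over $\xi\in Q$ and then the supremum over cubes $Q\ni x$ yields the claim.

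Decomposing each $f_j=f_j\ind_{3Q}+f_j\ind_{(3Q)^c}$ and expanding the $m$-linearity gives
\[
T(\vec{f})(\xi)-T(\vec{f}\ind_{3Q})(\xi)=\sum_{\alpha\in\{0,1\}^m\setminus\{\vec{0}\}}T(f_1^{\alpha_1},\ldots,f_m^{\alpha_m})(\xi),
\]
where $f_j^{0}:=f_j\ind_{3Q}$ and $f_j^{1}:=f_j\ind_{(3Q)^c}$. For each $\alpha\neq\vec{0}$, the point $\xi\in Q$ lies outside the common support of the $f_j^{\alpha_j}$, so the kernel representation of $T$ applies. The first key step is the off-diagonal smoothness estimate: for a.e.~$\xi,\xi'\in Q$,
\[
\bigl|[T(\vec{f})-T(\vec{f}\ind_{3Q})](\xi)-[T(\vec{f})-T(\vec{f}\ind_{3Q})](\xi')\bigr|\lesssim_{d,m,D}C_K\Vert\omega\Vert_{\mathrm{Dini}}\mc{M}(\vec{f})(x).
\]
This is established by splitting the $\vec{y}$-integration into dyadic annuli $A_k=\{\vec{y}:\max_{j}|y_j-\xi|\sim 2^k\ell(Q)\}$ for $k\geq 0$. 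On $A_k$ with $k$ large enough, $|\xi-\xi'|\leq\sqrt{d}\,\ell(Q)\leq\tfrac{1}{2}\max_j|y_j-\xi|$, so the kernel smoothness yields $|K(\xi,\vec{y})-K(\xi',\vec{y})|\lesssim C_K\omega(c\cdot 2^{-k})(2^k\ell(Q))^{-md}$. Integrating against $\prod_j|f_j(y_j)|$ and bounding the resulting products of local averages on concentric cubes of sidelength $\sim 2^k\ell(Q)$ by $\mc{M}(\vec{f})(x)$, invoking the doubling of $\omega$ to pass $\sum_{k\geq 0}\omega(c\cdot 2^{-k})\lesssim_{D}\Vert\omega\Vert_{\mathrm{Dini}}$, gives the bound; the finitely many small-$k$ annuli are handled separately using the size condition together with $\omega(1)\lesssim_{D}\Vert\omega\Vert_{\mathrm{Dini}}$, which follows by combining $\omega(1/2)\leq\Vert\omega\Vert_{\mathrm{Dini}}/\ln 2$ with doubling.

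The second step is the selection of a good base point $\xi'\in Q$. Defining
\begin{align*}
B_1&:=\{\xi'\in Q:|T(\vec{f})(\xi')|>\lambda_1\},&
B_2&:=\{\xi'\in Q:|T(\vec{f}\ind_{3Q})(\xi')|>\lambda_2\},
\end{align*}
Chebyshev applied to the $\eta$-th power together with $\avint_Q|T(\vec{f})|^{\eta}\leq M_{\eta}(|T(\vec{f})|)(x)^{\eta}$ yields $|B_1|\leq(M_{\eta}(|T(\vec{f})|)(x)/\lambda_1)^{\eta}|Q|$, while Proposition~\ref{prop:weak_bound_multilinear_CZ} combined with $\prod_{j}\Vert f_j\ind_{3Q}\Vert_{L^1}\leq 3^{dm}|Q|^{m}\mc{M}(\vec{f})(x)$ gives $|B_2|\leq (3^{dm}C_T\mc{M}(\vec{f})(x)/\lambda_2)^{1/m}|Q|$. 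Choosing $\lambda_1=c_1(\eta)M_{\eta}(|T(\vec{f})|)(x)$ and $\lambda_2=c_2(d,m)C_T\mc{M}(\vec{f})(x)$ large enough that $|B_1|+|B_2|<|Q|$, the set $Q\setminus(B_1\cup B_2)$ has positive measure; picking any $\xi'$ in it and combining with the smoothness estimate produces the pointwise bound, absorbing the multiplicative constant on $M_{\eta}$ into the overall constant. The main technical obstacle is the careful bookkeeping in the dyadic-annulus argument: verifying that the hypotheses of the kernel smoothness condition hold uniformly on each $A_k$ (requiring the precise geometric comparison between $|\xi-\xi'|$, $\ell(Q)$, and $\max_{k,\ell}|y_k-y_\ell|$) and that the doubling of $\omega$ is applied cleanly to recover the Dini norm from the geometric sum.
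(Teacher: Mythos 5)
Your proposal is correct and follows the same overall blueprint as the paper (compare the value of $T(\vec f)-T(\vec f\ind_{3Q})$ at $\xi\in Q$ with its value at a second point, control the difference by the kernel smoothness on dyadic annuli, and control the second point by the weak-type bound and by $M_\eta(|T(\vec f)|)$). The one genuine divergence is how the ``good base point'' is obtained. The paper writes
\[
|T(\vec{f})(\xi)-T(\vec{f}\ind_{3Q})(\xi)|\leq \Big|\int_{(\R^d)^m\setminus(3Q)^m}(K(\xi,\vec y)-K(x',\vec y))\prod_j f_j(y_j)\,\mathrm d\vec y\Big|+|T(\vec f)(x')|+|T(\vec f\ind_{3Q})(x')|,
\]
then takes an $L^\eta$-average over $x'\in Q$, using the normalized embedding $L^{\frac1m,\infty}(Q,\tfrac{\mathrm dx}{|Q|})\hookrightarrow L^\eta(Q,\tfrac{\mathrm dx}{|Q|})$ to dispatch the last term. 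You instead run a measure-theoretic selection: you build bad sets $B_1,B_2\subseteq Q$ via Chebyshev at level $\eta$ and via the $L^{\vec 1}\to L^{1/m,\infty}$ bound, choose thresholds so $|B_1\cup B_2|<|Q|$, and pick $\xi'\in Q\setminus(B_1\cup B_2)$. Both are standard Cotlar-type moves and yield the same bound with the same dependences; the paper's averaging is slightly slicker since it avoids the bookkeeping of the thresholds $\lambda_1,\lambda_2$, while your selection is the more classical ``good point'' argument and makes the role of the weak-$L^{1/m}$ bound a bit more transparent. One small point you handle correctly that deserves emphasis: the smoothness hypothesis $|\xi-\xi'|\le\frac12\max_k|\xi-y_k|$ does not hold automatically on the first few dyadic annuli (since $|\xi-\xi'|$ can be as large as $\sqrt d\,\ell(Q)$ while $\max_k|\xi-y_k|$ may only be $\gtrsim\ell(Q)$ there), and your fallback to the size condition together with $\omega(1)\lesssim_D\Vert\omega\Vert_{\mathrm{Dini}}$ is the right fix.
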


Propositions~\ref{prop:weak_bound_multilinear_CZ},~\ref{prop:pointwise_bound_CZ_grand_maximal} and~\ref{prop:Cotlat_CZ_multilinear} and their proofs appear already implicitly in \cite{Damian2018, Li18}. Nevertheless, since the proofs there concern only the case $m=2$ and especially in \cite{Li18} a stronger, $L^{r}$-H\"{o}rmander smoothness assumption on the kernel is considered, we give detailed proofs of Propositions~\ref{prop:weak_bound_multilinear_CZ},~\ref{prop:pointwise_bound_CZ_grand_maximal} and~\ref{prop:Cotlat_CZ_multilinear} in the appendix.

\begin{corollary}
    \label{cor:weak_bound_grand_maximal}
    The grand maximal operator $\mc{M}_{T}$ maps $L^{\vec{1}}(\R^d)\to L^{\frac{1}{m},\infty}(\R^d)$ boundedly with
    \begin{equation*}
        \Vert\mc{M}_{T}\Vert_{L^{\vec{1}}(\R^d)\to L^{\frac{1}{m},\infty}(\R^d)}\leq C(C_{K}\Vert\omega\Vert_{\mathrm{Dini}}+\Vert T\Vert_{L^{\vec{1}}(\R^d)\to L^{\frac{1}{m},\infty}(\R^d)}),
    \end{equation*}
    where the constant $C$ depends only on $m,d$ and the doubling constant of $\omega$.
\end{corollary}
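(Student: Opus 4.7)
The plan is to derive the corollary directly from Proposition~\ref{prop:Cotlat_CZ_multilinear} by taking quasi-norms in $L^{1/m,\infty}(\R^d)$ of both sides and estimating the two resulting terms with classical weak-type bounds. Fix any $\eta\in\bigl(0,\tfrac{1}{m}\bigr)$, for instance $\eta=\tfrac{1}{2m}$. Proposition~\ref{prop:Cotlat_CZ_multilinear} gives, pointwise a.e.,
\begin{equation*}
\mc{M}_T(\vec{f})\leq C\bigl(C_K\|\omega\|_{\mathrm{Dini}}+\|T\|_{L^{\vec{1}}(\R^d)\to L^{\frac{1}{m},\infty}(\R^d)}\bigr)\mc{M}(\vec{f})+M_\eta(|T(\vec{f})|),
\end{equation*}
so since the $L^{1/m,\infty}$ quasi-norm satisfies a triangle-type inequality with constant depending only on $m$, it suffices to estimate the two summands separately in $L^{1/m,\infty}(\R^d)$.

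For the first summand, I will invoke the classical weak bound of Lerner--Ombrosi--P\'{e}rez--Torres--Trujillo-Gonz\'{a}lez \cite{LOPTT09} for the $m$-linear Hardy--Littlewood maximal operator, which gives $\mc{M}:L^{\vec{1}}(\R^d)\to L^{\frac{1}{m},\infty}(\R^d)$ with a constant depending only on $d$ and $m$. For the second summand, I will rely on the identity $M_\eta(g)=M(|g|^\eta)^{1/\eta}$, which reduces the problem to bounding the ordinary Hardy--Littlewood maximal operator $M$ on the weak Lebesgue space $L^{\frac{1}{m\eta},\infty}(\R^d)$. Since $\eta<\tfrac{1}{m}$, we have $\tfrac{1}{m\eta}>1$, so $M$ is bounded on $L^{\frac{1}{m\eta}}(\R^d)$ and hence on its weak analogue; this yields
\begin{equation*}
\|M_\eta(|T(\vec{f})|)\|_{L^{\frac{1}{m},\infty}(\R^d)}\lesssim_{d,m}\|T(\vec{f})\|_{L^{\frac{1}{m},\infty}(\R^d)}\leq\|T\|_{L^{\vec{1}}(\R^d)\to L^{\frac{1}{m},\infty}(\R^d)}\prod_{j=1}^m\|f_j\|_{L^1(\R^d)},
\end{equation*}
where the last inequality uses Proposition~\ref{prop:weak_bound_multilinear_CZ} to guarantee that $\|T\|_{L^{\vec{1}}\to L^{1/m,\infty}}$ is a finite quantity (in fact it is the one appearing in the statement of the corollary, so no further expansion is needed).

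Combining the two estimates and factoring out the common multiplicative factor gives the asserted bound on $\|\mc{M}_T\|_{L^{\vec{1}}(\R^d)\to L^{\frac{1}{m},\infty}(\R^d)}$, with a final constant depending only on $m$, $d$, and the doubling constant of $\omega$ (the latter entering through the $C$ in Proposition~\ref{prop:Cotlat_CZ_multilinear}). There is no real obstacle here: the whole content of the corollary has been packed into Proposition~\ref{prop:Cotlat_CZ_multilinear}, and the only nonroutine observation is that the $M_\eta$-term is controlled by $\|T(\vec{f})\|_{L^{1/m,\infty}}$ via the weak-type boundedness of $M$ above exponent $1$, which is precisely why the restriction $\eta<\tfrac{1}{m}$ is imposed in Proposition~\ref{prop:Cotlat_CZ_multilinear}.
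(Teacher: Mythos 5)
Your proof is correct and follows exactly the same route as the paper: combine Proposition~\ref{prop:Cotlat_CZ_multilinear} with the weak $L^{\vec{1}}\to L^{\frac{1}{m},\infty}$ boundedness of the $m$-linear maximal operator $\mc{M}$ and the boundedness of $M_\eta$ on $L^{\frac{1}{m},\infty}(\R^d)$ for $\eta<\tfrac{1}{m}$, using Proposition~\ref{prop:weak_bound_multilinear_CZ} to control $T(\vec{f})$. The only addition you make is spelling out why $M_\eta$ is bounded on $L^{\frac{1}{m},\infty}$ via $M_\eta(g)=M(|g|^\eta)^{1/\eta}$ and weak-type boundedness of $M$ above exponent $1$, which the paper leaves implicit.
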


\begin{proof}
    This follows immediately by combining Propositions~\ref{prop:weak_bound_multilinear_CZ} and~\ref{prop:Cotlat_CZ_multilinear}, after observing that the $m$-linear maximal operator $\mc{M}$ maps $L^{\vec{1}}(\R^d)\to L^{\frac{1}{m},\infty}(\R^d)$ boundedly and that the Hardy Littlewood $\eta$-maximal function maps $L^{\frac{1}{m},\infty}(\R^d)\to L^{\frac{1}{m},\infty}(\R^d)$ boundedly for any $0<\eta<\frac{1}{m}$.
\end{proof}

\begin{remark}
    Let $\widetilde{T}$ be the $m$-linear extension of $T$ acting on $m$-tuples of vector valued functions as defined in \eqref{eq:vector_valued_extension}. Then, for all $m$-tuples of (suitable) vector valued functions $\vec{f}=(f_1,\ldots,f_m)$ with $f_j:\R^d\to\mc{H}_j$, we have
    \begin{equation*}
        \widetilde{T}(\vec{f})(x)=\int_{(\R^d)^m}\!K(x,\vec{y})\bigotimes_{j=1}^m f_j(y_j)\,\mathrm{d}\vec{y}
    \end{equation*}
    for all $x\in\R^{d}\setminus\bigcap_{j=1}^{m}\mathrm{supp}(f_j)$.
\end{remark}

\subsection{Sparse domination of multilinear Calder\'{o}n--Zygmund operators}

In this subsection we prove a pointwise domination of the vector valued extensions of multilinear Calder\'{o}n--Zygmund operators by a ``multilinear'' convex body sparse operator. This domination is based on a bootstrapping principle whose starting point is the pointwise sparse domination algorithms in the scalar setting, which in turn relies on the following multilinear adaptation of \cite[Lemma 3.2]{NPTV17}.

For a measurable function $f:\R^d\to\mc{V}$ that is integrable over the cube $Q$ we denote $\llangle f\rrangle_{Q}:=\langle\mc{K}(f)\rangle_{Q}$.

\begin{lemma}
    \label{lem:sparse_domination_inductive_step}
    Let $T:L^{\vec{1}}_{\mathrm{c}}(\R^d)\to L^{0}(\R^d)$ be a multilinear operator. Let $\mc{D}$ be a dyadic grid. Assume that for some cube $Q_0\in\mathcal{D}$ there exists $\lambda\in[1,\infty)$ such that the following holds. For each $0<\varepsilon<1$ and for each $m$-tuple $\vec{f}\in L^{\vec{1}}_{\mathrm{c}}(\R^d)$ of functions supported in $\lambda Q_0$, there exists a collection $\mathcal{G}$ of disjoint cubes in $\mc{D}(Q_0)$ with the following properties:
    \begin{enumerate}
        \item $\sum_{Q\in\mathcal{G}}|Q|\leq\varepsilon|Q_0|$

        \item For any family $\bar{\mathcal{G}}$ of pairwise disjoint cubes in $\mc{D}(Q_0)$ covering $\mathcal{G}$ (meaning that for each $P\in\mathcal{G}$ there is $\bar{P}\in\bar{\mathcal{G}}$ with $P\subseteq\bar{P}$, and no cube in $\bar{\mc{G}}$ is a proper subset of a cube in $\mc{G}$) one has
        \begin{equation*}
            \bigg|T(\vec{f})(x)-\sum_{Q\in\bar{\mathcal{G}}}\ind_{Q}(x)T(\vec{f}\ind_{\lambda Q_0})(x)\bigg|\leq C\prod_{j=1}^{m}\langle |f_j|\rangle_{\lambda Q_0}\quad\text{for a.e. }x\in Q_0,
        \end{equation*}
        where the constant $C>0$ depends only on $\lambda$, $d$, $m$, $\varepsilon$ and $T$.
    \end{enumerate}
    Then, for every $0<\delta<1$ and for every $m$-tuple $\vec{f}\in L^{\vec{1}}_{\mathrm{c}}(\R^d;\vec{\mc{H}})$ of functions supported in $\lambda Q_0$, there exists a collection $\mathcal{G}$ of disjoint cubes in $\mc{D}(Q_0)$ with the following properties:
    \begin{enumerate}
        \item $\sum_{Q\in\mathcal{G}}|Q|\leq\delta|Q_0|$

        \item We have
        \begin{equation*}
            T(\vec{f})(x)\in C\mc{K}\Big(\bigotimes_{j=1}^{m}\llangle f_j\rrangle_{\lambda Q_0}\Big)+\sum_{Q\in\mathcal{G}}\ind_{Q}(x)T(\vec{f}\ind_{\lambda Q})(x)\quad\text{for a.e. }x\in Q_0,
        \end{equation*}
        where the constant $C>0$ depends only on $\lambda$, $d$, $m$, $\delta$, $\vec{n}$ and $T$.
    \end{enumerate}
\end{lemma}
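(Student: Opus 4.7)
The plan is to bootstrap the scalar hypothesis to the vector-valued setting by applying it to the scalar components of $\vec{f}$ in an orthonormal basis adapted to the John ellipsoids of $\llangle f_j\rrangle_{\lambda Q_0}$, and then amalgamating the resulting stopping collections into a single collection $\mc{G}$. For each $j$ I first fix an orthonormal basis $(e_{j,k})_{k=1}^{n_j}$ of $\mc{H}_j$ diagonalizing the John ellipsoid of the symmetric convex body $\llangle f_j\rrangle_{\lambda Q_0}\subseteq\mc{H}_j$, with semiaxes $\mu_{j,k}\geq 0$ in the direction of $e_{j,k}$. Decompose $f_j=\sum_{k=1}^{n_j}f_{j,k}e_{j,k}$ with scalar components $f_{j,k}:=\langle f_j,e_{j,k}\rangle_{\mc{H}_j}\in L^1_{\mathrm{loc}}(\R^d)$ supported in $\lambda Q_0$, and set $n:=\prod_{j=1}^m n_j$. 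For every multi-index $\vec{k}=(k_1,\ldots,k_m)$, apply the scalar hypothesis to the $m$-tuple $\vec{f}_{\vec{k}}:=(f_{1,k_1},\ldots,f_{m,k_m})$ with parameter $\varepsilon:=\delta/n$ to produce a disjoint collection $\mc{G}_{\vec{k}}\subseteq\mc{D}(Q_0)$ with $\sum_{P\in\mc{G}_{\vec{k}}}|P|\leq(\delta/n)|Q_0|$.

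Let $\mc{G}$ consist of the maximal cubes in $\bigcup_{\vec{k}}\mc{G}_{\vec{k}}$; it is disjoint, satisfies $\sum_{Q\in\mc{G}}|Q|\leq\delta|Q_0|$, and, by maximality, qualifies as a covering family of every $\mc{G}_{\vec{k}}$ in the sense of the hypothesis. Applying the scalar conclusion with $\bar{\mc{G}}=\mc{G}$ to each $\vec{f}_{\vec{k}}$ therefore yields, for a.e.\ $x\in Q_0$,
\begin{equation*}
    |E_{\vec{k}}(x)|\leq C_1\prod_{j=1}^m\langle|f_{j,k_j}|\rangle_{\lambda Q_0},\qquad E_{\vec{k}}(x):=T(\vec{f}_{\vec{k}})(x)-\sum_{Q\in\mc{G}}\ind_Q(x)T(\vec{f}_{\vec{k}}\ind_{\lambda Q})(x),
\end{equation*}
where $C_1$ depends only on $\lambda,d,m,\delta/n$, and $T$. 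By the multilinearity of $T$ and the definition of the vector-valued extension $\widetilde{T}$,
\begin{equation*}
    \widetilde{T}(\vec{f})(x)-\sum_{Q\in\mc{G}}\ind_Q(x)\widetilde{T}(\vec{f}\ind_{\lambda Q})(x)=\sum_{\vec{k}}E_{\vec{k}}(x)\bigotimes_{j=1}^m e_{j,k_j},
\end{equation*}
so the task is reduced to showing that this sum lies in a dimensional constant multiple of $\mc{K}\Big(\bigotimes_{j=1}^m\llangle f_j\rrangle_{\lambda Q_0}\Big)$.

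This last step is the main difficulty and is where the choice of basis is used. Since $\llangle f_j\rrangle_{\lambda Q_0}=\{\langle hf_j\rangle_{\lambda Q_0}:|h|\leq 1\}$, the support functional of $\llangle f_j\rrangle_{\lambda Q_0}$ at $e_{j,k}$ equals $\langle|f_{j,k}|\rangle_{\lambda Q_0}$; comparing with the John ellipsoid sandwich gives $\mu_{j,k}\leq\langle|f_{j,k}|\rangle_{\lambda Q_0}\leq n_j^{1/2}\mu_{j,k}$. Since $\mu_{j,k}e_{j,k}\in\llangle f_j\rrangle_{\lambda Q_0}$, scaling by the factor $\alpha_{j,k}:=\langle|f_{j,k}|\rangle_{\lambda Q_0}/\mu_{j,k}\in[1,n_j^{1/2}]$ yields $\langle|f_{j,k}|\rangle_{\lambda Q_0}\,e_{j,k}\in n_j^{1/2}\llangle f_j\rrangle_{\lambda Q_0}$. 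Tensoring across $j$,
\begin{equation*}
    \prod_{j=1}^m\langle|f_{j,k_j}|\rangle_{\lambda Q_0}\bigotimes_{j=1}^m e_{j,k_j}\in n^{1/2}\bigotimes_{j=1}^m\llangle f_j\rrangle_{\lambda Q_0}\subseteq n^{1/2}\mc{K}\Big(\bigotimes_{j=1}^m\llangle f_j\rrangle_{\lambda Q_0}\Big).
\end{equation*}
Combining this with the symmetry of $\mc{K}\Big(\bigotimes_j\llangle f_j\rrangle_{\lambda Q_0}\Big)$ and the coordinate bound on $|E_{\vec{k}}(x)|$, each of the $n$ summands on the right-hand side lies in $C_1n^{1/2}\mc{K}\Big(\bigotimes_j\llangle f_j\rrangle_{\lambda Q_0}\Big)$, and convexity then places their total in $C_1n^{3/2}\mc{K}\Big(\bigotimes_j\llangle f_j\rrangle_{\lambda Q_0}\Big)$. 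This delivers the desired containment with a constant depending only on $\lambda,d,m,\delta,\vec{n}$, and $T$. The hard part throughout is precisely this passage from coordinatewise scalar bounds to a genuine convex body containment in $\mc{H}$; the John ellipsoid adaptation of the basis is what overcomes it, by ensuring that every scaled basis vector appearing on the left of the containment is itself comparable to an element of the Aumann integral on the right.
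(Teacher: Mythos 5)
Your proposal is correct and follows essentially the same strategy as the paper's proof: expand $\vec{f}$ in the John-ellipsoid basis of each $\llangle f_j\rrangle_{\lambda Q_0}$, apply the scalar hypothesis to each of the $n$ scalar component $m$-tuples with parameter $\varepsilon=\delta/n$, take $\mc{G}$ to be the maximal cubes of the union (which by maximality is disjoint, has controlled measure, and covers every $\mc{G}_{\vec{k}}$), and then use the ellipsoid sandwich to convert the component-wise bounds $|E_{\vec{k}}|\leq C_1\prod_j\langle|f_{j,k_j}|\rangle_{\lambda Q_0}$ into a convex body containment. The only difference is cosmetic: the paper packages the error into an intermediate ``box'' polytope $P$ before passing to $\mc{K}\big(\bigotimes_j\llangle f_j\rrangle_{\lambda Q_0}\big)$, whereas you place each of the $n$ summands $E_{\vec{k}}(x)\bigotimes_j e_{j,k_j}$ in a scaled $\mc{K}$-hull individually and then sum by convexity, yielding $C_1 n^{3/2}$ rather than the paper's $C n$; both constants are dimensional, so this has no bearing on the conclusion.
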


\begin{proof}
   Fix $0<\delta<1$ and a $m$-tuple $\vec{f}\in L^{\vec{1}}_{\mathrm{c}}(\R^d;\vec{\mc{H}})$ of functions supported in $\lambda Q_0$. For each $j=1,\ldots,m$, let $\mc{E}_{j}$ be the John ellipsoid of $\llangle f_j\rrangle_{\lambda Q_0}$. Then, for each $j=1,\ldots,m$, we can find an orthonormal basis $e_{j,1},\ldots,e_{j,n_j}$ for $\mc{H}_j$ as well as $\alpha_{j,1},\ldots,\alpha_{j,n_j}\in[0,\infty)$, such that
   \begin{equation*}
       \mc{E}_j:=\Big\{\sum_{k=1}^{n_j}x_{k}\alpha_k e_k:~x_k\in\F,~\sum_{k=1}^{d}|x_k|^2\leq 1\Big\}.
   \end{equation*}
   Exactly as in \cite[Lemma 3.2]{NPTV17} we have $\langle |f_{j,k}|\rangle_{\lambda Q_0}\leq \sqrt{n_j}\,\alpha_{j,k}$ for each $k=1,\ldots,n_j$ and $j=1,\ldots,m$.

   For each $\vec{k}\in\prod_{j=1}^{m}\{1,\ldots,n_j\}$, apply the hypothesis on the $m$-tuple $f^{\vec{k}}:=(f_{1,k_1},\ldots,f_{m,k_m})$ for $\varepsilon=\frac{\delta}{n}$, where we recall that $n:=\prod_{j=1}^{m}n_j$. Thus, we obtain a family $\mc{G}^{\vec{k}}$ of pairwise disjoint cubes in $\mc{D}(Q_0)$, such that $\sum_{Q\in\mathcal{G}}|Q|\leq\varepsilon|Q_0|$ and for any family $\bar{\mathcal{G}}$ of pairwise disjoint cubes in $\mc{D}(Q_0)$ covering $\mathcal{G}$ one has
    \begin{equation*}
            \bigg|T(f^{\vec{k}})(x)-\sum_{Q\in\bar{\mathcal{G}}}\ind_{Q}(x)T(f^{\vec{k}}\ind_{\lambda Q_0})(x)\bigg|\leq C\prod_{j=1}^{m}\langle |f_{j,k_j}|\rangle_{\lambda Q_0}\quad\text{for a.e. }x\in Q_0,
    \end{equation*}
    where the constant $C>0$ depends only on $\lambda$, $d$, $m$, $\varepsilon$ and $T$.

    Let now $\mc{G}$ be the family of all maximal cubes in $\bigcup_{\vec{k}}\mc{G}^{\vec{k}}$. Observe that
    \begin{equation*}
        \sum_{Q\in\mc{G}}|Q|\leq\sum_{\vec{k}}\sum_{Q\in\mc{G}^{\vec{k}}}|Q|\leq\sum_{\vec{k}}\varepsilon|Q_0|=\delta|Q_0|.
    \end{equation*}
    Moreover, observe that for each $\vec{k}$ we have
    \begin{equation*}
        \bigg|T(f^{\vec{k}})(x)-\sum_{Q\in\mathcal{G}}\ind_{Q}(x)T(f^{\vec{k}}\ind_{\lambda Q_0})(x)\bigg|\leq C\prod_{j=1}^{m}\langle |f_{j,k_j}|\rangle_{\lambda Q_0}\leq Cn^{\frac{1}{2}}\prod_{j=1}^{m}\alpha_{j,k_j}
    \end{equation*}
    for a.e.~$x\in Q_0$, because $\mc{G}$ covers $\mc{G}^{\vec{k}}$. From the definition of the vector valued extension of $T$ we deduce
    \begin{equation*}
        T(\vec{f})(x)\in Cn^{\frac{1}{2}}P+\sum_{Q\in\mathcal{G}}\ind_{Q}(x)T(\vec{f}\ind_{\lambda Q})(x)
    \end{equation*}
    for a.e.~$x\in Q_0$, where
    \begin{equation*}
        P:=\bigg\{\sum_{\vec{k}}x_{\vec{k}}\bigg(\prod_{j=1}^{m}\alpha_{j,k_j}\bigg)e_{\vec{k}}:~x_{\vec{k}}\in\F\text{ with }|x_{\vec{k}}|\leq1\bigg\},
    \end{equation*}
    where $e_{\vec{k}}:=\bigotimes_{j=1}^{m}e_{j,k_j}$. Observe that
    \begin{equation*}
        P\subseteq\sqrt{n}\mc{K}\Big(\Big\{\lambda\bigg(\prod_{j=1}^{m}\alpha_{j,k_j}\bigg)e_{\vec{k}}:~\vec{k}\in\prod_{j=1}^{m}\{1,\ldots,n_j\},~\lambda\in\F\text{ with }|\lambda|=1\Big\}\Big).
    \end{equation*}
    Moreover, as
    \begin{equation*}
        \lambda\bigg(\prod_{j=1}^{m}\alpha_{j,k_j}\bigg)e_{\vec{k}}\in\bigotimes_{j=1}^{m}\mc{E}_j
    \end{equation*}
    for every $\vec{k}\in\prod_{j=1}^{m}\{1,\ldots,n_j\}$ and $\lambda\in\F$ with $|\lambda|=1$, it follows that
    \begin{equation*}
        P\subseteq n^{\frac{1}{2}}\mc{K}\Big(\bigotimes_{j=1}^{m}\mc{E}_j\Big)\subseteq n^{\frac{1}{2}}\mc{K}\Big(\bigotimes_{j=1}^{m}\llangle f_j\rrangle_{\lambda Q_0}\Big),
    \end{equation*}
    concluding the proof.
\end{proof}

\begin{lemma}
    \label{lem:multilinear_CZ_inductive_step}
    Let $T$ be a $m$-linear Calder\'{o}n--Zygmund operator with a kernel having a modulus of continuity that satisfies the Dini condition. Let $\mc{D}$ be a dyadic grid in $\R^d$. Then, for any cube $Q_0\in\mc{D}$, the assumptions of Lemma~\ref{lem:sparse_domination_inductive_step} are satisfied with $\lambda=3$.
\end{lemma}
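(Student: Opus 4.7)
The plan is to follow a Lerner-type stopping time construction, adapted to the multilinear setting via the grand maximal operator machinery of Propositions~\ref{prop:weak_bound_multilinear_CZ},~\ref{prop:pointwise_bound_CZ_grand_maximal}, and Corollary~\ref{cor:weak_bound_grand_maximal}. Fix $Q_0\in\mc{D}$, $0<\varepsilon<1$, and an $m$-tuple $\vec{f}\in L^{\vec{1}}_{\mathrm{c}}(\R^d)$ of scalar functions supported in $3Q_0$; abbreviate $A:=\prod_{j=1}^m\langle|f_j|\rangle_{3Q_0}$. I will manufacture the stopping collection $\mc{G}$ from a bad set $E\subseteq Q_0$ outside of which both $\mc{M}_{T,Q_0}(\vec{f})$ and the multilinear maximal function $\mc{M}(\vec{f})$ are pointwise controlled by $A$.

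Concretely, I will take $E$ to be the union of the level sets
\[
\{x\in Q_0:\mc{M}_{T,Q_0}(\vec{f})(x)>C_1A\}\cup\{x\in Q_0:\mc{M}(\vec{f})(x)>C_2A\},
\]
with constants $C_1,C_2$ depending on $d$, $m$, $\varepsilon$, and $T$ chosen large enough, via the weak-type $L^{\vec{1}}\to L^{\frac{1}{m},\infty}$ bounds for $\mc{M}_T$ (Corollary~\ref{cor:weak_bound_grand_maximal}, which uses the Dini hypothesis) and for $\mc{M}$, to guarantee $|E|\leq\varepsilon|Q_0|/2^{d+1}$. I then let $\mc{G}$ be the family of maximal dyadic cubes $Q\in\mc{D}(Q_0)$ satisfying $|Q\cap E|>|Q|/2^{d+1}$. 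The smallness of $|E|$ ensures $Q_0\notin\mc{G}$, and the stopping condition yields the required packing estimate $\sum_{Q\in\mc{G}}|Q|\leq 2^{d+1}|E|\leq\varepsilon|Q_0|$. Lebesgue differentiation along $\mc{D}$ simultaneously yields $E\subseteq\bigcup\mc{G}$ up to a null set.

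It remains to verify the pointwise estimate in condition~(2) of Lemma~\ref{lem:sparse_domination_inductive_step}, with the interpretation that the truncation is at the scale of the covering cube. I will split $Q_0$ into $Q_0\setminus\bigcup\bar{\mc{G}}$ and the cubes of $\bar{\mc{G}}$. On the good part, almost every point lies outside $\bigcup\mc{G}$ and hence outside $E$, so Proposition~\ref{prop:pointwise_bound_CZ_grand_maximal} together with the definitions of $E$ and $\mc{M}$ produces $|T(\vec{f})(x)|=|T(\vec{f}\ind_{3Q_0})(x)|\lesssim(cC_2+C_1)A$. For $\bar{Q}\in\bar{\mc{G}}$, the central claim is that $\bar{Q}\setminus E$ has positive measure: once this is in hand, the definition of $\mc{M}_{T,Q_0}$ immediately gives for a.e.~$\xi\in\bar{Q}$ that
\[
|T(\vec{f})(\xi)-T(\vec{f}\ind_{3\bar{Q}})(\xi)|\leq\mc{M}_{T,Q_0}(\vec{f})(y)\leq C_1A
\]
upon choosing any $y\in\bar{Q}\setminus E$.

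The main technical step is therefore verifying $|\bar{Q}\setminus E|>0$, and this is where the precise covering hypothesis on $\bar{\mc{G}}$ becomes essential. If $\bar{Q}\in\mc{G}$, then the forced upper density bound (from $\bar{Q}$ being maximal) yields $|\bar{Q}\cap E|\leq|\bar{Q}|/2$; if $\bar{Q}\supsetneq Q$ for some $Q\in\mc{G}$, the maximality of $Q$ gives $|\bar{Q}\cap E|\leq|\bar{Q}|/2^{d+1}$; and if $\bar{Q}$ is disjoint from every cube in $\mc{G}$ (the only alternative permitted by the prohibition against $\bar{Q}$ being a proper subset of a stopping cube), then $|\bar{Q}\cap E|=0$ a.e.~since $E\subseteq\bigcup\mc{G}$ a.e. These three cases exhaust the options for $\bar{Q}$, and together they yield the desired bound uniformly over all admissible covering families $\bar{\mc{G}}$, completing the verification of the hypothesis of Lemma~\ref{lem:sparse_domination_inductive_step}.
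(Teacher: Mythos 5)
Your proof is correct and follows essentially the same Lerner-type stopping-time strategy as the paper, built on Propositions~\ref{prop:weak_bound_multilinear_CZ} and~\ref{prop:pointwise_bound_CZ_grand_maximal} and Corollary~\ref{cor:weak_bound_grand_maximal}. The one noteworthy deviation is in the choice of the exceptional set $E$: you use a superlevel set of the multilinear maximal function $\mc{M}(\vec{f})$, whereas the paper instead uses the superlevel sets of the individual functions $|f_j|$, $j=1,\ldots,m$, controlled by Markov's inequality. Both work, since Lebesgue differentiation gives $\prod_j|f_j(x)|\leq\mc{M}(\vec{f})(x)$ a.e.\ (which is what feeds into Proposition~\ref{prop:pointwise_bound_CZ_grand_maximal}), and the weak-type $(\vec{1},\frac{1}{m})$ bound for $\mc{M}$ gives the same measure control; the paper's version is marginally more elementary since it avoids invoking the weak-type bound for $\mc{M}$. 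Your three-case analysis for $|\bar{Q}\setminus E|>0$ is correct and exhaustive, though the paper handles all three cases in one step by looking at the dyadic parent $\hat{\bar{Q}}$ of $\bar{Q}$ and noting that the covering hypothesis on $\bar{\mc{G}}$ forces $\langle\ind_E\rangle_{\hat{\bar{Q}}}\leq 1/2^{d+1}$, hence $\langle\ind_E\rangle_{\bar{Q}}\leq 1/2$ — a slightly cleaner route to the same conclusion. You also correctly diagnose the typo in Lemma~\ref{lem:sparse_domination_inductive_step}(2) (the truncation should read $T(\vec f\ind_{\lambda Q})$, not $T(\vec f\ind_{\lambda Q_0})$), which is confirmed by how the hypothesis is consumed in the proof of Lemma~\ref{lem:sparse_domination_inductive_step}.
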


\begin{proof}
    We adapt \cite[3.2.2]{NPTV17}. Let $Q\in\mc{D}$ and $0<\varepsilon<1$ be arbitrary. Let $\vec{f}\in L^{\vec{1}}_{\mathrm{c}}(\R^d)$ be a $m$-tuple of functions supported in $3Q_0$. Then, by Corollary~\ref{cor:weak_bound_grand_maximal} and Chebyshev's inequality, we have that there exists a constant $C_1>0$ depending only on $T$, $d$ and $m$, such that for the set
    \begin{align*}
        E&:=\Big\{x\in Q_0:~\mc{M}_{T,Q_0}(\vec{f})(x)>\frac{C_1}{\varepsilon^m}\prod_{j=1}^{m}\langle|f_j|\rangle_{3Q_0}\Big\}\\
        &\cup\Big(\bigcup_{j=1}^{m}\Big\{x\in Q_0:~|f_j(x)|>\frac{C_1}{\varepsilon}\langle|f_j|\rangle_{3Q_0}\Big\}\Big)
    \end{align*}
    we have $|E|<\frac{\varepsilon}{2^{d+1}}|Q_0|$. Then, for all $x\in Q_0\setminus E$, by Proposition~\ref{prop:pointwise_bound_CZ_grand_maximal} we have
    \begin{equation*}
        T(\vec{f})(x)\leq\frac{C_2}{\varepsilon^{m}}\prod_{j=1}^{n}\langle|f_j|\rangle_{3Q_0},
    \end{equation*}
    where the constant $C_2>0$ depends only on $T$, $d$ and $m$.

    Let now $\mc{G}$ be the family of all maximal cubes $Q\in\mc{D}(Q_0)$ such that
    \begin{equation*}
        \langle\ind_{E}\rangle_{Q}>\frac{1}{2^{d+1}}.
    \end{equation*}
    Let $\bar{\mc{G}}$ be an arbitrary family of pairwise disjoint cubes in $\mc{D}(Q_0)$ covering $\mc{G}$. Since $\Big|E\setminus\Big(\bigcup_{Q\in\mc{G}}Q\Big)\Big|=0$ by the Lebesgue differentiation theorem, we conclude that
    \begin{equation*}
        T(\vec{f})(x)\leq\frac{C_2}{\varepsilon^{m}}\prod_{j=1}^{n}\langle|f_j|\rangle_{3Q_0},
    \end{equation*}
    for a.e.~$x\in Q_0\setminus\Big(\bigcup_{Q\in\mc{G}}Q\Big)$. In particular, we have
    \begin{equation*}
        T(\vec{f})(x)\leq\frac{C_2}{\varepsilon^{m}}\prod_{j=1}^{n}\langle|f_j|\rangle_{3Q_0},
    \end{equation*}
    for a.e.~$x\in Q_0\setminus\Big(\bigcup_{Q\in\bar{\mc{G}}}Q\Big)$.

    Let now $Q\in\bar{\mc{G}}$ be arbitrary. We show that
    \begin{equation*}
        |T(\vec{f})(x)-T(\vec{f}\1_{3Q_0})(x)|\leq\frac{C_1}{\varepsilon^{m}}\prod_{j=1}^{m}\langle|f_j|\rangle_{3Q_0}
    \end{equation*}
    for a.e.~$x\in Q$.

    If $Q=Q_0$, then $\langle\ind_{E}\rangle_{Q}\leq\frac{1}{2}$. If $Q\neq Q_0$, then by the maximality of the cubes in $\mc{G}$ we deduce that the parent $\hat{Q}$ of $Q$ in $\mc{D}$, which is contained in $Q_0$, satisfies $\langle \ind_{E}\rangle_{\hat{Q}}\leq\frac{1}{2^{d+1}}$, therefore $\langle\ind_{E}\rangle_{Q}\leq\frac{1}{2}$. In particular, $Q$ cannot be a subset of $E$, and therefore there exists $x_0\in Q$ with $\mc{M}_{T,Q_0}(\vec{f})(x_0)\leq\frac{C_1}{\varepsilon^m}\prod_{j=1}^{m}\langle|f_j|\rangle_{3Q_0}$. Since $x_0\in Q$ and $Q\subseteq Q_0$, from the definition of the localized grand maximal operator we deduce
    \begin{equation*}
        |T(\vec{f}\ind_{3Q_0})(x)-T(\vec{f}\ind_{3Q})(x)|\leq\frac{C_1}{\varepsilon^m}\prod_{j=1}^{m}\langle|f_j|\rangle_{3Q_0}\quad\text{for a.e. }x\in Q,
    \end{equation*}
    in other words
    \begin{equation*}
        |T(\vec{f})(x)-T(\vec{f}\ind_{3Q})(x)|\leq\frac{C_1}{\varepsilon^m}\prod_{j=1}^{m}\langle|f_j|\rangle_{3Q_0}\quad\text{for a.e. }x\in Q,
    \end{equation*}
    concluding the proof.
\end{proof}

\begin{lemma}
    \label{lem:sparse_operator_well_defined}
    Let $\mc{S}$ be an $\eta$-sparse family of cubes in $\R^d$ for some $0<\eta<1$ and let $\vec{F}\in L^{\vec{1}}_{\mathrm{c}}(\R^d;\mc{K}(\vec{\mc{H}}))$. Then, for a.e.~$x\in\R^d$, the Minkowski sum
    \begin{equation*}
        \sum_{Q\in\mc{S}}\mc{K}\Big(\bigotimes_{j=1}^{m}\langle F_j\rangle_{Q}\Big)\1_{Q}(x)
    \end{equation*}
    is well-defined as a bounded, convex, complex symmetric subset of $\mc{H}$.
\end{lemma}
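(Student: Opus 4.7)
The plan is to first bound, via the Aumann-integral estimate of Proposition~\ref{prop:aumannintegralbound}, the $\mc{H}$-norm of each set-valued summand $A_Q(x):=\mc{K}\bigl(\bigotimes_{j=1}^{m}\langle F_j\rangle_Q\bigr)\ind_Q(x)$ by a scalar quantity; then to invoke a multilinear weak-type sparse-operator estimate to obtain pointwise a.e.\ absolute convergence of the series of norms; and finally to define the Minkowski sum as a limit of finite partial sums and verify its properties.

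For the first step, since $\bigl\|\mc{K}\bigl(\bigotimes_{j}u_j\bigr)\bigr\|_{\mc{H}}=\bigl\|\bigotimes_j u_j\bigr\|_{\mc{H}}=\prod_j\|u_j\|_{\mc{H}_j}$ for $u_j\in\mc{H}_j$, one has $\|A_Q(x)\|_{\mc{H}}=\ind_Q(x)\prod_j\|\langle F_j\rangle_Q\|_{\mc{H}_j}$. Proposition~\ref{prop:aumannintegralbound} applied to each $\ind_Q F_j$ gives $\|\langle F_j\rangle_Q\|_{\mc{H}_j}\lesssim_{n_j}\langle g_j\rangle_Q$, where $g_j:=\|F_j\|_{\mc{H}_j}\in L^1_{\mathrm{c}}(\R^d)$, so that
\[
\sum_{Q\in\mc{S}}\|A_Q(x)\|_{\mc{H}}\lesssim_{\vec{n}}\sum_{Q\in\mc{S}}\prod_{j=1}^{m}\langle g_j\rangle_Q\,\ind_Q(x)=:A_{\mc{S}}\vec{g}(x).
\]
After reducing to a martingale sparse subfamily of a single dyadic grid via the $3^d$-lattice trick of Section~\ref{s:sparse_families}, one then uses the multilinear weak-type sparse-operator bound
\[
\|A_{\mc{S}}\vec{g}\|_{L^{1/m,\infty}(\R^d)}\lesssim_{d,m,\eta}\prod_{j=1}^{m}\|g_j\|_{L^{1}(\R^d)},
\]
which is the multilinear analogue of Lerner's scalar sparse-operator estimate and follows from a standard stopping-time argument exploiting the packing furnished by the pairwise disjoint sets $\{E_Q\}_{Q\in\mc{S}}$. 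Since the right-hand side is finite for $g_j\in L^{1}_{\mathrm{c}}$, this yields $A_{\mc{S}}\vec{g}(x)<\infty$ and hence $\sum_Q\|A_Q(x)\|_{\mc{H}}<\infty$ for a.e.~$x\in\R^d$.

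At any such $x$ we define $\sum_{Q\in\mc{S}}A_Q(x)$ as the set of all $v=\sum_{Q\in\mc{S}}v_Q\in\mc{H}$ with $v_Q\in A_Q(x)$ and $\sum_Q\|v_Q\|_{\mc{H}}<\infty$; the finiteness of $\sum_{Q}\|A_Q(x)\|_{\mc{H}}$ guarantees that every such series converges unconditionally in $\mc{H}$, so the set is well-defined. It lies in the closed ball of radius $\sum_Q\|A_Q(x)\|_{\mc{H}}$ and is therefore bounded; it is convex because each $A_Q(x)$ is convex and Minkowski sums preserve convexity; and it is complex symmetric because each $A_Q(x)\in\mc{K}(\mc{H})$ is symmetric and symmetry passes through termwise multiplication by a unimodular scalar.

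The main obstacle is the pointwise a.e.\ finiteness in the second step. Indeed, a sparse family can contain infinitely many cubes through a single point---e.g., the $(1-2^{-d})$-sparse nested collection $\{[0,2^{-k}]^d\}_{k\in\Z}$---and by the Lebesgue differentiation theorem $\langle g_j\rangle_Q$ tends to the pointwise value $g_j(x)$ along such a chain rather than decaying, so summability of $A_{\mc{S}}\vec{g}(x)$ is not automatic. The weak-type sparse bound is the essential ingredient that circumvents this issue by ensuring the exceptional set on which the sum diverges has Lebesgue measure zero; without it, the Minkowski sum could not even be defined pointwise a.e.
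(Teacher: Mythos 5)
Your proof is correct, but it follows a genuinely different route from the paper's. After the common first reduction (via Proposition~\ref{prop:aumannintegralbound}) to the scalar series $\sum_{Q\in\mc{S}}\prod_{j}\langle \|F_j\|_{\mc{H}_j}\rangle_Q\ind_Q$, the paper argues in a completely elementary way: it passes to a single dyadic grid by the $3^d$-lattice trick and then splits $\mc{S}$ into cubes of side length at most $1$ and cubes of side length greater than $1$. For the small cubes it invokes the combinatorial fact from \cite[Lemma~2.5]{NPTV17} that sparseness forces almost every point to lie in only finitely many of them, and for the large cubes it uses the compact support of $\vec{F}$ so that the averages decay geometrically in the side length. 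You instead push the whole sum through a multilinear weak-type $(1,\ldots,1;1/m)$ estimate for the sparse operator $A_{\mc{S}}$ and extract pointwise a.e.\ finiteness from $L^{1/m,\infty}$-membership. Both routes are valid. The paper's is fully self-contained at the cost of a case split and an external combinatorial lemma, and it uses the compactness of the supports in an essential way for the large-cubes part. Your route is more conceptual and shorter \emph{if} the multilinear sparse weak-type bound is taken as known, and it actually dispenses with the compact-support hypothesis; but that weak-type bound is itself a nontrivial ingredient (it is not merely the pointwise comparison with the multilinear maximal operator, since a sum of averages is not dominated by their supremum), so in a self-contained exposition you would owe the reader a proof or a precise reference, which makes your argument no shorter in total. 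One small inaccuracy in your discussion of the ``obstacle'': in the nested example $\{[0,2^{-k}]^d\}_{k\in\Z}$, Lebesgue differentiation becomes relevant only along chains of cubes shrinking to a fixed point (the origin here, a null set); for generic $x\neq 0$ the danger in that example actually comes from the infinitely many \emph{large} cubes, which is precisely the part the paper handles via compact support and geometric decay.
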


\begin{proof}
This is an adaptation of \cite[Lemma 2.5]{NPTV17}. It suffices to show that
\begin{equation*}
    \sum_{Q\in\mc{S}}\Big\Vert\mc{K}\Big(\bigotimes_{j=1}^{m}\langle F_j\rangle_{Q}\Big)\Big\Vert_{\mc{H}}\1_{Q}(x)<\infty\quad\text{for a.e. }x\in\R^d,
\end{equation*}
in other words that
\begin{equation*}
    \sum_{Q\in\mc{S}}\prod_{j=1}^{m}\Vert\langle F_j\rangle_{Q}\Vert_{\mc{H}_j}\ind_{Q}(x)<\infty\quad\text{for a.e. }x\in\R^d.
\end{equation*}
In view of Proposition~\ref{prop:aumannintegralbound}, it suffices to show that
\begin{equation*}
    \sum_{Q\in\mc{S}}\prod_{j=1}^{m}\langle\Vert F_j\Vert_{\mc{H}_j}\rangle_{Q}\ind_{Q}(x)<\infty\quad\text{for a.e. }x\in\R^d.
\end{equation*}
By a standard application of the $3^d$-lattice trick, we can without loss of generality assume that $\mc{S}$ is a subset of some dyadic grid $\mc{D}$ in $\R^d$. Set
\begin{equation*}
    \mc{S}_1:=\{Q\in\mc{S}:~\ell(Q)\leq1\},\quad \mc{S}_2:=\mc{S}\setminus\mc{S}_1.
\end{equation*}
Exactly as in \cite[Lemma 2.5]{NPTV17} we have that the set $\{Q\in\mc{S}_1:~x\in Q\}$ is finite, for a.e.~$x\in\R^d$. Thus, we only have to prove that
\begin{equation*}
    \sum_{Q\in\mc{S}_2}\prod_{j=1}^{m}\langle\Vert F_j\Vert_{\mc{H}_j}\rangle_{Q}\ind_{Q}(x)<\infty\quad\text{for a.e. }x\in\R^d.
\end{equation*}
It is clear that there exists a finite subset $\mc{F}$ of $\mc{D}$ such that $\ell(Q)=2$, for all $Q\in\mc{S}$ and
\begin{equation*}
    \bigcup_{j=1}^{m}\mathrm{supp}(F_j)\subseteq\bigcup_{Q\in\mc{F}}Q.
\end{equation*}
Thus, we have
\begin{align*}
    \sum_{Q\in\mc{S}_2}\prod_{j=1}^{m}\langle\Vert F_j\Vert_{\mc{H}_j}\rangle_{Q}&\leq
    \sum_{Q\in\mc{F}}\sum_{\substack{R\in\mc{D}\\Q\subseteq\R}}\prod_{j=1}^{m}\langle\Vert F_j\Vert_{\mc{H}_j}\rangle_{Q}\\
    &\leq
    \sum_{Q\in\mc{F}}\sum_{k=2}^{\infty}2^{-mdk}\prod_{j=1}^{m}\int_{\R^d}\Vert F_j(x)\Vert_{\mc{H}_j}\mathrm{d}x<\infty,
\end{align*}
concluding the proof.
\end{proof}

If $\mc{S}$ is an $\eta$-sparse family of cubes in $\R^d$ for some $0<\eta<1$, then for  we consider the \emph{multilinear convex body sparse operator} $A_{\mc{S}}$ acting on $m$-tuples $\vec{F}\in L^{\vec{1}}_{\mathrm{c}}(\R^d;\mc{K}(\vec{\mc{H}}))$ by
\begin{equation*}
A_{\mc{S}}(\vec{F})(x):=\sum_{Q\in\mc{S}}\mc{K}\Big(\bigotimes_{j=1}^{m}\langle F_j\rangle_{Q}\Big)\1_{Q}(x)\quad\text{for~ a.e. }x\in\R^d.
\end{equation*}
By Lemma \ref{lem:sparse_operator_well_defined} and \cite[Theorem 3.3]{BC23} we have that $A_{\mc{S}}(\vec{F}):\R^d\to\mc{K}(\mc{H})$ is a well-defined measurable map. For $\vec{f}\in L^{\vec{1}}_{\mathrm{c}}(\R^d;\vec{\mc{H}})$ we denote $A_{\mc{S}}(\vec{f}):=A_{\mc{S}}(\mc{K}(\vec{f}))$.

\begin{theorem}
    \label{thm:sparse_domination_multilinear_CZ}
    Let $T$ be a $m$-linear Calder\'{o}n--Zygmund operator with a kernel having a modulus of continuity that satisfies the Dini condition. Then, for any cube $Q_0\subseteq\R^d$ and for any $m$-tuple of functions $\vec{f}\in L^{\vec{1}}_{\mathrm{c}}(\R^d;\vec{\mc{H}})$ supported in $Q_0$, there exists a martingale $\frac{1}{2\cdot 3^{d}}$-sparse family $\mc{S}$ of cubes in $\R^d$ contained in $3Q_0$ such that
    \begin{equation*}
        T(\vec{f})(x)\in CA_{\mc{S}}(\vec{f})(x)\quad\text{for a.e. }x\in Q_0,
    \end{equation*}
    where the constant $C$ depends only on $d$, $m$, $\vec{n}$ and $T$.
\end{theorem}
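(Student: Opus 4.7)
The plan is to iterate the convex body stopping-time scheme of Lemma~\ref{lem:sparse_domination_inductive_step}, whose hypothesis for $T$ is established by Lemma~\ref{lem:multilinear_CZ_inductive_step} with dilation parameter $\lambda=3$. To initialize, I fix a dyadic grid $\mc{D}$ in which $Q_0$ is a dyadic cube, which is always arrangeable by an appropriate translation. Since $\vec{f}$ is supported in $Q_0\subseteq 3Q_0$, combining the two lemmas with $\delta=\tfrac{1}{2}$ yields a pairwise disjoint family $\mc{G}_0\subseteq\mc{D}(Q_0)$ with $\sum_{Q\in\mc{G}_0}|Q|\leq\tfrac{1}{2}|Q_0|$ and the pointwise inclusion
\[
T(\vec{f})(x)\in C\,\mc{K}\Big(\bigotimes_{j=1}^{m}\llangle f_j\rrangle_{3Q_0}\Big)+\sum_{Q\in\mc{G}_0}\ind_Q(x)\,T(\vec{f}\ind_{3Q})(x)\qquad\text{for a.e.\ }x\in Q_0,
\]
with $C$ depending only on $d,m,\vec{n}$ and $T$.

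The iteration hinges on the elementary geometric identity $R\subseteq Q\Rightarrow 3R\subseteq 3Q$ for cubes, which both gives $\vec{f}\ind_{3Q}\ind_{3R}=\vec{f}\ind_{3R}$ and identifies $\llangle f_j\ind_{3Q}\rrangle_{3Q}=\llangle f_j\rrangle_{3Q}$. Applying the same pair of lemmas recursively with each $Q\in\mc{G}_0$ in place of $Q_0$ and $\vec{f}\ind_{3Q}$ in place of $\vec{f}$ therefore produces subfamilies $\mc{G}_Q\subseteq\mc{D}(Q)$ whose convex body bounds slot cleanly into the previous step. Setting $\mc{G}^{(0)}:=\{Q_0\}$, $\mc{G}^{(k+1)}:=\bigcup_{Q\in\mc{G}^{(k)}}\mc{G}_Q$ and $\mc{S}^{*}:=\bigcup_{k\geq 0}\mc{G}^{(k)}\subseteq\mc{D}$, the family $\mc{S}^{*}$ is martingale $\tfrac{1}{2}$-sparse in $\mc{D}$ by construction, and telescoping the inclusions produces
\[
T(\vec{f})(x)\in C\sum_{P\in\mc{S}^{*}}\mc{K}\Big(\bigotimes_{j=1}^{m}\llangle f_j\rrangle_{3P}\Big)\ind_P(x)\qquad\text{for a.e.\ }x\in Q_0,
\]
where Lemma~\ref{lem:sparse_operator_well_defined} ensures that the Minkowski sum on the right is a.e.\ a bounded, compact, symmetric convex subset of $\mc{H}$.

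To conclude, I would take $\mc{S}:=\{3P:P\in\mc{S}^{*}\}$, a family of cubes contained in $3Q_0$. The inclusion $P\subseteq 3P$ dominates $\ind_P$ by $\ind_{3P}$ and so promotes the previous bound to $T(\vec{f})(x)\in C\,A_{\mc{S}}(\vec{f})(x)$ on $Q_0$, while the pairwise disjoint stopping sets $E_P\subseteq P$ witnessing the martingale $\tfrac{1}{2}$-sparseness of $\mc{S}^{*}$ satisfy $|E_P|\geq\tfrac{1}{2}|P|=\tfrac{1}{2\cdot 3^d}|3P|$, which is the advertised sparseness constant; the tree structure of $\mc{S}^{*}$ in $\mc{D}$ transfers to $\mc{S}$ via the bijection $P\mapsto 3P$. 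The main obstacle in this scheme is not the iteration itself, which is purely combinatorial, but the internal convex body machinery of Lemma~\ref{lem:sparse_domination_inductive_step}, where passing from the scalar pointwise stopping bound of Lemma~\ref{lem:multilinear_CZ_inductive_step} to the vector valued extension $\widetilde{T}$ forces the use of the John ellipsoid theorem and Carath\'eodory's theorem on convex hulls, introducing the $n^{\frac{1}{2}}$ overhead that has no scalar analogue.
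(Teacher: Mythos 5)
Your proposal is correct and follows the paper's own proof essentially step for step: fix a dyadic grid containing $Q_0$, apply Lemma~\ref{lem:multilinear_CZ_inductive_step} to verify the hypothesis of Lemma~\ref{lem:sparse_domination_inductive_step} with $\lambda=3$ and $\delta=\tfrac12$, iterate on the stopping cubes using $\vec f\ind_{3Q}$, take the union $\mc{S}^*$, and triple all cubes to obtain $\mc{S}\subseteq 3Q_0$ with the stated sparseness constant inherited through $|E_P|\geq\tfrac12|P|=\tfrac{1}{2\cdot 3^d}|3P|$. The one place where the write-up is looser than the paper is the word ``telescoping'': after $k$ steps the inclusion still carries a remainder term $\sum_{Q\in\mc{G}^{(k)}}\ind_Q\,T(\vec f\ind_{3Q})$, and one needs to observe that $\big|\bigcap_{k}\bigcup_{Q\in\mc{G}^{(k)}}Q\big|=0$ (since $\sum_{Q\in\mc{G}^{(k)}}|Q|\leq 2^{-k}|Q_0|$) to conclude that the pointwise inclusion holds a.e.\ on $Q_0$ in the limit; this is spelled out in the paper and should be made explicit in your argument as well, though it is a one-line addition.
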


\begin{proof}
We adapt \cite[Theorem 3.4]{NPTV17}. Let $\vec{f}\in L^{\vec{1}}_{\mathrm{c}}(\R^d;\vec{\mc{H}})$ be a $m$-tuple of functions supported in $Q_0$. Let $\mc{D}$ be a dyadic grid with $Q_0\in\mc{D}$. Set $\mc{G}_0:=\{Q_0\}$. We apply Lemma~\ref{lem:multilinear_CZ_inductive_step} for the cube $Q_0$ and the $m$-tuple $\vec{f}$ with $\delta=\frac{1}{2}$. Thus, we obtain a family $\mc{G}_1$ of pairwise disjoint cubes in $\mc{D}(Q_0)$ such that
\begin{equation*}
    \sum_{Q\in\mc{G}_1}|Q|\leq\frac{1}{2}|Q_0|
\end{equation*}
and
\begin{equation*}
    T(\vec{f})(x)\in C\mc{K}\Big(\bigotimes_{j=1}^{m}\llangle f_j\rrangle_{3 Q_0}\Big)+\sum_{Q\in\mathcal{G}_1}\ind_{Q}(x)T(\vec{f}\ind_{3 Q})(x)\quad\text{for a.e. }x\in Q_0,
\end{equation*}
where the constant $C>0$ depends only on $d$, $m$, $\vec{n}$ and $T$.

Next, for each $Q_1\in\mc{G}_1$, we repeat this step for the cube $Q_1$ and the $m$-tuple $\vec{f}\ind_{3Q_1}$ with $\delta=\frac{1}{2}$, obtaining a family $\mc{G}_{2,Q_1}$ of pairwise disjoint cubes in $\mc{D}(Q_1)$ such that
\begin{equation*}
    \sum_{Q\in\mc{G}_{2,Q_1}}|Q|\leq\frac{1}{2}|Q_1|
\end{equation*}
and
\begin{equation*}
    T(\vec{f})(x)\in C\mc{K}\Big(\bigotimes_{j=1}^{m}\llangle f_j\rrangle_{3 Q_1}\Big)+\sum_{Q\in\mathcal{G}_{2,Q_2}}\ind_{Q}(x)T(\vec{f}\ind_{3 Q})(x)\quad\text{for a.e. }x\in Q_1,
\end{equation*}
with the same constant $C$ as above. Setting $\mc{G}_2:=\bigcup_{Q\in\mc{G}_1}\mc{G}_{2,Q}$ we deduce
\begin{equation*}
    T(\vec{f})(x)\in C\sum_{Q\in\mc{G}_0\cup\mc{G}_1}\mc{K}\Big(\bigotimes_{j=1}^{m}\llangle f_j\rrangle_{3 Q}\Big)\ind_{Q}(x)+\sum_{Q\in\mathcal{G}_2}\ind_{Q}(x)T(\vec{f}\ind_{3 Q})(x)\quad\text{for a.e. }x\in Q_0.
\end{equation*}
Now we repeat this steps for the cubes in $\mc{G}_2$ and continue this process inductively, obtaining successively families $\mc{G}_k$, $k=1,2,3,\ldots$. After the $k$-th step we have
\begin{equation*}
     T(\vec{f})(x)\in C\sum_{Q\in\bigcup_{\ell=0}^{k-1}\mc{G}_\ell}\mc{K}\Big(\bigotimes_{j=1}^{m}\llangle f_j\rrangle_{3 Q}\Big)\ind_{Q}(x)+\sum_{Q\in\mathcal{G}_k}\ind_{Q}(x)T(\vec{f}\ind_{3 Q})(x)\quad\text{for a.e. }x\in Q_0,
\end{equation*}
for all $k=1,2,\ldots$. Set $\mc{G}:=\bigcup_{k=0}^{\infty}\mc{G}_k$. Since
\begin{equation*}
    \Big|\bigcap_{k=1}^{\infty}\bigcup_{Q\in\mc{G}_k}Q\Big|=\lim_{k\to\infty}\Big|\bigcup_{Q\in\mc{G}_k}Q\Big|=0,
\end{equation*}
we deduce that
\begin{equation*}
     T(\vec{f})(x)\in C\sum_{Q\in\mc{G}}\mc{K}\Big(\bigotimes_{j=1}^{m}\llangle f_j\rrangle_{3 Q}\Big)\ind_{Q}(x)\quad\text{for a.e. }x\in Q_0.
\end{equation*}
Set $\mc{S}:=\{3Q:~Q\in\mc{G}\}$. Then, $\mc{S}$ is a martingale $\frac{1}{2\cdot 3^d}$-sparse family with
\begin{equation*}
     T(\vec{f})(x)\in C\sum_{Q\in\mc{S}}\mc{K}\Big(\bigotimes_{j=1}^{m}\llangle f_j\rrangle_{ Q}\Big)\ind_{Q}(x)\quad\text{for a.e. }x\in Q_0,
\end{equation*}
concluding the proof.
\end{proof}

\subsection{Matrix weighted bounds for multilinear convex body sparse operators}

In this subsection we prove matrix weighted bounds for our ``multilinear'' convex body sparse operator.

For each $C\in\mc{K}(\mc{V})$ and $v\in\mc{V}$ we denote
\begin{equation*}
    \langle C,v\rangle_{\mc{V}}:=\{\langle u,v\rangle_{\mc{V}}:~u\in\mc{K}\}\subseteq\F.
\end{equation*}
Then we have $\langle C,v\rangle_{\mc{V}}\in\mc{K}(\F)$.

\begin{theorem}\label{thm:multilinearconvexbodyweightedbounds}
Let $\vec{p}\in(1,\infty]^m$ satisfy $\tfrac{1}{p}:=\sum_{j=1}^m\tfrac{1}{p_j}>0$, and let $\vec{W}\in A_{\vec{p}}$. If $\mc{S}$ is an $\eta$-sparse collection of cubes in $\R^d$ for some $0<\eta<1$, then
\begin{align*}
&\|A_{\mc{S}}\|_{L^{\vec{p}}_{\vec{W}}(\R^d;\mc{K}(\vec{\mc{H}}))\to L^p_{\mb{W}}(\R^d;\mc{K}(\mc{H}))}
\lesssim_{d,m,\vec{n},\vec{p},\eta}[\vec{W}]_{\vec{p}}[\mb{W}]_{\mathrm{FW}_p}^{(p-1)^{+}}\prod_{j=1}^m[W_j^{-1}]_{\mathrm{FW}_{p_j'}}^{\frac{p_j'}{p_j}}\\
&\lesssim_{d,m,\vec{n},p}[\vec{W}]_{\vec{p}}^{\max(1,p)+\sum_{j=1}^m\frac{p_j'}{p_j}}.
\end{align*}
\end{theorem}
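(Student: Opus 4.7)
The plan is to proceed via a pointwise domination reducing the convex-body sparse operator to a sum of scalar quantities, followed by a principal-cube-based Carleson embedding argument leveraging the reverse H\"older inequalities from Corollary~\ref{cor:needed_reverse_Holder}. First I would establish the scalar pointwise bound
\[
\|\mathbf{W}(x) A_{\mc S}(\vec F)(x)\|_{\mc H} \lesssim_{\vec n} \sum_{Q \in \mc S} b_Q(x)\, a_Q\, \ind_Q(x),
\]
where $a_Q := \prod_{j=1}^m \langle \|W_j F_j\|_{\mc H_j}^{p_j}\rangle_Q^{1/p_j}$ and $b_Q(x) := \|\mathbf{W}(x) B_Q\|_{\mc H \to \mc H}$ with $B_Q := \bigotimes_{j=1}^m A_{W_j^{-1}, Q, p_j'}$. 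To obtain this I would combine subadditivity of the Minkowski sum with Proposition~\ref{prop:aumannintegralbound} and the factorisation $\|\mathbf{W}(x) \mc K(\bigotimes_{j=1}^m \langle F_j\rangle_Q)\|_{\mc H} = \prod_{j=1}^m \|W_j(x) \langle F_j\rangle_Q\|_{\mc H_j}$, and then bound each factor by writing $W_j(x) = (W_j(x) W_j(y)^{-1}) W_j(y)$ inside the Aumann integral defining $\langle F_j\rangle_Q$ and applying H\"older's inequality in $y$ with exponents $(p_j', p_j)$; Lemma~\ref{lem:reducing_operator_on_operator} then identifies the resulting $L^{p_j'}$-average of $\|W_j(x) W_j(\cdot)^{-1}\|_{\mc H_j \to \mc H_j}$ with $\|W_j(x) A_{W_j^{-1}, Q, p_j'}\|_{\mc H_j \to \mc H_j}$. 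The crucial feature of $b_Q$ is that another application of Lemma~\ref{lem:reducing_operator_on_operator} together with Proposition~\ref{prop:equivalence_averages_operator} gives
\[
\Big(\avint_Q b_Q(x)^p\,\mathrm{d}x\Big)^{1/p} \eqsim_{\vec n, p} \|A_{\mathbf{W}, Q, p}\, B_Q\|_{\mc H \to \mc H} \lesssim_{m, \vec n} [\vec W]_{\vec p}
\]
uniformly in $Q$, so that $b_Q$ behaves as a local, $L^p$-normalised weight with constant $[\vec W]_{\vec p}$.

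For $p \geq 1$, I would dualise against a non-negative $g \in L^{p'}(\R^d)$ with $\|g\|_{L^{p'}} = 1$ and estimate $\sum_{Q \in \mc S} a_Q \int_Q b_Q\, g\,\mathrm{d}x$. I would carry out a principal-cube construction, letting $\mc P \subseteq \mc S$ be built iteratively, starting from the maximal cubes of $\mc S$ and adding $Q$ to $\mc P$ as soon as either $\langle g^{p'}\rangle_Q^{1/p'}$ or one of the $\langle \|W_j F_j\|^{p_j}_{\mc H_j}\rangle_Q^{1/p_j}$, $j = 1, \ldots, m$, exceeds a fixed multiple of the corresponding value over its nearest ancestor in $\mc P$. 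A standard argument then shows that $\mc P$ is sparse, and for each $Q \in \mc S$ with principal ancestor $P = \pi_{\mc P}(Q)$ one has $a_Q \lesssim a_P$ and $\langle g^{p'}\rangle_Q \lesssim \langle g^{p'}\rangle_P$. Applying the Cauchy--Schwarz inequality and the $L^p$-control of $b_Q$ on each $P$-stratum collapses the inner sum to yield
\[
\sum_{Q \in \mc S} a_Q \int_Q b_Q\, g\,\mathrm{d}x \;\lesssim\; [\vec W]_{\vec p} \sum_{P \in \mc P} a_P\, \langle g^{p'}\rangle_P^{1/p'}\, |P|,
\]
modulo the reverse H\"older improvements from Corollary~\ref{cor:needed_reverse_Holder}, which introduce the factors $[\mathbf{W}]_{\mathrm{FW}_p}^{(p-1)^+}$ and $[W_j^{-1}]_{\mathrm{FW}_{p_j'}}^{p_j'/p_j}$ required to slightly enlarge the integrability exponents used in the H\"older step. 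A final generalised H\"older inequality combined with the Carleson embedding $\sum_{P \in \mc P} a_P \langle g^{p'}\rangle_P^{1/p'} |P| \lesssim \prod_{j=1}^m \|F_j\|_{L^{p_j}_{W_j}} \|g\|_{L^{p'}}$, proven from the sparseness of $\mc P$ and weighted maximal-function bounds, closes this case.

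The case $p < 1$ does not admit duality, so I would instead apply the quasi-triangle inequality to decompose $\|\sum_Q b_Q a_Q \ind_Q\|_{L^p}^p$ as a sum over principal cubes $P \in \mc P$ of local contributions $\|\sum_{\pi_{\mc P}(Q) = P} b_Q a_Q \ind_Q\|_{L^p}^p$, each of which is estimated directly from the $L^p$-bound for $b_Q$ established above; in this range the exponent $(p-1)^+ = 0$ correctly reflects that no Fujii--Wilson improvement for $\mathbf{W}$ is needed. The main technical obstacle throughout is the simultaneous orchestration of $m+1$ stopping conditions (or $m$ when $p<1$) and the delicate matching of the exponents of the improved integrability with the two-exponent Fujii--Wilson bounds of Corollary~\ref{cor:needed_reverse_Holder}; the second, simplified inequality in the statement then follows from the first by estimating $[\mathbf{W}]_{\mathrm{FW}_p}$ and $[W_j^{-1}]_{\mathrm{FW}_{p_j'}}$ in terms of $[\vec W]_{\vec p}$ using the sharp dependence recorded in Remark~\ref{rem:sharppdendence}.
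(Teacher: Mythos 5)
Your pointwise reduction $\|\mathbf{W}(x) A_{\mc S}(\vec F)(x)\| \lesssim \sum_Q b_Q(x)\,a_Q\,\ind_Q(x)$ with $a_Q=\prod_j\langle\|W_jF_j\|^{p_j}\rangle_Q^{1/p_j}$ and $\langle b_Q^p\rangle_Q^{1/p}\lesssim[\vec W]_{\vec p}$ is correct, but it is too lossy, and the Carleson embedding step does not close. The embedding $\sum_P a_P\langle g^{p'}\rangle_P^{1/p'}|P|\lesssim\prod_j\|F_j\|\,\|g\|$ factors (after H\"older for sums with exponents $p_1,\ldots,p_m,p'$) into $m+1$ sums of the form $\sum_P\langle h\rangle_P|P|$ with $h\in L^1$, and this $L^1$-Carleson embedding fails for a general sparse or principal-cube family; it sits exactly at the unbounded endpoint of the maximal operator. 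More tellingly, if one follows your outlined steps literally (stopping cubes, $a_Q\lesssim a_P$, $\langle b_Q^p\rangle_Q^{1/p}\lesssim[\vec W]_{\vec p}$, sparseness giving $\sum_{\pi_{\mc P}(Q)=P}|Q|\lesssim_\eta|P|$, then the final embedding), one obtains a bound with a \emph{single} power of $[\vec W]_{\vec p}$ and no Fujii--Wilson factors at all, which is already too strong: it would contradict the sharp cubic $A_2$ exponent for matrix weights in the linear case $m=1$, $p=2$. The reverse H\"older improvements you invoke are not a cosmetic correction to be inserted ``at the collapse of the inner sum'' (that step is just in-cube H\"older plus sparseness and has no room for them); they must be applied \emph{before} the H\"older step inside the pointwise bound, replacing the exponent $p_j'$ on $\|W_j(x)W_j(\cdot)^{-1}\|$ by $p_j'(1+\varepsilon)$ so that the conjugate average of $\|W_jF_j\|$ is an $L^{p_j/(1+\delta)}$-average, strictly below the endpoint.

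The paper sidesteps both issues: it keeps $L^1$-averages $\langle\|A_{W_j^{-1},Q,p_j'}^{-1}F_j\|\rangle_Q$ in the pointwise estimate (no early H\"older), packages the reverse H\"older improvement into the auxiliary maximal operator $\widetilde{\mc M}_{\vec W}$ of Proposition~\ref{prop:auxiliary_maximal}, and, for $p\ge1$, dualizes against a vector-valued $h\in L^{p'}_{\mathbf W^{-1}}(\R^d;\mc H)$ rather than a scalar $g$, so that the reverse H\"older applied to $\|A_{\mathbf W,Q,p}^{-1}\mathbf W\|$ supplies the $[\mathbf W]_{\mathrm{FW}_p}^{p/p'}$ factor; for $p<1$ the $\ell^p\hookrightarrow\ell^1$ embedding plus $\widetilde{\mc M}_{\vec W}$ suffices. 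No principal cubes are needed, only sparseness. Your plan could likely be repaired by moving the reverse H\"older into the pointwise reduction and matching exponents carefully, but as written it has a genuine gap and adds principal-cube machinery that the sparseness-only argument avoids.
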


\begin{proof}[Proof of Theorem~\ref{thm:multilinearconvexbodyweightedbounds}]
The second bound follows immediately from Theorem~\ref{thm:needed_reverse_Holder} (and Remark~\ref{rem:sharppdendence}), so we only have to prove the first.

First of all, by monotone convergence (see \cite[Proposition~3.7]{Ni24b}), we may assume that $\mc{S}$ is finite. For each cube $Q\subseteq\R^d$ we let $A_{j, Q}:=A_{W_j^{-1},Q,p_j'}$, and $A_{Q}:=A_{\mathbf{W},Q,p}$. We now distinguish two cases.

\medskip

\textbf{Case 1.: $p\geq 1$}. Let $h\in L^{p'}_{\mb{W}^{-1}}(\R^d;\mc{H})$ be arbitrary. We need to show that
\begin{align*}
    &\Big|\int_{\R^d}\langle g(x),h(x)\rangle_{\mc{H}}\mathrm{d}x\Big|\\
    &\lesssim_{d,m,\vec{n},\vec{p},\varepsilon}[\vec{W}]_{\vec{p}}[\mb{W}]_{\mathrm{FW}_p}^{(p-1)^{+}}\prod_{j=1}^m[W_j^{-1}]_{\mathrm{FW}_{p_j'}}^{\frac{p_j'}{p_j}}\Vert\vec{F}\Vert_{L^{\vec{p}}_{\vec{W}}(\R^d;\mc{K}(\vec{\mc{H}}))}\Vert h\Vert_{L^{p'}_{\mb{W}^{-1}}(\R^d;\mc{H})},
\end{align*}
for every $g\in S^{0}(\R^d;A_{\mc{S}}(\vec{F}))$. It is clear that
\begin{equation*}
    \Big|\int_{\R^d}\langle g(x),h(x)\rangle_{\mc{H}}\mathrm{d}x\Big|\leq\int_{\R^d}|\langle A_{\mc{S}}(\vec{F})(x),h(x)\rangle_{\mc{H}}|\mathrm{d}x,
\end{equation*}
for any $g\in S^{0}(\R^d;A_{\mc{S}}(\vec{F}))$. Thus, we only have to bound $\int_{\R^d}|\langle A_{\mc{S}}(\vec{F})(x),h(x)\rangle_{\mc{H}}|\mathrm{d}x$. We estimate
\begin{align*}
    &\int_{\R^d}|\langle A_{\mc{S}}(\vec{F})(x),h(x)\rangle_{\mc{H}}|\mathrm{d}x=
    \int_{\R^d}\Big|\sum_{Q\in\mc{S}}\Big\langle\mc{K}\Big(\bigotimes_{j=1}^{m}\langle F_j\rangle_{Q}\Big),h(x)\Big\rangle_{\mc{H}}\ind_{Q}(x)\Big|\mathrm{d}x\\
    &=\int_{\R^d}\Big|\sum_{Q\in\mc{S}}\Big\langle A_{Q}\Big(\bigotimes_{j=1}^m A_{j,Q}\Big)\mc{K}\Big(\bigotimes_{j=1}^{m}\langle A_{j,Q}^{-1}F_j\rangle_{Q}\Big),A_{Q}^{-1}h(x)\Big\rangle_{\mc{H}}\ind_{Q}(x)\Big|\mathrm{d}x\\
    &\lesssim_{m,\vec{n},\vec{p}}[\vec{W}]_{\vec{p}}\int_{\R^d}\sum_{Q\in\mc{S}}\Big\Vert\mc{K}\Big(\bigotimes_{j=1}^{m}\langle A_{j,Q}^{-1}F_j\rangle_{Q}\Big)\Big\Vert_{\mc{H}}\cdot\Vert A_{Q}^{-1}h(x)\Vert_{\mc{H}}\ind_{Q}(x)\mathrm{d}x\\
    &=[\vec{W}]_{\vec{p}}\sum_{Q\in\mc{S}}\Big\Vert\mc{K}\Big(\bigotimes_{j=1}^{m}\langle A_{j,Q}^{-1}F_j\rangle_{Q}\Big)\Big\Vert_{\mc{H}}\cdot\langle\Vert A_{Q}^{-1}h\Vert_{\mc{H}}\rangle_{Q}\cdot|Q|\\
    &\leq[\vec{W}]_{\vec{p}}\sum_{Q\in\mc{S}}\prod_{j=1}^{m}\langle\Vert A_{j,Q}^{-1}F_j\Vert_{\mc{H}_j}\rangle_{Q}\cdot\langle\Vert A_{Q}^{-1}h\Vert_{\mc{H}}\rangle_{Q}\cdot|Q|\\
    &\leq[\vec{W}]_{\vec{p}}\Big(\sum_{Q\in\mc{S}}\prod_{j=1}^{m}\langle\Vert A_{j,Q}^{-1}F_j\Vert_{\mc{H}_j}\rangle_{Q}^{p}|Q|\Big)^{\frac{1}{p}}\Big(
    \sum_{Q\in\mc{S}}\langle\Vert A_{Q}^{-1}h\Vert_{\mc{H}}\rangle_{Q}^{p'}|Q|\Big)^{\frac{1}{p'}}.
\end{align*}
Using Proposition~\ref{prop:auxiliary_maximal} we obtain
\begin{align*}
    &\Big(\sum_{Q\in\mc{S}}\prod_{j=1}^{m}\langle\Vert A_{j,Q}^{-1}F_j\Vert_{\mc{H}_j}\rangle_{Q}^{p}|Q|\Big)^{\frac{1}{p}}\lesssim_{\eta,p}
    \Big(\sum_{Q\in\mc{S}}\prod_{j=1}^{m}\langle\Vert A_{j,Q}^{-1}F_j\Vert_{\mc{H}_j}\rangle_{Q}^{p}|E_{Q}|\Big)^{\frac{1}{p}}\\
    &\leq\Vert\widetilde{\mc{M}}_{\vec{W}}(\vec{F})\Vert_{L^{p}(\R^d)}
    \lesssim_{d,m,\vec{n},\vec{p}}\prod_{j=1}^m[W_j^{-1}]_{\mathrm{FW}_{p_j'}}^{\frac{p_j'}{p_j}}\Vert\vec{F}\Vert_{L^{\vec{p}}_{\vec{W}}(\R^d;\mc{K}(\vec{\mc{H}}))}.
\end{align*}
As for the last term, let $r\in(1,\infty)$ with
\begin{equation*}
    r'=2^{d+1}\sup_{\substack{A\in\mc{L}(\C^{\vec{n}})\\A\text{ invertible}}}[\Vert\mathbf{W} A\Vert_{\mc{H}\to\mc{H}}^{p}]_{\mathrm{FW}},
\end{equation*}
where as a consequence of Lemma~\ref{lem:reducing_operator_on_operator} we observe that
\begin{equation*}
    \sup_{\substack{A\in\mc{L}(\C^{\vec{n}})\\A\text{ invertible}}}[\Vert\mathbf{W} A\Vert_{\mc{H}\to\mc{H}}^{p}]^{\frac{1}{p}}_{\mathrm{FW}}\lesssim_{n}[\mathbf{W}]_{\mathrm{FW}_{p}}<\infty.
\end{equation*}
Thus, by the sharp reverse H\"older inequality \cite[Theorem 2.3]{HPR12}, we obtain
\begin{align*}
    &\Big(
    \sum_{Q\in\mc{S}}\langle\Vert A_{Q}^{-1}h\Vert_{\mc{H}}\rangle_{Q}^{p'}|Q|\Big)^{\frac{1}{p'}}
    \leq\Big(\sum_{Q\in\mc{S}}\langle\Vert A_{Q}^{-1}\mathbf{W}\Vert_{\mc{H}\to\mc{H}}\cdot\Vert\mathbf{W}h\Vert_{\mc{H}}\rangle_{Q}^{p'}|Q|\Big)^{\frac{1}{p'}}\\
    &\leq\Big(\sum_{Q\in\mc{S}}\langle\Vert A_{Q}^{-1}\mathbf{W}\Vert_{\mc{H}\to\mc{H}}^{rp}\rangle_{Q}^{\frac{p'}{rp}}\cdot\langle\Vert\mathbf{W}h\Vert_{\mc{H}}^{(rp)'}\rangle_{Q}^{\frac{p'}{(rp)'}}|Q|\Big)^{\frac{1}{p'}}\\
    &\lesssim_{d,p}\Big(\sum_{Q\in\mc{S}}\langle\Vert A_{Q}^{-1}\mathbf{W}\Vert_{\mc{H}\to\mc{H}}^{p}\rangle_{Q}^{\frac{p'}{p}}\cdot\langle\Vert\mathbf{W}h\Vert_{\mc{H}}^{(rp)'}\rangle_{Q}^{\frac{p'}{(rp)'}}|Q|\Big)^{\frac{1}{p'}}\\
    &\lesssim_{n,p}\Big(\sum_{Q\in\mc{S}}\langle\Vert\mathbf{W}h\Vert_{\mc{H}}^{(rp)'}\rangle_{Q}^{\frac{p'}{(rp)'}}|Q|\Big)^{\frac{1}{p'}}
    \lesssim_{\varepsilon,p}\Big(\sum_{Q\in\mc{S}}\langle\Vert\mathbf{W}h\Vert_{\mc{H}}^{(rp)'}\rangle_{Q}^{\frac{p'}{(rp)'}}|E_{Q}|\Big)^{\frac{1}{p'}}\\
    &\leq\Big(\int_{\R^d}M(\Vert\mathbf{W}h\Vert_{\mc{H}}^{(rp)'})(x)^{\frac{p'}{(rp)'}}\mathrm{d}x\Big)^{\frac{1}{p'}},
\end{align*}
where $M$ is the usual Hardy--Littlewood maximal function on $\R^d$. Note that $\frac{p'}{(rp)'}>1$, because $r>1$. Thus, we have
\begin{equation*}
    \Big(\int_{\R^d}M(\Vert\mathbf{W}h\Vert_{\mc{H}}^{(rp)'})(x)^{\frac{p'}{(rp)'}}\mathrm{d}x\Big)^{\frac{1}{p'}}
    \lesssim_{d,p}\Big[\Big(\frac{p'}{(rp)'}\Big)'\Big]^{\frac{1}{p'}}\Vert h\Vert_{L^{p'}(\R^d;\mc{H})}.
\end{equation*}
As in the proof of Proposition~\ref{prop:auxiliary_maximal} we have
\begin{equation*}
    \Big[\Big(\frac{p'}{(rp)'}\Big)'\Big]^{\frac{1}{p'}}\lesssim_{n,d,p}[\mb{W}]_{\mathrm{FW}_p}^{\frac{p}{p'}},
\end{equation*}
concluding the proof in this case.

\medskip

\textbf{Case 2.: $p<1$}. Using that
\begin{equation*}
    \Vert x\Vert_{\ell^1(\N)}\leq\Vert x\Vert_{\ell^{p}(\N)},
\end{equation*}
for all sequences $x=(x_{k})_{k\in\N}$ of real numbers, we estimate
\begin{align*}
    &\Vert A_{\mc{S}}(\vec{F})\Vert_{L^{p}_{\mathbf{W}}(\R^d;\mc{H})}^{p}=
    \int_{\R^d}\Big\Vert\mathbf{W}(x)\sum_{Q\in\mc{S}}\mc{K}\Big(\bigotimes_{j=1}^{m}\langle F_j\rangle_{Q}\Big)\ind_Q(x)\Big\Vert_{\mc{H}}^{p}\,\mathrm{d}x\\
    &\leq\int_{\R^d}\sum_{Q\in\mc{S}}\Big\Vert\mathbf{W}(x)\mc{K}\Big(\bigotimes_{j=1}^{m}\langle F_j\rangle_{Q}\Big)\Big\Vert_{\mc{H}}^{p}\ind_Q(x)\,\mathrm{d}x\\
    &\leq\int_{\R^d}\sum_{Q\in\mc{S}}\Big\Vert\mathbf{W}(x)\Big(\bigotimes_{j=1}^{m}A_{j, Q}\Big)\Big\Vert_{\mc{H}\to\mc{H}}^{p}\cdot\Big\Vert\mc{K}\Big(\bigotimes_{j=1}^{m}\langle A_{j,Q}^{-1}F_j\rangle_{Q}\Big)\Big\Vert_{\mc{H}}^{p}\ind_Q(x)\,\mathrm{d}x\\
    &\lesssim_{p,m,\vec{n}}[\vec{W}]_{\vec{p}}\sum_{Q\in\mc{S}}\prod_{j=1}^{m}\langle\Vert A_{j,Q}^{-1}F_j\Vert_{\mc{H}_j}\rangle_{Q}^{p}|Q|.
\end{align*}
Using now the sparseness of $\mc{S}$ and then Proposition~\ref{prop:auxiliary_maximal}, we obtain
\begin{align*}
    &\sum_{Q\in\mc{S}}\prod_{j=1}^{m}\langle\Vert A_{j,Q}^{-1}F_j\Vert_{\mc{H}_j}\rangle_{Q}^{p}|Q|
    \lesssim_{\eta}\sum_{Q\in\mc{S}}\prod_{j=1}^{m}\langle\Vert A_{j,Q}^{-1}F_j\Vert_{\mc{H}_j}\rangle_{Q}^{p}|E_{Q}|\\
    &\leq\int_{\R^d}\widetilde{\mc{M}}_{\vec{W}}(\vec{F})(x)^{p}\mathrm{d}x
    \lesssim_{d,m,\vec{n},\vec{p}}\Big(\prod_{j=1}^{m}[W_{j}^{-1}]_{\mathrm{FW}_{p_j'}}^{\frac{p_j'}{p_j}}\Big)^{p}\Vert\vec{F}\Vert_{L^{\vec{p}}_{\vec{W}}(\R^d;\mc{K}(\vec{\mc{H}}))}^{p},
\end{align*}
concluding the proof.
\end{proof}

\begin{corollary}\label{cor:czoquantitativebound}
    Let $T$ be a $m$-linear Calder\'{o}n--Zygmund operator with a kernel having a modulus of continuity that satisfies the Dini condition. Let $\vec{p}\in(1,\infty]^{m}$ with $p<\infty$, and let $\vec{W}\in A_{\vec{p}}$. Then, we have
    \begin{align*}
        \Vert T\Vert_{L^{\vec{p}}_{\vec{W}}(\R^d;\vec{\mc{H}})\to L^{p}_{\mathbf{W}}(\R^d;\mc{H})}&\lesssim_{T,d,m,\vec{n},\vec{p}}[\vec{W}]_{\vec{p}}[\mb{W}]_{\mathrm{FW}_p}^{(p-1)^{+}}\prod_{j=1}^m[W_j^{-1}]_{\mathrm{FW}_{p_j'}}^{\frac{p_j'}{p_j}}\\
        &\lesssim_{d,m,\vec{n},\vec{p}}[\vec{W}]_{\vec{p}}^{\max(1,p)+\sum_{j=1}^m\frac{p_j'}{p_j}}.
    \end{align*}
\end{corollary}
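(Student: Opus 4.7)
The plan is to combine the pointwise convex body sparse domination of Theorem~\ref{thm:sparse_domination_multilinear_CZ} with the matrix weighted bounds for the multilinear convex body sparse operator from Theorem~\ref{thm:multilinearconvexbodyweightedbounds}. Since the bound in Theorem~\ref{thm:multilinearconvexbodyweightedbounds} depends on a sparse family only through its sparseness parameter (here fixed at $\tfrac{1}{2\cdot 3^d}$), the desired estimate will drop out once a mild local-to-global passage is carried out. The second inequality of the statement follows immediately from the first by applying Theorem~\ref{thm:needed_reverse_Holder} together with Remark~\ref{rem:sharppdendence} to bound $[\mathbf{W}]_{\mathrm{FW}_p}$ and $[W_j^{-1}]_{\mathrm{FW}_{p_j'}}$ in terms of $[\vec{W}]_{\vec{p}}$, so I will focus on the first inequality.

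By a standard density argument, it suffices to prove the bound for $\vec{f}\in L^{\vec{\infty}}_{\mathrm{c}}(\R^d;\vec{\mc{H}})$. For such $\vec{f}$, I would fix an increasing sequence of cubes $(Q_0^{(k)})_{k\geq 1}$ with $\bigcup_{j=1}^{m}\mathrm{supp}(f_j)\subseteq Q_0^{(k)}$ and $Q_0^{(k)}\nearrow\R^d$, and apply Theorem~\ref{thm:sparse_domination_multilinear_CZ} to each $Q_0^{(k)}$. This produces, for every $k$, a $\tfrac{1}{2\cdot 3^d}$-sparse family $\mc{S}_k$ of cubes contained in $3Q_0^{(k)}$ such that
\[
\widetilde{T}(\vec{f})(x)\in C_T\,A_{\mc{S}_k}(\vec{f})(x)\quad\text{for a.e. }x\in Q_0^{(k)},
\]
where $C_T$ depends only on $d$, $m$, $\vec{n}$ and $T$.

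Since the containment above means that $\ind_{Q_0^{(k)}}\widetilde{T}(\vec{f})$ is a measurable selection of the convex-set valued map $C_T\,A_{\mc{S}_k}(\vec{f})$, the definition of the $L^p_{\mathbf{W}}(\R^d;\mc{K}(\mc{H}))$-quasinorm gives
\[
\|\widetilde{T}(\vec{f})\ind_{Q_0^{(k)}}\|_{L^p_{\mathbf{W}}(\R^d;\mc{H})}\leq C_T\|A_{\mc{S}_k}(\vec{f})\|_{L^p_{\mathbf{W}}(\R^d;\mc{K}(\mc{H}))}.
\]
I would then invoke Theorem~\ref{thm:multilinearconvexbodyweightedbounds} to dominate the right-hand side by
\[
C_T\,C'\,[\vec{W}]_{\vec{p}}\,[\mathbf{W}]_{\mathrm{FW}_p}^{(p-1)^+}\prod_{j=1}^m[W_j^{-1}]_{\mathrm{FW}_{p_j'}}^{p_j'/p_j}\prod_{j=1}^m\|f_j\|_{L^{p_j}_{W_j}(\R^d;\mc{H}_j)},
\]
with $C'$ depending only on $d$, $m$, $\vec{n}$, $\vec{p}$ and the fixed sparseness parameter $\tfrac{1}{2\cdot 3^d}$. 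Crucially, this bound is uniform in $k$. Sending $k\to\infty$ through monotone convergence then finishes the argument.

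The only subtlety here is the local-to-global transfer through the expanding cubes $Q_0^{(k)}$: Theorem~\ref{thm:sparse_domination_multilinear_CZ} only controls $\widetilde{T}(\vec{f})$ inside $Q_0^{(k)}$, and the sparse family $\mc{S}_k$ depends on $k$. This would ordinarily be the main obstacle, but here it is completely painless: because the weighted bound for $A_{\mc{S}}$ supplied by Theorem~\ref{thm:multilinearconvexbodyweightedbounds} is uniform over all $\tfrac{1}{2\cdot 3^d}$-sparse families, monotone convergence applies directly and no further bookkeeping is needed. All the genuine work is concentrated in the sparse domination theorem of Section~\ref{sec:mczo} and the sparse estimate of Theorem~\ref{thm:multilinearconvexbodyweightedbounds}, both of which are already established.
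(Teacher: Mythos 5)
Your proposal is correct and takes essentially the same route as the paper's own proof: apply the pointwise convex body sparse domination of Theorem~\ref{thm:sparse_domination_multilinear_CZ} on an exhaustion by cubes, dominate each resulting sparse operator by Theorem~\ref{thm:multilinearconvexbodyweightedbounds} with a bound uniform over $\tfrac{1}{2\cdot 3^d}$-sparse families, and pass to the limit by monotone convergence, with the second inequality following from Theorem~\ref{thm:needed_reverse_Holder} and Remark~\ref{rem:sharppdendence}. The only cosmetic difference is that the paper works directly with $\vec{f}\in L^{\vec{1}}_{\mathrm{c}}(\R^d;\vec{\mc{H}})$ rather than first reducing to $L^{\vec{\infty}}_{\mathrm{c}}$ by density, which sidesteps the (minor) question of density of bounded compactly supported functions in matrix-weighted Lebesgue spaces.
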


\begin{proof}
    Let $\vec{f}\in L^{\vec{1}}_{\mathrm{c}}(\R^d;\vec{\mc{H}})$ be arbitrary. By monotone convergence it suffices to prove that for every cube $Q_0\subseteq\R^d$ containing $\bigcup_{j=1}^{m}\mathrm{supp}(f_j)$ we have
    \begin{equation*}
        \Big(\int_{Q_0}\Vert\mathbf{W}(x)T(\vec{f})(x)\Vert_{\mc{H}}^{p}\mathrm{d}x\Big)^{\frac{1}{p}}\lesssim_{T,d,m,\vec{n},\vec{p}}[\vec{W}]_{\vec{p}}[\mb{W}]_{\mathrm{FW}_p}^{(p-1)^{+}}\prod_{j=1}^m[W_j^{-1}]_{\mathrm{FW}_{p_j'}}^{\frac{p_j'}{p_j}}\Vert\vec{f}\Vert_{L^{\vec{p}}_{\vec{W}}(\R^d;\vec{\mc{H})}}.
    \end{equation*}

    Let $Q_0\subseteq\R^d$ be any cube containing $\bigcup_{j=1}^{m}\mathrm{supp}(f_j)$. By Theorem~\ref{thm:sparse_domination_multilinear_CZ} we have that there exists a martingale $\frac{1}{2\cdot 3^d}$-sparse family $\mc{S}$ of cubes in $\R^d$ such that
    \begin{equation*}
        T(\vec{f})(x)\in CA_{\mc{S}}(\vec{f})(x)\quad\text{for a.e. }x\in Q_0,
    \end{equation*}
    where the constant $C$ depends only on $T,d,m$ and $\vec{n}$. Thus, by Theorem~\ref{thm:multilinearconvexbodyweightedbounds} we obtain
    \begin{align*}
        &\Big(\int_{Q_0}\Vert\mathbf{W}(x)T(\vec{f})(x)\Vert_{\mc{H}}^{p}\mathrm{d}x\Big)^{\frac{1}{p}}\lesssim_{T,d,m,\vec{n}}\Big(\int_{Q_0}\Vert\mathbf{W}(x)A_{\mc{S}}(\vec{f})(x)\Vert_{\mc{H}}^{p}\mathrm{d}x\Big)^{\frac{1}{p}}\\
        &\lesssim_{d,m,\vec{n},\vec{p}}[\vec{W}]_{\vec{p}}[\mb{W}]_{\mathrm{FW}_p}^{(p-1)^{+}}\prod_{j=1}^m[W_j^{-1}]_{\mathrm{FW}_{p_j'}}^{\frac{p_j'}{p_j}}\Vert\vec{f}\Vert_{L^{\vec{p}}_{\vec{W}}(\R^d;\vec{\mc{H}})},
    \end{align*}
    concluding the proof.
\end{proof}
\subsection{Non-degeneracy}
Finally, we prove that the boundedness of non-degenerate $m$-linear Calder\'on--Zygmund operators implies the multilinear Muckenhoupt condition.

Such a result for matrix weights in the case $m=1$ is proven in \cite[Theorem~5.2]{Go03}. However, to treat the case $m>1$, we require an idea that is more involved. In \cite{Go03}, the operator $T$ is applied twice in order to obtain an operator satisfying appropriate estimates with respect to the averaging operator, whose boundedness characterizes the Muckenhoupt condition. In the multilinear setting, an operator maps $m$ functions into a single function and, hence, cannot be applied twice. In order to deal with this, we will split the argument into two steps in a way similar to what is done in \cite{Ni24, Le24b}, and use a general factorization technique to reduce $m$ functions back to one.

\begin{definition}
Let $T$ be an $m$-linear Calder\'on--Zygmund operator. We call $T$ \emph{directionally non-degenerate} if there is a $C>0$ such that for all cubes $Q$ there is a cube $Q'$ satisfying the following properties:
\begin{enumerate}[(a)]
    \item\label{it:dirnondegdef1} For all $0\leq h_1,\ldots,h_m\in L^\infty_c(\R^d)$ supported in $Q'$ with $\prod_{j=1}^m h_j=\ind_{Q'}$, we have
    \[
    1\leq C |T(\vec{h})(x)|
    \]
    for a.e. $x\in Q$.
    \item\label{it:dirnondegdef2} For all $0<\alpha<1$ there is a mapping $S:Q'\times Q^m\to\C$ satisfying
    \[
    |S(x,\vec{y})|\leq|Q|^{-m}
    \]
    a.e. for which for all $\vec{f}\in L^{\vec{\infty}}_c(\R^d;\vec{\mc{H}})$ supported in $Q$ we have
    \[
    \bigotimes_{j=1}^m\langle f_j\rangle_Q=(1-\alpha)C\widetilde{T}(\vec{f})(x)+\alpha\int_{Q^m}S(x,\vec{y})\bigotimes_{j=1}^mf_j(y_j)\,\mathrm{d}\vec{y}
    \]
    for a.e. $x\in Q'$.
\end{enumerate}
\end{definition}

Our main result of this section is as follows:
\begin{theorem}\label{thm:nondeglebesgue}
Let $\vec{p}\in(1,\infty]^m$ with $p<\infty$ and let $\vec{W}$ be matrix weights for which $W_j^{-1}$ is locally $p_j'$-integrable for $j=1,\ldots,m$. If $T$ is a non-degenerate $m$-linear Calder\'on--Zygmund operator satisfying
\[
\widetilde{T}:L^{\vec{p}}_{\vec{W}}(\R^d;\vec{\mc{H}})\to L^p_{\mb{W}}(\R^d;\mc{H}),
\]
then $\vec{W}\in A_{\vec{p}}$, with
\[
[\vec{W}]_{\vec{p}}\lesssim \|\widetilde{T}\|_{L^{\vec{p}}_{\vec{W}}(\R^d;\vec{\mc{H}})\to L^p_{\mb{W}}(\R^d;\mc{H})}^2.
\]
\end{theorem}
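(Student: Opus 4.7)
The plan is to bound $\sup_Q\|T_Q\|_{L^{\vec{p}}_{\vec{W}}(\R^d;\vec{\mc{H}})\to L^p_{\mb{W}}(\R^d;\mc{H})}$ by $\|\widetilde{T}\|^2$, which by Proposition~\ref{prop:equivalence_averages_operator} is equivalent to the asserted bound on $[\vec{W}]_{\vec{p}}$. Property (a) of directional non-degeneracy will supply an inequality that transfers the weighted $L^p$-norm of \emph{simple} tensors from $Q'$ to $Q$ through $\widetilde{T}$, while property (b) expresses the simple tensor $\bigotimes_j\langle f_j\rangle_Q$ on $Q'$ as a linear combination of $\widetilde{T}(\vec{f})(x)$ and an integral error $\widetilde{U}(\vec{f})(x):=\int_{Q^m}S(x,\vec{y})\bigotimes_j f_j(y_j)\,\mathrm{d}\vec{y}$ with $|S|\leq|Q|^{-m}$. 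Combining these two ingredients and absorbing the error will yield the $\|\widetilde{T}\|^2$-estimate.

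The first key step uses (a). Fix $Q$ and the associated $Q'$. For $\vec{v}\in\vec{\mc{H}}$ and any $0\leq h_j\in L^{\infty}_{\mathrm{c}}$ supported in $Q'$ with $\prod_j h_j=\ind_{Q'}$, the identity $\widetilde{T}(\vec{h}\vec{v})(x)=T(\vec{h})(x)\bigotimes_j v_j$ combined with (a) yields $\|\mb{W}(x)\bigotimes_j v_j\|_{\mc{H}}\leq C\|\mb{W}(x)\widetilde{T}(\vec{h}\vec{v})(x)\|_{\mc{H}}$ a.e.\ on $Q$. Integrating $p$-th powers and using the boundedness of $\widetilde{T}$ gives $\|\bigotimes_j v_j\|_{L^p_{\mb{W}}(Q;\mc{H})}\leq C\|\widetilde{T}\|\prod_j\|h_jv_j\|_{L^{p_j}_{W_j}}$. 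Taking the infimum over admissible $\vec{h}$, the product-Banach-function-space identity $\prod_j L^{p_j}_{w_j}=L^p_{\prod_j w_j}$ applied with $w_j=\|W_jv_j\|_{\mc{H}_j}$ reduces this infimum exactly to $\|\bigotimes_j v_j\|_{L^p_{\mb{W}}(Q';\mc{H})}$, yielding the transfer inequality
\[
\Bigl\|\textstyle\bigotimes_j v_j\Bigr\|_{L^p_{\mb{W}}(Q;\mc{H})}\leq C\|\widetilde{T}\|\Bigl\|\textstyle\bigotimes_j v_j\Bigr\|_{L^p_{\mb{W}}(Q';\mc{H})}.
\]

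Next, for $\vec{f}\in L^{\vec{\infty}}_{\mathrm{c}}$ supported in $Q$, identity (b) gives $\mb{u}:=\bigotimes_j\langle f_j\rangle_Q=(1-\alpha)C\widetilde{T}(\vec{f})(x)+\alpha\widetilde{U}(\vec{f})(x)$ a.e.\ on $Q'$. Taking the $L^p_{\mb{W}}(Q';\mc{H})$-norm (with quasi-triangle constant $K_p$ when $p<1$) and applying the transfer inequality to the simple tensor $\mb{u}$ yields
\[
\|T_Q(\vec{f})\|_{L^p_{\mb{W}}}=\|\mb{u}\|_{L^p_{\mb{W}}(Q;\mc{H})}\lesssim\|\widetilde{T}\|\Bigl[(1-\alpha)\|\widetilde{T}\|\textstyle\prod_j\|f_j\|_{L^{p_j}_{W_j}}+\alpha\|\widetilde{U}(\vec{f})\|_{L^p_{\mb{W}}(Q';\mc{H})}\Bigr].
\]
For the error term, the bound $|S|\leq|Q|^{-m}$ and the tensor structure of $\mb{W}$ give the pointwise estimate $\|\mb{W}(x)\widetilde{U}(\vec{f})(x)\|_{\mc{H}}\leq\prod_j\avint_Q\|W_j(x)f_j(y_j)\|_{\mc{H}_j}\,\mathrm{d}y_j$. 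Splitting $\|W_j(x)f_j(y_j)\|\leq\|W_j(x)W_j(y_j)^{-1}\|_{\mc{H}_j\to\mc{H}_j}\|W_j(y_j)f_j(y_j)\|$ and applying H\"older's inequality with exponents $p_j,p_j'$ (valid since $p_j>1$), then integrating over $Q'$, produces
\[
\|\widetilde{U}(\vec{f})\|_{L^p_{\mb{W}}(Q';\mc{H})}\lesssim[\vec{W}]_{\vec{p},\mathrm{op}}\textstyle\prod_j\|f_j\|_{L^{p_j}_{W_j}},
\]
where one passes from the natural two-cube Roudenko-type quantity over $(Q,Q')$ to the one-cube version over a common cube $R\supseteq Q\cup Q'$ with $|R|\lesssim|Q|$---a comparability forced on the configuration $(Q,Q')$ by the uniformity of the constant in the non-degeneracy definition.

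Plugging this bound back and taking the supremum over $Q$, and recalling from Proposition~\ref{prop:equivalence_averages_operator} that $[\vec{W}]_{\vec{p},\mathrm{op}}\eqsim[\vec{W}]_{\vec{p}}$, we obtain $[\vec{W}]_{\vec{p}}\lesssim\|\widetilde{T}\|\bigl((1-\alpha)\|\widetilde{T}\|+\alpha\,c\,[\vec{W}]_{\vec{p}}\bigr)$ for a constant $c$ independent of $Q$ and $\vec{W}$. Choosing $\alpha\sim\|\widetilde{T}\|^{-1}$ so that $\alpha c\|\widetilde{T}\|\leq\tfrac12$ allows absorption of the $[\vec{W}]_{\vec{p}}$ term, giving $[\vec{W}]_{\vec{p}}\lesssim\|\widetilde{T}\|^2$. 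The \emph{main obstacle} lies in justifying this absorption, which a priori requires $[\vec{W}]_{\vec{p}}<\infty$: this is secured by a standard truncation/limiting argument on $\vec{W}$ (or, alternatively, by iterating the pointwise estimate $\|T_Q\|\leq A+B\|T_{R(Q)}\|$ along an enlarging chain of cubes and using the local $p_j'$-integrability of $W_j^{-1}$ to guarantee convergence of the remainder).
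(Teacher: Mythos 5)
Your proposal captures the two structural pillars the paper uses---property \ref{it:dirnondegdef1} to transfer the weighted norm of a simple tensor from $Q'$ to $Q$ through $\widetilde{T}$ (including the product--Banach-function-space identity to interpret the infimum over admissible $\vec{h}$), and property \ref{it:dirnondegdef2} to split $\bigotimes_j\langle f_j\rangle_Q$ on $Q'$ into a $\widetilde{T}$-part and an absorbable error. But there is a genuine gap in how you handle the error and the ensuing absorption, and it is precisely the point where the paper's Theorem~\ref{thm:nondeg} takes a different, more careful route.

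You bound the error $\widetilde{U}$ by the \emph{global} quantity $[\vec{W}]_{\vec{p},\mathrm{op}}$, passing from the mixed $(Q,Q')$ Roudenko expression to a single enclosing cube $R$ with $|R|\lesssim|Q|$, and then absorb $[\vec{W}]_{\vec{p}}$ by choosing $\alpha\sim\|\widetilde{T}\|^{-1}$. Two things go wrong here. First, the cube comparability $|R|\lesssim|Q|$ is asserted ``by uniformity of the constant'' but never derived; it is in fact provable from the kernel size condition (testing \ref{it:dirnondegdef2} on $\vec{f}=\ind_Q\vec{u}$ and bounding $|T(\ind_Q,\dots,\ind_Q)|$ away from $Q$), but this requires a separate argument you do not give, and the definition of directional non-degeneracy by itself imposes no size or position constraint on $Q'$. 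Second and more seriously, the absorption step requires $[\vec{W}]_{\vec{p}}<\infty$ a priori --- the very thing the theorem is meant to prove --- and neither of your proposed rescues closes this. Truncating $\vec{W}$ does not preserve the hypothesis $\widetilde{T}:L^{\vec{p}}_{\vec{W}}\to L^p_{\mb{W}}$ for the truncated weight, since $L^{p_j}_{W_j^{(N)}}$ is not contained in $L^{p_j}_{W_j}$; and restricting to a compact family of cubes only gives a finite supremum if each $T_Q$ is already bounded, which in turn requires $\mb{W}\in L^p_{\mathrm{loc}}$ on simple-tensor lines --- not given a priori (only $W_j^{-1}\in L^{p_j'}_{\mathrm{loc}}$ is assumed). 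Iterating $\|T_Q\|\leq A+B\|T_{R(Q)}\|$ along an enlarging chain likewise fails without a subexponential a priori growth bound on $\|T_{R^n(Q)}\|$.

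The paper sidesteps both issues by replacing $[\vec{W}]_{\vec{p}}$ with the \emph{local} operator norm $\|T_{Q,Q'}\|_{\vec{\mb{X}}\to\mb{X}}$ throughout. The preliminary step (Proposition~\ref{prop:scalarnondeg}, invoked through Lemma~\ref{lem:dirnondegimpliesnondeg}) uses the induced scalar non-degeneracy of $T$ on the one-dimensional spaces $(\mb{X}_j)_{u_j}$ to prove $\ind_Q u\in\mb{X}$ for all cubes $Q$ and $u\in\mc{H}$; together with the assumed local $p_j'$-integrability of $W_j^{-1}$ this makes $\|T_{Q,Q'}\|<\infty$ a priori by Proposition~\ref{prop:reducingmatrixavop}. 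Lemma~\ref{lem:boundedkernel} then bounds the error operator $L$ abstractly by $C_{\vec n,K_X}\|T_{Q,Q'}\|$ via the reducing-operator characterization, with no geometric comparability between $Q$ and $Q'$. The absorption in \eqref{eq:nondegproof2} thus cancels a quantity that is provably finite and local, with $\alpha$ chosen as a fixed constant depending only on $\vec{n}$ and $K_X$ (not on $\|\widetilde{T}\|$), and only at the very end is the supremum over $Q$ taken. Your proof has the right skeleton, but the absorption argument needs to be localized in this way to close the circularity.
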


Before we get to the proof, we first define a scalar multilinear non-degeneracy condition which reduces back to \cite[Definition~3.11]{Le24b} when $m=1$.
\begin{definition}
Let $T$ be an $m$-linear Calder\'on--Zygmund operator. We call $T$ \emph{non-degenerate} if there is a $C>0$ such that for all cubes $Q$ there is a cube $Q'$ satisfying the following properties:
\begin{enumerate}[(a)]
    \item\label{it:dirnondegscalardef1} For all $0\leq h_1,\ldots,h_m\in L^\infty_c(\R^d)$ supported in $Q'$ with $\prod_{j=1}^m h_j=\ind_{Q'}$, we have
    \[
    1\leq C |T(\vec{h})(x)|
    \]
    for a.e. $x\in Q$.
    \item\label{it:dirnondegscalardef2} For all non-negative $\vec{h}\in L^{\vec{\infty}}_c(\R^d)$ supported in $Q$ we have
    \[
    \prod_{j=1}^m\langle h_j\rangle_Q\leq C|T(\vec{h})(x)|
    \]
    for a.e. $x\in Q'$.
\end{enumerate}
\end{definition}
The boundedness of such an operator implies the scalar Muckenhoupt condition. As a matter of fact, we only need a weak-type boundedness, where
\[
\|f\|_{X_{\text{weak}}}:=\sup_{\lambda>0}\|\ind_{\{|f|>\lambda\}}\lambda\|_X.
\]
Note that for any $E\subseteq\R^d$ we have $\|\ind_E\|_X=\|\ind_E\|_{X_{\text{weak}}}$.
\begin{proposition}\label{prop:scalarnondeg}
Let $\vec{X}$ be Banach function spaces over $\R^d$ with the Fatou property and let $T$ be an $m$-linear non-degenerate Calder\'on--Zygmund operator for which
\[
T:\vec{X}\to X_{\emph{weak}}.
\]
Then $T_Q:\vec{X}\to X$ uniformly for all cubes $Q$, with
\[
\sup_Q\|T_Q\|_{\vec{X}\to X}\leq C^2\|T\|^2_{\vec{X}\to X_{\emph{weak}}}.
\]
In particular, $\ind_Q\in X$ and $\ind_Q\in X_j'$ for $j=1,\ldots,m$ for all cubes $Q$.
\end{proposition}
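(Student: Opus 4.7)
The strategy is to use conditions (a) and (b) of non-degeneracy to bound $\|\ind_Q\|_X$ and $\prod_j\langle f_j\rangle_Q$ separately, then multiply the two estimates, mirroring the linear case treated in \cite{Le24b}. Fix a cube $Q\subseteq\R^d$, let $Q'$ be the associated cube from non-degeneracy, and consider first a nice $\vec f\in \vec X\cap L^{\vec\infty}_{\mathrm{c}}$ that is non-negative and supported in $Q$. By condition (b), $\prod_j\langle f_j\rangle_Q\ind_{Q'}\leq C|T(\vec f)|$ a.e.; taking $\|\cdot\|_{X_{\text{weak}}}$ and using $T:\vec X\to X_{\text{weak}}$ yields
\[
\prod_j\langle f_j\rangle_Q\cdot\|\ind_{Q'}\|_X\leq C\|T\|_{\vec X\to X_{\text{weak}}}\prod_j\|f_j\|_{X_j}.\qquad(\star)
\]
Invoking the saturation of each $X_j$ (a consequence of BFS non-degeneracy together with the Fatou property) one can choose a single such $\vec f$ with $\prod_j\langle f_j\rangle_Q>0$; inserting it into $(\star)$ forces $\|\ind_{Q'}\|_X<\infty$, hence $\ind_{Q'}\in X$.

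Next, to control $\|\ind_Q\|_X$, I would invoke (a) with a cleverly chosen $\vec h$. For $\epsilon>0$, by the definition of the product norm, pick non-negative $\tilde g_j\in X_j$ with $\ind_{Q'}\leq\prod_j\tilde g_j$ and $\prod_j\|\tilde g_j\|_{X_j}\leq(1+\epsilon)\|\ind_{Q'}\|_X$. Setting
\[
h_j:=\tilde g_j\ind_{Q'}\Big(\prod_k\tilde g_k\Big)^{-1/m},
\]
I obtain non-negative functions supported in $Q'$ with $h_j\leq\tilde g_j$ (since $\prod_k\tilde g_k\geq 1$ on $Q'$) and $\prod_j h_j=\ind_{Q'}$; a truncation-plus-Fatou approximation on the $\tilde g_j$'s arranges $h_j\in L^\infty_{\mathrm{c}}$ at the cost of inflating the constant by another factor of $1+\epsilon$. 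Condition (a) then gives $\ind_Q\leq C|T(\vec h)|$ a.e., and passing to the $X_{\text{weak}}$-norm yields
\[
\|\ind_Q\|_X\leq C\|T(\vec h)\|_{X_{\text{weak}}}\leq C\|T\|\prod_j\|h_j\|_{X_j}\leq C(1+\epsilon)^2\|T\|\,\|\ind_{Q'}\|_X.
\]
Multiplying this with $(\star)$ and sending $\epsilon\to 0$ produces $\|T_Q(\vec f)\|_X\leq C^2\|T\|^2\prod_j\|f_j\|_{X_j}$ for all nice $\vec f$.

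Finally, to obtain $\ind_Q\in X_j'$ and to extend the bound to all $\vec f\in\vec X$, fix $j$ and, using saturation, pick positive $g_i\in X_i\cap L^\infty_{\mathrm{c}}$ supported in $Q$ for $i\neq j$. Applying $(\star)$ to the $m$-tuple consisting of these $g_i$ in slots $i\neq j$ and of $f_j^{(N)}:=\min(|f_j|,N)\ind_Q$ in the $j$-th slot (for an arbitrary $f_j\in X_j$), then sending $N\to\infty$ via monotone convergence, gives $\int_Q|f_j|\lesssim_Q\|f_j\|_{X_j}$, i.e., $\ind_Q\in X_j'$. With this integrability secured, $T_Q(\vec f)$ is well-defined for every $\vec f\in\vec X$, and the main estimate transfers from nice $\vec f$ to general $\vec f$ via the ideal property (reducing to $|f_j|$) and a final monotone convergence in each slot.

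The main technical obstacle is the approximation in the middle step: the literal statement of condition (a) demands $\vec h\in L^{\vec\infty}_{\mathrm{c}}$, whereas the optimal factorizations $\tilde g_j$ of $\ind_{Q'}$ in $X$ need not be bounded. This is handled by a two-level truncation of the $\tilde g_j$'s, rescaled to maintain the constraint $\prod\tilde g_j\geq\ind_{Q'}$ up to a factor $1+\epsilon$, together with the Fatou property of the $X_j$ to control the norms in the limit.
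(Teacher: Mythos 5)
Your proof follows essentially the same two-step strategy as the paper's: use property (a) of directional non-degeneracy to control $\|\ind_Q\|_X$ by $\|\ind_{Q'}\|_X$, use property (b) to control $\prod_j\langle f_j\rangle_Q\cdot\|\ind_{Q'}\|_X$ by $\prod_j\|f_j\|_{X_j}$, and multiply the two estimates; you merely carry out the two steps in the opposite order and re-derive $\ind_Q\in X_j'$ by hand where the paper cites Proposition~\ref{prop:reducingmatrixavop} with $n=1$. The technical obstacle you flag in your last paragraph is real: property (a) only applies to factorizations $\prod_j h_j=\ind_{Q'}$ with $h_j\in L^\infty_{\mathrm{c}}$, while a near-optimal factorization of $\ind_{Q'}$ in the product norm need consist only of (possibly unbounded) elements of $X_j$, and your truncation $h_j:=\tilde g_j\ind_{Q'}(\prod_k\tilde g_k)^{-1/m}$ is still not bounded in general; passing to bounded factors while preserving both the exact identity $\prod_j h_j=\ind_{Q'}$ and the near-optimal norm control is not purely formal (for instance, if some $\ind_{Q'}\notin X_j$, bounded factorizations of $\ind_{Q'}$ in $\vec X$ may fail to exist at all). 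The paper's own sentence ``By the ideal property of the $X_j$, this is true for all $0\leq h_j\in X_j$ with $\ind_{Q'}\leq\prod_j h_j$'' passes over exactly the same point silently, so your proposal is no less complete than the published argument on this score.
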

\begin{proof}
Let $Q$ be a cube, let $Q'$ be as in the definition of the non-degeneracy of $T$, and let $0\leq h_1,\ldots,h_m\in L^\infty_c(\R^d)$ supported in $Q'$ with $\prod_{j=1}^m h_j=\ind_{Q'}$. Then, by property \ref{it:dirnondegscalardef1} and the ideal property of $X_{\text{weak}}$, $\ind_{Q'}\in X$ with
\[
\|\ind_Q\|_X=\|\ind_{Q'}\|_{X_{\text{weak}}}\leq C\|T\|_{\vec{X}\to X_{\text{weak}}}\prod_{j=1}^m\|h_j\|_{X_j}.
\]
By the ideal property of the $X_j$, this is true for all $0\leq h_j\in X_j$ with $\ind_{Q'}\leq\prod_{j=1}^m h_j$. Thus, taking an infimum over such $h_j$, we conclude from the definition of $X=\prod_{j=1}^m X_j$ that
\[
\|\ind_Q\|_X=\|\ind_{Q'}\|_{X_{\text{weak}}}\leq C\|T\|_{\vec{X}\to X_{\text{weak}}}\|\ind_{Q'}\|_X.
\]
Now, let $\vec{f}\in L^{\vec{\infty}}_c(\R^d)\cap\vec{X}$. Then the above estimate combined with property \ref{it:dirnondegscalardef2} yields
\begin{align*}
\|T_Q(\vec{f})\|_X&=\prod_{j=1}^m\langle f_j\rangle_Q\|\ind_Q\|_X
\leq C\|T\|_{\vec{X}\to X_{\text{weak}}}\Big\|\prod_{j=1}^m\langle f_j\rangle_Q\ind_{Q'}\Big\|_{X_{\text{weak}}}\\
&\leq C^2\|T\|^2_{\vec{X}\to X_{\text{weak}}}\prod_{j=1}^m\|f_j\|_{X_j}.
\end{align*}
The Fatou property of the $X_j$ now extends this result to all $\vec{f}\in\vec{X}$, as desired. Now, the final assertion follows from Proposition~\ref{prop:reducingmatrixavop} in the case $n=1$.
\end{proof}

To prove Theorem~\ref{thm:nondeg}, we will actually prove a more general result for matrix-weighted Banach function spaces. Given a $\mc{V}$-directional quasi-Banach function space $\mb{X}$ over $\Omega$, following \cite{Ni24b}, we define $\mb{X}_{\text{weak}}$ as those $f\in L^0(\Omega;\mc{V})$ for which
\[
\|f\|_{\mb{X}_{\text{weak}}}:=\sup_{u\in\mc{V}}\|\ind_{\{x\in\Omega:u\in\mc{K}(f)(x)\}}u\|_{\mb{X}}<\infty.
\]
Note that for any $E\subseteq\Omega$, $v\in\mc{V}$, and $f(x):=\ind_E(x)v$, $u\in\mc{H}$ satisfies
\[
u\in\mc{K}(f)(x)
\]
whenever $x\in E$ and $u=\lambda v$ with $|\lambda|\leq 1$, which shows that
\[
\|\ind_E v\|_{\mb{X}_{\text{weak}}}=\sup_{|\lambda|\leq 1}\|\ind_E \lambda v\|_{\mb{X}}=\|\ind_E v\|_{\mb{X}}.
\]
Our general result is as follows:
\begin{theorem}\label{thm:nondeg}
Let $\vec{X}$ be $m$ Banach function spaces over $\R^d$ with the Fatou property and $X:=\prod_{j=1}^m X_j$. Let $\vec{W}$ be matrix weights for which $\ind_Q\|W_j^{-1}\|_{\mc{H}_j\to\mc{H}_j}\in X_j'$ for all cubes $Q$ for $j=1,\ldots,m$. If $T$ is a non-degenerate $m$-linear Calder\'on--Zygmund operator satisfying
\[
\widetilde{T}:\vec{X}_{\vec{W}}\to X_{\mb{W}},
\]
then
\[
\sup_Q\|T_Q\|_{\vec{X}_{\vec{W}}\to X_{\mb{W}}}\lesssim_{\vec{n},K_X} C^2 \|\widetilde{T}\|_{\vec{X}_{\vec{W}}\to (X_{\mb{W}})_{\emph{weak}}}\|\widetilde{T}\|_{\vec{X}_{\vec{W}}\to X_{\mb{W}}}.
\]
\end{theorem}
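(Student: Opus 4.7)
The plan is to mirror the two-step architecture of Proposition~\ref{prop:scalarnondeg}, adapted to the matrix-valued directional setting. In the scalar case, property~(a) produced a weak-norm inequality for indicator functions and the scalar version of~(b) produced a pointwise lower bound on $|T(\vec h)|$; in the matrix case the directional weak norm $(X_{\mb{W}})_{\mathrm{weak}}$ interacts nicely only with \emph{simple tensors}, and property~(b) provides an exact identity with a remainder rather than a pointwise inequality. These two features dictate the shape of the argument.

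\emph{Step 1 (weak norm contribution, via (a)).} I would first establish, for every simple tensor $\mb u=\bigotimes_{j=1}^m u_j$,
\[
\|\ind_Q\mb u\|_{X_{\mb{W}}}\leq C\|\widetilde T\|_{\vec{X}_{\vec{W}}\to(X_{\mb{W}})_{\mathrm{weak}}}\|\ind_{Q'}\mb u\|_{X_{\mb{W}}}.
\]
For $h_j\geq 0$ supported in $Q'$ with $\prod_j h_j=\ind_{Q'}$, the identity $\widetilde T(h_1u_1,\ldots,h_mu_m)=T(\vec h)\mb u$ together with property~(a) gives $\mb u/C\in\mc K(\widetilde T(h_1u_1,\ldots,h_mu_m))(x)$ for a.e.\ $x\in Q$, so the weak-norm definition yields $\|\ind_Q\mb u\|_{X_{\mb W}}\leq C\|\widetilde T\|_{\mathrm{weak}}\prod_j\|h_ju_j\|_{(X_j)_{W_j}}$. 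Extending the admissible $\vec h$ from $\prod h_j=\ind_{Q'}$ to $\prod h_j\geq\ind_{Q'}$ by the ideal property of the $X_j$ (exactly as in Proposition~\ref{prop:scalarnondeg}) and taking the infimum, the right-hand side becomes $C\|\widetilde T\|_{\mathrm{weak}}\|\ind_{Q'}\mb u\|_{X_{\mb{W}}}$ by the product-space definition of $X$. Specializing to the simple tensor $\mb u=\bigotimes_j\langle f_j\rangle_Q$ delivers $\|T_Q(\vec f)\|_{X_{\mb W}}\leq C\|\widetilde T\|_{\mathrm{weak}}\|T_{Q',Q}(\vec f)\|_{X_{\mb W}}$.

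\emph{Step 2 (strong norm contribution, via (b)).} It remains to bound $\|T_{Q',Q}\|_{\vec{X}_{\vec{W}}\to X_{\mb W}}$ by $\|\widetilde T\|_{\vec{X}_{\vec{W}}\to X_{\mb W}}$ up to constants depending only on $\vec n$ and $K_X$. For $\alpha\in(0,1)$, property~(b) supplies $S_\alpha$ with $|S_\alpha|\leq|Q|^{-m}$ and the identity
\[
\ind_{Q'}(x)\bigotimes_j\langle f_j\rangle_Q=(1-\alpha)C\,\ind_{Q'}(x)\widetilde T(\vec f)(x)+\alpha\,\ind_{Q'}(x)R_\alpha(\vec f)(x),
\]
where $R_\alpha(\vec f)(x):=\int_{Q^m}S_\alpha(x,\vec y)\bigotimes_jf_j(y_j)\,\mathrm d\vec y$. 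The quasi-triangle inequality then gives $\|T_{Q',Q}(\vec f)\|_{X_{\mb W}}\leq K_X(1-\alpha)C\|\widetilde T(\vec f)\|_{X_{\mb W}}+K_X\alpha\|R_\alpha(\vec f)\|_{X_{\mb W}}$, so the argument reduces to bounding $\|R_\alpha\|$ by a constant (depending only on $\vec n$ and $K_X$) times $\|T_{Q',Q}\|$, after which a sufficiently small choice of $\alpha$ absorbs the remainder.

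\emph{Step 3 (estimating the remainder; the main obstacle).} To control $R_\alpha$ I would fix orthonormal bases $(e_{j,k})_{k=1}^{n_j}$ of the $\mc H_j$, expand $R_\alpha(\vec f)(x)=\sum_{\vec k}R_{\alpha,\vec k}(x)e_{\vec k}$ with $e_{\vec k}=\bigotimes_je_{j,k_j}$, and use $|S_\alpha|\leq|Q|^{-m}$ to get $|R_{\alpha,\vec k}(x)|\leq\ind_{Q'}(x)\prod_j\langle|f_{j,k_j}|\rangle_Q$. The directional ideal property of $X_{\mb{W}}$ applied component by component, the quasi-triangle inequality (contributing a factor $K_X^{\log_2 n}$), and the product-space inequality $\|\ind_{Q'}e_{\vec k}\|_{X_{\mb W}}\leq\prod_j\|\ind_{Q'}e_{j,k_j}\|_{(X_j)_{W_j}}$ combine to give
\[
\|R_\alpha(\vec f)\|_{X_{\mb W}}\lesssim_{\vec n,K_X}\prod_j\Big(\sum_{k_j}\langle|f_{j,k_j}|\rangle_Q\|\ind_{Q'}e_{j,k_j}\|_{(X_j)_{W_j}}\Big).
\]
Each factor is estimated by K\"othe duality using the hypothesis $\ind_Q\|W_j^{-1}\|_{\mc H_j\to\mc H_j}\in X_j'$, yielding $\langle|f_{j,k_j}|\rangle_Q\leq|Q|^{-1}\|f_j\|_{(X_j)_{W_j}}\|\ind_Q e_{j,k_j}\|_{((X_j)_{W_j})'}$, and then Lemma~\ref{lem:reducing_operator_on_operator} together with the $m=1$ case of Proposition~\ref{prop:reducingmatrixavop} recast the resulting sum of products of indicator norms as the norm of the associated linear averaging operator. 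The tensor reducing operator identity of Proposition~\ref{prop:reducingmatrixavop} for $T_{Q',Q}$ combined with the product-Banach-function-space structure of $X$ then upgrades this to the required bound $\|R_\alpha\|\lesssim_{\vec n,K_X}\|T_{Q',Q}\|_{\vec{X}_{\vec{W}}\to X_{\mb W}}$. Reconciling the one-dimensional reducing-operator bounds that arise naturally from the coordinate decomposition with the genuinely multilinear operator norm $\|T_{Q',Q}\|$ is the delicate point of the proof; with this reconciliation in place, choosing $\alpha=\alpha(\vec n,K_X)$ absorbs the remainder and Steps~1 and~2 compose to yield the claim.
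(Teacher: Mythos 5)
Your Steps 1 and 2 track the paper's argument faithfully (modulo the cosmetic mismatch between your $T_{Q',Q}$ and the paper's $T_{Q,Q'}$ notation): property~\ref{it:dirnondegdef1} with the weak norm gives $\|T_Q(\vec f)\|_{X_{\mb{W}}}\leq C\|\widetilde T\|_{\mathrm{weak}}\|T_{Q,Q'}(\vec f)\|_{X_{\mb{W}}}$, and property~\ref{it:dirnondegdef2} plus the quasi-triangle inequality reduces everything to bounding the remainder operator by a small multiple of $\|T_{Q,Q'}\|$, which can then be absorbed.

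Step 3, however, has a genuine gap, and the delicacy you flag is in fact fatal as written. After expanding $R_\alpha(\vec f)=\sum_{\vec k}R_{\alpha,\vec k}\,e_{\vec k}$ in a fixed orthonormal basis and applying K\"othe duality coordinate-by-coordinate, you arrive at a bound of the form
\[
\|R_\alpha(\vec f)\|_{X_{\mb{W}}}\ \lesssim\ |Q|^{-m}\prod_{j=1}^m\|f_j\|_{(X_j)_{W_j}}\;\prod_{j=1}^m\Big(\sum_{k_j}\|\ind_Q e_{j,k_j}\|_{(X_j')_{W_j^{-1}}}\,\|\ind_{Q'}e_{j,k_j}\|_{(X_j)_{W_j}}\Big),
\]
and the claim is that the second product is controlled by $\|T_{Q,Q'}\|_{\vec X_{\vec W}\to X_{\mb{W}}}\eqsim|Q|^{-m}\|A_{X_{\mb{W}},Q'}\bigotimes_j A_{(X_j')_{W_j^{-1}},Q}\|_{\mc{H}\to\mc{H}}$. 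This fails for two independent reasons. First, each inner sum is $\eqsim\sum_{k_j}\|A_{(X_j')_{W_j^{-1}},Q}\,e_{j,k_j}\|\,\|A_{(X_j)_{W_j},Q'}\,e_{j,k_j}\|$, and for positive operators $A,B$ the quantity $\sum_k\|Ae_k\|\|Be_k\|$ is in general not controlled by $\|BA\|$ for a non-adapted orthonormal basis: for instance, with $A=\mathrm{diag}(1,\eps)$, $B=\mathrm{diag}(\eps,1)$ and $e_1,e_2$ rotated by $45$ degrees, one finds $\sum_k\|Ae_k\|\|Be_k\|\approx 1$ while $\|BA\|=\eps$. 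Second, and more structurally, even after fixing the first issue you would obtain a bound featuring $\bigotimes_j A_{(X_j)_{W_j},Q'}$ rather than $A_{X_{\mb{W}},Q'}$. The product-space inequality you invoke, $\|\ind_{Q'}e_{\vec k}\|_{X_{\mb{W}}}\leq\prod_j\|\ind_{Q'}e_{j,k_j}\|_{(X_j)_{W_j}}$, goes in the wrong direction here: it shows $\bigotimes_j A_{(X_j)_{W_j},Q'}$ dominates $A_{X_{\mb{W}},Q'}$ (up to constants), and the gap between them is uncontrolled because $\|\cdot\|_X$ is an infimum over factorizations. Decomposing in coordinates too early destroys the very multilinear coupling that the operator norm $\|T_{Q,Q'}\|$ encodes.

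The paper's Lemma~\ref{lem:boundedkernel} avoids both pitfalls by never expanding in a basis. It uses the pointwise estimate $\|\mb{W}(x)R_\alpha(\vec f)(x)\|_{\mc{H}}\leq\prod_j\langle\|W_j(x)f_j\|_{\mc{H}_j}\rangle_Q$, applies K\"othe duality in $X_j'$ to the function $y\mapsto\|W_j(x)W_j(y)^{-1}\|$ while keeping the $x$-dependence alive, contracts the resulting sum into $\prod_j\|W_j(x)A_{(X_j')_{W_j^{-1}},Q}\|=\|\mb{W}(x)\bigotimes_j A_{(X_j')_{W_j^{-1}},Q}\|$ via \eqref{eq:norm_of_matrix_columns} and the tensor-norm multiplicativity, and only then applies the $X$-norm over $x\in Q'$, at which point Lemma~\ref{lem:reducing_operator_on_operator} produces precisely $A_{X_{\mb{W}},Q'}$ and Proposition~\ref{prop:reducingmatrixavop} closes the loop with $\|T_{Q,Q'}\|$. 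The moral is that the $x$-variable must be carried inside the $X$-norm until the very last step.
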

The reason we need to assume the strong-type bound for for $\widetilde{T}$ is because, in general, the space $(X_{\mb{W}})_{\text{weak}}$ is not a vector space. However, this is needed when applying the directional non-degeneracy property \ref{it:dirnondegdef2}.

For the proof of the theorem, we require several lemmata.
\begin{lemma}\label{lem:dirnondegimpliesnondeg}
Let $T$ be a directionally non-degenerate $m$-linear Calder\'on--Zygmund operator. Then $T$ is non-degenerate with the same constant $C$ and cubes $Q'$.
\end{lemma}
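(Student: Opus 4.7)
The plan is to observe that scalar condition \ref{it:dirnondegscalardef1} is literally identical to directional condition \ref{it:dirnondegdef1}, so it transfers for free with the same constant $C$ and the same associated cube $Q'$. Thus the entire work lies in deducing the scalar condition \ref{it:dirnondegscalardef2} from the directional condition \ref{it:dirnondegdef2}, which should follow from a single test of the directional identity on ``rank one'' vector-valued inputs together with the cancellation built into the $(1-\alpha)/\alpha$ splitting.

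Concretely, I would fix a non-negative $\vec{h}\in L^{\vec\infty}_c(\R^d)$ supported in $Q$, pick unit vectors $e_j\in\mc{H}_j$ (available because $\dim\mc{H}_j\geq 1$), and apply directional \ref{it:dirnondegdef2} to the vector-valued tuple $\vec{f}$ with $f_j:=h_je_j$. By multilinearity of $\widetilde{T}$ and bilinearity of the tensor product one then has $\widetilde{T}(\vec{f})=T(\vec{h})\bigotimes_{j=1}^m e_j$, $\bigotimes_{j=1}^m\langle f_j\rangle_Q=\bigl(\prod_{j=1}^m\langle h_j\rangle_Q\bigr)\bigotimes_{j=1}^m e_j$, and an analogous factorization of the $S$-integral. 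Since $\bigotimes_{j=1}^m e_j$ is a unit vector in $\mc{H}$, this vector-valued identity collapses to the scalar identity
\[
\prod_{j=1}^m\langle h_j\rangle_Q=(1-\alpha)CT(\vec{h})(x)+\alpha\int_{Q^m}S(x,\vec{y})\prod_{j=1}^m h_j(y_j)\,\mathrm{d}\vec{y}
\]
valid for a.e. $x\in Q'$ and for any $\alpha\in(0,1)$.

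Now I would fix any $\alpha\in(0,1)$, take absolute values, and invoke the pointwise bound $|S(x,\vec{y})|\leq|Q|^{-m}$ together with the fact that $h_j\geq 0$ is supported in $Q$ to estimate
\[
\Bigl|\int_{Q^m}S(x,\vec{y})\prod_{j=1}^m h_j(y_j)\,\mathrm{d}\vec{y}\Bigr|\leq|Q|^{-m}\prod_{j=1}^m\int_Q h_j(y_j)\,\mathrm{d}y_j=\prod_{j=1}^m\langle h_j\rangle_Q.
\]
Substituting this into the scalar identity yields
\[
\prod_{j=1}^m\langle h_j\rangle_Q\leq(1-\alpha)C|T(\vec{h})(x)|+\alpha\prod_{j=1}^m\langle h_j\rangle_Q,
\]
and subtracting the $\alpha$ term on the right and dividing by $1-\alpha$ gives exactly $\prod_{j=1}^m\langle h_j\rangle_Q\leq C|T(\vec{h})(x)|$ for a.e. $x\in Q'$, with the \emph{same} $C$ and $Q'$ as in the directional definition. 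There is no real obstacle here beyond keeping the tensor bookkeeping straight; the ``for all $\alpha\in(0,1)$'' quantifier in \ref{it:dirnondegdef2} is what makes the $\alpha\prod\langle h_j\rangle_Q$ term strictly absorbable on the left. This completes the reduction and establishes the lemma.
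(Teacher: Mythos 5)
Your proof is correct and takes essentially the same route as the paper's: both test the directional condition \ref{it:dirnondegdef2} on the rank-one input $f_j=h_je_j$, collapse the resulting tensor identity to a scalar one along the unit vector $\bigotimes_j e_j$, and then absorb the $S$-integral term using the bound $|S|\leq|Q|^{-m}$ and the non-negativity of the $h_j$. The only cosmetic difference is that the paper fixes $\alpha=\tfrac{1}{2}$ rather than working with a generic $\alpha\in(0,1)$ and rearranging.
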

\begin{proof}
Let $Q$ be a cube and let $Q'$ be as in the definition of directional non-degeneracy. Let $u_j\in\mc{H}_j$ be unit vectors for $j=1,\ldots,m$, and note that for any $\vec{h}\in L^{\vec{\infty}}_c(\R^d)$ we have
\[
\widetilde{T}(h_1u_1,\ldots,h_mu_m)=T(\vec{h})u,
\]
where $u:=\bigotimes_{j=1}^m u_j$. Thus, property \ref{it:dirnondegdef2} of directional non-degeneracy implies that there is an $S:Q'\times Q^m\to\C$ with $|S(x,\vec{y})|\leq|Q|^{-m}$ for which for all non-negative $\vec{h}\in L^{\vec{\infty}}_c(\R^d)$ we have
\[
\prod_{j=1}^m\langle h_j\rangle_Q u=\tfrac{1}{2} CT(\vec{h})(x)u+\tfrac{1}{2}\Big(\int_{Q^m}\!S(x,\vec{y})\prod_{j=1}^m h_j(y_j)\,\mathrm{d}\vec{y}\Big)u
\]
for a.e. $x\in Q'$. Hence, since $|S(x,\vec{y})|\leq |Q|^{-m}$,
\[
\prod_{j=1}^m\langle h_j\rangle_Q\ind_{Q'}\leq\tfrac{1}{2}C |T(\vec{h})(x)|+\tfrac{1}{2}\prod_{j=1}^m\langle h_j\rangle_{Q}\ind_{Q'},
\]
so
\[
\prod_{j=1}^m\langle h_j\rangle_Q\ind_{Q'}\leq C|T(\vec{h})(x)|,
\]
proving that $T$ satisfies property \ref{it:dirnondegscalardef2} of non-degeneracy, as asserted.
\end{proof}

The following lemma is a multilinear analogue of \cite[Proposition~5.3]{Go03}:
\begin{lemma}\label{lem:boundedkernel}
Let $Q,Q'$ be cubes in $\R^d$, let $\vec{X}$ be Banach function spaces over $\R^d$, and let $\vec{W}$ be matrix weights. Suppose $\ind_Q u\in (X_j')_{W_j^{-1}}$ for all $u\in\mc{H}_j$ for $j=1,\ldots,m$, and suppose $\ind_{Q'} u\in X_{\mb{W}}$ for all $u\in\mc{H}$. If $S:Q'\times Q^m\to\C$ satisfies
\[
|S(x,\vec{y})|\leq |Q|^{-m},
\]
then the operator
\[
L(\vec{f})(x):=\ind_{Q'}(x)\int_{Q^m}S(x,\vec{y})\bigotimes_{j=1}^m f_j(y_j)\,\mathrm{d}\vec{y}
\]
satisfies
\[
\|L\|_{\vec{X}_{\vec{W}}\to X_{\mb{W}}}\leq C_{\vec{n},K_X}\|T_{Q,Q'}\|_{\vec{X}_{\vec{W}}\to X_{\mb{W}}}.
\]
\end{lemma}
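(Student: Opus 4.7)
The plan is to reduce the bound on $\|L(\vec{f})\|_{X_{\mb{W}}}$ to the reducing-operator formula for $\|T_{Q,Q'}\|$ furnished by Proposition~\ref{prop:reducingmatrixavop}, by inserting an $\vec{f}$-dependent operator factor built from John ellipsoids of the Aumann integrals $\llangle f_j\rrangle_Q$. First, using $|S(x,\vec{y})|\leq|Q|^{-m}$ together with the tensor identity $\|\mb{W}(x)\bigotimes_j v_j\|_{\mc{H}}=\prod_j\|W_j(x)v_j\|_{\mc{H}_j}$, a pointwise estimate gives
\[
\|\mb{W}(x)L(\vec{f})(x)\|_{\mc{H}}\leq\ind_{Q'}(x)\prod_{j=1}^m\avint_Q\|W_j(x)f_j(y_j)\|_{\mc{H}_j}\,\mathrm{d}y_j
\]
for a.e.\ $x\in Q'$. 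Applying Proposition~\ref{prop:aumannintegralbound} to the set-valued functions $\mc{K}(W_j(x)f_j)$, with the fixed operator $W_j(x)$ pulled out of the integral, yields $\avint_Q\|W_j(x)f_j(y_j)\|_{\mc{H}_j}\,\mathrm{d}y_j\eqsim_{n_j}\|W_j(x)\llangle f_j\rrangle_Q\|_{\mc{H}_j}$, so that taking the product produces
\[
\|\mb{W}(x)L(\vec{f})(x)\|_{\mc{H}}\lesssim_{\vec{n}}\ind_{Q'}(x)\Big\|\mb{W}(x)\bigotimes_{j=1}^m\llangle f_j\rrangle_Q\Big\|_{\mc{H}}.
\]

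Next I would let $C_j$ be the John ellipsoid of $\llangle f_j\rrangle_Q$, so that $\llangle f_j\rrangle_Q\subseteq n_j^{1/2}C_j\overline{B}_{\mc{H}_j}$. Taking tensor products (and using $\prod n_j^{1/2}=n^{1/2}$) dominates the right-hand side by $n^{1/2}\|\mb{W}(x)\bigotimes_j C_j\|_{\mc{H}\to\mc{H}}\ind_{Q'}(x)$. Applying the $X$-quasinorm and invoking Lemma~\ref{lem:reducing_operator_on_operator} with $B:=A_{X_{\mb{W}},Q'}$ then yields
\[
\|L(\vec{f})\|_{X_{\mb{W}}}\lesssim_{\vec{n},K_X}\Big\|B\Big(\bigotimes_{j=1}^m C_j\Big)\Big\|_{\mc{H}\to\mc{H}}.
\]

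To relate these ``data-dependent'' ellipsoids to the ``space-dependent'' reducing operators $A_j:=A_{(X_j')_{W_j^{-1}},Q}$, I would prove the key estimate $\|C_jA_j^{-1}\|_{\mc{H}_j\to\mc{H}_j}\lesssim_{n_j}|Q|^{-1}\|f_j\|_{X_{j,W_j}}$. Starting from the self-adjointness of $C_j$ to write $\|C_ju\|_{\mc{H}_j}=\sup_{\|w\|\leq 1}|\langle u,C_jw\rangle_{\mc{H}_j}|\leq\sup_{v\in\llangle f_j\rrangle_Q}|\langle u,v\rangle_{\mc{H}_j}|=\avint_Q|\langle u,f_j(y)\rangle_{\mc{H}_j}|\,\mathrm{d}y$, factoring $\langle u,f_j(y)\rangle=\langle W_j(y)^{-1}u,W_j(y)f_j(y)\rangle$, applying the Cauchy--Schwarz inequality in $\mc{H}_j$ followed by H\"older between $(X_j')_{W_j^{-1}}$ and $X_{j,W_j}$, and using the reducing operator equivalence $\|\ind_Q u\|_{(X_j')_{W_j^{-1}}}\eqsim_{n_j}\|A_ju\|_{\mc{H}_j}$, produces the estimate. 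Writing $\bigotimes_jC_j=(\bigotimes_jA_j)(\bigotimes_jA_j^{-1}C_j)$ and using submultiplicativity of the operator norm together with $\|A_j^{-1}C_j\|_{\mc{H}_j\to\mc{H}_j}=\|C_jA_j^{-1}\|_{\mc{H}_j\to\mc{H}_j}$ (from the self-adjointness of $A_j$ and $C_j$) then gives
\[
\Big\|B\bigotimes_jC_j\Big\|_{\mc{H}\to\mc{H}}\leq\Big\|B\bigotimes_jA_j\Big\|_{\mc{H}\to\mc{H}}\prod_{j=1}^m\|C_jA_j^{-1}\|_{\mc{H}_j\to\mc{H}_j}\lesssim_{\vec{n}}|Q|^{-m}\Big\|B\bigotimes_jA_j\Big\|_{\mc{H}\to\mc{H}}\prod_{j=1}^m\|f_j\|_{X_{j,W_j}},
\]
and Proposition~\ref{prop:reducingmatrixavop} converts $|Q|^{-m}\|B\bigotimes_jA_j\|_{\mc{H}\to\mc{H}}$ into a constant multiple of $\|T_{Q,Q'}\|_{\vec{X}_{\vec{W}}\to X_{\mb{W}}}$, finishing the proof.

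The main obstacle will be the $C_j$--$A_j$ comparison described in the third paragraph: it is here that the $\vec{f}$-dependent Aumann body $\llangle f_j\rrangle_Q$ must be tied to the $X_j'$-reducing operator through duality, and the correct choice of factorization $\bigotimes C_j=(\bigotimes A_j)(\bigotimes A_j^{-1}C_j)$—rather than the reverse ordering, which would force an extra factor of $\|B\|_{\mc{H}\to\mc{H}}\|\bigotimes A_j\|_{\mc{H}\to\mc{H}}$ in place of the smaller joint norm $\|B\bigotimes A_j\|_{\mc{H}\to\mc{H}}$—is essential in order to land on the right side of Proposition~\ref{prop:reducingmatrixavop}.
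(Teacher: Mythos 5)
Your argument is correct, and it arrives at the same endpoint as the paper (the reducing-operator expression $|Q|^{-m}\bigl\|A_{X_{\mb W},Q'}\bigotimes_{j}A_{(X_j')_{W_j^{-1}},Q}\bigr\|_{\mc H\to\mc H}$ followed by Proposition~\ref{prop:reducingmatrixavop}), but via a genuinely different intermediate step. The paper never introduces the Aumann body $\llangle f_j\rrangle_Q$ or its John ellipsoid: after the same pointwise estimate $\|\mb W(x)L(\vec f)(x)\|_{\mc H}\leq\prod_j\langle\|W_j(x)f_j\|_{\mc H_j}\rangle_Q$, it factors $W_j(x)f_j(y)=W_j(x)W_j(y)^{-1}\cdot W_j(y)f_j(y)$, applies K\"othe duality between $X_j$ and $X_j'$ in the $y$-variable, decomposes the resulting matrix norm $\|W_j(x)W_j^{-1}\|_{\mc H_j\to\mc H_j}$ by columns via \eqref{eq:norm_of_matrix_columns}, and reads off $\|W_j(x)A_{(X_j')_{W_j^{-1}},Q}\|_{\mc H_j\to\mc H_j}$ directly. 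Your route instead passes through the data-dependent ellipsoid $C_j$ of $\llangle f_j\rrangle_Q$, then compares it to the space-dependent reducing operator $A_j:=A_{(X_j')_{W_j^{-1}},Q}$ via the duality estimate $\|C_jA_j^{-1}\|_{\mc H_j\to\mc H_j}\lesssim_{n_j}|Q|^{-1}\|f_j\|_{(X_j)_{W_j}}$ (where the Cauchy--Schwarz/H\"older/reducing-operator chain is essentially the same as the paper's, just embedded inside this auxiliary comparison), and then exploits the factorization $\bigotimes_jC_j=(\bigotimes_jA_j)(\bigotimes_jA_j^{-1}C_j)$ with the correct ordering so that the joint norm $\|B\bigotimes_jA_j\|$ appears. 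The paper's proof is shorter, since it collapses two of your steps into one; your version makes the convex-body mechanism explicit, which is conceptually parallel to the sparse-domination proofs in Sections~\ref{sec:tensor_maximal} and~\ref{sec:mczo}, at the cost of the extra lemma comparing $C_j$ to $A_j$. One minor point worth a sentence if you write it up: if $\llangle f_j\rrangle_Q$ is degenerate, $C_j$ is only positive semidefinite, but this causes no trouble since $A_j^{-1}C_j$ is still well-defined and self-adjoint, and the inclusion $\llangle f_j\rrangle_Q\subseteq n_j^{1/2}C_j\overline B_{\mc H_j}$ continues to hold.
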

\begin{proof}
Let $\vec{f}\in \vec{X}_{\vec{W}}$ of norm one, and let $\mc{B}_j$ be an orthonormal basis of $\mc{H}_j$ for $j=1,\ldots,m$. For any $x\in Q'$ we have
\begin{align*}
\Big\|\int_{Q^m}S(\cdot,\vec{y})&\bigotimes_{j=1}^m W_j(x)f_j(y_j)\,\mathrm{d}\vec{y}\Big\|_{\mc{H}}
\leq\prod_{j=1}^m\langle\|W_j(x)f_j\|_{\mc{H}_j}\rangle_Q\\
&\leq|Q|^{-m}\prod_{j=1}^m\|\|W_j(x)W_j^{-1}\|_{\mc{H}_j\to\mc{H}_j}\ind_Q\|_{X'_j}\|\|W_jf_j\|_{\mc{H}_j}\|_{X_j}\\
&\leq |Q|^{-m}\prod_{j=1}^m\sum_{e_j\in\mc{B}_j}\|\|W_j^{-1}W_j(x)e_j\|_{\mc{H}_j}\ind_Q\|_{X'_j}\\
&\leq |Q|^{-m}\prod_{j=1}^m\sum_{e_j\in\mc{B}_j}\|A_{(X_j')_{W_j^{-1}},Q}W_j(x)e_j\|_{\mc{H}_j}\\
&\leq n|Q|^{-m}\prod_{j=1}^m\|W_j(x)A_{(X_j')_{W_j^{-1}},Q}\|_{\mc{H}_j\to\mc{H}_j}\\
&= n|Q|^{-m}\Big\|\mb{W}\bigotimes_{j=1}^m A_{(X_j')_{W_j^{-1}},Q}\Big\|_{\mc{H}\to\mc{H}}.
\end{align*}
Thus, it follows from the ideal property of $X$ and Lemma~\ref{lem:reducing_operator_on_operator} that
\begin{align*}
\|L(\vec{f})\|_{X_{\mb{W}}}
&\lesssim_n|Q|^{-m}\Big\|\ind_{Q'}\Big\|\mb{W}\bigotimes_{j=1}^m A_{(X_j')_{W_j^{-1}},Q}\Big\|_{\mc{H}\to\mc{H}}\Big\|_X\\
&\eqsim_{n,K_{X}}|Q|^{-m}\Big\|A_{X_{\mb{W}},Q'}\bigotimes_{j=1}^m A_{(X_j')_{W_j^{-1}},Q}\Big\|_{\mc{H}\to\mc{H}}.
\end{align*}
Thus, the result follows from Proposition~\ref{prop:reducingmatrixavop}.
\end{proof}

\begin{proof}[Proof of Theorem~\ref{thm:nondeg}]
Set $\mb{X}_j:=(X_j)_{W_j}$, $\mb{X}:=X_{\mb{W}}$. We will first show that for all cubes $Q$ we have $\ind_Q u_j\in \mb{X}_j'$ for all $u_j\in\mc{H}_j$ for $j=1,\ldots,m$, and $\ind_Q u\in \mb{X}$ for all $u\in\mc{H}$. For the former result, note that per assumption
\[
\ind_Q \|W_j^{-1}u_j\|_{\mc{H}_j}\leq\ind_Q\|W_j^{-1}\|_{\mc{H}_j\to\mc{H}_j}\|u_j\|_{\mc{H}_j}\in X_j'
\]
and hence, by the ideal property of $X_j'$, $\ind_Q u_j\in\mb{X}_j'=(X_j')_{W_j^{-1}}$ for all $u_j\in\mc{H}_j$, $j=1,\ldots,m$. For the latter result, Let $u_j\in\mc{H}_j\backslash\{0\}$ for $j=1,\ldots,m$, set $u:=\bigotimes_{j=1}^m u_j$, and note that by \cite[Section~3.2]{Ni24b} the spaces $(\mb{X}_j)_{u_j}$ and $\mb{X}_u$, defined by the $h_j\in L^0(\R^d)$ for which $h_ju_j\in \mb{X}_j$ for $j=1,\ldots,m$ and $h\in L^0(\R^d)$ for which $hu\in\mb{X}$ respectively, are Banach function spaces over $\R^d$.

Noting that
\begin{equation}\label{eq:nondegmain1}
\widetilde{T}(h_1u_1,\ldots,h_mu_m)=T(\vec{h})u,
\end{equation}
the boundedness $\widetilde{T}:\vec{\mb{X}}\to\mb{X}$ implies that $T:\vec{\mb{X}}_{\vec{u}}\to \mb{X}_u$. Thus, since $T$ is non-degenerate by Lemma~\ref{lem:dirnondegimpliesnondeg}, it follows from Proposition~\ref{prop:scalarnondeg} that $\ind_Q\in \mb{X}_u$, i.e., $\ind_Q u\in\mb{X}$, for all cubes $Q$, as desired.

Next, since $\|\ind_E u\|_{\mb{X}}=\|\ind_E u\|_{\mb{X}_{\text{weak}}}$ for all $E\subseteq \R^d$ and $u\in\mc{H}$, it follows from \ref{it:dirnondegdef1} and \eqref{eq:nondegmain1} that for all $\vec{u}\in\vec{\mc{H}}$ and $0\leq h_1,\ldots,h_m\in L^\infty_c(\R^d)$ with $\prod_{j=1}^m h_j=\ind_{Q'}$ we have
\[
\ind_Q \bigotimes_{j=1}^m u_j\in C\mc{K}(\widetilde{T}(h_1u_1,\ldots,h_mu_m))
\]
a.e. and, hence, by the directional ideal property of $\mb{X}_{\text{weak}}$,
\begin{align*}
\Big\|\ind_Q \bigotimes_{j=1}^m u_j\Big\|_{\mb{X}}&
\leq C\|\widetilde{T}(h_1u_1,\ldots,h_m u_m)\|_{\mb{X}_{\text{weak}}}\\
&\leq C\|\widetilde{T}\|_{\vec{\mb{X}}\to \mb{X}_{\text{weak}}}\prod_{j=1}^m\|h_j\|W_j u_j\|_{\mc{H}_j}\|_{X_j}.
\end{align*}
By the ideal property of the $X_j$, this is true for all $0\leq h_j\|W_j u_j\|_{\mc{H}_j}\in X_j$ with $\ind_{Q'}\leq\prod_{j=1}^m h_j$. Hence, taking an infimum over all possible $h_j$, we conclude that
\begin{equation}\label{eq:nondegproof1}
\begin{split}
\Big\|\ind_Q \bigotimes_{j=1}^m u_j\Big\|_{\mb{X}}
&\leq C\|\widetilde{T}\|_{\vec{X}_{\vec{W}}\to \mb{X}_{\text{weak}}}\|\ind_{Q'}\prod_{j=1}^m\|W_j u_j\|_{\mc{H}_j}\|_X\\
&=C\|\widetilde{T}\|_{\vec{X}_{\vec{W}}\to \mb{X}_{\text{weak}}}\Big\|\ind_{Q'} \bigotimes_{j=1}^m u_j\Big\|_{\mb{X}}.
\end{split}
\end{equation}
Now, by \ref{it:dirnondegdef2}, for each $0<\eta<1$ we can find an $S:Q'\times Q^m\to\C$ with $|S(x,\vec{y})|\leq|Q|^{-m}$ a.e., and, defining $L$ as in Lemma~\ref{lem:boundedkernel},
\begin{equation}\label{eq:nondegproof2}
\begin{split}
\|T_{Q,Q'}(\vec{f})\|_{\mb{X}}
&\leq (1-\alpha)CK_X\|\widetilde{T}(\ind_Q\vec{f})\|_{\mb{X}}+\alpha K_X\|L(\ind_Q\vec{f})\|_{\mb{X}}\\
&\leq(1-\alpha)CK_X\|\widetilde{T}\|_{\vec{\mb{X}}\to \mb{X}}+\alpha K_X C_{\vec{n},K_X}\|T_{Q,Q'}\|_{\vec{\mb{X}}\to \mb{X}}
\end{split}
\end{equation}
for all $\vec{f}\in L^\infty_c(\R^d;\vec{\mc{H}})\cap \vec{\mb{X}}$ with $\|f_j\|_{\mb{X}_j}\leq 1$ for $j=1,\ldots,m$. For arbitrary $\vec{f}\in\vec{\mb{X}}$ of norm bounded by $1$, define
\[
h_j^N:=\ind_{\{x\in\R^d:|x|\leq N,\,\|f_j(x)\|_{\mc{H}_j}\leq N\}}
\]
and set $f_{j,N}:=h_j^N f_j\in L^\infty_c(\R^d;\mc{H}_j)\cap\mb{X}_j$ for positive integers $N$ and $j=1,\ldots,m$. Then
\[
\ind_{Q'}\prod_{j=1}^m\|W_j(\cdot)\langle f_{j,N}\rangle_Q\|_{\mc{H}_j}\leq\ind_{Q'}\prod_{j=1}^m\langle \|W_j(\cdot)f_j\|_{\mc{H}_j}\rangle_Q,
\]
so by the dominated convergence theorem and the Fatou property of $X=\prod_{j=1}^m X_j$ (see \cite{Sc10}), we may conclude that \eqref{eq:nondegproof2} actually holds for all $\vec{f}\in\vec{\mb{X}}$ of norm bounded by $1$, i.e.,
\[
\|T_{Q,Q'}\|_{\vec{\mb{X}}\to \mb{X}}\leq(1-\alpha)CK_X\|\widetilde{T}\|_{\vec{\mb{X}}\to \mb{X}}+\alpha K_X C_{\vec{n},K_X}\|T_{Q,Q'}\|_{\vec{\mb{X}}\to \mb{X}}.
\]
Choosing $\alpha=\tfrac{1}{2}K_X^{-1}C_{\vec{n},K_X}^{-1}$ and using \eqref{eq:nondegproof1}, we conclude that
\[
\|T_Q\|_{\vec{\mb{X}}\to\mb{X}}\leq C\|\widetilde{T}\|_{\vec{\mb{X}}\to \mb{X}_{\text{weak}}}\|T_{Q,Q'}\|_{\vec{\mb{X}}\to \mb{X}}\lesssim_{\vec{n},K_X} C^2\|\widetilde{T}\|_{\vec{\mb{X}}\to \mb{X}_{\text{weak}}}\|\widetilde{T}\|_{\vec{\mb{X}}\to \mb{X}}.
\]
Taking a supremum over all cubes $Q$ proves the assertion.
\end{proof}

\begin{proof}[Proof of Theorem~\ref{thm:nondeglebesgue}]
We apply Theorem~\ref{thm:nondeg} with $X_j=L^{p_j}(\R^d)$. The assertion then follows from the definition of $[\vec{W}]_{\vec{p}}$.
\end{proof}

As our non-degeneracy condition is quite convoluted, we provide an example.
\begin{example}\label{ex:riesznondeg}
Let $T$ be the $m$-linear Riesz transform of the first coordinate, i.e., the $m$-linear Calder\'on--Zygmund operator associated to the kernel
\[
K(x,\vec{y})=\frac{\sum_{j=1}^m (x^1-y_j^1)}{\Big(\sum_{j=1}^m|x-y_j|\Big)^{md+1}},
\]
where $x\in\R^d$, $\vec{y}=(y_1,\ldots,y_m)\in(\R^d)^m$, and where $x=(x^1,\ldots,x^d)$, $y_j=(y_j^1,\ldots,y_j^m)$ denote the coordinates of $x,y_j\in\R^d$, $j=1,\ldots,m$.

Given a cube $Q$ in $\R^d$, set
\[
Q':=Q+2m\ell(Q)e_1,
\]
where $e_1=(1,0,\ldots,0)\in\R^d$. To check \ref{it:dirnondegdef1}, let $0\leq h_1,\ldots,h_m\in L^\infty_c(\R^d)$ supported in $Q'$ with $\prod_{j=1}^m h_j=\ind_{Q'}$. Since for $x\in Q$ and $\vec{y}\in (Q')^m$ we have
\[
(2m-1)\ell(Q)\leq y_j^1-x^1\leq (2m+1)\ell(Q)
\]
and
\[
0\leq y_j^k-x^k\leq \ell(Q)
\]
for all $k\in\{2,\ldots,d\}$, $j\in\{1,\ldots,m\}$, it follows that for
\[
C_{m,d}:=\frac{(d-1+(2m+1)^2)^{\frac{1}{2}}}{m(2m-1)}
\]
we have
\begin{align*}
\Big(\sum_{j=1}^m|x-y_j|\Big)^{md+1}&\leq m(2m-1)C_{m,d}  \ell(Q)^{md+1}=C_{m,d}|Q|^m m(2m-1)\ell(Q)\\
&\leq C_{m,d}|Q|^m\sum_{j=1}^m (y_j^1-x^1).
\end{align*}
Thus, since $|Q|=|Q'|$,
\begin{align*}
C_{m,d} |T(\vec{h})(x)|&=C_{m,d} \int_{(Q')^m}\!\frac{\sum_{j=1}^m (y_j^1-x^1)}{\Big(\sum_{j=1}^m|x-y_j|\Big)^{md+1}}\prod_{j=1}^m h_j(y_j)\,\mathrm{d}\vec{y}\\
&\geq \prod_{j=1}^m\avint_{Q'}\!h_j(y)\,\mathrm{d}y\geq \Big(\avint_{Q'}\prod_{j=1}^m h_j(y)^{\frac{1}{m}}\,\mathrm{d}y\Big)^m=1
\end{align*}
for a.e. $x\in Q$, as desired.

For \ref{it:dirnondegdef2}, let $\vec{f}\in L^{\vec{\infty}}_c(\R^d;\vec{\mc{H}})$ be supported in $Q$ and let $0<\alpha<1$. Then we have
\[
\bigotimes_{j=1}^m\langle f_j\rangle_Q=(1-\alpha)C_{m,d} \widetilde{T}(\vec{f})(x)+\alpha\int_{Q^m}\!S(x,\vec{y})\bigotimes_{j=1}^mf_j(y_j)\,\mathrm{d}\vec{y},
\]
where
\[
S(x,\vec{y})=\alpha^{-1}\big(|Q|^{-m}-(1-\alpha)C_{m,d} K(x,\vec{y})\big).
\]
If $x\in Q'$ and $\vec{y}\in Q^m$, then by a similar computation as above,
\[
K(x,\vec{y})=\frac{\sum_{j=1}^m (x^1-y_j^1)}{\Big(\sum_{j=1}^m|x-y_j|\Big)^{md+1}}\geq C_{m,d} ^{-1} |Q|^{-m}.
\]
Hence, we have
\[
\alpha|S(x,\vec{y})|=|Q|^{-m}-(1-\alpha)C_{m,d} K(x,\vec{y})\leq(1-(1-\alpha))|Q|^{-m}=\alpha|Q|^{-m},
\]
proving the desired result.
\end{example}

\section{Proof of the main result}\label{sec:proofs}

\begin{proof}[Proof of Theorem~\ref{thm:A}]
The implications \ref{it:thmA3}$\Rightarrow$\ref{it:thmA1} \& \ref{it:thmA2} correspond to \ref{it:thmB1} \& \ref{it:thmB2} of Theorem~\ref{thm:B} respectively. The implication \ref{it:thmA2}$\Rightarrow$\ref{it:thmA3} follows from Proposition~\ref{prop:weaktypechar}, and the implication \ref{it:thmA1}$\Rightarrow$\ref{it:thmA3} from Theorem~\ref{thm:nondeglebesgue} applied with $T$ being the multilinear Riesz transform of Example~\ref{ex:riesznondeg}.
\end{proof}

\appendix

\section{}

Here we give the details of the proofs of Propositions~\ref{prop:weak_bound_multilinear_CZ},~\ref{prop:pointwise_bound_CZ_grand_maximal} and~\ref{prop:Cotlat_CZ_multilinear}. We consider a $m$-linear Calder\'{o}n--Zygmund operator with a kernel $K$ having a modulus of continuity $\omega$ satisfying the Dini condition. We denote by $C_{K}>0$ the best constant in the boundedness and smoothness conditions for the kernel $K$. Moreover, we denote by $D$ the doubling constant of $\omega$.

\begin{proposition}
    The operator $T$ admits a bounded extension $L^{\vec{1}}(\R^d)\to L^{\frac{1}{m},\infty}(\R^d)$ with
    \begin{equation*}
        \Vert T\Vert_{L^{\vec{1}}(\R^d)\to L^{\frac{1}{m},\infty}(\R^d)}\leq C(\Vert T\Vert_{L^{\vec{q}}(\R^d)\to L^{q}(\R^d)}+C_{K}\Vert\omega\Vert_{\mathrm{Dini}}), 
    \end{equation*}
    where the constant $C>0$ depends only on $d,m,\vec{q}$ and the doubling constant of $\omega$.
\end{proposition}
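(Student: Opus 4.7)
The plan is to prove this by running a simultaneous Calder\'on--Zygmund decomposition of all $m$ input functions, in the spirit of the standard multilinear weak-endpoint argument, adapted to a kernel whose modulus of continuity merely satisfies the Dini condition. By the $m$-homogeneity of both $\|T(\cdot)\|_{L^{1/m,\infty}}$ and $\prod_j\|f_j\|_{L^1}$, one may normalize $\|f_j\|_{L^1(\R^d)}=1$ for every $j$, so that the assertion reduces to the distributional bound
\begin{equation*}
|\{x\in\R^d:|T(\vec f)(x)|>\lambda\}|\lesssim B^{1/m}\,\lambda^{-1/m}
\end{equation*}
uniformly in $\lambda>0$, where $B:=\|T\|_{L^{\vec q}\to L^q}+C_K\|\omega\|_{\mathrm{Dini}}$ and the implied constant depends only on $d$, $m$, $\vec q$ and the doubling constant of $\omega$. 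I would fix a height $\alpha>0$ (to be tuned) and apply the Calder\'on--Zygmund decomposition to each $f_j$ at height $\alpha$, producing $f_j=g_j+b_j$ with $b_j=\sum_k b_j^k$, each $b_j^k$ supported in a dyadic cube $Q_j^k$ with $\int b_j^k=0$, $\|b_j^k\|_{L^1}\lesssim\alpha|Q_j^k|$, $\|g_j\|_{L^\infty}\lesssim\alpha$, $\|g_j\|_{L^1}\leq 1$, and $\sum_k|Q_j^k|\lesssim\alpha^{-1}$. Setting $\Omega:=\bigcup_{j,k}2Q_j^k$, the trivial estimate $|\Omega|\lesssim m\alpha^{-1}$ reduces matters to estimating the distribution of $T(\vec f)$ on $\Omega^c$.

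Expanding $T(\vec f)=\sum_{\vec\epsilon\in\{g,b\}^m}T(\vec\epsilon)$ into $2^m$ pieces, the purely good piece $T(\vec g)$ is handled through interpolation: $\|g_j\|_{L^{q_j}}\leq\|g_j\|_{L^\infty}^{1-1/q_j}\|g_j\|_{L^1}^{1/q_j}\lesssim\alpha^{1-1/q_j}$ together with the hypothesized strong-type bound yields $\|T(\vec g)\|_{L^q}\lesssim\|T\|_{L^{\vec q}\to L^q}\alpha^{m-1/q}$, and Chebyshev contributes $\|T\|_{L^{\vec q}\to L^q}^q\lambda^{-q}\alpha^{qm-1}$ to the distribution function. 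For each of the $2^m-1$ mixed pieces containing at least one bad factor, I would exploit the vanishing moments of the bad factors slot by slot: in each bad slot, the kernel $K(\ldots,y_{j_i},\ldots)$ is replaced by its difference with the kernel evaluated at the center of $Q_{j_i}^k$, and the resulting $x$-integral over $(2Q_{j_i}^k)^c$ is estimated through a dyadic decomposition into annular shells around that center of sidelength $\sim 2^\ell\ell(Q_{j_i}^k)$, where the smoothness bound contributes $\omega(2^{-\ell})$; summing over $\ell\geq 1$ produces the factor $C_K\|\omega\|_{\mathrm{Dini}}$. Iterating across all bad slots, with the bad factors supplying savings through $\sum_k|Q_{j_i}^k|\lesssim\alpha^{-1}$ and the good factors through $L^\infty$-bounds, yields
\begin{equation*}
\int_{\Omega^c}|T(\vec\epsilon)(x)|\,\mathrm{d}x\lesssim C_K\|\omega\|_{\mathrm{Dini}}\,\alpha^{m-1},
\end{equation*}
and Chebyshev at level $\lambda$ converts this into a contribution $C_K\|\omega\|_{\mathrm{Dini}}\,\alpha^{m-1}\lambda^{-1}$. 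Choosing $\alpha=\lambda^{1/m}B^{-1/m}$ then balances all three contributions: $|\Omega|\lesssim B^{1/m}\lambda^{-1/m}$, the good-piece contribution becomes $\|T\|_{L^{\vec q}\to L^q}^q B^{1/m-q}\lambda^{-1/m}\leq B^{1/m}\lambda^{-1/m}$, and each mixed-piece contribution becomes $C_K\|\omega\|_{\mathrm{Dini}}B^{-(m-1)/m}\lambda^{-1/m}\leq B^{1/m}\lambda^{-1/m}$, yielding the target distributional bound.

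The main technical obstacle is the slot-by-slot iteration of the Dini--H\"ormander estimate in the mixed pieces. After the first vanishing-moment replacement, the integrand ceases to be a standard Calder\'on--Zygmund kernel, and the next replacement demands a careful geometric decomposition of the $(m-1)$-fold remaining integration region into dyadic shells around the current center, tracking which of the other arguments dominates on each shell. The Dini condition furnishes exactly the summability required across these shells, and the doubling constant of $\omega$ ensures only an admissible increase of the constant at each iteration step. Summing the contributions of all $2^m$ pieces together with the $|\Omega|$-bound then produces the asserted weak-type inequality with the advertised constant.
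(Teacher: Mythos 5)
Your proposed decomposition into the $|\Omega|$-term, the all-good term, and the $2^m-1$ mixed terms is exactly the paper's structure (modulo the harmless normalization $\|f_j\|_{L^1}=1$ and using a single stopping height $\alpha$ instead of per-function heights $a_j\lambda$). The all-good term and the $|\Omega|$-term are handled just as in the paper. The divergence -- and the gap -- is in the mixed terms. You propose replacing the kernel by its difference with a frozen argument \emph{in every bad slot}, i.e.\ an iterated multi-difference, and claim this yields a single factor $C_K\|\omega\|_{\mathrm{Dini}}$ per mixed piece. If you carry out the iterated add-and-subtract across all $\#B$ bad slots, each difference contributes its own $\omega$-factor and each annular decomposition its own Dini sum, which would give $(C_K\|\omega\|_{\mathrm{Dini}})^{\#B}$ rather than $C_K\|\omega\|_{\mathrm{Dini}}$; since $C_K\|\omega\|_{\mathrm{Dini}}\le B$ but not $\le 1$, this spoils the optimization over $\alpha$ and you cannot recover the stated linear dependence on $\|\omega\|_{\mathrm{Dini}}$. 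You flag the non-CZ nature of the once-differenced kernel as "the main technical obstacle," but you do not resolve it, and as formulated the iteration does not close.

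The paper sidesteps this completely: it uses the vanishing moment in \emph{one} bad slot only (say $r_1$), obtaining a single $\omega(\ell(Q_{r_1})/\sum_{j\in B}|x-y_j|)$. It then uses the monotonicity of $\omega$ to pass to the bad slot $r_k$ whose cube has the \emph{largest} side length, $\omega(\ell(Q_{r_1})/\cdot)\le\omega(\ell(Q_{r_k})/\cdot)$, dominates the sum of distances in the argument by the single distance $|x-y_{r_k}|$, integrates out the remaining bad variables against the $md$-homogeneous size factor to reduce to a $d$-dimensional kernel in $|x-y_{r_k}|$, and only then invokes the Dini integral -- once. The remaining bad slots are paid for by $\sum_{Q\in\mathcal F_{r_j}}|Q|\lesssim(a_{r_j}\lambda)^{-1}$ without any further smoothness argument. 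This "use cancellation once, pick the largest cube" device is what makes the mixed-term estimate linear in $\|\omega\|_{\mathrm{Dini}}$. You would need to replace your slot-by-slot iteration by something of this flavor for the proof to go through.
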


\begin{proof}
    We adapt the proof of \cite[Lemma 6.1]{Damian2018}. Let $\mc{D}$ be the standard dyadic grid in $\R^d$. Let $\vec{f}=(f_1,\ldots,f_m)$ be a $m$-tuple of functions that are finite linear combinations with complex coefficients of characteristic functions of cubes in $\mc{D}$. Let $\lambda>0$ be arbitrary. We show that
    \begin{equation*}
        \lambda|\{|T(\vec{f})|>\lambda\}|^{m}\leq C(\Vert T\Vert_{L^{\vec{q}}(\R^d)\to L^{q}(\R^d)}+C_{K}\Vert\omega\Vert_{\mathrm{Dini}}),
    \end{equation*}
    where the constant $C>0$ depends only on $d,m,\vec{q}$ and the doubling constant of $\omega$. Without loss of generality, we may assume that for every $j=1,\ldots,m$ it is not the case that $f_j=0$ a.e.~on $\R^d$.

    We consider positive real numbers $a_1,\ldots,a_m$ to be chosen below. Then, for each $j=1,\ldots,m$, we consider a Calder\'{o}n--Zygmund decomposition of $f_j$ at height $\alpha_j\lambda$. Thus, we obtain functions measurable functions $g_j,b_j:\R^d\to\C$, a finite family $\mc{F}_j$ of pairwise disjoint cubes in $\R^d$ and functions $b_{j,Q}:\R^d\to\C$, $Q\in\mc{F}_j$ with the following properties:
    \begin{gather*}
        f_j=g_j+b_j,\\
        b_j=\sum_{Q\in\mc{F}_j}b_{j,Q},\\
        \Vert g_j\Vert_{L^1(\R^d)}\leq\Vert f_j\Vert_{L^1(\R^d)},\quad\Vert g_j\Vert_{L^{\infty}(\R^d)}\leq 2^d\alpha_j\lambda,\\
        b_{j,Q}\text{ vanishes outside }Q\text{ and }\int_{Q}b_{j,Q}(x)\mathrm{d}x=0,~\Vert b_{j,Q}\Vert_{L^1(\R^d)}\leq 2^{d+1}\alpha_j\lambda|Q|\text{ for each }Q\in\mc{F},\\
        \sum_{Q\in\mc{F}_j}|Q|\leq\frac{1}{\alpha_j\lambda}\Vert f_j\Vert_{L^1(\R^d)}.
    \end{gather*}
    Set $\Omega_j:=\bigcup_{Q\in\mc{F}_j}(3Q)$ for each $j=1,\ldots,m$. Set also $E:=\R^d\setminus\Big(\bigcup_{j=1}^{m}\Omega_j\Big)$. Then, we have
    \begin{align*}
        |\{|T(\vec{f})|>\lambda\}|\leq\sum_{j=1}^{m}|\Omega_j|+\sum_{\vec{h}\in\{g,b\}^{m}}\Big|\Big\{x\in E:~|T(h_1,\ldots,h_m)|>\frac{\lambda}{2^m}\Big\}\Big|.
    \end{align*}
    We estimate each term separately. First of all, we have
    \begin{align*}
        \sum_{j=1}^{m}|\Omega_j|\leq\sum_{j=1}^{m}\sum_{Q\in\mc{F}_j}|3Q|\lesssim_{d}\sum_{j=1}^{m}\sum_{Q\in\mc{F}_j}|Q|\leq\frac{1}{\lambda}\sum_{j=1}^{m}\frac{1}{a_j}\Vert f_j\Vert_{L^1(\R^d)}.
    \end{align*}
    We turn our attention to the other $2^m$ terms. For the term where only ``good'' functions appear, we have
    \begin{align*}
        &\Big|\Big\{x\in E:~|T(\vec{g})|>\frac{\lambda}{2^m}\Big\}\Big|
        \leq\Big(\frac{2^m}{\lambda}\Big)^{q}\Vert T(\vec{g})\Vert_{L^q(\R^d)}^q\leq
        \Big(\frac{2^m}{\lambda}\Big)^{q}\Vert T\Vert_{L^{\vec{q}}(\R^d)\to L^q(\R^d)}^q\Vert\vec{g}\Vert_{L^{\vec{q}}(\R^d)}^q\\
        &\lesssim_{d,\vec{q}}\Big(\frac{2^m}{\lambda}\Big)^{q}\Vert T\Vert_{L^{\vec{q}}(\R^d)\to L^q(\R^d)}^q\Big(\prod_{j=1}^{m}(a_j\lambda)^{\frac{1}{q_j'}}\Big)\Big(\prod_{j=1}^{m}\Vert f_j\Vert_{L^1(\R^d)}^{\frac{1}{q_j}}\Big)^{q}.
    \end{align*}
    Next, we estimate the terms where there is a mixture of ``good'' and ``bad'' functions, or where there are only ``bad'' functions. Let $\vec{h}\in\{g,b\}^{m}$ such that it is not the case that $h_j=g_j$ for all $j=1,\ldots,m$. Let $B:=\{j\in\{1,\ldots,m\}:~h_j=b_j\}$ and $A:=\{1,\ldots,m\}\setminus B$. Consider enumerations $A:=\{s_1,\ldots,s_n\}$ and $B:=\{r_1,\ldots,r_{\ell}\}$. We abbreviate $\mathrm{d}y_{A}:=\mathrm{d}y_{s_1}\ldots\mathrm{d}y_{s_{n}}$ and $\mathrm{d}y_{B}:=\mathrm{d}y_{r_1}\ldots\mathrm{d}y_{r_{\ell}}$. Then, for each $x\in E$ we have
    \begin{align*}
        &|T(\vec{h})(x)|=\Big|\int_{(\R^d)^m}K(x,\vec{y})\prod_{j=1}^m h_j(y_j)\,\mathrm{d}\vec{y}\Big|\\
        &=
        \int_{(\R^d)^{A}}\Big(\sum_{\substack{Q_t\in\mc{F}_{r_t}\\t=1,\ldots,\ell}}\int_{Q_1\times\ldots\times Q_{\ell}}K(x,\vec{y})\prod_{j\in B}b_{j,Q_j}(y_j)\mathrm{d}y_{B}\Big)\prod_{j\in A}g_j(y_j)\mathrm{d}y_{A}\\
        &=\int_{(\R^d)^{A}}\bigg(\sum_{\substack{Q_t\in\mc{F}_{r_t}\\t=1,\ldots,\ell}}\int_{Q_1\times\ldots\times Q_{\ell}}(K(x,\vec{y})-K(x,y_1,\ldots,y_{r_1-1},c_{Q_{1}},y_{r_1+1},\ldots,y_m))\\
        &\times\prod_{j\in B}b_{j,Q_j}(y_j)\mathrm{d}y_{B}\bigg)\prod_{j\in A}g_j(y_j)\mathrm{d}y_{A}\\
        &\leq C_{K}\int_{(\R^d)^{A}}\bigg(\sum_{\substack{Q_t\in\mc{F}_{r_t}\\t=1,\ldots,\ell}}\int_{Q_1\times\ldots\times Q_{\ell}}\omega\Big(\frac{|y_{r_1}-c_{Q_1}|}{\sum_{j=1}^{m}|x-y_j|}\Big)\frac{1}{\Big(\sum_{j=1}^{m}|x-y_j|\Big)^{md}}\\
        &\times\prod_{j\in B}b_{j,Q_j}(y_j)\mathrm{d}y_{B}\bigg)
        \prod_{j\in A}g_j(y_j)\mathrm{d}y_{A}\\
        &=C_{K}\int_{(\R^d)^{A}}\bigg(\sum_{\substack{Q_t\in\mc{F}_{r_t}\\t=1,\ldots,\ell}}\int_{Q_1\times\ldots\times Q_{\ell}}\omega\Big(\frac{|y_{r_1}-c_{Q_1}|}{\sum_{j\in A}|y_j|+\sum_{j\in B}|x-y_j|}\Big)\\
        &\times\frac{1}{\Big(\sum_{j\in A}|y_j|+\sum_{j\in B}|x-y_j|\Big)^{md}}\prod_{j\in B}b_{j,Q_j}(y_j)\mathrm{d}y_{B}\bigg)\prod_{j\in A}g_j(y_j)\mathrm{d}y_{A}\\
        &\lesssim_{m,d,D}C_{K}\int_{(\R^d)^{A}}\bigg(\sum_{\substack{Q_t\in\mc{F}_{r_t}\\t=1,\ldots,\ell}}\int_{Q_1\times\ldots\times Q_{\ell}}\omega\Big(\frac{\ell(Q_{1})}{\sum_{j\in A}|y_j|+\sum_{j\in B}|x-c_{Q_j}|}\Big)\\
        &\times\frac{1}{\Big(\sum_{j\in A}|y_j|+\sum_{j\in B}|x-c_{Q_j}|\Big)^{md}}\prod_{j\in B}b_{j,Q_j}(y_j)\mathrm{d}y_{B}\bigg)\prod_{j\in A}g_j(y_j)\mathrm{d}y_{A}\\
        &\lesssim_{d,m}C_{K}\Big(\prod_{j\in A}a_j\Big)\lambda^{\#A}\int_{(\R^d)^{A}}\bigg(\sum_{\substack{Q_t\in\mc{F}_{r_t}\\t=1,\ldots,\ell}}\int_{Q_1\times\ldots\times Q_{\ell}}\omega\Big(\frac{\ell(Q_{1})}{\sum_{j\in A}|y_j|+\sum_{j\in B}|x-c_{Q_j}|}\Big)\\
        &\times\frac{1}{\Big(\sum_{j\in A}|y_j|+\sum_{j\in B}|x-c_{Q_j}|\Big)^{md}}\prod_{j\in B}b_{j,Q_j}(y_j)\mathrm{d}y_{B}\bigg)\mathrm{d}y_{A}\\
        &\lesssim_{d,m}C_{K}\Big(\prod_{j=1}^{m}a_j\Big)\lambda^{m}\int_{(\R^d)^{A}}\sum_{\substack{Q_t\in\mc{F}_{r_t}\\t=1,\ldots,\ell}}\Big(\prod_{k=1}^{\ell}|Q_k|\Big)\omega\Big(\frac{\ell(Q_{1})}{\sum_{j\in A}|y_j|+\sum_{j\in B}|x-c_{Q_j}|}\Big)\\
        &\times\frac{1}{\Big(\sum_{j\in A}|y_j|+\sum_{j\in B}|x-c_{Q_j}|\Big)^{md}}\mathrm{d}y_{A}\\
        &\leq C_{K}\Big(\prod_{j=1}^{m}a_j\Big)\lambda^{m}\int_{(\R^d)^{A}}\sum_{\substack{Q_t\in\mc{F}_{r_t}\\t=1,\ldots,\ell}}\Big(\prod_{k=1}^{\ell}|Q_k|\Big)\omega\Big(\frac{\ell(Q_{1})}{\sum_{j\in B}|x-c_{Q_j}|}\Big)\\
        &\times\frac{1}{\Big(\sum_{j\in A}|y_j|+\sum_{j\in B}|x-c_{Q_j}|\Big)^{md}}\mathrm{d}y_{A}\\
        &\lesssim_{m,d}C_{K}\Big(\prod_{j=1}^{m}a_j\Big)\lambda^{m}\sum_{\substack{Q_t\in\mc{F}_{r_t}\\t=1,\ldots,\ell}}\Big(\prod_{k=1}^{\ell}|Q_k|\Big)\omega\Big(\frac{\ell(Q_{1})}{\sum_{j\in B}|x-c_{Q_j}|}\Big)\frac{1}{\Big(\sum_{j\in B}|x-c_{Q_j}|\Big)^{(m-\#A)d}}\\
        &\lesssim_{m,d,D}C_{K}\Big(\prod_{j=1}^{m}a_j\Big)\lambda^{m}\sum_{\substack{Q_t\in\mc{F}_{r_t}\\t=1,\ldots,\ell}}\int_{Q_1\times\ldots\times Q_{\ell}}\omega\Big(\frac{\ell(Q_{1})}{\sum_{j\in B}|x-y_j|}\Big)\frac{1}{\Big(\sum_{j\in B}|x-y_j|\Big)^{\#Bd}}\mathrm{d}y_{B}\\
        &\leq C_{K}\Big(\prod_{j=1}^{m}a_j\Big)\lambda^{m}\sum_{k=1}^{\ell}\sum_{\substack{Q_t\in\mc{F}_{r_t}\\t=1,\ldots,\ell\\\ell(Q_k)=\max_{t=1,\ldots,\ell}\ell(Q_t)}}\int_{Q_1\times\ldots\times Q_{\ell}}\omega\Big(\frac{\ell(Q_{k})}{\sum_{j\in B}|x-y_j|}\Big)\\
        &\times\frac{1}{\Big(\sum_{j\in B}|x-y_j|\Big)^{\#Bd}}\mathrm{d}y_{B}\\
        &\leq C_{K}\Big(\prod_{j=1}^{m}a_j\Big)\lambda^{m}\sum_{k=1}^{\ell}\sum_{\substack{Q_t\in\mc{F}_{r_t}\\t=1,\ldots,\ell}}\int_{Q_1\times\ldots\times Q_{\ell}}\omega\Big(\frac{\ell(Q_{k})}{\sum_{j\in B}|x-y_j|}\Big)\frac{1}{\Big(\sum_{j\in B}|x-y_j|\Big)^{\#Bd}}\mathrm{d}y_{B}\\
        &\leq C_{K}\Big(\prod_{j=1}^{m}a_j\Big)\lambda^{m}\sum_{k=1}^{\ell}\sum_{Q_k\in\mc{F}_{r_k}}\int_{Q_k}\bigg(\int_{(\R^d)^{B\setminus\{r_k\}}}\omega\Big(\frac{\ell(Q_{k})}{|x-y_{r_k}|}\Big)\\
        &\frac{1}{\Big(\sum_{j\in B}|x-y_j|\Big)^{\#Bd}}\mathrm{d}y_{B\setminus\{r_k\}}\bigg)\mathrm{d}y_{r_k}\\
        &\lesssim_{m,d}C_{K}\Big(\prod_{j=1}^{m}a_j\Big)\lambda^{m}\sum_{k=1}^{\ell}\sum_{Q_k\in\mc{F}_{r_k}}\int_{Q_k}\omega\Big(\frac{\ell(Q_{k})}{|x-y_{r_k}|}\Big)\frac{1}{|x-y_{r_k}|^{d}}\mathrm{d}y_{r_k}.
    \end{align*}
    Integrating over $x\in E$ we obtain
    \begin{align*}
        &\int_{E}|T(\vec{h})(x)|\mathrm{d}x\\
        &\lesssim_{m,d,D}C_{K}\Big(\prod_{j=1}^{m}a_j\Big)\lambda^{m}\sum_{k=1}^{\ell}\sum_{Q_k\in\mc{F}_{r_k}}\int_{Q_k}\Big(\int_{E}\omega\Big(\frac{\ell(Q_{k})}{|x-y_{r_k}|}\Big)\frac{1}{|x-y_{r_k}|^{d}}\mathrm{d}x\Big)\mathrm{d}y_{r_k}\\
        &\lesssim_{d,D}C_{K}\Big(\prod_{j=1}^{m}a_j\Big)\lambda^{m}\sum_{k=1}^{\ell}\sum_{Q_k\in\mc{F}_{r_k}}\int_{Q_k}\Vert\omega\Vert_{\mathrm{Dini}}\mathrm{d}y_{r_k}\\
        &\leq C_{K}\Vert\omega\Vert_{\mathrm{Dini}}\lambda^{m-1}\sum_{k=1}^{\ell}\bigg(\prod_{\substack{j=1\\j\neq k}}^{m}a_j\bigg)\Vert f_k\Vert_{L^1(\R^d)}.
    \end{align*}
    Thus, we obtain
    \begin{align*}
         |\{|T(\vec{f})|>\lambda\}|&\lesssim_{m,d,D,\vec{q}}\frac{1}{\lambda}\sum_{j=1}^{m}\frac{1}{a_j}\Vert f_j\Vert_{L^1(\R^d)}\\
         &+\frac{1}{\lambda^{q}}\Vert T\Vert_{L^{\vec{q}}(\R^d)\to L^q(\R^d)}^q\Big(\prod_{j=1}^{m}(a_j\lambda)^{\frac{1}{q_j'}}\Big)\Big(\prod_{j=1}^{m}\Vert f_j\Vert_{L^1(\R^d)}^{\frac{1}{q_j}}\Big)^{q}\\
         &+C_{K}\Vert\omega\Vert_{\mathrm{Dini}}\lambda^{m-1}\sum_{k=1}^{\ell}\bigg(\prod_{\substack{j=1\\j\neq k}}^{m}a_j\bigg)\Vert f_k\Vert_{L^1(\R^d)}.
    \end{align*}
    We now choose
    \begin{equation*}
        a_j:=\frac{\Vert f_j\Vert_{L^1(\R^d)}^{1-\frac{1}{m}}}{\prod_{\substack{k=1\\k\neq j}}^{m}\Vert f_k\Vert_{L^1(\R^d)}^{\frac{1}{m}}}\cdot\frac{1}{\lambda^{1-\frac{1}{m}}}\cdot\frac{1}{(\Vert T\Vert_{L^{\vec{q}}(\R^d)\to L^q(\R^d)}+\Vert\omega\Vert_{\mathrm{Dini}})^{\frac{1}{m}}},\quad j=1,\ldots,m.
    \end{equation*}
    A direct computation shows that this choice yields the desired result.
\end{proof}

\begin{proposition}
    For any cube $Q_0\subseteq\R^d$ one has
    \begin{equation*}
        |T(\vec{f}\ind_{3Q_0})(x)|\leq c\Vert T\Vert_{L^{\vec{1}(\R^d)\to L^{\frac{1}{m},\infty}(\R^d)}}\prod_{j=1}^{m}|f_j(x)|+\mc{M}_{T,Q_0}(\vec{f})(x)
    \end{equation*}
    for a.e.~$x\in Q_0$, where the constant $c>0$ depends only on $d$.
\end{proposition}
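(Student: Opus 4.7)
The approach is to exploit Lebesgue differentiation in an $L^\eta$ sense for a small exponent $\eta\in(0,1/m)$, combined with Kolmogorov's inequality applied to the weak $L^{1/m}$ bound of $T$ established in Proposition~\ref{prop:weak_bound_multilinear_CZ}. Fix such an $\eta$, say $\eta:=\tfrac{1}{2m}$. Since $T(\vec{f}\ind_{3Q_0})\in L^{1/m,\infty}(\R^d)\subseteq L^\eta_{\mathrm{loc}}(\R^d)$ by Kolmogorov's inequality and each $|f_j|$ is locally integrable, for almost every $x\in Q_0$ the point $x$ is both an $L^\eta$-Lebesgue point of $T(\vec{f}\ind_{3Q_0})$ and a Lebesgue point of each $|f_j|$; I work with such an $x$.

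For any cube $Q$ with $x\in Q\subseteq Q_0$, the $\eta$-triangle inequality $|a+b|^\eta\leq|a|^\eta+|b|^\eta$ (valid for $\eta\leq 1$) yields
\[
\avint_{Q}|T(\vec{f}\ind_{3Q_0})|^\eta\,\leq\,\avint_{Q}|T(\vec{f}\ind_{3Q_0})-T(\vec{f}\ind_{3Q})|^\eta+\avint_{Q}|T(\vec{f}\ind_{3Q})|^\eta.
\]
The first term on the right is bounded by $\mc{M}_{T,Q_0}(\vec{f})(x)^\eta$ directly from the definition of $\mc{M}_{T,Q_0}$, since $x\in Q\subseteq Q_0$. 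For the second term, Kolmogorov's inequality together with the weak-type estimate of Proposition~\ref{prop:weak_bound_multilinear_CZ} gives
\[
\avint_{Q}|T(\vec{f}\ind_{3Q})|^\eta\lesssim_{m,\eta}|Q|^{-m\eta}\|T\|_{L^{\vec{1}}(\R^d)\to L^{\frac{1}{m},\infty}(\R^d)}^\eta\prod_{j=1}^{m}\|f_j\ind_{3Q}\|_{L^1(\R^d)}^\eta,
\]
which, after rewriting $\|f_j\ind_{3Q}\|_{L^1(\R^d)}=3^{d}|Q|\avint_{3Q}|f_j|$, reduces to
\[
\avint_{Q}|T(\vec{f}\ind_{3Q})|^\eta\lesssim_{d,m,\eta}\|T\|_{L^{\vec{1}}(\R^d)\to L^{\frac{1}{m},\infty}(\R^d)}^\eta\prod_{j=1}^{m}\Big(\avint_{3Q}|f_j|\Big)^\eta.
\]

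Letting $Q$ shrink to $x$ along a sequence of cubes, the left-hand side of the original inequality converges to $|T(\vec{f}\ind_{3Q_0})(x)|^\eta$ by the $L^\eta$-Lebesgue differentiation theorem, while $\avint_{3Q}|f_j|\to|f_j(x)|$ by classical Lebesgue differentiation. I thereby arrive at
\[
|T(\vec{f}\ind_{3Q_0})(x)|^\eta\leq\mc{M}_{T,Q_0}(\vec{f})(x)^\eta+C\,\|T\|_{L^{\vec{1}}(\R^d)\to L^{\frac{1}{m},\infty}(\R^d)}^\eta\prod_{j=1}^{m}|f_j(x)|^\eta
\]
for a constant $C$ depending on $d$, $m$ and $\eta$. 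Taking $\eta$-th roots and applying the elementary inequality $(a^\eta+b^\eta)^{1/\eta}\leq 2^{1/\eta}(a+b)$ for $a,b\geq 0$ yields the claim after absorbing $\eta=\tfrac{1}{2m}$ into the numerical constant (so that the constant $c$ ultimately depends on $d$ and $m$, matching the spirit of the previous bookkeeping in the appendix).

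The main obstacle is really a bookkeeping one: since $T(\vec{f}\ind_{3Q_0})$ lies only in $L^{1/m,\infty}_{\mathrm{loc}}$ and not $L^1_{\mathrm{loc}}$, one cannot use ordinary Lebesgue points, but must pass through $L^\eta$-Lebesgue differentiation for $\eta<1/m$. The compatibility of this with the definition of $\mc{M}_{T,Q_0}$ (which is stated in terms of essential suprema rather than integral averages) is precisely what makes the estimate $\avint_Q|T(\vec{f}\ind_{3Q_0})-T(\vec{f}\ind_{3Q})|^\eta\leq\mc{M}_{T,Q_0}(\vec{f})(x)^\eta$ trivially valid, and this is the only place where the pointwise nature of the grand maximal operator enters. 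Once this small conceptual step is in place, the rest of the proof is a routine combination of Kolmogorov, Lebesgue differentiation, and the $\eta$-triangle inequality.
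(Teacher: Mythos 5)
Your argument is a genuinely different route from the one the paper points to (it defers to the argument behind Li's Lemma~2.1, which uses approximate continuity and a direct level-set count), and it is essentially correct; the one caveat is that it proves the estimate only up to a multiplicative constant on the grand maximal term, rather than with coefficient exactly $1$ as in the statement.

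After passing to the limit you obtain $|T(\vec f\ind_{3Q_0})(x)|^\eta \le \mc{M}_{T,Q_0}(\vec f)(x)^\eta + C\|T\|^\eta\prod_j|f_j(x)|^\eta$, and the only way to undo the $\eta$-th power is $(a^\eta+b^\eta)^{1/\eta}\le 2^{1/\eta}(a+b)$. With $\eta=\tfrac1{2m}$ this yields coefficient $2^{2m}$, not $1$, in front of $\mc{M}_{T,Q_0}(\vec f)(x)$. This is not a bookkeeping slip that can be absorbed: for $\eta<1$ one always has $(a^\eta+b^\eta)^{1/\eta}\ge a+b$, so no choice of $\eta<1/m$ recovers the clean coefficient. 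The Li-style argument sidesteps this by never raising to a power. At a Lebesgue point $x$ of all $|f_j|$, for each small cube $Q\subseteq Q_0$ containing $x$, the weak-type $L^{\vec 1}\to L^{1/m,\infty}$ bound applied at a single level $\lambda\eqsim\|T\|\prod_j\langle|f_j|\rangle_{3Q}$ produces a set $E_Q\subseteq Q$ with $|E_Q|\ge\tfrac12|Q|$ on which $|T(\vec f\ind_{3Q})|\le c\|T\|\prod_j\langle|f_j|\rangle_{3Q}$, and on which (by definition of the grand maximal operator) $|T(\vec f\ind_{3Q_0})-T(\vec f\ind_{3Q})|\le\mc{M}_{T,Q_0}(\vec f)(x)$ a.e. Since $T(\vec f\ind_{3Q_0})$ is approximately continuous at a.e.\ $x$, for every $\delta>0$ and $Q$ small enough one may choose $\xi\in E_Q$ with $|T(\vec f\ind_{3Q_0})(\xi)-T(\vec f\ind_{3Q_0})(x)|\le\delta$; sending $Q\downarrow x$ and $\delta\to 0$ keeps the additive structure intact and gives coefficient $1$.

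That said, the $2^{2m}$ factor is harmless everywhere this proposition is used (Lemma~\ref{lem:multilinear_CZ_inductive_step} and Corollary~\ref{cor:weak_bound_grand_maximal} only need the bound up to constants), so your argument proves a statement that serves the same purpose. Your observation that the $L^\eta$-average of $T(\vec f\ind_{3Q_0})-T(\vec f\ind_{3Q})$ is trivially controlled by $\mc{M}_{T,Q_0}(\vec f)(x)$, precisely because the latter is an essential supremum, is the right reduction and is where the pointwise nature of the grand maximal operator enters in both approaches. One further remark: the constant you produce depends on $m$ as well as $d$; the same is true of the Li-style argument (the natural choice is $c\eqsim(2\cdot 3^{d})^{m}$), so the paper's ``depends only on $d$'' is likely an oversight given that $m$ is fixed throughout.
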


\begin{proof}
    The proof given in \cite[Lemma 2.1]{Li18} for $m=2$ works mutatis mutandis. One has to replace $2$ by $m$, $\vec{q}$ by $\vec{1}$, $q$ by $1$, $(f_1\chi_{3Q_0},f_2\chi_{3Q_0})$ by $\vec{f}\ind_{3Q_0}$, $(f_1\chi_{3Q},f_2\chi_{3Q})$ by $\vec{f}\ind_{3Q}$, $(f_1,f_2)$ by $\vec{f}$, and, at the very end of the proof, let $x$ be a Lebesgue point of all $|f_1|,\ldots,|f_m|$.
\end{proof}

\begin{proposition}
    For all $\eta\in\Big(0,\frac{1}{m}\Big)$ we have
    \begin{equation*}
        \mc{M}_{T}(\vec{f})\leq C(C_{K}\Vert\omega\Vert_{\mathrm{Dini}}+\Vert T\Vert_{L^{\vec{1}}(\R^d)\to L^{\frac{1}{m},\infty}(\R^d)})\mc{M}(\vec{f})+M_{\eta}(|T(\vec{f})|)\quad\text{a.e.~on }\R^d,
    \end{equation*}
    where the constant $C>0$ depends only on $m,d,\eta$ and the doubling constant of $\omega$.
\end{proposition}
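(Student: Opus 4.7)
The plan is to prove a Cotlar-type inequality, adapting the classical argument. Fix $x\in\R^d$ which is simultaneously a Lebesgue point of each $|f_j|$, at which $M_\eta(|T(\vec f)|)(x)$ is finite, and let $Q$ be any cube with $x\in Q$. We need to control $\operatorname{ess\,sup}_{\xi\in Q}|T(\vec f)(\xi)-T(\vec f\ind_{3Q})(\xi)|$ by the right-hand side.

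First I would establish an oscillation estimate: for a.e.\ $\xi,\xi'\in Q$,
\begin{equation*}
\bigl|\bigl(T(\vec f)(\xi)-T(\vec f\ind_{3Q})(\xi)\bigr)-\bigl(T(\vec f)(\xi')-T(\vec f\ind_{3Q})(\xi')\bigr)\bigr|\lesssim C_K\Vert\omega\Vert_{\mathrm{Dini}}\,\mc{M}(\vec f)(x).
\end{equation*}
By multilinearity, $T(\vec f)-T(\vec f\ind_{3Q})=\sum_{\emptyset\neq S\subseteq\{1,\ldots,m\}}T(\vec g^{S})$, where $g_j^{S}=f_j\ind_{(3Q)^{\mathrm{c}}}$ if $j\in S$ and $g_j^{S}=f_j\ind_{3Q}$ otherwise. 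For each such term at least one factor is supported away from $3Q$, so for $\xi\in Q$ the kernel representation is valid, and $|T(\vec g^{S})(\xi)-T(\vec g^{S})(\xi')|$ can be bounded by integrating $|K(\xi,\vec y)-K(\xi',\vec y)|\prod_j|g_j^{S}(y_j)|$. I would then split the region of integration into dyadic annuli $\{y_j\in 2^{k+1}(3Q)\setminus 2^{k}(3Q)\}$ for each $j\in S$; on the remaining indices the variables range over $3Q$. On such an annular piece $|\xi-y_j|\eqsim 2^{k}\ell(Q)$, so for $k$ beyond a fixed threshold the smoothness hypothesis $|\xi-\xi'|\leq\tfrac{1}{2}\max_k|\xi-y_k|$ is satisfied, and the smoothness estimate gives a factor $\omega(2^{-k})$ (after using the doubling of $\omega$). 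Carrying out the Fubini-type integration against the factors of $|f_j|$ in $3Q$ produces $\prod_{j=1}^{m}\langle|f_j|\rangle_{2^{k+1}\cdot 3Q}\leq\mc{M}(\vec f)(x)$, and then $\sum_{k}\omega(2^{-k})\eqsim\Vert\omega\Vert_{\mathrm{Dini}}$ finishes the oscillation bound.

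Next I would exhibit a point $\xi'\in Q$ at which both $|T(\vec f\ind_{3Q})(\xi')|$ and $|T(\vec f)(\xi')|$ are small. For the first, the weak-type bound and a Chebyshev argument give
\begin{equation*}
\bigl|\bigl\{\xi\in Q:|T(\vec f\ind_{3Q})(\xi)|>C_1\Vert T\Vert_{L^{\vec 1}\to L^{\frac{1}{m},\infty}}\mc{M}(\vec f)(x)\bigr\}\bigr|\leq\tfrac{1}{4}|Q|,
\end{equation*}
using $\prod_j\langle|f_j|\rangle_{3Q}\leq\mc{M}(\vec f)(x)$, once $C_1$ is chosen large depending on $d,m$. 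For the second, Chebyshev at the level $\eta$ yields
\begin{equation*}
\bigl|\bigl\{\xi\in Q:|T(\vec f)(\xi)|>C_{2}M_{\eta}(|T(\vec f)|)(x)\bigr\}\bigr|\leq C_2^{-\eta}|Q|\leq\tfrac{1}{4}|Q|
\end{equation*}
for $C_2$ large depending on $\eta$. Hence the set where neither bound holds has measure at most $\tfrac{1}{2}|Q|$, so we can pick $\xi'\in Q$ where both hold; evaluating at this $\xi'$ in the oscillation estimate, taking the essential supremum over $\xi\in Q$ and then the supremum over cubes $Q\ni x$ yields the claim. The main obstacle will be the careful bookkeeping in the first step: one has to ensure the kernel smoothness hypothesis is applicable on every annular piece and across every nonempty subset $S$, and show that all of the $2^{m}-1$ resulting contributions sum to a bound linear in $\mc{M}(\vec f)(x)$ with constant $C_K\Vert\omega\Vert_{\mathrm{Dini}}$.
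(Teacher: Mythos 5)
Your proposal is correct and rests on the same core ingredients as the paper's proof: (i) an oscillation estimate obtained by applying the kernel smoothness condition on dyadic annuli away from $3Q$, which produces the factor $C_K\Vert\omega\Vert_{\mathrm{Dini}}\mc{M}(\vec f)(x)$; (ii) the weak-type bound for $T(\vec f\ind_{3Q})$; and (iii) the local $L^\eta$ control on $T(\vec f)$. The one genuine difference is in step (ii)--(iii): the paper raises the pointwise inequality to the power $\eta$ and averages over the free variable $x'\in Q$, yielding $\langle|T(\vec f)|^\eta\rangle_Q^{1/\eta}+\langle|T(\vec f\ind_{3Q})|^\eta\rangle_Q^{1/\eta}$, then invokes the embedding $L^{1/m,\infty}(Q,\mathrm{d}x/|Q|)\subseteq L^\eta(Q,\mathrm{d}x/|Q|)$ for the second term. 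You instead run two Chebyshev arguments to locate a single good point $\xi'\in Q$ where both $|T(\vec f)(\xi')|\leq C_2 M_\eta(|T(\vec f)|)(x)$ and $|T(\vec f\ind_{3Q})(\xi')|\leq C_1\Vert T\Vert_{L^{\vec 1}\to L^{1/m,\infty}}\mc{M}(\vec f)(x)$. These are interchangeable techniques; the $L^\eta$-average is a touch quicker to write, the good-point argument is slightly more elementary and makes the role of the $1/m$-weak bound more transparent. Also, you make the multilinearity split into the $2^m-1$ pieces indexed by $\emptyset\neq S\subseteq\{1,\ldots,m\}$ explicit, whereas the paper writes the same object compactly as a single integral over $(\R^d)^m\setminus(3Q)^m$ before passing to annuli; this is a notational rather than a substantive difference. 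Both routes close the argument with the same constants, so nothing further is needed.
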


\begin{proof}
    We adapt and combine the proofs of \cite[Theorem 1.2]{Li18} and \cite[Theorem 6.6]{Damian2018}. Let $Q\subseteq\R^d$ be any cube. Then, for all $x,x',\xi\in Q$ we have
    \begin{align*}
        &|T(\vec{f})(\xi)-T(\vec{f}\ind_{3Q})(\xi)|=\Big|\int_{(\R^d)^{m}\setminus(3Q)^m}K(\xi,\vec{y})\Big(\prod_{j=1}^{m}f_j(y_j)\Big)\mathrm{d}\vec{y}\Big|\\
        &\leq\Big|\int_{(\R^d)^{m}\setminus(3Q)^m}(K(\xi,\vec{y})-K(x',\vec{y}))\Big(\prod_{j=1}^{m}f_j(y_j)\Big)\mathrm{d}\vec{y}\Big|
        +|T(\vec{f})(x')|+|T(\vec{f}\ind_{3Q})(x')|.
    \end{align*}
    We estimate
    \begin{align*}
        &\Big|\int_{(\R^d)^{m}\setminus(3Q)^m}(K(\xi,\vec{y})-K(x',\vec{y}))\Big(\prod_{j=1}^{m}f_j(y_j)\Big)\mathrm{d}\vec{y}\Big|\\
        &\leq\sum_{k=1}^{\infty}\int_{(2^k\cdot 3Q)^{m}\setminus(2^{k-1}\cdot 3Q)^{m}}
        |K(\xi,\vec{y})-K(x',\vec{y})|\Big(\prod_{j=1}^{m}|f_j(y_j)|\Big)\mathrm{d}\vec{y}\\
        &\leq C_{K}\sum_{k=1}^{\infty}\int_{(2^k\cdot 3Q)^{m}\setminus(2^{k-1}\cdot 3Q)^{m}}
        \omega\Big(\frac{|\xi-x'|}{\sum_{j=1}^{m}|\xi-y_j|+\sum_{j=1}^{m}|x'-y_j|}\Big)\\
        &\times\frac{1}{\Big(\sum_{j=1}^{m}|\xi-y_j|+\sum_{j=1}^{m}|x'-y_j|\Big)^{md}}
        \Big(\prod_{j=1}^{m}|f_j(y_j)|\Big)\mathrm{d}\vec{y}\\
        &\lesssim_{m,d,D}C_{K}\sum_{k=1}^{\infty}
        \omega\Big(\frac{1}{2^k}\Big)
        \Big(\prod_{j=1}^{m}\langle|f_j|\rangle_{2^k\cdot 3Q}\Big)
        \lesssim_{D}C_k\Vert\omega\Vert_{\mathrm{Dini}}\mc{M}(\vec{f})(x).
    \end{align*}
    Now, taking an $L^{\eta}$-average over $x'\in Q$, we obtain:
    \begin{align*}
        &|T(\vec{f})(\xi)-T(\vec{f}\ind_{3Q})(\xi)|\lesssim_{\eta,m,d,D}C_k\Vert\omega\Vert_{\mathrm{Dini}}\mc{M}(\vec{f})(x)+\langle|T(\vec{f})|^{\eta}\rangle_{Q}^{\frac{1}{\eta}}+\langle|T(\vec{f}\ind_{3Q})|^{\eta}\rangle_{Q}^{\frac{1}{\eta}}\\
        &\leq C_k\Vert\omega\Vert_{\mathrm{Dini}}\mc{M}(\vec{f})(x)+M_{\eta}(|T(\vec{f})|)(x)+\langle|T(\vec{f}\ind_{3Q})|^{\eta}\rangle_{Q}^{\frac{1}{\eta}}.
    \end{align*}
    Since
    \begin{equation*}
        L^{\frac{1}{m},\infty}\subseteq L^{\eta}
    \end{equation*}
    because $0<\eta<\frac{1}{m}$, we deduce
    \begin{align*}
        &\langle|T(\vec{f}\ind_{3Q})|^{\eta}\rangle_{Q}^{\frac{1}{\eta}}\lesssim_{\eta}\Vert T(\vec{f}\ind_{3Q})\Vert_{L^{\frac{1}{m},\infty}\Big(Q,\frac{\mathrm{d}x}{|Q|}\Big)}\\
        &\leq\frac{1}{|Q|^{m}}\Vert T\Vert_{L^{\vec{1}}(\R^d)\to L^{\frac{1}{m},\infty}(\R^d)}\prod_{j=1}^{m}\Vert f_j\ind_{3Q}\Vert_{L^1(\R^d)}\\
        &\lesssim_{d,m}\Vert T\Vert_{L^{\vec{1}}(\R^d)\to L^{\frac{1}{m},\infty}(\R^d)}\mc{M}(\vec{f})(x),
    \end{align*}
    concluding the proof.
\end{proof}

\bibliography{bieb}
\bibliographystyle{alpha}
\end{document}